\let\para\S
\def\section{\@startsection{section}{1}%
\z@{.7\linespacing\@plus\linespacing}{.5\linespacing}%
{\normalfont\bfseries\centering}}
\def\@settitle{\begin{center}%
  \baselineskip14\p@\relax
    \bfseries
    \LARGE\@title
  \end{center}%
}
\def\@setauthors{%
  \begingroup
  \trivlist
  \centering\footnotesize \@topsep30\p@\relax
  \advance\@topsep by -\baselineskip
  \item\relax
  \andify\authors
  \def\\{\protect\linebreak}%
 {\Large\authors}%
  \endtrivlist
  \endgroup
}
\def\maketitle{\par
  \@topnum\z@ % this prevents figures from falling at the top of page 1
  \@setcopyright
  \thispagestyle{firstpage}% this sets first page specifications
  %\uppercasenonmath\shorttitle
  \ifx\@empty\shortauthors \let\shortauthors\shorttitle
  \else \andify\shortauthors
  \fi
  \@maketitle@hook
  \begingroup
  \@maketitle
  \toks@\@xp{\shortauthors}\@temptokena\@xp{\shorttitle}%
  \toks4{\def\\{ \ignorespaces}}% defend against questionable usage
  \edef\@tempa{%
    \@nx\markboth{\the\toks4
%      \@nx\MakeUppercase{\the\toks@}}{\the\@temptokena}}%
      \@nx{\the\toks@}}{\the\@temptokena}}%
  \@tempa
  \endgroup
  \c@footnote\z@
  \def\do##1{\let##1\relax}%
  \do\maketitle \do\@maketitle \do\title \do\@xtitle \do\@title
  \do\author \do\@xauthor \do\address \do\@xaddress
  \do\email \do\@xemail \do\curraddr \do\@xcurraddr
  \do\commby \do\@commby
  \do\dedicatory \do\@dedicatory \do\thanks \do\thankses
  \do\keywords \do\@keywords \do\subjclass \do\@subjclass
}
\newtheorem{defi}{Definition}[section]
\newtheorem{lem}[defi]{Lemma}
\newtheorem{prop}[defi]{Proposition}
\newtheorem{theo}[defi]{Theorem}
\newtheorem{cor}[defi]{Corollary}
\newtheorem{rk}[defi]{Remark}
\newcommand{\vta}{\vartheta}
\newcommand{\cal}{\mathcal}
\newcommand{\ES}{\mathbb{E}}
\newcommand{\ER}{\mathbb{R}}
\newcommand{\PE}{\mathbb{P}}
\newcommand{\un}{\underline}
\newcommand{\Exp}{\mathbb{E}}
\newcommand{\Var}{\mathrm{Var}}
\renewcommand{\Pr}{\mathbb{P}}
\newcommand{\R}{\mathbb{R}}
\newcommand{\N}{\mathbb{N}}
\newcommand{\dd}{\mathrm{d}}
\newcommand{\ee}{\mathrm{e}}
\newcommand{\ind}[1]{\mathds{1}_{\{#1\}}}
\newcommand{\iind}[1]{\mathds{1}_{#1}}
\DeclareMathOperator*{\sgn}{sgn}
\newcommand{\dto}{\downarrow}
\renewcommand{\tilde}{\widetilde}
\renewcommand{\bar}{\overline}
\newcommand{\pn}{\mathfrak{p}}
\renewcommand{\emptyset}{\varnothing}
\newcommand{\tauel}{\mathfrak{s}}
\newcommand{\mubar}{{\mu}}
\newcommand{\bfeta}{{\boldsymbol{\eta}}}
\newcommand{\bfnu}{\boldsymbol{\nu}}
\newcommand{\dten}{t}
\title{Asymptotically unbiased approximation of the QSD of diffusion processes with a decreasing time step Euler scheme}
\thanks{This work benefitted from the support of the project ANR QuAMProcs (ANR-19-CE40-0010) from the French National Research Agency and of the Centre Henri Lebesgue ANR-11-LABX-0020-01.}
\keywords{quasistationary distribution, Euler scheme, reflected brownian motion, asymptotic pseudotrajectory, diffusion  with redistribution on a bounded domain}
\subjclass[2010]{Primary: 60J60, 60B10,  65C99; Secondary: 60J70.}
\author{Fabien Panloup}
\address{{\bf Fabien Panloup}\newline
{\rm \indent {Univ Angers, CNRS, LAREMA, SFR MATHSTIC, F-49000 Angers, France}}}
\email{\href{mailto: fabien.panloup@univ-angers.fr}{fabien.panloup@univ-angers.fr}}
\author{Julien Reygner}
\address{{\bf Julien Reygner}\newline
{\rm \indent CERMICS, ENPC, Institut Polytechnique de Paris, Marne-la-Vallée, France}}
\email{\href{mailto:julien.reygner@enpc.fr}{julien.reygner@enpc.fr}}
\begin{document}

\begin{abstract} 
  We build and study a recursive algorithm based on the occupation measure of an Euler scheme with decreasing step for the numerical approximation of the quasistationary distribution (QSD) of an elliptic diffusion in a bounded domain. We prove the almost sure convergence of the procedure for a family of redistributions and show that we can also recover the approximation of the rate of survival and the convergence in distribution of the algorithm. This last point follows from some new bounds on the weak error related to diffusion dynamics with renewal.
\end{abstract}

\maketitle

%%%%%%%%%%%%%%%%%%%%%%%%%%%%%%%%%%%%%%%%%%%%%%%%%%%%%%%%%%%%%%%%%%%%%%%%%%%%%%%%%%%%%%%%%%%%%%%%%%%%%%%%%
%%%%%%%%%%%%%%%%%%%%%%%%%%%%%%%%%%%%%%%%%%%%%%%%%%%%%%%%%%%%%%%%%%%%%%%%%%%%%%%%%%%%%%%%%%%%%%%%%%%%%%%%%
%%%%%%%%%%%%%%%%%%%%%%%%%%%%%%%%%%%%%%%%%%%%%%%%%%%%%%%%%%%%%%%%%%%%%%%%%%%%%%%%%%%%%%%%%%%%%%%%%%%%%%%%%
\section{Introduction and statement of the results}
%%%%%%%%%%%%%%%%%%%%%%%%%%%%%%%%%%%%%%%%%%%%%%%%%%%%%%%%%%%%%%%%%%%%%%%%%%%%%%%%%%%%%%%%%%%%%%%%%%%%%%%%%
%%%%%%%%%%%%%%%%%%%%%%%%%%%%%%%%%%%%%%%%%%%%%%%%%%%%%%%%%%%%%%%%%%%%%%%%%%%%%%%%%%%%%%%%%%%%%%%%%%%%%%%%%
\subsection{Motivation}\label{ss:motivation}

This article is dedicated to the construction and study of a numerical scheme aimed at approximating the quasistationary distribution, in some bounded domain $D \subset \R^d$, $d \geq 1$, of the solution $(Y_t)_{t \geq 0}$ to the stochastic differential equation
\begin{equation}\label{eq:SDE}
  \dd Y_t = b(Y_t)\dd t + \sigma(Y_t)\dd B_t,
\end{equation}
where $b : \R^d \to \R^d$, $\sigma:\R^d\to \mathbb{M}_{d,d}$, and $(B_t)_{t \geq 0}$ is a $d$-dimensional Brownian motion. {Here and throughout the article, we denote by $\mathbb{M}_{d,d}$ the set of $d \times d$ matrices.}

\medskip
We shall work under the following assumptions:
\begin{enumerate}[label=$\mathbf{(H_{\arabic*})}$,ref=$\mathbf{(H_{\arabic*})}$]
  \item\label{cond:D} $D$ is a non-empty bounded and connected open set of $\ER^d$, whose boundary is ${\cal C}^2$.
  \item\label{cond:coeffs} \begin{enumerate}[label=$(\mathbf{\alph*})$,ref=$(\mathbf{H_2.\alph*})$]
    \item\label{it:coeffs-1} $b$ and $\sigma$ are bounded measurable on $\R^d$ and  $\sigma\sigma^\top$ is uniformly elliptic in $\R^d$.
    \item\label{it:coeffs-2} $b$ and $\sigma$ are Lipschitz continuous on $\R^d$.
  \end{enumerate}
\end{enumerate} 
Under these assumptions, it is known~\cite[Section~5.3]{ChaCouVil18} that if {one defines}
\begin{equation*}
  \tau_D = \inf\{t \geq 0: Y_t \not\in D\},
\end{equation*}
then there is a unique probability measure $\mu^\star$ on $D$ such that, for any $t \geq 0$,
\begin{equation*}
  \Pr_{\mu^\star}(Y_t \in \cdot | \tau_D > t) = \mu^\star(\cdot).
\end{equation*}
This measure is called the \emph{quasistationary distribution} (QSD) of $(Y_t)_{t \geq 0}$ in $D$. From a spectral point of view, it is the left eigenvector associated with the principal Dirichlet eigenvalue $\lambda^\star>0$ on $D$ of the infinitesimal generator $L$ of~\eqref{eq:SDE}. The eigenvalue $\lambda^\star$ is usually called \emph{rate of survival} in the literature. QSDs arise in many fields of applied probability, for instance in population dynamics~\cite{MV12}, molecular dynamics~\cite{DiGLelLePNec16}, or Monte Carlo methods~\cite{PolFeaJohRob20}. We refer to the monograph~\cite{CMM13} for an overall mathematical introduction.

\medskip
The numerical approximation of QSDs is a nontrivial task. There exist two main classes of methods for this purpose: particle systems, such as the Fleming--Viot process~\cite{BHM00,GriKan04,Vil14}, and interaction with the occupation measure~\cite{AFP,BenCloPan18,WRS20, BenChaVil22}. In the latter method, one constructs a random process $(X_t)_{t \geq 0}$ in $D$ according to the following rules:
\begin{itemize}
  \item in $D$, $X_t$ follows the stochastic differential equation~\eqref{eq:SDE},
  \item when $X_t$ reaches $\partial D$, it is killed and restarted from a point randomly chosen in $D$ according to the occupation measure ${t}^{-1}\int_0^t \delta_{X_s}\dd s$.
\end{itemize}
Under assumptions which are implied by~\ref{cond:D} and~\ref{cond:coeffs}, Bena\"im, Champagnat and Villemonais~\cite[Theorem~2.1]{BenChaVil22} proved that for any bounded and measurable function $f : D \to \R$,
\begin{equation}\label{eq:BCV}
  \lim_{t \to +\infty} \frac{1}{t}\int_0^t f(X_s) \dd s = \mu^\star(f), \qquad \text{almost surely.}
\end{equation}

\medskip
In practice, in order to simulate trajectories of $X_t$, the stochastic differential equation~\eqref{eq:SDE} needs to be discretized. Thus, the purpose of the present article is to provide a similar statement to~\eqref{eq:BCV} for an Euler scheme associated with~\eqref{eq:SDE}. If this equation is discretized with a constant step size $\gamma$, then it yields a homogeneous Markov chain in $\R^d$, and the discrete-time adaptation of the algorithm described above converges to the QSD $\mu^{\star,\gamma}$ of this chain in $D$. This QSD depends on $\gamma$, and it is known to converge to $\mu^\star$ when $\gamma \to 0$~\cite[Theorem~3.9]{BenCloPan18}, although to the best of our knowledge, no quantitative error estimate is available.

In order to remove the bias introduced by the approximation of $\mu^\star$ by $\mu^{\star,\gamma}$, in this article we follow the idea, initially introduced by Lamberton and Pag\`es~\cite{LP1} for stationary distributions of diffusions, to use an Euler scheme with \emph{decreasing} step size $\gamma_n$. Our main result shows that the algorithm above then directly converges to the QSD $\mu^\star$, without any discretization bias, as is the case for stationary distributions.

%%%%%%%%%%%%%%%%%%%%%%%%%%%%%%%%%%%%%%%%%%%%%%%%%%%%%%%%%%%%%%%%%%%%%%%%%%%%%%%%%%%%%%%%%%%%%%%%%%%%%%%%%
%%%%%%%%%%%%%%%%%%%%%%%%%%%%%%%%%%%%%%%%%%%%%%%%%%%%%%%%%%%%%%%%%%%%%%%%%%%%%%%%%%%%%%%%%%%%%%%%%%%%%%%%%
\subsection{The numerical scheme and a simplified statement}

Our scheme is based on a sequence of positive time steps $\boldsymbol{\gamma} = (\gamma_n)_{n \geq 1}$ which satisfies the following assumptions, in which $\Gamma_n := \sum_{k=0}^{n-1} \gamma_{k+1}$:
\begin{enumerate}[label=$\mathbf{(H_3)}$,ref=$\mathbf{(H_3)}$]
  \item\label{cond:steps}\begin{enumerate}[label=$(\mathbf{\alph*})$,ref=$(\mathbf{H_3.\alph*})$]
    \item\label{it:steps-1} $\lim_{n \to +\infty} \gamma_n = 0$, $\lim_{n \to +\infty} \Gamma_n = +\infty$.
    \item\label{it:steps-2} There exists $p>1$ such that $\sum_{n=1}^{+\infty} \gamma_n^p < +\infty$.
     \item\label{it:steps-3} $\sup_{n \geq 1}\gamma_n/\gamma_{n+1}<+\infty$.
  \end{enumerate}
\end{enumerate}
These three conditions are for instance satisfied if $\gamma_n= Cn^{-\rho}$ with $\rho\in(0,1]$. Whereas~\ref{it:steps-1} is standard in stochastic algorithms and used throughout the paper, the precise roles of~\ref{it:steps-2} and~\ref{it:steps-3} are respectively discussed in~Remark~\ref{rem:pas} and~Remark~\ref{rk:H3c}.

\medskip
With this step sequence at hand, on a probability space $(\Omega,\mathcal{F},\Pr)$ equipped with a $d$-dimensional Brownian motion $(B_t)_{t \geq 0}$, we define the cadlag process $(\bar{X}_t)_{t \geq 0}$ as follows. We set $\bar{X}_0 = x_0 \in D$, and for any $n \geq 0$, for any $t \in [\Gamma_n, \Gamma_{n+1})$,
\begin{equation*}
  \bar{X}_t = \bar{X}_{\Gamma_n} + b\left(\bar{X}_{\Gamma_n}\right)(t-\Gamma_n) + \sigma\left(\bar{X}_{\Gamma_n}\right)(B_t-B_{\Gamma_n}).
\end{equation*}
In particular, given $\bar{X}_{\Gamma_n}$, the numerical simulation of
\begin{equation*}
  \bar{X}_{\Gamma_{n+1}^-} = \bar{X}_{\Gamma_n} + b\left(\bar{X}_{\Gamma_n}\right)(\Gamma_{n+1}-\Gamma_n) + \sigma\left(\bar{X}_{\Gamma_n}\right)(B_{\Gamma_{n+1}}-B_{\Gamma_n})
\end{equation*}
only requires to sample the Brownian increment $B_{\Gamma_{n+1}}-B_{\Gamma_n}$. Defining the Bernoulli variable 
\begin{equation}\label{eq:theta}
  \theta_{n+1} := \ind{\bar{X}_{\Gamma_{n+1}^-} \not\in D},
\end{equation}
we next let
\begin{equation*}
  \bar{X}_{\Gamma_{n+1}} := \theta_{n+1} U_{n+1} + (1-\theta_{n+1}) \bar{X}_{\Gamma_{n+1}^-},
\end{equation*}
where, conditionally on $(\bar{X}_{\Gamma_0}, \ldots, \bar{X}_{\Gamma_n})$, $U_{n+1}$ is drawn according to some random measure $\pn_{n+1}$ on $D$, which we call \emph{redistribution measure} and is measurable with respect to the $\sigma$-algebra generated by $(\bar{X}_{\Gamma_0}, \ldots, \bar{X}_{\Gamma_n})$. {Here, and throughout the article, the space $\mathcal{M}_1(D)$ of Borel probability measures on $D$ is endowed with the topology of weak convergence and the associated Borel $\sigma$-algebra.}

\medskip
A natural choice for the redistribution measure $\pn_n$ is the occupation measure of the scheme
\begin{equation*}
  \mubar_n = \frac{1}{\Gamma_n}\sum_{k=0}^{n-1} \gamma_{k+1} \delta_{\bar{X}_{\Gamma_k}},
\end{equation*}
which mimics the `true' occupation measure $\Gamma_n^{-1}\int_0^{\Gamma_n} \delta_{X_s} \dd s$ associated with the continuous time process $(X_t)_{t \geq 0}$ defined above. For this choice, our first main result reads as follows. 

\begin{theo}[Convergence to the QSD for $\pn_n=\mubar_n$]\label{theo:main-mun}
  Under Assumptions~\ref{cond:D}, \ref{cond:coeffs} and~\ref{cond:steps}, and if $\pn_n=\mubar_n$, then for any measurable and bounded function $f : D \to \R^d$, 
  \begin{equation*}
    \lim_{n \to +\infty} \mubar_n(f) = \mu^\star(f), \qquad \text{almost surely.}
  \end{equation*}
\end{theo}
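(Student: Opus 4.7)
\medskip
\noindent\emph{Proof plan.}
The plan is to extend to the decreasing-step discretization the asymptotic pseudotrajectory approach used by Bena\"im--Champagnat--Villemonais~\cite{BenChaVil22} for the continuous-time statement~\eqref{eq:BCV}. Tightness is essentially free, since $\bar D$ is compact: almost surely, $(\bar\mu_n)$ is relatively compact in $\mathcal{M}_1(\bar D)$ for the weak topology, so it suffices to show that every weak limit point $\mu_\infty$ coincides with $\mu^\star$.

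\smallskip
The central identity comes from applying It\^o's formula to $f(\bar X_t)$ on each interval $[\Gamma_k,\Gamma_{k+1})$, on which $\bar X$ is a diffusion with coefficients frozen at $\bar X_{\Gamma_k}$. Summing the resulting increments for $k=0,\dots,n-1$, inserting the redistribution jumps $f(\bar X_{\Gamma_{k+1}})-f(\bar X_{\Gamma_{k+1}^-})=\theta_{k+1}(f(U_{k+1})-f(\bar X_{\Gamma_{k+1}^-}))$, and dividing by $\Gamma_n$ produces, for any $f\in C^2(\bar D)$,
\[
\frac{f(\bar X_{\Gamma_n})-f(x_0)}{\Gamma_n}=\bar\mu_n(Lf)+\frac{1}{\Gamma_n}\sum_{k=0}^{n-1}\theta_{k+1}\bigl(f(U_{k+1})-f(\bar X_{\Gamma_{k+1}^-})\bigr)+\frac{M_n^f+\mathcal R_n^f}{\Gamma_n},
\]
where $M_n^f$ is a martingale with quadratic variation of order $\sum_k\gamma_{k+1}$ and $\mathcal R_n^f$ collects higher-order discretization errors of order $\sum_k\gamma_{k+1}^{3/2}$. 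The left-hand side tends to $0$; $\mathcal R_n^f/\Gamma_n\to 0$ by Kronecker's lemma together with~\ref{it:steps-1}, and $M_n^f/\Gamma_n\to 0$ almost surely by a Burkholder--Kronecker argument using~\ref{it:steps-2}.

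\smallskip
Restricting to $f\in C_c^\infty(D)$ annihilates the boundary term $\theta_{k+1}f(\bar X_{\Gamma_{k+1}^-})$, because $\theta_{k+1}=1$ forces $\bar X_{\Gamma_{k+1}^-}\notin D\supset\operatorname{supp}(f)$. For the remaining contribution, the conditional law of $U_{k+1}$ given the past is $\pn_{k+1}=\bar\mu_{k+1}$, so another martingale approximation, combined with a Cesaro argument, identifies $\Gamma_n^{-1}\sum_k\theta_{k+1}f(U_{k+1})$ with $(N_n/\Gamma_n)\,\mu_\infty(f)+o(1)$ along the extracted subsequence, where $N_n:=\sum_{k=0}^{n-1}\theta_{k+1}$ counts the kills. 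After a further extraction so that $N_n/\Gamma_n\to C\in[0,+\infty)$, any limit point $\mu_\infty$ satisfies
\[
\mu_\infty(Lf)+C\,\mu_\infty(f)=0 \qquad\text{for all } f\in C_c^\infty(D).
\]
Elliptic regularity under~\ref{it:coeffs-1} upgrades $\mu_\infty|_D$ to a smooth nonnegative eigenfunction of the formal adjoint $L^\dagger$ with eigenvalue $-C$; Krein--Rutman and the simplicity of the principal Dirichlet eigenvalue then force $C=\lambda^\star$ and $\mu_\infty|_D\propto\mu^\star$, and combined with $\mu_\infty(\partial D)=0$ this gives $\mu_\infty=\mu^\star$.

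\smallskip
The main obstacle is the boundary analysis: one must simultaneously rule out $\mu_\infty(\partial D)>0$ and ensure that $C<+\infty$, i.e.\ that the scheme does not generate kills on arbitrarily short time scales. The crucial quantitative input should be a uniform bound $\Exp[\theta_{n+1}\mid\mathcal F_n]\lesssim \gamma_{n+1}$, which would simultaneously bound $N_n/\Gamma_n$ and prevent boundary accumulation. Establishing such a bound requires Gaussian tail estimates on the Euler increment, together with the $\mathcal C^2$-regularity of $\partial D$ from~\ref{cond:D}, the uniform ellipticity from~\ref{it:coeffs-1}, and the step-ratio control~\ref{it:steps-3} to transfer bounds between successive steps. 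This estimate is what I expect to carry most of the technical weight.
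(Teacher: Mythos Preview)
Your plan contains a genuine gap, and it is exactly the one you flag at the end: the bound $\Exp[\theta_{n+1}\mid\mathcal F_{\Gamma_n}]\lesssim\gamma_{n+1}$ is not true in general, and without it the rest of the argument does not close. If $\bar X_{\Gamma_n}$ sits at distance $d$ from $\partial D$, then by uniform ellipticity the one-step exit probability behaves like $\Phi(-c\,d/\sqrt{\gamma_{n+1}})$, which is of order $1$ (not $O(\gamma_{n+1})$) as soon as $d=O(\sqrt{\gamma_{n+1}})$. Since after a kill the scheme is redistributed according to $\bar\mu_n$, which you have not yet shown to avoid the boundary, nothing prevents the process from being repeatedly reinjected near $\partial D$ and killed immediately. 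So the ``tightness is essentially free'' step and the final boundary estimate are circularly dependent: compactness in $\mathcal M_1(\bar D)$ is indeed free, but you need tightness in $\mathcal M_1(D)$ (equivalently $\mu_\infty(\partial D)=0$) and a finite kill rate $C$, and your proposed mechanism for obtaining these fails.

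The paper breaks this circularity in a way that is quite different from your outline. For tightness in $\mathcal M_1(D)$ (Section~\ref{sec:tightness}), it does not try to bound single-step exit probabilities at all. Instead it couples the signed distance $\psi_D(\bar X_t)$, after a time change, with a one-dimensional drifted Brownian motion negatively reflected at a fixed level $\eta_0$ and positively ``reset'' to $0$ at the grid times; the occupation of $[0,\eta)$ for this auxiliary process is then controlled by explicit hitting-time estimates for reflected Brownian motion. Notably, this tightness argument works for \emph{any} adapted redistribution sequence, with no assumption linking $\pn_n$ to $\bar\mu_n$. For the identification (Section~\ref{sec:prooftheomain}), the paper does not use the generator $L$ and the eigenvalue relation $\mu_\infty(Lf)+C\mu_\infty(f)=0$; it works instead with the occupation operator $Af(x)=\Exp_x[\int_0^{\tau_D}f(Y_t)\,\dd t]$ and its discretization $\bar A^{\bfeta}$, studies the empirical measure $\vartheta_\ell$ \emph{at renewal times}, and shows that $(\vartheta_\ell)$ is an asymptotic pseudo-trajectory of the measure-valued ODE $\dot\nu=\nu A-(\nu A\iind{D})\nu$, whose unique attractor is $\mu^\star$. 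The passage from $\vartheta_\ell\to\mu^\star$ to $\bar\mu_n\to\mu^\star$ then goes through the relation $\Pi_{\mu^\star}=\mu^\star$. The extension from smooth to bounded measurable $f$ is handled separately via a Gaussian upper bound on the Euler-scheme density, which is where Assumption~\ref{it:steps-3} actually enters.
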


%%%%%%%%%%%%%%%%%%%%%%%%%%%%%%%%%%%%%%%%%%%%%%%%%%%%%%%%%%%%%%%%%%%%%%%%%%%%%%%%%%%%%%%%%%%%%%%%%%%%%%%%%
%%%%%%%%%%%%%%%%%%%%%%%%%%%%%%%%%%%%%%%%%%%%%%%%%%%%%%%%%%%%%%%%%%%%%%%%%%%%%%%%%%%%%%%%%%%%%%%%%%%%%%%%%
\subsection{More general redistribution measures} As a discrete measure, $\mu_n$ can be easily simulated. However, its sampling requires to keep all the memory of the path of the Euler scheme $(\bar{X}_{\Gamma_0}, \ldots, \bar{X}_{\Gamma_n})$ and its cost of computation therefore strongly depends on $n$. This suggests to wonder about some strategies to manage these potential numerical drawbacks and to consider the possibility of replacing the true occupation measure of the Euler scheme $\mu_n$ by an approximation $\pn_n$ (see Subsection~\ref{subsec:practical} for a detailed discussion). Below, we thus propose to generalize Theorem~\ref{theo:main-mun} to  general redistribution measures $\pn_n$ under the (natural) following assumption:

\begin{enumerate}[label=$\mathbf{(H_4)}$,ref=$\mathbf{(H_4)}$]
  \item\label{cond:pn} for any Lipschitz continuous function $f:D \to \ER$, 
  \begin{equation*}
    \lim_{n \to +\infty} \mubar_n(f)-\pn_n(f) = 0, \qquad \text{almost surely}.
  \end{equation*}
\end{enumerate}

Our main result shows the convergence to the QSD of the occupation measure of $(\bar{X}_{\Gamma_n})_{n \geq 0}$, and almost as a by-product (see Subsection~\ref{ss:pf-main} for details), also provides a way to approximate the related \emph{rate of survival} $\lambda^{\star}$ by counting the number of jumps of the underlying continuous-time process $\bar{X}_t$ between $0$ and $\Gamma_n$.

\begin{theo}[Convergence to the QSD]\label{theo:main}
  Under Assumptions~\ref{cond:D}, \ref{cond:coeffs}, \ref{cond:steps} and~\ref{cond:pn}, for any measurable and bounded function $f : D \to \R^d$, 
  \begin{equation*}
    \lim_{n \to +\infty} \mubar_n(f) = \mu^\star(f), \qquad \text{almost surely.}
  \end{equation*}
  Besides,
  \begin{equation*}
    \lim_{n \to +\infty} \frac{1}{\Gamma_n}\sum_{k=1}^n \theta_k = \lambda^\star, \qquad \text{almost surely.}
  \end{equation*}
\end{theo}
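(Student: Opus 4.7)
I would adopt the asymptotic pseudotrajectory (APT) framework of Bena\"im, as used in \cite{BenCloPan18} for stationary distributions and in \cite{BenChaVil22} for the continuous-time QSD algorithm. For each $\nu \in \mathcal{M}_1(D)$, let $(\Psi^\nu_t)_{t \geq 0}$ denote the \emph{linear} Markov semigroup on $D$ corresponding to the diffusion \eqref{eq:SDE} instantaneously restarted with law $\nu$ upon hitting $\partial D$. By the QSD property, $\mu^\star\Psi^{\mu^\star}_t = \mu^\star$ for every $t>0$; under \ref{cond:D} and \ref{cond:coeffs}, the uniform ellipticity of $\sigma\sigma^\top$ provides contractivity for $\Psi^\nu_t$ uniformly in $\nu$, and the global stability underlying \cite[Theorem~2.1]{BenChaVil22} implies that $\mu^\star$ is the unique globally attracting fixed point of the family $\nu \mapsto \nu\Psi^\nu_t$.

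\textbf{APT property.} The core step is to prove that for every $T>0$ and every Lipschitz $f : D \to \R$,
\[
\lim_{n \to +\infty} \sup_{t \in [0,T]} \bigl| \bar{\mu}_{N_n(t)}(f) - \bar{\mu}_n\Psi^{\bar{\mu}_n}_t(f)\bigr| = 0 \quad \text{a.s.},
\]
where $N_n(t) := \inf\{k \geq n : \Gamma_k \geq \Gamma_n+t\}$. The left-hand side decomposes into three pieces: (i) the weak error on $[\Gamma_n,\Gamma_n+T]$ between the Euler scheme with boundary killing and the corresponding continuous SDE; (ii) the error from substituting the true redistribution measures $(\pn_k)$ by the fixed measure $\bar{\mu}_n$, handled via \ref{cond:pn} together with the slow variation $\bar{\mu}_{N_n(t)}-\bar{\mu}_n = O(T/\Gamma_n)$; (iii) the fluctuations of the empirical measure around its conditional expectation, controlled by a Chow-type strong law using \ref{it:steps-2} (and \ref{it:steps-3} to compare successive Euler grids). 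The standard APT theorem, combined with the global attractor property of $\mu^\star$, then forces every accumulation point of $(\bar{\mu}_n)$ to be $\mu^\star$, yielding the first claim.

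\textbf{Rate of survival.} For the second statement, decompose
\[
\frac{1}{\Gamma_n}\sum_{k=1}^n \theta_k = \frac{1}{\Gamma_n}\sum_{k=1}^n \Exp[\theta_k \mid \mathcal{F}_{k-1}] + \frac{1}{\Gamma_n}\sum_{k=1}^n \bigl(\theta_k - \Exp[\theta_k \mid \mathcal{F}_{k-1}]\bigr).
\]
The second sum is a normalized martingale that vanishes a.s. by Chow's strong law under \ref{it:steps-2}. For the compensator, a balance argument for the invariant measure of the continuous-time renewed process gives that, for any $\nu \in \mathcal{M}_1(D)$, the long-time renewal intensity of the process with redistribution $\nu$ equals a functional $\Lambda(\nu)$ with $\Lambda(\mu^\star) = \lambda^\star$ and $\Lambda$ continuous at $\mu^\star$ in the topology inherited from step (ii). Combining the weak error of step (i) with the first part of the theorem to replace the running measures $\bar{\mu}_k$ by $\mu^\star$, this compensator converges a.s. to $\lambda^\star$.

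\textbf{Main obstacle.} The central analytical difficulty is item (i): a quantitative weak error estimate for the Euler scheme with restart, uniform in the redistribution measure $\nu$ and compatible with the decreasing step size. Near $\partial D$ the scheme overshoots the boundary by $O(\sqrt{\gamma_k})$, and over windows of length $T$ one expects $O(T\lambda^\star)$ renewals, so one must bound the one-step boundary error finely using Gaussian tails for the Brownian increment and the $C^2$ regularity from \ref{cond:D}, and propagate it through an unbounded number of renewals without loss of uniformity in $\nu$---this is precisely the new bound on the weak error of diffusion dynamics with renewal announced in the abstract.
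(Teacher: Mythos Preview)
Your proposal has two genuine gaps.

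\textbf{Wrong flow in the APT.} The map $\nu \mapsto \nu\Psi^\nu_t$ you invoke is the semigroup of the $\nu$-return process acting on \emph{laws}, not the flow governing an \emph{occupation measure}. The occupation measure $\bar{\mu}_n$ satisfies $\bar{\mu}_{n+1}-\bar{\mu}_n = \tfrac{\gamma_{n+1}}{\Gamma_{n+1}}(\delta_{\bar{X}_{\Gamma_n}}-\bar{\mu}_n)$, and the expected increment does not relate to $\Psi^{\bar{\mu}_n}$ but to the path of $\bar{X}$ between renewals. The relevant limiting ODE (from \cite{BenChaVil22}) is $\dot{\varphi}=F(\varphi)$ with $F(\mu)=\mu A-(\mu A\iind{D})\mu$, where $Af(x)=\Exp_x[\int_0^{\tau_D}f(Y_t)\,\dd t]$. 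The paper does not run the APT on $(\bar{\mu}_n)$ at all: it introduces the renewal-time empirical measure $\vartheta_\ell=\ell^{-1}\sum_{j=1}^\ell\delta_{\bar{X}_{\tauel_j}}$, shows $\vartheta_\ell$ is an asymptotic pseudo-trajectory of this ODE, concludes $\vartheta_\ell\to\mu^\star$, and then transfers this to $\bar{\mu}_n$ via the identity $\Pi_{\mu^\star}=\mu^\star$ and the relation $\bar{\mu}_{\mathfrak{n}_\ell}\approx\Pi_{\vartheta_\ell}$. Correspondingly, the central analytical ingredient is the uniform convergence $\bar{A}^\bfeta f\to Af$ (Proposition~\ref{prop:weakA}), not a weak error for the return process; the latter is what drives \cref{theo2}, not \cref{theo:main}.

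\textbf{Missing tightness.} You never address why $(\bar{\mu}_n)$ stays away from $\partial D$. This is not free: the redistribution law $\pn_n$ is arbitrary and may concentrate near the boundary, and the Euler scheme can overshoot. The paper devotes all of Section~\ref{sec:tightness} to this, building a delicate coupling between $\psi_D(\bar{X}_t)$ and a time-changed one-dimensional reflected Brownian motion to show $\limsup_n \bar{\mu}_n(D\setminus K_\eta)\to 0$ as $\eta\downarrow 0$. Without this, even a correct APT argument only tells you that weak limit points lie in the attractor, not that any exist in $\mathcal{M}_1(D)$.

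For the rate of survival, your compensator argument would require showing $\Gamma_n^{-1}\sum_k\Exp[\theta_k\mid\mathcal{F}_{\Gamma_{k-1}}]\to\lambda^\star$, but $\Exp[\theta_k\mid\mathcal{F}_{\Gamma_{k-1}}]$ depends on $\bar{X}_{\Gamma_{k-1}}$ and $\gamma_k$ individually, not on $\bar{\mu}_n$, and is singular near $\partial D$; relating its Ces\`aro mean to $\Lambda(\bar{\mu}_n)$ is not straightforward. The paper instead observes that $\sum_{k\le n}\theta_k$ is just the renewal count $\ell$ with $\mathfrak{n}_\ell\le n<\mathfrak{n}_{\ell+1}$, and uses the already-proved $\tauel_\ell/\ell\to\mu^\star A\iind{D}=\Exp_{\mu^\star}[\tau_D]=1/\lambda^\star$ together with a sandwich $\ell/\tauel_{\ell+1}\le\Gamma_n^{-1}\sum\theta_k\le\ell/\tauel_\ell$.
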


In the setting of~Theorem~\ref{theo:main}, Assumption~\ref{cond:pn} combined with the tightness of $(\mubar_n)_{n \geq 1}$, which will be proved in~Proposition~\ref{prop:tightness}, implies that we also have $\pn_n(f) \to \mu^\star(f)$ almost surely for any continuous and bounded function $f : D \to \R$. Therefore, one deduces the following corollary, which will be used in the proof of~Theorem~\ref{theo2} below, from a standard separability argument.

\begin{cor}[Almost sure weak convergence of $(\pn_n)_{n \geq 1}$]\label{cor:pnconverges}
  In the setting of~Theorem~\ref{theo:main}, $\pn_n$ converges weakly to $\mu^\star$, almost surely.
\end{cor}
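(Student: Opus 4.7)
The goal is to upgrade the pointwise-in-$f$ convergence $\pn_n(f)\to\mu^\star(f)$ (which holds almost surely for each fixed $f\in C_b(D)$, as established in the paragraph preceding the corollary) into a single almost sure statement $\pn_n\to\mu^\star$ weakly. The obstacle is that $C_b(D)$ is uncountable, so the exceptional null sets cannot be merged by a plain intersection; separability of the state space will allow one to reduce to a countable determining class of Lipschitz test functions.

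My plan is to work with the bounded Lipschitz metric
\begin{equation*}
  d_{BL}(\mu,\nu) := \sup\bigl\{\,|\mu(f)-\nu(f)|\;:\;\|f\|_\infty + \mathrm{Lip}(f)\le 1\,\bigr\},
\end{equation*}
which metrizes weak convergence on $\mathcal{M}_1(D)$ since $D\subset\R^d$ is Polish, and to show that $d_{BL}(\pn_n,\mu^\star)\to 0$ almost surely. The same argument that yields convergence against any fixed $f\in C_b(D)$ in the pre-corollary discussion also delivers the almost sure tightness of $(\pn_n)$ on $D$: apply \ref{cond:pn} to a Lipschitz cutoff $\chi$ with $\iind{K}\le\chi\le\iind{K'}$ for nested compacts $K\subset K'\subset D$ provided by \cref{prop:tightness}, and use $\pn_n(K')\ge\mubar_n(\chi)-|\mubar_n(\chi)-\pn_n(\chi)|$. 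Hence for every integer $m\ge 1$ there is an almost surely compact $K_m\subset D$ with $\sup_n\pn_n(D\setminus K_m)\vee\mu^\star(D\setminus K_m)\le 1/m$.

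On $K_m$, Arzel\`a--Ascoli tells us the $\mathrm{BL}$-unit ball is sup-norm totally bounded, hence admits a finite $(1/m)$-net, which McShane extension and a $[-1,1]$-truncation turn into Lipschitz functions $f_1^{(m)},\ldots,f_{N_m}^{(m)}$ on $D$ bounded by $1$. By \cref{theo:main} combined with \ref{cond:pn}, each $\pn_n(f_i^{(m)})\to\mu^\star(f_i^{(m)})$ almost surely; intersecting these countably many full-probability events over $m\ge 1$ and $1\le i\le N_m$ gives an almost sure event $\Omega_0$ on which the triangle inequality (combining the net approximation on $K_m$ with the tail bound on $D\setminus K_m$) yields
\begin{equation*}
  \limsup_{n\to\infty}d_{BL}(\pn_n,\mu^\star)\le \frac{C}{m}\qquad\text{for every }m\ge 1,
\end{equation*}
hence $d_{BL}(\pn_n,\mu^\star)\to 0$ on $\Omega_0$, which is the desired weak convergence. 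The one genuinely new ingredient is the tightness transfer from $(\mubar_n)$ to $(\pn_n)$ sketched above; the rest is the routine compactness-plus-net manipulation underlying the "standard separability argument" alluded to in the text.
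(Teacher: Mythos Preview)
Your argument is sound in outline, but there is one slippery point: if the compacts $K_m$ are chosen depending on $\omega$ (which is what almost sure tightness literally gives), then your net functions $f_i^{(m)}$ become random, and the per-$f$ convergence $\pn_n(f)\to\mu^\star(f)$ established just before the corollary applies only to deterministic test functions. The easy repair is to fix from the start the deterministic exhaustion $K_{1/m}=\{x\in D:\psi_D(x)\ge 1/m\}$ used in \cref{prop:tightness}, build your finite nets on each of these once and for all, intersect the resulting countably many convergence events, and only then, on each $\omega$, pick $m$ large enough that the tightness bound kicks in. With this adjustment your proof goes through.

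That said, the paper's route is shorter. Because $\bar D$ is compact, $\mathcal{C}(\bar D)$ is separable, so one fixes a single countable family $(f_k)$---the one defining the metric $d$ in~\eqref{eq:distM1}---whose restrictions to $D$ lie in $C_b(D)$. Applying the per-$f$ convergence to each $f_k$ and intersecting the full-probability events, dominated convergence in the series for $d$ gives $d(\pn_n,\mu^\star)\to 0$ almost surely; since $d$ metrizes weak convergence on $\mathcal{M}_1(D)$, the corollary follows. No Arzel\`a--Ascoli, no McShane extension, and no intermediate tightness of $(\pn_n)$ are needed for this upgrade: the compactness of $\bar D$ replaces all of it. Your approach has the merit of being transportable to noncompact state spaces, where a single countable family on the closure is not available and one really does need tightness plus local approximation; here, though, it is more machinery than the situation calls for.
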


The proof of Theorem~\ref{theo:main} follows the strategy of \cite{BenChaVil22} and is classically divided into two parts respectively devoted to the almost sure tightness of the sequence $(\mu_n)_{n \geq 1}$ and to the identification of the limit. Below, we give some comments on the related proofs.

\medskip
$\rhd$ \textit{Tightness:} In this part, the main difficulty is to control the time spent by the sequence $(\bar{X}_{\Gamma_n})_{n \geq 0}$ near the boundary of $D$ and indirectly the number of redistributions induced by exits of $D$. Actually, $D$ being a bounded subset of $\ER^d$, proving the tightness reduces to showing that the dynamics does not concentrate any mass close to the boundary of $D$. Roughly, our proof consists of coupling the (time-changed) distance of $(\bar{X}_{\Gamma_n})_{n \geq 0}$ to the boundary with an appropriately reflected process, which may be seen as a discretization of a one-dimensional reflected Brownian motion with drift, and for which it is possible to better control the time spent near the boundary. The construction of this coupling and the study of the dynamics of this reflected-type process are probably among the main challenges overcome in this paper. 
It is worth noting that in this section, we do not require  $b$ and $\sigma$ to be Lipschitz and only use Assumption~\ref{it:coeffs-1} (see Proposition~\ref{prop:tightness} for details). As well, we do not use \ref{cond:pn} in this part of the proof, which means that our tightness result is (surprisingly) available for schemes constructed with \emph{any} adapted redistribution sequence $(\pn_n)_{n \geq 1}$ on $D$.

\medskip
$\rhd$ \textit{Identification of the limit:} As shown in \cite{BenChaVil22} (see also \cite{BenCloPan18} for a discrete-time counterpart), $\mu^\star$ can be viewed as an attractive point of an ODE $\dot{\nu}_t=F(\nu_t)$ on ${\cal M}_1(D)$ (see Proposition~\ref{prop:ODE} for details). With this property, the strategy is to show that the dynamics of $(\mu_k)_{k\ge n}$ asymptotically fits the one of the ODE, \emph{i.e.} that an appropriate continuous-time extension of $(\mu_k)_{k\ge n}$ is an \emph{asymptotic pseudo-trajectory} of this ODE. Note that as in \cite{BenChaVil22}, we choose in fact to mainly study the empirical measure restricted to the renewal times of $(\bar{X}_{\Gamma_k})_{k\ge1}$ (denoted by $(\vta_\ell)_{\ell\ge1}$), which allows to look at an asymptotically homogeneous and ergodic sequence (see Section~\ref{sec:prooftheomain} for details).  In view of our discretized dynamics, the main difficulty of this part of the proof is to show some uniform convergence properties of the weak error related to the dynamics of the diffusion killed at $\tau_D=\inf\{t\ge0, Y_t\in D^c\}$ (see Proposition~\ref{prop:weakA}). The extension to measurable functions is also a challenge: with an adaptation of  \cite{LemMen10}, we obtain some bounds on the density of an Euler scheme with a non-constant step sequence satisfying~\ref{it:steps-3}  (see Lemma~\ref{lem:gauss-density}).

\begin{rk}\label{withoutH3c}
  We shall see in Remark~\ref{rk:H3c} that without Assumption~\ref{it:steps-3}, the conclusions of Theorem~\ref{theo:main-mun} and Theorem~\ref{theo:main} hold true for continuous and bounded functions $f : D \to \R$. As a consequence, Corollary~\ref{cor:pnconverges} holds without this assumption. In fact, Assumption~\ref{it:steps-3} is only employed to extend the convergence to measurable and bounded functions, through a regularization argument which requires to get some uniform bounds on the density of Euler schemes with nonconstant step sizes (see Proposition~\ref{prop:weakA} and Lemma~\ref{lem:gauss-density}).
\end{rk}

{\begin{rk}
  Assumption~\ref{cond:coeffs} is stated for coefficients $b$ and $\sigma$ defined on the whole space $\R^d$. However, the construction of our scheme only depends on these coefficients through their restriction to the set $D$. We chose this formulation because in the proofs of Sections~\ref{sec:prooftheomain} and~\ref{sec:converg-distrib}, we shall sometimes work with extensions of the diffusion $Y_t$ or the scheme $\bar{X}_t$ beyond $D$. Yet, it is clear that all our results remain true as soon as $b$ and $\sigma$ only satisfy the condition that $b_{|D}$ and $\sigma_{|D}$ coincide with $\tilde{b}_{|D}$ and $\tilde{\sigma}_{|D}$, for some functions $\tilde{b} : \R^d \to \R^d$ and $\tilde{\sigma} : \R^d \to \mathbb{M}_{d,d}$ which satisfy Assumption~\ref{cond:coeffs}.
\end{rk}}

%%%%%%%%%%%%%%%%%%%%%%%%%%%%%%%%%%%%%%%%%%%%%%%%%%%%%%%%%%%%%%%%%%%%%%%%%%%%%%%%%%%%%%%%%%%%%%%%%%%%%
%%%%%%%%%%%%%%%%%%%%%%%%%%%%%%%%%%%%%%%%%%%%%%%%%%%%%%%%%%%%%%%%%%%%%%%%%%%%%%%%%%%%%%%%%%%%%%%%%%%%%%%%%
\subsection{Convergence in distribution of $\bar{X}_{\Gamma_n}$}\label{ss:cvdistrib}

The previous results show the almost sure convergence of the occupation measure of the discretization scheme $(\bar{X}_{\Gamma_n})_{n\ge0}$. Such a result is thus related to pathwise averages of this sequence. {Similarly to~\cite[Theorem~2.6]{BenCloPan18}, which is established for continuous-time Markov chains in compact state spaces,} we can also obtain a `spatial' counterpart, \emph{i.e.} the convergence in distribution of $(\bar{X}_{\Gamma_n})_{n\ge0}$. 

\begin{theo}[Convergence in distribution of $(\bar{X}_{\Gamma_{n}})_{n\ge0}$]\label{theo2} 
  Suppose that the assumptions of Theorem~\ref{theo:main} hold. Then, $(\bar{X}_{\Gamma_n})_{n\ge0}$ converges in distribution to $\mu^\star$.
\end{theo}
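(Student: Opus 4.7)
The strategy I would follow, in the spirit of~\cite[Theorem~2.6]{BenCloPan18}, is to analyze the (deterministic) sequence of laws $\nu_n := \mathrm{Law}(\bar X_{\Gamma_n})$ via the renewal structure provided by the Bernoulli sequence $(\theta_k)_{k \ge 1}$. Since $\bar X_{\Gamma_n}$ takes values in $D$ and $D$ is bounded, $(\nu_n)_{n \ge 1}$ is automatically tight in $\mathcal{M}_1(\bar D)$, so it suffices to prove that every subsequential weak limit coincides with $\mu^\star$.

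To this end, I would introduce the index of the last renewal up to step $n$,
\[
  \tau^{(n)} := \max\{0 \le k \le n : \theta_k = 1\},
\]
with a convention in case no redistribution has yet occurred. Conditioning on $\tau^{(n)} = \tau \ge 1$ and on the redistribution value $U_\tau$, $\bar X_{\Gamma_n}$ is distributed as the value at step $n$ of the Euler scheme started from $U_\tau$ at time $\Gamma_\tau$ and conditioned not to have left $D$ on $(\Gamma_\tau,\Gamma_n]$. Decomposing $\nu_n$ along this renewal, the proof reduces to two ingredients: the asymptotic law of $U_{\tau^{(n)}}$, and the action of the (conditioned) Euler transition on this starting distribution.

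The first ingredient follows from \cref{cor:pnconverges}, which gives $\pn_n \to \mu^\star$ weakly almost surely, combined with $\tau^{(n)} \to +\infty$ almost surely (a consequence of $\sum_{k \le n}\theta_k \to +\infty$, itself provided by the rate-of-survival part of \cref{theo:main}); together these imply that $\pn_{\tau^{(n)}}$ converges weakly to $\mu^\star$. The second ingredient relies on the weak-error bound of \cref{prop:weakA}, which lets one replace the killed Euler transition on the window $[\Gamma_\tau,\Gamma_n]$ by the transition of the killed diffusion $(Y_t)_{t\ge 0}$. The key algebraic fact is then the defining QSD identity $\int_D \mu^\star(\mathrm{d}u)\,\mathbb{P}_u(Y_t \in A,\,\tau_D > t) = \ee^{-\lambda^\star t}\mu^\star(A)$, which collapses the inner integral to $\ee^{-\lambda^\star(\Gamma_n - \Gamma_\tau)}\mu^\star(A)$; a renewal-theoretic computation then shows that the resulting $\tau$-sum of weights $\mathbb{E}[\theta_\tau \ee^{-\lambda^\star(\Gamma_n - \Gamma_\tau)}]$ tends to $1$, yielding $\nu_n(A) \to \mu^\star(A)$.

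The main obstacle I anticipate is the uniformity required in the second ingredient: the weak-error bound must be applied to a random initial distribution $\pn_\tau$, over a time horizon $\Gamma_n - \Gamma_\tau$ that is not bounded uniformly in $\tau$, with step sizes drawn from an arbitrary tail of $(\gamma_k)$, and while dividing by the survival probability when conditioning. Handling this should combine \cref{prop:weakA} and \cref{lem:gauss-density} with two-sided bounds on the survival probabilities of both the scheme and the diffusion, and should require truncating the age $\Gamma_n - \Gamma_\tau$ to a large but fixed window $[0,T]$; the tail contribution is then controlled via uniform lower bounds on the one-step exit probability, themselves coming from the uniform ellipticity in \ref{it:coeffs-1}.
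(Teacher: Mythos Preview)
Your last-renewal decomposition is a genuinely different route from the paper's. The paper does not condition on $\tau^{(n)}$ nor invoke the QSD identity for the killed diffusion directly; instead it introduces the $\mu^\star$-return process $(X^{\mu^\star}_t)_{t\ge0}$ of \cref{ss:cvdistrib}, proves tightness of $(\nu_n)$ in $\mathcal{M}_1(D)$ (not merely in $\mathcal{M}_1(\bar D)$; this is the nontrivial \cref{prop:theo2-tight}, whose proof incidentally \emph{does} use a last-renewal decomposition in your spirit), then applies \cref{theo:weakerrorf} to compare the shifted scheme $(\bar X_{\Gamma_{\varphi(n)}+t})_{t\in[0,T]}$ with $X^{\mu^\star}$ over a \emph{fixed} horizon, and concludes by the uniform ergodicity of $P^{\mu^\star}_t$ toward $\mu^\star$ (\cref{lem:ergo-return}). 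Working with the return process rather than the killed one is what lets the paper avoid both the unbounded age $\Gamma_n-\Gamma_\tau$ and the division by survival probability that you correctly flag as obstacles.

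There is a concrete gap in your plan: \cref{prop:weakA} is not the tool you need. It controls $\bar A^{\bfeta}f-Af$, i.e.\ the time-integrated operator $x\mapsto\Exp_x\bigl[\int_0^{\tau_D} f(Y_t)\,\dd t\bigr]$, and says nothing about the fixed-time killed semigroup $x\mapsto\Exp_x[f(Y_t)\ind{t<\tau_D}]$ that your decomposition requires. The relevant estimate can be assembled from Step~1 of the proof of \cref{propdistrib} together with \cref{lem:wass-estim}, but it is uniform only on compacts of $D$, so you still need tightness of $\pn_\tau$ in $D$ (available via \cref{cor:tightness-pn}) to feed into it. Separately, your ``renewal-theoretic computation'' that $\sum_\tau \Exp[\theta_\tau \ee^{-\lambda^\star(\Gamma_n-\Gamma_\tau)}]\to 1$ is not a standalone fact in this inhomogeneous setting; it follows by applying your own approximation with $f=\iind{D}$, but this in turn demands a uniform-in-$n$ bound on the expected number of renewals in a window of length $T$, which is the content of the last part of the proof of \cref{lem:cvdistrib}.
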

To deduce this result from  Theorem~\ref{theo:main}, the main tool is a {(nonquantitative)} control of the \emph{weak error in finite horizon} related to the dynamics induced in this problem. More precisely, to prove this convergence in distribution, our proof relies on Theorem~\ref{theo:weakerrorf} below which shows that the  weak error in finite time between the diffusion with renewal $\mu$ and its discretization with close \emph{{redistribution}} vanishes under {natural} conditions.

\medskip
For some $\mu \in \mathcal{M}_1(D)$, let us define the \emph{$\mu$-return process} associated with the SDE~\eqref{eq:SDE} in $D$ as follows. Starting from $x \in D$, we let $X^\mu_t$ evolve as the (strong) solution to~\eqref{eq:SDE} up to the first time $\tau_D$ at which it reaches $\partial D$, and then restart it from some position $X^\mu_{\tau_D}$ drawn independently in $D$ according to $\mu$. The construction goes on iteratively, with \emph{renewal epochs} at each time the process reaches $\partial D$. This defines a strong, cadlag Markov process $(X^\mu_t)_{t \geq 0}$ in $D$ whose semigroup is defined and denoted by $P^\mu_t f(x) = \Exp_x[f(X^\mu_t)]$.

The introduction of this process to study QSDs dates back to the first works dedicated to QSDs~\cite{Bar60,DarSen65}. In particular, Ferrari, Kesten, Martinez and Picco~\cite{FerKesMarPic95} noted that $\mu$ is a QSD if and only if it is stationary for the $\mu$-return process, {a fact that we shall also use in a slightly different formulation, see Lemma~\ref{lem:ergo-return}}. In this case, the $\mu^\star$-return process is also related to  stationary Fleming--Viot particle systems as it describes the limit dynamics of a tagged particle in such systems~\cite{GroJon13}, and its semigroup was noted to appear in the asymptotic variance of the fluctuations of these particle systems in~\cite{LelPilRey18}.

\medskip
Our next result establishes the consistency of Euler schemes with a redistribution mechanism when they attempt to exit $D$ toward the $\mu$-return process. We first provide a precise definition of such schemes.

\begin{defi}[Euler schemes with redistribution] \label{defi:eulerdistrib} 
  Let $\bfeta = (\eta_n)_{n\ge 1}$ be a sequence of positive numbers such that 
  \begin{equation*}
    \lim_{n \to +\infty} \dten_n = +\infty, \qquad \dten_n := \sum_{k=0}^{n-1} \eta_{k+1}.
  \end{equation*}
  On a probability space $(\Omega,\mathcal{F},\Pr)$, an \emph{Euler scheme with redistribution} is a triple $(({\cal G}_t)_{t\ge0}, \bfnu, (\bar{X}^{\bfeta,\bfnu}_t)_{t \geq 0})$ such that:
  \begin{itemize}
    \item $({\cal G}_t)_{t\ge0}$ is a filtration;
    \item $\bfnu = (\nu_n)_{n \geq 1}$ is a sequence of random variables in $\mathcal{M}_1(D)$ which is predictable with respect to the filtration $(\mathcal{G}_{\dten_n})_{n \geq 0}$;
    \item the random variable $\bar{X}^{\bfeta,\bfnu}_0$ is $\mathcal{G}_0$-measurable, and there exist a $(\mathcal{G}_t)_{t \geq 0}$-Brownian motion $(B_t)_{t \geq 0}$ and a $(\mathcal{G}_{\dten_n})_{n \geq 1}$-adapted sequence of random variables $(U^{\bfeta,\bfnu}_n)_{n \geq 1}$ such that, for any $n \geq 0$,
    \begin{equation*}
      \forall t \in [\dten_n, \dten_{n+1}), \qquad \bar{X}^{\bfeta,\bfnu}_t = \bar{X}^{\bfeta,\bfnu}_{\dten_n} + b\left(\bar{X}^{\bfeta,\bfnu}_{\dten_n}\right)(t-\dten_n) + \sigma\left(\bar{X}^{\bfeta,\bfnu}_{\dten_n}\right)\left(B_t-B_{\dten_n}\right),
    \end{equation*}
    and
    \begin{equation*}
      \bar{X}^{\bfeta,\bfnu}_{\dten_{n+1}} = \theta^{\bfeta,\bfnu}_{n+1} U^{\bfeta,\bfnu}_{n+1} + \left(1-\theta^{\bfeta,\bfnu}_{n+1}\right)\bar{X}^{\bfeta,\bfnu}_{\dten_{n+1}^-}, 
    \end{equation*}
    where $\theta^{\bfeta,\bfnu}_{n+1} = \ind{\bar{X}^{\bfeta,\bfnu}_{\dten_{n+1}^-}\not\in D}$ and conditionally on $\mathcal{G}_{\dten_{n+1}^-}$, the random variable $U^{\bfeta,\bfnu}_{n+1}$ has law~$\nu_n$.
  \end{itemize}
\end{defi}
Notice that for any $t \geq 0$, the random variable $\bar{X}^{\bfeta,\bfnu}_t$ only depends on the sequence $\bfnu$ through the elements $\nu_n$ for which $\dten_n \leq t$ and therefore the process $(\bar{X}^{\bfeta,\bfnu}_t)_{t \geq 0}$ is adapted to $(\mathcal{G}_t)_{t \geq 0}$. In the sequel, we shall often simply refer to the process $(\bar{X}^{\bfeta,\bfnu}_t)_{t \geq 0}$ as the Euler scheme with redistribution, and keep the filtration $(\mathcal{G}_t)_{t \geq 0}$, the step sequence $\bfeta$ and the redistribution sequence $\bfnu$ implicit.

\begin{rk} \label{rk:informationavantzero} For every $n\ge0$, $(\bar{X}_{\Gamma_n+t})_{t \geq 0} \overset{(d)}{=} (\bar{X}^{\bfeta,\bfnu}_t)_{t \geq 0}$ where $(\bar{X}^{\bfeta,\bfnu}_t)_{t \geq 0}$ is the Euler scheme with redistribution with parameters $\bfeta=(\gamma_{n+k})_{k\ge1}$, ${\cal G}_0=\sigma(\bar{X}_0,\ldots,\bar{X}_{\Gamma_n})$, ${\cal G}_t={\cal G}_0 \vee
\sigma((B_{\Gamma_n+s}-B_{\Gamma_n})_{0 \le s\le t}, (\bar{X}_{\Gamma_n+k})_{1 \leq k \leq t-\Gamma_n})$ and $\bfnu=(\pn_{n+k})_{k \geq 1}$. In particular, $\bar{X}^{\bfeta,\bfnu}_0=\bar{X}_{\Gamma_n}$.
\end{rk}

Let ${\cal L}(Y)$ denote the distribution of a random variable $Y$. In the next statement, we shall express our convergence result in terms of the $1$-Wasserstein distance on ${\cal M}_1(D)$, which is defined by
\begin{equation}\label{def:wass}
  {\cal W}_1(\mu,\nu)=\inf_{\Pi\in{\cal C}(\mu,\nu)}\int |x-y|\Pi(\dd x,\dd y),
\end{equation}
where
\begin{equation}\label{def:coupl-alpha-beta}
  {\cal C}(\mu,\nu):=\{\Pi\in {\cal M}_1(D\times D), \Pi(\cdot \times D)=\mu,\Pi(D \times \cdot)=\nu\}
\end{equation}
denotes the set of couplings of $\mu$ and $\nu$.

\begin{theo}[{Weak consistency of Euler schemes with redistribution}]\label{theo:weakerrorf}
  Assume~\ref{cond:D} and \ref{cond:coeffs}. Let $\mu\in{\cal M}_1(D)$. For any $n \geq 1$, let $\mu_0^n \in \mathcal{M}_1(D)$, and let $(\bar{X}^{\bfeta^n,\bfnu^n}_t)_{t \geq 0}$ be an Euler scheme with redistribution for some step sequence $\bfeta^n = (\eta^n_k)_{k \geq 1}$ and redistribution sequence $\bfnu^n = (\nu^n_k)_{k \geq 1}$. Denote $\nu_0^n = \mathcal{L}(\bar{X}^{\bfeta^n,\bfnu^n}_0)$, and assume that the sequence $(\mu_0^n)_{n \geq 1}$ is tight, and that
  \begin{equation*}
    \lim_{n \to +\infty} \sup_{k\ge1}\eta_k^n = 0; \qquad \lim_{n \to +\infty} \mathcal{W}_1(\mu_0^n,\nu_0^n) = 0;
  \end{equation*}
  and
  \begin{equation*}
    \forall \rho > 0, \quad \lim_{n \to +\infty} \PE\left(\sup_{k\ge1} {\cal W}_1\left(\nu^n_k,\mu\right)>\rho\right) = 0.
  \end{equation*}
  Then, for every $T>0$,
  \begin{equation*}
    \lim_{n \to +\infty} \sup_{t\in[0,T]}{\cal W}_1\left(\mu_0^n P_t^\mu, {\cal L}\left({{\bar X}^{\bfeta^n,\bfnu^n}_t}\right)\right) = 0.
  \end{equation*}
\end{theo}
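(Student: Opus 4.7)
The plan is to build a stepwise pathwise coupling between the $\mu$-return process $(X^\mu_t)_{t\ge 0}$ started from $\mu_0^n$ and the Euler scheme with redistribution $(\bar X^{\bfeta^n,\bfnu^n}_t)_{t \ge 0}$, propagating a Wasserstein bound across successive renewals. As a first step, I would show that for any $\varepsilon,T>0$ there exists $K=K(\varepsilon,T)$ such that both processes undergo at most $K$ renewals in $[0,T]$ with probability at least $1-\varepsilon$, uniformly in $n$. For the continuous process, this follows from the uniform exponential tail $\sup_{x\in D}\Pr_x(\tau_D>t)\le C\ee^{-\lambda t}$, a standard consequence of uniform ellipticity and of $\lambda^\star>0$; since each renewal regenerates the dynamics inside $D$, the number of renewals in $[0,T]$ has a geometric-type tail. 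For the Euler scheme, an analogous bound uniform in $n$ is available by the same reflection/excursion mechanism as in the tightness proof, \cref{prop:tightness}. It therefore suffices to bound the Wasserstein error on the event of at most $K$ renewals.

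On a single renewal interval, I would evolve both processes with the same Brownian motion, starting from a coupling of their current positions at $L^1$-distance $\delta$. Under \ref{it:coeffs-2}, the classical strong convergence of the Euler scheme gives
\begin{equation*}
  \Exp\left[\sup_{t\in[0,T]} \left|Y_t - \bar X^{\bfeta^n,\bfnu^n}_t\right|^2\right] \le C\bigl(\delta^2 + \sup_k \eta_k^n\bigr),
\end{equation*}
uniformly in $n$, provided $\sup_k \eta_k^n \to 0$. Combined with the $\mathcal{C}^2$ regularity of $\partial D$ and uniform ellipticity, which ensure that $\tau_D$ is almost surely a strict-crossing time for $Y$ (no tangential touching of $\partial D$), this implies that the grid exit time $\bar\tau^n$ of the Euler scheme converges to $\tau_D$ in probability and that $\bar X^{\bfeta^n,\bfnu^n}_{\bar\tau^n}$ lies within $o_{\Pr}(1)$ of $\partial D$. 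These controls bound both the exit-time mismatch and the locations at which the two processes jump.

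At each renewal, the $\mu$-return process samples from $\mu$ while the Euler scheme samples from the predictable $\nu^n_k$ at the current grid index $k$. Using the hypothesis $\Pr(\sup_k \mathcal W_1(\nu^n_k,\mu)>\rho)\to 0$ for every $\rho>0$ together with the predictability of $\bfnu^n$, I would construct, conditionally on the past, an optimal $\mathcal W_1$-coupling of the two redistributions, introducing an additional error $o_{\Pr}(1)$. The initial step uses the hypothesis $\mathcal W_1(\mu_0^n,\nu_0^n)\to 0$ in the same way. Iterating the single-interval estimate over the at most $K$ renewals, and absorbing the residual probability of more than $K$ renewals into the tolerance $\varepsilon$ via the uniform boundedness of $|\bar X^{\bfeta^n,\bfnu^n}_t|$ and $|X^\mu_t|$ (both take values in the bounded set $D$), would yield
\begin{equation*}
  \sup_{t\in[0,T]}\mathcal W_1\!\left(\mu_0^n P^\mu_t,\mathcal L\bigl(\bar X^{\bfeta^n,\bfnu^n}_t\bigr)\right)\longrightarrow 0.
\end{equation*}

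The delicate point is the joint in-probability control of the exit time and exit position of the Euler scheme relative to those of the continuous SDE, uniformly in $n$ and in the number of renewals already occurred. Two pathologies must be ruled out: spurious exits of the Euler scheme caused by boundary skimming that the continuous process does not witness, and genuine exits of the continuous process that the Euler scheme detects only after a macroscopic lag. For a $\mathcal C^2$ domain and a non-degenerate diffusion both events have vanishing probability, but the uniformity required here, over step sequences constrained only by $\sup_k \eta_k^n \to 0$ and over predictable redistributions, calls for a careful non-quantitative probabilistic argument, morally the weak-convergence counterpart of the quantitative weak-error estimate of \cref{prop:weakA}.
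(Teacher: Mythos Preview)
Your strategy differs from the paper's: you propose a pathwise coupling propagated across at most $K$ renewals, whereas the paper derives \cref{theo:weakerrorf} from \cref{propdistrib}, whose proof uses the renewal identities~\eqref{eq:renewal}--\eqref{eq:renewal-discr} to obtain a closed renewal inequality for the supremum $F_{\rho,\eta_0}(t)$ of the error over all admissible schemes and then iterates it analytically. The paper's route avoids tracking individual trajectories across jumps entirely.

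Two gaps stand out. First, your control on the number of renewals rests on the wrong tail: the exponential decay of $\Pr_x(\tau_D>t)$ means the process exits \emph{quickly}, which produces \emph{more} renewals, not fewer. What is actually needed is a uniform upper bound on $\Pr_\alpha(\tau_D<\delta)$ over the relevant initial laws $\alpha$, and this requires those laws to put little mass near $\partial D$---precisely why \cref{lem:cvdistrib} assumes both $\mathcal{L}(\bar{X}_0^{\bfeta,\bfnu})$ and all $\nu_n$ are $\mathcal{W}_1$-close to the fixed $\mu$. \cref{prop:tightness}, which concerns occupation measures rather than renewal counts, does not deliver this. Second, your iteration must absorb the time desynchronisation $|\tau_k-\bar{\tauel}_k|$: after the $k$-th jump the two processes sit at nearby positions but at different absolute times, and to compare them at a fixed $t$ you must bound both the displacement of the continuous flow over the mismatch interval and the time-continuity of the scheme's law on short windows. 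You flag this as delicate but give no mechanism; the paper handles the analogous term in Step~2 of the proof of \cref{propdistrib} via \cref{lem:cvdistrib}, which bounds the probability that the scheme jumps in any $\delta$-window, and thereby sidesteps the pathwise bookkeeping altogether.
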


Even if this result is `only' a tool for Theorem~\ref{theo2}, we chose to state it in this section since it may have some interest independently of the rest of the paper. This result is a direct corollary of Proposition~\ref{propdistrib} which is a slightly more uniform result (see Remark~\ref{rk:weakerrorrf}). The main difficulty for its proof comes from the dephasing between the jumps of the diffusion and its approximation and the strategy to overcome it is based on the derivation and resolution of a renewal inequality.
 
%%%%%%%%%%%%%%%%%%%%%%%%%%%%%%%%%%%%%%%%%%%%%%%%%%%%%%%%%%%%%%%%%%%%%%%%%%%%%%%%%%%%%%%%%%%%%%%%%%%%%%%%%%
%%%%%%%%%%%%%%%%%%%%%%%%%%%%%%%%%%%%%%%%%%%%%%%%%%%%%%%%%%%%%%%%%%%%%%%%%%%%%%%%%%%%%%%%%%%%%%%%%%%%%%%%%%
\subsection{Practical implementation}\label{subsec:practical} The interest of the sequence $(\pn_n)_{n\ge1}$ is to open the door to algorithms based on a redistribution measure which is less memory-consuming than the occupation measure $(\mu_n)_{n \geq 1}$. Before detailing such issues, let us consider the following example:
$$ \pn_n=\frac{1}{\sum_{k=0}^{n-1}\eta_{k+1,n}}\sum_{k=0}^{n-1} \eta_{k+1,n}{\delta_{\bar{X}_{\Gamma_k}}},$$
for some triangular array of positive steps $(\eta_{k,n})_{1 \leq k \leq n}$. In this case, one easily checks that Assumption~\ref{cond:pn} holds as soon as
\begin{equation}\label{eq:conditionpnn}
  \lim_{n \to +\infty} \frac{1}{\sum_{k=0}^{n-1}\eta_{k+1,n}}\sum_{k=0}^{n-1} |\eta_{k+1,n}-\gamma_{k+1}| = 0.
\end{equation}
If one wants to consider an approximation of $\mubar_n$ which is less memory-consuming, the natural idea is thus to forget the beginning of the path, $i.e.$ to consider 
$$ \pn_n=\frac{1}{\Gamma_n-\Gamma_{t(n)}}\sum_{k=t(n)}^{n-1}\gamma_{k+1}{\delta_{\bar{X}_{\Gamma_k}}},$$
where $(t(n))_{n\ge1}$ is a nondecreasing sequence of integers such that $0\le t(n)\le n-1$. In this case, one can deduce from \eqref{eq:conditionpnn} with $\eta_{k,n} = \gamma_k\ind{t(n)+1 \leq k \leq n}$ that Assumption~\ref{cond:pn} holds true if and only if
$$ \lim_{n \to +\infty} \frac{\Gamma_{t(n)}}{\Gamma_n} = 0.$$
If we further assume that $\gamma_n= Cn^{-\rho}$ with $\rho\in(0,1)$, the condition \eqref{eq:conditionpnn} is equivalent to 
$ t(n)=o(n)$ as $n\to+\infty$, {while if $\gamma_n= Cn^{-1}$ then the condition becomes $\log(t(n)) = o(\log(n))$}. 

One can observe that the gain of memory induced by this modification is unfortunately negligible with respect to the number of iterations. A probably more efficient alternative is to `quantize' the measure $\mubar_n$, $i.e$ to replace $\mubar_n$ by an approximation built on a partition of $D$: for a given $\varepsilon>0$, consider a partition $(A_\ell^{(\varepsilon)})_{\ell=1}^{L_\varepsilon}$ of $D$ and assume that $\max_\ell {\rm diam}(A_\ell^{(\varepsilon)})\le\varepsilon$. Set
$$\pn_n^{(\varepsilon)}=\sum_{\ell=1}^{L_\varepsilon} a_{\ell,n}^{(\varepsilon)}\pi_{\ell,{(\varepsilon)}},$$	
where $\pi_{\ell,{(\varepsilon)}}$ is a given distribution on $A_\ell^{(\varepsilon)}$  and
for each $\ell\in \{1,\ldots, L_\varepsilon\}$, 
$$a_{\ell,n}^{(\varepsilon)}=\frac{1}{\Gamma_n} \sum_{k=0}^{n-1} \gamma_{k+1}\ind{\bar{X}_{\Gamma_k} \in A_\ell^{(\varepsilon)}} = \mu_n(A_\ell^{(\varepsilon)}).$$
For instance, $\pi_{\ell,{(\varepsilon)}}$ can be the uniform distribution on $A_\ell^{(\varepsilon)}$ or a Dirac mass at a point $x_\ell^{(\varepsilon)}$ of $A_\ell^{(\varepsilon)}$. If $f$ is a Lipschitz continuous function on $D$, one easily checks that
$$|\pn_n^{(\varepsilon)}(f)-\mubar_n(f)|\le [f]_1 \varepsilon, $$
with $[f]_1$ the Lipschitz constant of $f$, so that if $(\varepsilon_n)_{n\ge1}$ converges to $0$ as $n\to+\infty$, then the sequence $(\pn_n^{(\varepsilon_n)})_{n\ge1}$ satisfies Assumption~\ref{cond:pn}.

In practice, one may fix $\varepsilon$ all along the simulation. In this case, one has `only' to keep a vector of length $L_\varepsilon$ for which each coordinate is the (non normalized) mass $\Gamma_n a_{\ell,n}^{(\varepsilon)}$ related to the area $A_\ell^{(\varepsilon)}$.   Quantifying how the final error depends on $\varepsilon$ may be an interesting task which is left to a future paper.

%%%%%%%%%%%%%%%%%%%%%%%%%%%%%%%%%%%%%%%%%%%%%%%%%%%%%%%%%%%%%%%%%%%%%%%%%%%%%%%%%%%%%%%%%%%%%%%%%%%%%%%%%
%%%%%%%%%%%%%%%%%%%%%%%%%%%%%%%%%%%%%%%%%%%%%%%%%%%%%%%%%%%%%%%%%%%%%%%%%%%%%%%%%%%%%%%%%%%%%%%%%%%%%%%%%
\subsection{Organization of the article and notation}\label{sec:orgnot}

The sequel of the paper is devoted to the proofs. Since Theorem~\ref{theo:main-mun} is a particular case of Theorem~\ref{theo:main}, we only prove Theorem~\ref{theo:main} whose proof is divided into two parts (as mentioned before). The almost sure tightness of the sequence $(\mu_n)_{n \geq 1}$ is proved in Section~\ref{sec:tightness} whereas the identification of the limit is achieved in Section~\ref{sec:prooftheomain} (see Subsection~\ref{ss:pf-main} for a synthesis). The proof of the convergence in distribution is the objective of Section~\ref{sec:converg-distrib}, which contains the proofs of~Theorem~\ref{theo2} and~Theorem~\ref{theo:weakerrorf}. 

Some proofs of results in these sections, which are either technical or close to standard arguments, are detailed in a Supplementary Material document, which is attached to this preprint. This document also contains auxiliary technical results on one-dimensional reflected Brownian motions, and various estimates related with the Euler discretization of the SDE~\eqref{eq:SDE}.

\medskip
Throughout the article, we denote by $|\cdot|$ the Euclidean norm on $\R^d$. For a function $f:\ER^d\rightarrow\ER$, $[f]_1$ and $\|f\|_\infty$ respectively denote its Lispchitz constant and its sup norm. 

For any $t \geq 0$, we denote by $\mathcal{F}_t$ the $\sigma$-algebra jointly generated by $(B_s)_{0 \leq s \leq t}$ and the family of random variables $\{\bar{X}_{\Gamma_n}, \Gamma_n \leq t\}$. In particular, the sequence of random probability measures $(\pn_n)_{n \geq 1}$ is $(\mathcal{F}_{\Gamma_n})_{n \geq 1}$-predictable, \emph{i.e.} $\pn_n$ is $\mathcal{F}_{\Gamma_{n-1}}$-measurable.

%%%%%%%%%%%%%%%%%%%%%%%%%%%%%%%%%%%%%%%%%%%%%%%%%%%%%%%%%%%%%%%%%%%%%%%%%%%%%%%%%%%%%%%%%%%%%%%%%%%%%%%%%
%%%%%%%%%%%%%%%%%%%%%%%%%%%%%%%%%%%%%%%%%%%%%%%%%%%%%%%%%%%%%%%%%%%%%%%%%%%%%%%%%%%%%%%%%%%%%%%%%%%%%%%%%
%%%%%%%%%%%%%%%%%%%%%%%%%%%%%%%%%%%%%%%%%%%%%%%%%%%%%%%%%%%%%%%%%%%%%%%%%%%%%%%%%%%%%%%%%%%%%%%%%%%%%%%%%
%%%%%%%%%%%%%%%%%%%%%%%%%%%%%%%%%%%%%%%%%%%%%%%%%%%%%%%%%%%%%%%%%%%%%%%%%%%%%%%%%%%%%%%%%%%%%%%%%%%%%%%%%
\section{Tightness}\label{sec:tightness}

We let $\psi_D : D \to (0,+\infty)$ be defined by
\begin{equation*}
  \forall x \in D, \qquad \psi_D(x) := \inf_{y \not\in D} |x-y|.
\end{equation*}
Since, by Assumption~\ref{cond:D}, $D$ is bounded, for any $\eta > 0$, the set $K_\eta := \{x \in D: \psi_D(x) \geq \eta\}$ is a compact subset of $D$. The following result is the main statement of this section.

\begin{prop}[Almost sure tightness of $(\mubar_n)_{n \geq 1}$]\label{prop:tightness}
If Assumptions~\ref{cond:D}, \ref{it:coeffs-1} and~\ref{cond:steps} hold true, then
  \begin{equation*}
    \lim_{\eta \dto 0} \limsup_{n \to +\infty} \mubar_n\left(D \setminus K_\eta\right) = 0, \qquad \text{almost surely.}
  \end{equation*}
\end{prop}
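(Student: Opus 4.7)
The plan follows the strategy outlined in the introduction: control the occupation of a neighborhood of $\partial D$ by coupling $\psi_D(\bar X_{\Gamma_n})$ with a one-dimensional reflected-type process whose dynamics are explicit. First I would exploit the $\mathcal{C}^2$ regularity in~\ref{cond:D} to fix $\eta_0 > 0$ such that $\psi_D$ is $\mathcal{C}^2$ on the tubular strip $V_{\eta_0} := \{x \in D : \psi_D(x) < \eta_0\}$, with $|\nabla\psi_D| \equiv 1$ and $\nabla^2 \psi_D$ uniformly bounded there. Combined with uniform ellipticity~\ref{it:coeffs-1}, this yields $\sigma_{\min}^2 := \inf_{x \in V_{\eta_0}} |\sigma(x)^\top \nabla\psi_D(x)|^2 > 0$: the normal component of the noise is nondegenerate close to the boundary.

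Next I would Taylor-expand $\psi_D$ along the scheme: for indices $n$ such that $\bar X_{\Gamma_n} \in V_{\eta_0}$,
\[
\psi_D(\bar X_{\Gamma_{n+1}^-}) - \psi_D(\bar X_{\Gamma_n}) = \gamma_{n+1}\beta_n + \Delta M_{n+1} + r_{n+1},
\]
where $\beta_n = (\nabla\psi_D \cdot b)(\bar X_{\Gamma_n})$ is uniformly bounded, $\Delta M_{n+1}$ is centered Gaussian given $\mathcal{F}_{\Gamma_n}$ with conditional variance $\gamma_{n+1}|\sigma(\bar X_{\Gamma_n})^\top \nabla\psi_D(\bar X_{\Gamma_n})|^2 \geq \gamma_{n+1}\sigma_{\min}^2$, and $r_{n+1}$ is a quadratic Taylor remainder bounded by $\gamma_{n+1}\|\nabla^2\psi_D\|_\infty(1 + |B_{\Gamma_{n+1}}-B_{\Gamma_n}|^2/\gamma_{n+1})$. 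On the same probability space I would introduce a one-dimensional auxiliary process $(R_n)_{n \geq 0}$: a Skorokhod-reflected Euler scheme on $[0,+\infty)$, driven by a scalar Brownian motion extracted from the same noise, with constant drift $-c_0$ (take $c_0 := \|b\|_\infty + 1$) pointing toward $0$ and constant variance $\sigma_{\min}^2$, reset to $0$ whenever $\bar X_{\Gamma_n}$ undergoes a redistribution or leaves the strip $V_{\eta_0}$. The coupling is arranged so that $R_n \leq \psi_D(\bar X_{\Gamma_n})$ holds for all $n$ almost surely.

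Granted this coupling, the inequality
\[
\mubar_n(D\setminus K_\eta) \leq \frac{1}{\Gamma_n}\sum_{k=0}^{n-1}\gamma_{k+1}\ind{R_k < \eta}
\]
reduces everything to a scalar ergodic problem. Since $R$ is (a decreasing-step Euler discretization of) a reflected Brownian motion with constant drift into the boundary, its invariant measure is the explicit exponential $\pi(\mathrm{d}r) = (2c_0/\sigma_{\min}^2)\exp(-2c_0 r/\sigma_{\min}^2)\,\mathrm{d}r$, with $\pi([0,\eta]) = O(\eta)$ as $\eta \dto 0$. An adaptation of the Lamberton--Pag\`es~\cite{LP1} ergodic theorem for decreasing-step Euler schemes, applied to this scalar uniformly elliptic reflected setting, yields almost surely $\Gamma_n^{-1}\sum_{k=0}^{n-1}\gamma_{k+1}\ind{R_k < \eta} \to \pi([0,\eta])$, whence letting $\eta \dto 0$ closes the argument.

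The hard part will be the construction of the almost-sure domination $R_n \leq \psi_D(\bar X_{\Gamma_n})$. A naive semimartingale comparison does not quite go through, because the actual noise variance $|\sigma^\top \nabla\psi_D|^2$ strictly exceeds $\sigma_{\min}^2$ in general, so after coupling the Gaussian innovations there is an unsigned extra-noise term. I expect this to be handled by absorbing the discrepancy into a martingale whose quadratic variation is summable along the scheme, and invoking a Borel--Cantelli argument based on~\ref{it:steps-2} to show it does not damage the comparison asymptotically. The quadratic Taylor remainder $r_{n+1}$ is controlled analogously: the summability $\sum \gamma_n^p < +\infty$ ensures that centered polynomial functionals of the Brownian increments sum almost surely, absorbing $r_{n+1}$ into a lower-order correction. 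Finally, the resets of $R$ to $0$ at redistribution epochs and at exits from $V_{\eta_0}$ are harmless, since they only push $R$ down while $\psi_D(\bar X_{\Gamma_n})$ remains nonnegative, preserving the domination.
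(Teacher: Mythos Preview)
Your overall strategy is right, but the core coupling step has a genuine gap. You propose to dominate $\psi_D(\bar X_{\Gamma_n})$ from below by a reflected scalar scheme $R$ with \emph{constant} variance $\sigma_{\min}^2$, and you correctly note that the actual conditional variance $|\sigma^\top\nabla\psi_D|^2$ strictly exceeds $\sigma_{\min}^2$. But two processes with different diffusion coefficients driven by the same Brownian source admit no pathwise ordering: the ``extra noise'' you would have to absorb is a martingale whose bracket grows like $\sum_k \gamma_{k+1}\bigl(|\sigma^\top\nabla\psi_D|^2-\sigma_{\min}^2\bigr)$, hence linearly in $\Gamma_n$, not summably. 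Assumption~\ref{it:steps-2} controls polynomial functionals of the increments (your remainder $r_{n+1}$), but it cannot kill an $O(\sqrt{\Gamma_n})$ martingale fluctuation. So the domination $R_n\le\psi_D(\bar X_{\Gamma_n})$ cannot be obtained this way.

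The paper avoids this obstacle by a \emph{time change}: it normalizes the quadratic variation of the martingale part of $\tilde\psi_D(\bar X_t)$ so that, in the new clock, the noise is exactly a standard one-dimensional Brownian motion $W$. The comparison process $Z$ is then driven by the \emph{same} $W$, with a constant drift $-c_1$ that uniformly bounds the true drift; this yields a clean pathwise inequality via the monotonicity of the reflection map, with no leftover noise term. There is a second difficulty you do not anticipate: once the coupling is set up, $Z$ still depends on the random grid times (through resets), so standard ergodic theorems for decreasing-step schemes do not apply directly. The paper handles this by introducing intertwined stopping times at which $Z$ crosses fixed levels, showing that on the resulting excursions $Z$ is close to a genuine reflected drifted Brownian motion independent of the grid, and then running a law of large numbers over excursions. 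Your final step, invoking a Lamberton--Pag\`es-type result for $R$, would face the same issue: the resets of $R$ at redistribution epochs and at exits from the strip make $R$ depend on the full trajectory of $\bar X$, so it is not a Markovian scheme to which such a theorem applies off the shelf.
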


Remarkably, in this tightness result, we do not require all the assumptions of the main result. On the one hand, we do not need $b$ and $\sigma$ to be continuous. On the other hand, Proposition~\ref{prop:tightness} does not require Assumption~\ref{cond:pn} to hold, and thus holds for any choice of redistribution measure $\pn_n$ on $D$. As mentioned before, this property is rather unexpected. This means in particular that even in the case where the redistribution is mainly concentrated close to the boundary of $D$, the scheme $(\bar{X}_{\Gamma_n})_{n\ge0}$ does not spend much time in this area. Our proof shows that we have a reflection-type property which allows the (discretized) process to move away from the boundary.

Last, if Assumption~\ref{cond:pn} holds, Proposition~\ref{prop:tightness} easily implies the almost sure tightness of $(\pn_n)_{n \geq 1}$.

\begin{cor}[Almost sure tightness of $(\pn_n)_{n \geq 1}$]\label{cor:tightness-pn}
  Under the assumptions of Proposition~\ref{prop:tightness} and the supplementary Assumption~\ref{cond:pn}, we have
  \begin{equation*}
    \lim_{\eta \dto 0} \limsup_{n \to +\infty} \pn_n\left(D \setminus K_\eta\right) = 0, \qquad \text{almost surely.}
  \end{equation*}
\end{cor}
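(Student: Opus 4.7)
The plan is to deduce this corollary from \cref{prop:tightness} by a standard regularization argument: we approximate the indicator $\ind{D \setminus K_\eta}$ from above by Lipschitz functions, so that Assumption~\ref{cond:pn} allows us to transfer the tightness from $\mubar_n$ to $\pn_n$.

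First, I would construct for each $\eta > 0$ a Lipschitz test function $f_\eta : D \to [0,1]$ which equals $1$ on $D \setminus K_\eta$ and vanishes on $K_{2\eta}$. A natural choice is
\begin{equation*}
  f_\eta(x) := \max\bigl(0, \min(1, 2 - \psi_D(x)/\eta)\bigr),
\end{equation*}
which, since $\psi_D$ is $1$-Lipschitz as an infimum of $1$-Lipschitz functions, is Lipschitz with constant $1/\eta$. By construction one has the sandwich
\begin{equation*}
  \ind{D \setminus K_\eta} \le f_\eta \le \ind{D \setminus K_{2\eta}}.
\end{equation*}

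Second, fix $\eta > 0$ in a countable dense set, say $\eta \in \{2^{-k}:k\ge 1\}$. Assumption~\ref{cond:pn} applied to each such $f_\eta$ gives $\mubar_n(f_\eta)-\pn_n(f_\eta) \to 0$ almost surely, and the countable intersection of these almost sure events is almost sure. On this event of full probability, the sandwich above yields, for every such $\eta$,
\begin{equation*}
  \limsup_{n \to +\infty} \pn_n(D \setminus K_\eta) \le \limsup_{n \to +\infty} \pn_n(f_\eta) = \limsup_{n \to +\infty} \mubar_n(f_\eta) \le \limsup_{n \to +\infty} \mubar_n(D \setminus K_{2\eta}).
\end{equation*}

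Third, letting $\eta \dto 0$ along the chosen countable sequence and invoking \cref{prop:tightness} for the right-hand side gives the desired almost sure conclusion. There is no genuine obstacle here: the only point to be careful about is the order of the almost sure quantifier and the limit in $\eta$, which is handled precisely by restricting to a countable sequence $\eta_k \dto 0$ before taking the null set out of the probability space.
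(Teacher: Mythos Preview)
Your proof is correct and takes a genuinely different route from the paper's. The paper argues abstractly: from \cref{prop:tightness} and Prokhorov's theorem on the Polish space $D$ (see \cref{rk:topoPolish}), $(\mubar_n)_{n\ge1}$ is almost surely relatively compact; Assumption~\ref{cond:pn} then forces any weakly convergent subsequence of $(\mubar_n)$ to have the corresponding $(\pn_n)$ subsequence converge to the same limit (bounded Lipschitz functions determine weak convergence), so $(\pn_n)_{n\ge1}$ is relatively compact and hence tight by Prokhorov again. Your approach bypasses Prokhorov entirely: you directly squeeze $\ind{D\setminus K_\eta}$ between Lipschitz functions and apply~\ref{cond:pn} quantitatively. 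This is more elementary and arguably more transparent, since it never leaves the level of explicit estimates on $\pn_n(D\setminus K_\eta)$; the paper's argument is shorter to state but relies on the full Prokhorov machinery in both directions. One small point worth making explicit in your write-up: the passage from convergence along $\eta=2^{-k}$ to $\lim_{\eta\dto 0}$ over all $\eta>0$ is justified because $\eta\mapsto \pn_n(D\setminus K_\eta)$ is nondecreasing, so $\eta\mapsto\limsup_n \pn_n(D\setminus K_\eta)$ is as well, and its limit as $\eta\dto 0$ coincides with the limit along any sequence tending to $0$.
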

\begin{proof}
By Proposition~\ref{prop:tightness}, $(\mubar_n)_{n\ge1}$ is almost surely tight. By Prokhorov's theorem and Remark~\ref{rk:topoPolish} below, $(\mubar_n)_{n\ge1}$ is almost surely relatively compact. Then, \ref{cond:pn}
certainly implies that $(\pn_n)_{n\ge1}$ also is. By what precedes, $(\pn_n)_{n\ge1}$ is thus almost surely tight on $D$. The result follows.
\end{proof}
\begin{rk} \label{rk:topoPolish} It is well-known that the open set $D$ is topologically Polish: one can build a distance $\delta$ which is such that the metric space $(D,\delta)$ is separable and complete and whose induced topology is equivalent to the topology induced by the Euclidean distance on $D$ (for instance, one can set  $\delta(x,y)=|x-y|+|\frac{1}{d(x,\partial D)}-\frac{1}{d(y,\partial D)}|$).
\end{rk}

Setting $\xi_n := \psi_D(\bar{X}_{\Gamma_n}) \in (0,+\infty)$, Proposition~\ref{prop:tightness} rewrites equivalently
\begin{equation}\label{eq:prop-tightness-xi}
  \lim_{\eta \dto 0} \limsup_{n \to +\infty} \frac{1}{\Gamma_n}\sum_{k=0}^{n-1}\gamma_{k+1}\ind{\xi_k < \eta} = 0.
\end{equation}

The proof of~\eqref{eq:prop-tightness-xi} is divided into two parts. In Subsection~\ref{ss:coupling}, we construct a one-dimensional process $Z_{\tau^{-1}(t)}$ which essentially bounds from below the process $\psi_D(\bar{X}_t)$. Denoting by $\zeta_n$ the value of this process at time $\Gamma_n$, the main result of this first part, Corollary~\ref{cor:final-coupling}, states that $\zeta_n \leq \xi_n$ for $n$ large enough and thus allows to reduce the proof of~\eqref{eq:prop-tightness-xi} to a similar statement on the sequence $(\zeta_n)_{n \geq 0}$. This is the object of the second part of the proof, which is detailed in Subsection~\ref{ss:zeta}. 

\medskip
Throughout the section, we let Assumption~\ref{cond:D}, \ref{it:coeffs-1} and~\ref{cond:steps} be in force, and do not mention them in the statement of our results.

%%%%%%%%%%%%%%%%%%%%%%%%%%%%%%%%%%%%%%%%%%%%%%%%%%%%%%%%%%%%%%%%%%%%%%%%%%%%%%%%%%%%%%%%%%%%%%%%%%%%%%%%%
%%%%%%%%%%%%%%%%%%%%%%%%%%%%%%%%%%%%%%%%%%%%%%%%%%%%%%%%%%%%%%%%%%%%%%%%%%%%%%%%%%%%%%%%%%%%%%%%%%%%%%%%%
\subsection{Construction of the coupling}\label{ss:coupling} Let $(X_t)_{t \geq 0}$ be the continuous-time process introduced in Subsection~\ref{ss:motivation}. The tightness of its occupation measure is proved in~\cite[Proposition~4.1]{BenChaVil22}, using a coupling argument. More precisely, the starting point of the proof is the observation that there is $\eta_0 > 0$ such that, by Itô's formula, as long as $\psi_D(X_t) \in (0,\eta_0)$, this process behaves as an Itô process with bounded coefficients. When it reaches $0$, the redistribution mechanism for $X_t$ induces a positive jump for $\psi_D(X_t)$, and when $\psi_D(X_t)$ exceeds $\eta_0$, it may lose the Itô decomposition due to the possible lack of regularity of $\psi_D$. Therefore, applying a time change $t = \tau(r)$ to make the noise part of $\psi_D(X_t)$ additive, it is possible to construct a coupling between $\psi_D(X_{\tau(r)})$ and a drifted Brownian motion $Z_r$, reflected at the boundaries of the interval $(0,\eta_0)$, such that $\psi_D(X_{\tau(r)}) \geq Z_r$ for all $r \geq 0$, and to conclude by controlling the time spent by $Z_r$ near $0$. Our proof of Proposition~\ref{prop:tightness} follows the same lines but requires to take into account the discretization mechanism. The latter allows in particular $\bar{X}_t$ to exit from $D$ on the interval $[\Gamma_n, \Gamma_{n+1})$, so the (signed) distance of $\bar{X}_t$ to $\partial D$ may become negative, and so must be the bounding process $Z_r$ that will be constructed.

%%%%%%%%%%%%%%%%%%%%%%%%%%%%%%%%%%%%%%%%%%%%%%%%%%%%%%%%%%%%%%%%%%%%%%%%%%%%%%%%%%%%%%%%%%%%%%%%%%%%%%%%%
\subsubsection{Preliminary material} We first extend the function $\psi_D$ to the whole space $\R^d$ by letting
\begin{equation*}
  \forall x \not\in D, \qquad \psi_D(x) := -\inf_{y \in D} |x-y|.
\end{equation*}
Then $\psi_D : \R^d \to \R$ is the \emph{signed distance to $\partial D$}, which is positive in $D$ and negative in $\R^d \setminus \bar{D}$. 

\medskip
Since, by Assumption~\ref{cond:D}, $D$ is bounded with a $\mathcal{C}^2$-boundary, the following statement follows from~\cite[Lemma 14.16]{Gilbarg-Trudinger}.

\begin{lem}[Function $\tilde{\psi}_D$]\label{lem:tildepsi}
  There exist $\eta_0>0$ and a $\mathcal{C}^2$ function $\tilde{\psi}_D : \R^d \to \R$ such that:
  \begin{enumerate}[label=(\roman*),ref=\roman*]
    \item $\psi_D$ and $\nabla \psi_D$ coincide with $\tilde{\psi}_D$ and $\nabla \tilde{\psi}_D$  on the strip $\{x \in \R^d : |\psi_D(x)| \leq \eta_0\}$;
   
    \item $\tilde{\psi}_D$, $\nabla \tilde{\psi}_D$ and $\nabla^2 \tilde{\psi}_D$ are bounded on $\R^d$;
    \item $\psi_D(x)>\eta_0$ if and only if $\tilde{\psi}_D(x)>\eta_0$, and $\psi_D(x)<-\eta_0$ if and only if $\tilde{\psi}_D(x)<-\eta_0$.
  \end{enumerate}
\end{lem}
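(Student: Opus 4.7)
The plan is to combine Lemma~14.16 of Gilbarg--Trudinger, which handles the local smoothness of the signed distance near a $\mathcal{C}^2$ boundary, with a smooth one-dimensional truncation that globalizes it without destroying the agreement with $\psi_D$ near $\partial D$.

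By Assumption~\ref{cond:D}, $\partial D$ is a compact $\mathcal{C}^2$ hypersurface, so the cited lemma yields some $\delta>0$ such that $\psi_D$ is $\mathcal{C}^2$ on the tubular neighborhood $U_\delta := \{x \in \R^d : |\psi_D(x)| < \delta\}$. Compactness of $\partial D$ together with the eikonal identity $|\nabla \psi_D|=1$ then makes $\nabla \psi_D$ and $\nabla^2 \psi_D$ uniformly bounded on $U_\delta$. I would fix $0 < \eta_0 < \eta_1 < \delta$ and choose an odd, nondecreasing $\varphi \in \mathcal{C}^\infty(\R)$ with $\varphi(s)=s$ for $|s|\le \eta_0$, strictly increasing on $(-\eta_1,\eta_1)$, and constant equal to some $M>\eta_0$ on $[\eta_1,+\infty)$ (hence equal to $-M$ on $(-\infty,-\eta_1]$). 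By construction $\varphi$, $\varphi'$, $\varphi''$ are bounded on $\R$, and both $\varphi'$ and $\varphi''$ vanish for $|s|\ge \eta_1$.

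Then I would set $\tilde{\psi}_D := \varphi \circ \psi_D$ on $U_\delta$ and extend it by the constants $+M$ on $\{\psi_D \ge \eta_1\}$ and $-M$ on $\{\psi_D \le -\eta_1\}$. These two recipes agree on the overlap $\{\eta_1 \le |\psi_D| < \delta\}$, and since $\varphi'$ and $\varphi''$ vanish there, the resulting global function is $\mathcal{C}^2$ on $\R^d$. Property~(i) is immediate from $\varphi(s)=s$ and $\varphi'(s)=1$ on $|s|\le \eta_0$; property~(ii) follows from the boundedness of $\varphi$ together with the chain rule applied on $U_\delta$ and the fact that $\nabla \tilde{\psi}_D$ and $\nabla^2 \tilde{\psi}_D$ vanish outside $U_\delta$; property~(iii) follows from $\varphi(\pm \eta_0) = \pm \eta_0$, the strict monotonicity of $\varphi$ on $(-\eta_1,\eta_1)$, and $\varphi(\pm s)=\pm M > \eta_0$ for $s \ge \eta_1$. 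There is essentially no substantial obstacle here: the only point of care is to pick the thresholds strictly inside the tubular neighborhood where $\psi_D$ is smooth, so that the interpolating region $\{\eta_0 \le |\psi_D| \le \eta_1\}$ avoids the cut locus of $\partial D$ where $\psi_D$ fails to be $\mathcal{C}^2$.
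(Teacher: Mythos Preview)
Your construction is correct and complete. The paper itself does not give a proof but simply cites \cite[Lemma~14.16]{Gilbarg-Trudinger}; your argument makes explicit the smooth truncation step that turns the local $\mathcal{C}^2$ regularity of $\psi_D$ near $\partial D$ into the global $\tilde{\psi}_D$ with properties~(i)--(iii), which the paper leaves to the reader.
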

A particular consequence of the first assertion in Lemma~\ref{lem:tildepsi} is that for any $x \in \R^d$ such that $|\psi_D(x)| \leq \eta_0$, it follows from the \emph{eikonal equation} that 
\begin{equation}\label{eq:eikonal}
  |\nabla\tilde{\psi}_D(x)|=|\nabla\psi_D(x)|=1.
\end{equation}

By It\^o's formula, we then deduce that for any $n \geq 0$, for any $t \in [\Gamma_n,\Gamma_{n+1})$,
\begin{equation*}
  \tilde{\psi}_D(\bar{X}_t) = \tilde{\psi}_D(\bar{X}_{\Gamma_n}) + \int_{s=\Gamma_n}^t \tilde{K}_s \dd s + \int_{s=\Gamma_n}^t \tilde{H}_s \cdot \dd B_s,
\end{equation*}
where
\begin{align}\label{eq:itopsidtilde}
  \tilde{K}_s &:= b(\bar{X}_{\Gamma_n}) \cdot \nabla \tilde{\psi}_D(\bar{X}_s)  + \frac{1}{2} \mathrm{tr}\left(\sigma\sigma^\top(\bar{X}_{\Gamma_n})\nabla^2 \tilde{\psi}_D(\bar{X}_s)\right) \in \R,\\
  \tilde{H}_s &:= \sigma^\top(\bar{X}_{\Gamma_n}) \nabla \tilde{\psi}_D(\bar{X}_s) \in \R^d.
\end{align}

%%%%%%%%%%%%%%%%%%%%%%%%%%%%%%%%%%%%%%%%%%%%%%%%%%%%%%%%%%%%%%%%%%%%%%%%%%%%%%%%%%%%%%%%%%%%%%%%%%%%%%%%%
\subsubsection{Time change} In order to make the noise part in the Itô decomposition of $\tilde{\psi}_D(\bar{X}_t)$ additive, we define the time change $(\tau(r))_{r \geq 0}$ by the identity
\begin{equation*}
  \forall r \geq 0, \qquad \int_{s=0}^{\tau(r)} \left(|\tilde{H}_s|^2\ind{|\tilde{\psi}_D(\bar{X}_s)| \leq \eta_0} + \ind{|\tilde{\psi}_D(\bar{X}_s)| > \eta_0}\right)\dd s = r.
\end{equation*}
Since, by Assumption~\ref{it:coeffs-1}, $\sigma\sigma^\top$ is bounded and uniformly elliptic on $D$, and $\tilde{\psi}_D$ satisfies~\eqref{eq:eikonal}, there exists $0 < c_0 \leq 1$ such that, for any $r \geq 0$,
\begin{equation}\label{eq:c0}
  c_0 \leq \tau'(r) \leq \frac{1}{c_0},
\end{equation}
which allows to define the inverse function $\tau^{-1} : [0,+\infty) \to [0,+\infty)$.

For any $n \geq 0$, we set $\Delta_n = \tau^{-1}(\Gamma_n)$ and $\delta_{n+1} = \Delta_{n+1}-\Delta_n$. We then have
\begin{equation}\label{eq:gamma-delta}
  c_0 \Gamma_n \leq \Delta_n \leq \frac{\Gamma_n}{c_0}, \qquad c_0 \gamma_n \leq \delta_n \leq \frac{\gamma_n}{c_0}.
\end{equation}

For all $r \geq 0$, we set $\mathcal{G}_r := \mathcal{F}_{\tau(r)}$. Then $(\mathcal{G}_r)_{r \geq 0}$ is a filtration, with respect to which the sequence $(\Delta_n)_{n \geq 0}$ is an increasing sequence of stopping times. Besides, by the Dambis--Dubins--Schwarz theorem~\cite[Theorem~1.6, p.~181]{RevYor99} the process $(W_r)_{r \geq 0}$ defined by
\begin{equation*}
  W_r = \int_{s=0}^{\tau(r)} \left(\ind{|\tilde{\psi}_D(\bar{X}_s)| \leq \eta_0} \tilde{H}_s \cdot \dd B_s + \ind{|\tilde{\psi}_D(\bar{X}_s)| > \eta_0} \dd B^1_s\right),
\end{equation*}
where $B^1$ is the first coordinate of $B$, is a one-dimensional $(\mathcal{G}_r)_{r \geq 0}$-Brownian motion.

%%%%%%%%%%%%%%%%%%%%%%%%%%%%%%%%%%%%%%%%%%%%%%%%%%%%%%%%%%%%%%%%%%%%%%%%%%%%%%%%%%%%%%%%%%%%%%%%%%%%%%%%%
\subsubsection{The sequence $(\zeta_n)_{n \geq 0}$ and the process $(Z_r)_{r \geq 0}$} Combining the boundedness of $\sigma$ and $b$ on $D$ provided by Assumption~\ref{it:coeffs-1}, the boundedness of $\nabla\tilde{\psi}_D$ and $\nabla^2\tilde{\psi}_D$ provided by Lemma~\ref{lem:tildepsi}, and~\eqref{eq:c0}, we deduce that there exists $c_1 \geq 0$ such that for any $n \geq 0$, for any $\Delta_n \leq r' \leq r < \Delta_{n+1}$,
\begin{equation*}
  \left|\int_{s=\tau(r')}^{\tau(r)} \tilde{K}_s \dd s\right| \leq c_1(r-r').
\end{equation*}

We introduce the drifted Brownian motion $(\omega_r)_{r \geq 0}$ defined by $\omega_r := -c_1 r + W_r$, and for any $n \geq 0$, we set
\begin{equation*}
  \iota_{n+1} := \sup_{\Delta_n \leq u < v \leq \Delta_{n+1}} |\omega_v-\omega_u|.
\end{equation*}

\begin{lem}[Increments of $(\omega_r)_{r \geq 0}$ on the grid $(\Delta_n)_{n \geq 0}$]\label{lem:iota}
  Almost surely, $\displaystyle\lim_{n \to +\infty} \iota_n = 0$.
\end{lem}
\begin{proof}
  Let $n \geq 0$. By~\eqref{eq:gamma-delta}, we have
  \begin{equation*}
    \iota_{n+1} \leq \iota'_{n+1} := \sup_{\Delta_n \leq u < v \leq \Delta_n + c_0^{-1} \gamma_{n+1}} |\omega_v-\omega_u|.
  \end{equation*}
  Since $\Delta_n$ is a $(\mathcal{G}_r)_{r \geq 0}$-stopping time, while $(W_r)_{r \geq 0}$ is a $(\mathcal{G}_r)_{r \geq 0}$-Brownian motion, the strong Markov property yields 
  \begin{equation*}
    \iota'_{n+1} = \sup_{0 \leq u < v \leq c_0^{-1}\gamma_{n+1}} |\omega_v-\omega_u| \qquad \text{in distribution,}
  \end{equation*}
  and the right-hand side is bounded from above by $\kappa (c_0^{-1}\gamma_{n+1})^{1/4}$, where $\kappa$ is the (random) $1/4$-H\"older constant of the drifted Brownian motion $(\omega_r)_{r \geq 0}$ on the bounded and deterministic interval $[0, c_0^{-1} \sup_{n \geq 0} \gamma_{n+1}]$. Using the Markov inequality, we deduce that for any $\epsilon > 0$,
  \begin{equation*}
    \Pr\left(\iota_{n+1} \geq \epsilon\right) \leq \Pr\left(\kappa (c_0^{-1}\gamma_{n+1})^{1/4} \geq \epsilon\right) \leq \frac{(c_0^{-1} \gamma_{n+1})^p}{\epsilon^{4p}}\Exp[\kappa^{4p}],
  \end{equation*}
  where $p$ is given by Assumption~\ref{it:steps-2}. Since $\kappa$ has finite moments of all orders~\cite[Theorem~10.1, p.~152]{SchPar12}, the conclusion follows from the Borel--Cantelli lemma.
\end{proof}

\begin{rk}\label{rem:pas}
While Assumption~\ref{it:steps-1} is clearly fundamental in all the paper, it is worth noting that Assumption~\ref{it:steps-2} plays only a role in the proof of Lemma~\ref{lem:iota}. 
\end{rk}

In the sequel of the section, we shall define reflected Brownian motions in terms of the \emph{positive} and \emph{negative reflection maps} at $z \in \R$, constructed as follows: given a real-valued trajectory $\beta_\bullet = (\beta_r)_{r \in [r_0,r_1)}$ for some $-\infty < r_0 < r_1 \leq +\infty$, we set 
\begin{equation}\label{eq:positive-reflection-map}
  \boldsymbol{\Gamma}^{+,z}(\beta_\bullet) = Z^+_\bullet = (Z^+_r)_{r \in [r_0,r_1)}, \qquad Z^+_r := \beta_r + \max_{0 \leq u \leq r} [\beta_u-z]_- \quad \in [z,+\infty),
\end{equation}
and
\begin{equation}\label{eq:negative-reflection-map}
  \boldsymbol{\Gamma}^{-,z}(\beta_\bullet) = Z^-_\bullet = (Z^-_r)_{r \in [r_0,r_1)}, \qquad Z^-_r := \beta_r - \max_{0 \leq u \leq r} [\beta_u-z]_+ \quad \in (-\infty,z].
\end{equation}
Detailed properties of these reflection maps are gathered in Section~\ref{app:RBM}.

We may next define the sequence $(\zeta_n)_{n \geq 0}$ in $[0,\eta_0]$ and the process $(Z_r)_{r \geq 0}$ in $(-\infty,\eta_0]$ as follows:
\begin{itemize}
  \item $Z_0=\zeta_0=z_0$ for some $z_0 \in [0,\eta_0]$ which will be specified in the next subsection;
  \item for any $n \geq 0$, $(Z_r)_{r \in [\Delta_n, \Delta_{n+1})}$ is the one-dimensional Brownian motion started at $\zeta_n$, with constant drift $-c_1$, driven by $(W_r-W_{\Delta_n})_{r \in [\Delta_n, \Delta_{n+1})}$ and negatively reflected at the level $\eta_0$; in other words, with the notation ntroduced above, we set  
  \begin{equation*}
    Z_\bullet = \boldsymbol{\Gamma}^{-,\eta_0}(\zeta_n + \omega_\bullet - \omega_{\Delta_n}) \qquad \text{on $[\Delta_n, \Delta_{n+1})$;}
  \end{equation*}
  \item $\zeta_{n+1} = [Z_{\Delta_{n+1}^-}]_+$.
\end{itemize}
This construction is illustrated on Figure~\ref{fig:coupling}. It follows from this definition that $(Z_r)_{r \geq 0}$ is adapted to the filtration $(\mathcal{G}_r)_{r \geq 0}$.

\begin{center}
  \begin{figure}%[ht]
    \includegraphics[width=\textwidth]{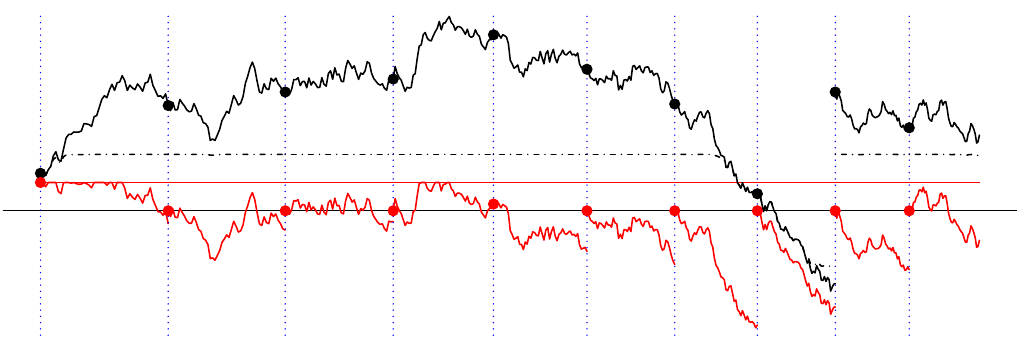}
    \caption{Construction of the process $(Z_r)_{r \geq 0}$. Vertical dotted blue lines indicate the times $\Delta_n$. The solid black curve is $\psi_D(\bar{X}_{\tau(r)})$, the dashed black curve is $\tilde{\psi}_D(\bar{X}_{\tau(r)})$. Black points represent the values of $\xi_n = \psi_D(\bar{X}_{\tau(\Delta_n)})$. The red curve is $Z_r$, the horizontal red line has coordinate $\eta_0$, and red points represent the values of $\zeta_n$.}
    \label{fig:coupling}
  \end{figure}
\end{center}

\begin{rk}\label{rk:edsZ}
  The evolution of the process $(Z_r)_{r \geq 0}$ can be concisely described by the reflected stochastic differential equation with jumps
  \begin{equation*}
    \dd Z_r = -c_1 \dd r + \dd W_r - \dd L^{Z_\bullet,\eta_0}_r + [Z_{r^-}]_-\dd \mathcal{N}^\Delta_r,
  \end{equation*}
  where $(L^{Z_\bullet,\eta_0}_r)_{r \geq 0}$ is the local time of $(Z_r)_{r \geq 0}$ at $\eta_0$ and $(\mathcal{N}^\Delta_r)_{r \geq 0}$ is the adapted counting process defined by
  \begin{equation*}
    \mathcal{N}^\Delta_r = \sum_{n=0}^{+\infty} \ind{\Delta_n \leq r}.
  \end{equation*}
\end{rk}
For any $n \geq 0$, we then let
\begin{equation*}
  \tilde{\Delta}_{n+1} := \inf\{r \in [\Delta_n,\Delta_{n+1}): \tilde{\psi}_D(\bar{X}_{\tau(r)}) \leq -\eta_0\},
\end{equation*}
with the convention that $\tilde{\Delta}_{n+1} = \Delta_{n+1}$ if $\tilde{\psi}_D(\bar{X}_{\tau(r)})$ remains above $-\eta_0$ on the interval $[\Delta_n,\Delta_{n+1})$.

\medskip
The first step of our coupling argument is detailed in the next statement.

\begin{lem}[Coupling $\psi_D(\bar{X}_{\tau(r)})$ with $Z_r$]\label{lem:coupling}
  For any $n \geq 0$, if $\zeta_n \leq \xi_n$, then for any $r \in [\Delta_n, \tilde{\Delta}_{n+1})$, $Z_r \leq \psi_D(\bar{X}_{\tau(r)})$.
\end{lem}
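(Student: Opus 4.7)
I would argue by contradiction. Assume that $\mathcal{S} := \{r \in [\Delta_n, \tilde{\Delta}_{n+1}) : Z_r > \psi_D(\bar{X}_{\tau(r)})\}$ is nonempty and let $\sigma := \inf \mathcal{S}$. The hypothesis $\zeta_n \leq \xi_n$ says $Z_{\Delta_n} \leq \psi_D(\bar{X}_{\Gamma_n})$, and since both $r \mapsto Z_r$ and $r \mapsto \psi_D(\bar{X}_{\tau(r)})$ are continuous on $[\Delta_n, \Delta_{n+1})$ (for $Z$ by construction, for $\bar X$ because no renewal occurs inside this interval), we get $\sigma > \Delta_n$ and $Z_\sigma = \psi_D(\bar{X}_{\tau(\sigma)})$. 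Combining $Z_\sigma \leq \eta_0$, the constraint $\tilde{\psi}_D(\bar{X}_{\tau(\sigma)}) > -\eta_0$ inherited from $\sigma < \tilde{\Delta}_{n+1}$, and item (iii) of \cref{lem:tildepsi} (which gives $\{|\psi_D| \leq \eta_0\} = \{|\tilde{\psi}_D| \leq \eta_0\}$), one obtains $|\psi_D(\bar{X}_{\tau(\sigma)})| \leq \eta_0$; by item (i), $\psi_D$ and $\tilde{\psi}_D$ coincide at $\bar{X}_{\tau(\sigma)}$, whence $U_\sigma = 0$ where $U_r := \tilde{\psi}_D(\bar{X}_{\tau(r)}) - Z_r$.

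\textbf{Dynamics of $U$.} Combining It\^o's formula applied to $\tilde{\psi}_D(\bar{X}_t)$ with the time change and \cref{rk:edsZ}, I would obtain, for $r \in [\Delta_n, \Delta_{n+1})$,
\begin{equation*}
  U_r - U_{\Delta_n} = \int_{\Delta_n}^r \bigl(\tilde{K}_{\tau(s)}\tau'(s) + c_1\bigr)\dd s + \bigl((N_r - N_{\Delta_n}) - (W_r - W_{\Delta_n})\bigr) + \bigl(L^{Z_\bullet, \eta_0}_r - L^{Z_\bullet, \eta_0}_{\Delta_n}\bigr),
\end{equation*}
where $N_r := \int_0^{\tau(r)} \tilde{H}_s \cdot \dd B_s$ is the martingale part of $\tilde{\psi}_D(\bar{X}_{\tau(\cdot)})$. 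The drift is pointwise nonnegative by the very definition of $c_1$, and the local time at $\eta_0$ is nondecreasing. From the construction of $W$ one directly checks that $N_r - W_r = \int_0^{\tau(r)} \iind{|\tilde{\psi}_D(\bar{X}_s)| > \eta_0}\bigl(\tilde{H}_s \cdot \dd B_s - \dd B^1_s\bigr)$, so on any sub-interval of $[\Delta_n, \tilde{\Delta}_{n+1})$ on which $\tilde{\psi}_D(\bar{X}_{\tau(\cdot)}) < \eta_0$ (which, together with $\tilde{\psi}_D > -\eta_0$, forces $|\tilde{\psi}_D| < \eta_0$), the process $N - W$ is constant.

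\textbf{The contradiction.} Picking $r \in \mathcal{S}$ close to $\sigma$, I would set $r'' := \sup\{r' \in [\sigma, r] : \tilde{\psi}_D(\bar{X}_{\tau(r')}) \geq \eta_0\}$, with the convention $r'' := \sigma$ when this set is empty or equal to $\{\sigma\}$. By continuity of $\tilde{\psi}_D(\bar{X}_{\tau(\cdot)})$, either $r'' = \sigma$, in which case $U_{r''} = U_\sigma = 0$, or $r'' > \sigma$ and $\tilde{\psi}_D(\bar{X}_{\tau(r'')}) = \eta_0$, so that $U_{r''} = \eta_0 - Z_{r''} \geq 0$. On $(r'', r]$ we have $\tilde{\psi}_D(\bar{X}_{\tau(\cdot)}) < \eta_0$, the martingale increment $N - W$ vanishes, the drift and local time contributions are nonnegative, so $U_r \geq U_{r''} \geq 0$. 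On the other hand, $r \in \mathcal{S}$ and $Z_r \leq \eta_0$ yield $\psi_D(\bar{X}_{\tau(r)}) < Z_r \leq \eta_0$, hence $|\psi_D(\bar{X}_{\tau(r)})| < \eta_0$; by items (i) and (iii) of \cref{lem:tildepsi} this gives $\tilde{\psi}_D = \psi_D$ at $\bar{X}_{\tau(r)}$, whence $U_r = \psi_D(\bar{X}_{\tau(r)}) - Z_r < 0$, a contradiction. Thus $\mathcal{S} = \emptyset$, which is the claim.

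\textbf{Main obstacle.} The delicate point is that the na\"ive first-crossing argument fails when $Z_\sigma = \eta_0$: then $Z$ sits at its reflecting barrier while $\tilde{\psi}_D(\bar{X}_\tau)$ may oscillate above and below $\eta_0$ in every right-neighborhood of $\sigma$, and on the excursions where $\tilde{\psi}_D > \eta_0$ the martingale piece $\dd N - \dd W$ is nonzero and not sign-definite, so one cannot directly conclude $\dd U \geq 0$. The anchor $r''$ circumvents this by restarting the comparison from the most recent visit of $\tilde{\psi}_D(\bar X_\tau)$ to $\eta_0$: at this time the trivial bound $Z \leq \eta_0 \leq \tilde{\psi}_D$ provides a nonnegative initialization of $U$, while on the remaining sub-interval $\tilde{\psi}_D < \eta_0$ and the cancellation $\dd N = \dd W$ makes the nonnegativity of the drift and local time sufficient to conclude.
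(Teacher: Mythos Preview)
Your proof is correct; the core idea---anchoring at the last time $\tilde{\psi}_D(\bar{X}_{\tau(\cdot)})$ visits level $\eta_0$ and then exploiting that on the remaining interval the time-changed martingale part coincides with $W$---is exactly the paper's. The paper packages it more directly: for each $r$ it takes the anchor $r'$ on $[\Delta_n, r]$ (rather than on $[\sigma,r]$) and concludes $\psi_D(\bar{X}_{\tau(r)}) \geq Z_{r'} + \omega_r - \omega_{r'} \geq Z_r$ via the Skorokhod increment bound \cref{prop:refmap}~\eqref{it:prop-refmap:accr}, avoiding both the contradiction setup and the local-time SDE for $U$.
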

\begin{proof}
  Let $n \geq 0$ be such that $\zeta_n \leq \xi_n = \psi_D(\bar{X}_{\Gamma_n})$. Let $r \in [\Delta_n, \tilde{\Delta}_{n+1})$ and $r'$ be the largest time in $[\Delta_n, r]$ for which $\tilde{\psi}_D(\bar{X}_{\tau(r')}) \geq \eta_0$; if there is no such time, we set $r'=\Delta_n$. In both cases, it is easily checked that $\tilde{\psi}_D(\bar{X}_{\tau(r')}) \geq Z_{r'}$, so that if $r'=r$ then the claimed inequality is immediate. 
  
  If $r'<r$, by the definition of $r'$ and the fact that $r < \tilde{\Delta}_{n+1}$, we have $\tilde{\psi}_D(\bar{X}_{\tau(r)}) \in (-\eta_0,\eta_0)$ and thus
  \begin{equation*}
    \psi_D(\bar{X}_{\tau(r)}) = \tilde{\psi}_D(\bar{X}_{\tau(r)}) = \tilde{\psi}_D(\bar{X}_{\tau(r')}) + \int_{s=\tau(r')}^{\tau(r)} \tilde{K}_s \dd s + \int_{s=\tau(r')}^{\tau(r)} \tilde{H}_s \cdot \dd B_s.
  \end{equation*}
  The first term in the right-hand side is larger than $Z_{r'}$, the second term is larger than $-c_1(r-r')$, and since $\tilde{\psi}_D(\bar{X}_{\tau(u)})$ remains in $(-\eta_0,\eta_0]$ for $u \in [r',r]$, the third term coincides with $W_r-W_{r'}$. Therefore we get
  \begin{equation*}
    \psi_D(\bar{X}_{\tau(r)}) \geq Z_{r'} -c_1(r-r') + W_r - W_{r'} = Z_{r'} + \omega_r - \omega_{r'},
  \end{equation*}
  which by Proposition~\ref{prop:refmap}~\eqref{it:prop-refmap:accr} is larger than $Z_r$.
\end{proof}

We deduce from Lemma~\ref{lem:coupling} that if $\zeta_n \leq \xi_n$, then 
\begin{equation}\label{eq:ineq-Delta}
  \tilde{\Delta}_{n+1} \geq \inf\{r \in [\Delta_n, \Delta_{n+1}): Z_r \leq -\eta_0\} =: \Delta'_{n+1},
\end{equation}
with the same convention as before that $\Delta'_{n+1} = \Delta_{n+1}$ if $Z_r$ remains above $-\eta_0$ on the interval $[\Delta_n,\Delta_{n+1})$. 

\begin{lem}[Asymptotic behavior of $\Delta'_n$]\label{lem:Deltaprime}
  Almost surely, there exists $N \geq 1$ such that for any $n \geq N$, $\Delta'_{n+1}=\Delta_{n+1}$.
\end{lem}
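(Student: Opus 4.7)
The plan is to argue that eventually the reflected process $(Z_r)_{r \geq 0}$ does not venture below $-\eta_0$ on any single interval $[\Delta_n,\Delta_{n+1})$, because its excursions there are controlled by $\iota_{n+1}$, which tends to zero by \cref{lem:iota}. The key point is that the increments of $Z$ on each such interval can be bounded in terms of the oscillation $\iota_{n+1}$ of the driver $\omega$, combined with the fact that $\zeta_n \geq 0$ by construction (since $\zeta_{n+1} = [Z_{\Delta_{n+1}^-}]_+$ and $\zeta_0 = z_0 \geq 0$).

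Concretely, on each interval $[\Delta_n,\Delta_{n+1})$ the process $Z$ is obtained as $\boldsymbol{\Gamma}^{-,\eta_0}(\zeta_n + \omega_\bullet - \omega_{\Delta_n})$. I would first recall from Appendix~\ref{app:RBM} the explicit formula for the negative Skorokhod reflection at level $\eta_0$, namely
\begin{equation*}
  Z_r = \zeta_n + (\omega_r - \omega_{\Delta_n}) - \max_{\Delta_n \leq s \leq r}\bigl[\zeta_n + \omega_s - \omega_{\Delta_n} - \eta_0\bigr]_+, \qquad r \in [\Delta_n,\Delta_{n+1}).
\end{equation*}
Using $\zeta_n \leq \eta_0$, the last $[\,\cdot\,]_+$ term is bounded from above by $\max_{\Delta_n \leq s \leq r}[\omega_s - \omega_{\Delta_n}]_+ \leq \iota_{n+1}$, and $\omega_r - \omega_{\Delta_n} \geq -\iota_{n+1}$, which together yield
\begin{equation*}
  Z_r \geq \zeta_n - 2\iota_{n+1} \geq -2\iota_{n+1}, \qquad r \in [\Delta_n, \Delta_{n+1}).
\end{equation*}

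To conclude, I would invoke \cref{lem:iota}: almost surely $\iota_{n+1} \to 0$, so there exists a (random) $N \geq 1$ such that for every $n \geq N$ we have $2\iota_{n+1} < \eta_0$; then $Z_r > -\eta_0$ on the entire interval $[\Delta_n, \Delta_{n+1})$, meaning that the infimum in the definition of $\Delta'_{n+1}$ is taken over an empty set and hence $\Delta'_{n+1} = \Delta_{n+1}$ by convention. No real obstacle is expected here; the only delicate point is the elementary verification of the lower bound on $Z_r$ via the explicit Skorokhod reflection formula, which is standard but needs the sign $\zeta_n \leq \eta_0$ to convert the $[\cdot]_+$ inside the maximum into a bound by the pure oscillation of $\omega$.
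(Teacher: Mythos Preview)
Your proof is correct and follows essentially the same approach as the paper: both show that a dip of $Z$ to $-\eta_0$ on $[\Delta_n,\Delta_{n+1})$ forces $\iota_{n+1}$ to be bounded below by a fixed positive constant, which contradicts \cref{lem:iota}. The only cosmetic difference is that the paper argues via the last zero-crossing of $Z$ (where reflection at $\eta_0$ is inactive) to obtain the slightly sharper threshold $\iota_{n+1} \geq \eta_0$, whereas your explicit Skorokhod-formula bound yields $\iota_{n+1} \geq \eta_0/2$; this is immaterial for the conclusion.
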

\begin{proof}
  Assume that $n \geq 0$ is such that $\Delta'_{n+1} < \Delta_{n+1}$. Then $Z_{\Delta'_{n+1}} = -\eta_0$, and since $Z_{\Delta_n} = \zeta_n \geq 0$, the largest $r' \in [\Delta_n, \Delta'_{n+1})$ such that $Z_{r'}=0$ is well-defined. On the interval $[r',\Delta'_{n+1}]$, the reflection at $\eta_0$ does not act and therefore 
  \begin{equation*}
    -\eta_0 = Z_{\Delta'_{n+1}}-Z_{r'} = \omega_{\Delta'_{n+1}} - \omega_{r'}.
  \end{equation*}
  We deduce that if $\Delta'_{n+1} < \Delta_{n+1}$ then $\iota_{n+1} \geq \eta_0$, which by Lemma~\ref{lem:iota} implies that almost surely, $\Delta'_{n+1} = \Delta_{n+1}$ for $n$ large enough.
\end{proof}

Let $N \geq 1$ be given by Lemma~\ref{lem:Deltaprime}. If there exists $n \geq N$ such that $\zeta_n \leq \xi_n$, then by~\eqref{eq:ineq-Delta}, $\tilde{\Delta}_{n+1} = \Delta_{n+1}$ and therefore Lemma~\ref{lem:coupling} can be applied on the whole interval $[\Delta_n, \Delta_{n+1})$ to yield 
\begin{equation*}
  Z_{\Delta_{n+1}^-} \leq \tilde{\psi}_D(\bar{X}_{\Gamma_{n+1}^-}).
\end{equation*}
If $\tilde{\psi}_D(\bar{X}_{\Gamma_{n+1}^-}) \leq 0$ then $Z_{\Delta_{n+1}^-} \leq 0$ and therefore $\zeta_{n+1} = 0 \leq \xi_{n+1}$. Otherwise, $\bar{X}_{\Gamma_{n+1}^-} = \bar{X}_{\Gamma_{n+1}}$ and therefore we also have $\zeta_{n+1} \leq \xi_{n+1}$. Thus the argument may be iterated and yields $\zeta_{n+k} \leq \xi_{n+k}$ for any $k \geq 1$.

\medskip
The existence of $n \geq N$ such that $\zeta_n \leq \xi_n$ is ensured by the next lemma.

\begin{lem}[Regeneration times for $(\zeta_n)_{n \geq 0}$]\label{lem:regen-zeta}
  Almost surely, the set $\{n \geq 0: \zeta_n=0\}$ is unbounded.
\end{lem}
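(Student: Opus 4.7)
The plan is to argue by contradiction. If the lemma fails, then since the events $A_M := \{\zeta_n > 0 \text{ for every } n \geq M\}$ form an increasing family in $M$, there exists a deterministic $M \geq 1$ such that $\Pr(A_M)>0$, and we aim to derive a contradiction from this assumption.

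On $A_M$, for every $n \geq M$ the identity $\zeta_n = [Z_{\Delta_n^-}]_+$ forces $Z_{\Delta_n^-} = \zeta_n > 0$, so the reset jump in~\cref{rk:edsZ} at time $\Delta_n$ vanishes, and thus $Z$ is continuous on $[\Delta_M,+\infty)$. Moreover, since on each interval $[\Delta_n, \Delta_{n+1})$ the negative Skorokhod reflection at $\eta_0$ only subtracts a non-negative quantity, one has $Z_r \leq \zeta_n + \omega_r - \omega_{\Delta_n}$. Evaluating at $r = \Delta_{n+1}^-$ and using the identity $\zeta_{n+1} = Z_{\Delta_{n+1}^-}$ that holds on $A_M$ gives $\zeta_{n+1} \leq \zeta_n + \omega_{\Delta_{n+1}} - \omega_{\Delta_n}$; iterating along the grid yields
\begin{equation*}
  Z_r \leq \zeta_M + \omega_r - \omega_{\Delta_M}, \qquad r \geq \Delta_M.
\end{equation*}
As $\omega_r = W_r - c_1 r$ with $c_1 \geq 0$ and $W$ a standard one-dimensional Brownian motion, the right-hand side has almost sure liminf equal to $-\infty$ as $r \to +\infty$, so the same conclusion holds for $Z_r$ on $A_M$.

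Fix then (on $A_M$) a time $r^* > \Delta_M$ with $Z_{r^*} < -1$. By continuity of $Z$ on $[\Delta_M,+\infty)$, there exists $\epsilon > 0$ such that $Z_r < -1/2$ for every $r \in (r^* - \epsilon, r^* + \epsilon)$. Since $\delta_n \to 0$ by~\eqref{eq:gamma-delta} and Assumption~\ref{it:steps-1}, and since $r^*$ may be chosen arbitrarily large, we may further arrange that $\delta_{n+1} < \epsilon$ for every $n$ with $\Delta_{n+1}$ in a neighbourhood of $r^*$. Combined with $\Delta_n \to +\infty$, this forces the existence of some $n > M$ with $\Delta_{n+1} \in (r^* - \epsilon, r^* + \epsilon)$; for this $n$, the continuity of $Z$ gives $Z_{\Delta_{n+1}^-} = Z_{\Delta_{n+1}} < -1/2 < 0$, whence $\zeta_{n+1} = 0$, contradicting the definition of $A_M$.

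The (mild) technical subtlety is the derivation of the pathwise comparison $Z_r \leq \zeta_M + \omega_r - \omega_{\Delta_M}$ uniformly for $r \geq \Delta_M$ on $A_M$; the iterative argument above bypasses any delicate semigroup-type identity for the Skorokhod reflection map and reduces the claimed recurrence of $(\zeta_n)_{n\geq 0}$ to the classical oscillation of drifted Brownian motion.
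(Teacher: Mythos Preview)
Your overall strategy---reduce to a fixed $M$ with $\Pr(A_M)>0$, show that on $A_M$ the process $Z$ eventually becomes very negative, and then derive a contradiction at some grid time---is sound and matches the paper's. Your comparison $Z_r \leq \zeta_M + \omega_r - \omega_{\Delta_M}$ is a neat elementary substitute for the paper's appeal to Corollary~\ref{cor:unboundZ}.

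However, Step~4 contains a genuine gap. You fix $r^*$ with $Z_{r^*}<-1$, then produce $\epsilon=\epsilon(r^*)$ from continuity, and then assert that ``since $r^*$ may be chosen arbitrarily large, we may further arrange that $\delta_{n+1}<\epsilon$''. This is circular: $\epsilon$ depends on $r^*$, and nothing prevents $\epsilon(r^*)$ from shrinking faster than the mesh $\delta_{n(r^*)+1}$ as $r^*\to\infty$. Indeed, using only the information ``$Z$ continuous, $\liminf Z_r=-\infty$, $Z_{\Delta_n}>0$, $\delta_n\to 0$'', no contradiction follows: one can construct an abstract continuous path with these properties where each negative excursion of $Z$ is contained strictly inside a single grid interval and is much narrower than that interval.

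What is missing is a control on the oscillation of $Z$ on grid intervals, which is where the Brownian structure enters. The clean fix is the paper's: once $Z_{r^*}<-1$ with $r^*\in[\Delta_n,\Delta_{n+1})$, use \cref{prop:refmap}~\eqref{it:prop-refmap:accr} to get $Z_{\Delta_{n+1}^-}-Z_{r^*}\leq \omega_{\Delta_{n+1}}-\omega_{r^*}$, hence $\omega_{\Delta_{n+1}}-\omega_{r^*}>1$ and $\iota_{n+1}>1$. Since $\iota_m\to 0$ almost surely by \cref{lem:iota} and $r^*$ can be taken arbitrarily large (so $n\to\infty$), this yields the contradiction. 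Replacing your continuity/mesh argument by this single use of \cref{lem:iota} completes the proof.
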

\begin{proof}
  For any $n \geq 0$, on the event $\{\forall n' > n, \zeta_{n'} > 0\}$ we have $Z_{\Delta_{n'}^-}>0$ for all $n' > n$ and therefore
  \begin{equation*}
    Z_\bullet = \boldsymbol{\Gamma}^{-,\eta_0}(\zeta_n + \omega_\bullet - \omega_{\Delta_n}), \qquad \text{on $[\Delta_n, +\infty)$,}
  \end{equation*}
  with the notation introduced in~\eqref{eq:negative-reflection-map}. As a consequence, by Corollary~\ref{cor:unboundZ}, there exists a finite time $r \geq \Delta_n$ such that $Z_r = -1$. Let $n' > n$ be such that $\Delta_{n'-1} \leq r < \Delta_{n'}$. Since $\zeta_{n'} = Z_{\Delta_{n'}^-} > 0$, we thus have
  \begin{equation*}
    \iota_{n'} \geq Z_{\Delta_{n'}^-}-Z_{r'} \geq 1,
  \end{equation*}
  which by Lemma~\ref{lem:iota} shows that $\Pr(\forall n' > n, \zeta_{n'} > 0)=0$. The result follows.
\end{proof}

We deduce from the discussion preceding Lemma~\ref{lem:regen-zeta} that almost surely, for $n$ large enough, $\zeta_n \leq \xi_n$. Combining this result with the bounds~\eqref{eq:gamma-delta} yields our final coupling estimate.

\begin{cor}[Final coupling estimate]\label{cor:final-coupling}
  Almost surely, for any $\eta > 0$, 
  \begin{equation*}
    \limsup_{n \to +\infty} \frac{1}{\Gamma_n} \sum_{k=0}^{n-1} \gamma_{k+1}\ind{\xi_k < \eta} \leq \frac{1}{c_0^2} \limsup_{n \to +\infty} \frac{1}{\Delta_n} \sum_{k=0}^{n-1} \delta_{k+1}\ind{\zeta_k < \eta}.
  \end{equation*}
\end{cor}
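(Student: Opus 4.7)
The plan is to combine three ingredients that are all already in hand: (i) the coupling conclusion that was reached just before \cref{lem:regen-zeta}, together with \cref{lem:regen-zeta} itself, which jointly yield an (almost sure) finite random index $N$ such that $\zeta_k \leq \xi_k$ for every $k \geq N$; (ii) the two-sided comparisons~\eqref{eq:gamma-delta} between the original steps $\gamma_n, \Gamma_n$ and the time-changed steps $\delta_n, \Delta_n$; (iii) the fact that $\Gamma_n \to +\infty$ by~\ref{it:steps-1}, so any contribution coming from a bounded range of indices washes out in the time average.

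First I would observe that once $\zeta_k \leq \xi_k$ holds, the event $\{\xi_k < \eta\}$ is contained in $\{\zeta_k < \eta\}$, so the pointwise inequality $\ind{\xi_k < \eta} \leq \ind{\zeta_k < \eta}$ holds for all $k \geq N$. Then I would split the sum at index $N$:
\begin{equation*}
  \frac{1}{\Gamma_n}\sum_{k=0}^{n-1}\gamma_{k+1}\ind{\xi_k < \eta} = \frac{1}{\Gamma_n}\sum_{k=0}^{N-1}\gamma_{k+1}\ind{\xi_k < \eta} + \frac{1}{\Gamma_n}\sum_{k=N}^{n-1}\gamma_{k+1}\ind{\xi_k < \eta}.
\end{equation*}
The first sum is bounded by $\Gamma_N$, which is a (random but) finite quantity independent of $n$, hence divided by $\Gamma_n \to +\infty$ it tends almost surely to $0$.

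For the second sum I would invoke \eqref{eq:gamma-delta} twice: first $\gamma_{k+1} \leq \delta_{k+1}/c_0$ to replace each step weight, and second $c_0 \Delta_n \leq \Gamma_n$ (equivalently $1/\Gamma_n \leq 1/(c_0\Delta_n)$) to replace the normalization, producing
\begin{equation*}
  \frac{1}{\Gamma_n}\sum_{k=N}^{n-1}\gamma_{k+1}\ind{\xi_k < \eta} \leq \frac{1}{c_0\Gamma_n}\sum_{k=N}^{n-1}\delta_{k+1}\ind{\zeta_k < \eta} \leq \frac{1}{c_0^2\Delta_n}\sum_{k=0}^{n-1}\delta_{k+1}\ind{\zeta_k < \eta},
\end{equation*}
where I have also enlarged the range of summation back to $\{0,\dots,n-1\}$ (using non-negativity of the summands) in the final step. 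Taking $\limsup_{n \to +\infty}$ of both sides and combining with the vanishing of the initial piece yields exactly the claimed inequality. There is no real obstacle here; all the substantive work has been done in the preceding lemmas, and this corollary is essentially a bookkeeping consequence of the coupling $\zeta_k \leq \xi_k$ and the uniform comparability of the two time scales.
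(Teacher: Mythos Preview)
Your proof is correct and matches the paper's approach exactly: the paper simply says the corollary follows by combining the fact that $\zeta_n \leq \xi_n$ for $n$ large enough with the bounds~\eqref{eq:gamma-delta}, and your write-up spells out precisely this combination. The only comment is cosmetic: the paper leaves all of this implicit, whereas you have written out the details.
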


\begin{rk}\label{rk:z0}
  Since the coupling argument works as soon as $\zeta_n \leq \xi_n$ for some $n$ larger than the index $N$ given by Lemma~\ref{lem:Deltaprime}, and Lemma~\ref{lem:regen-zeta} ensures the existence of such an $n$, the choice of the initial value $z_0$ for $(Z_r)_{r \geq 0}$ and $(\zeta_n)_{n \geq 0}$ does not affect the results of this subsection. In the next subsection, where we will slice the trajectory of $(Z_r)_{r \geq 0}$ into segments between alternate crossings of the levels $\eta_0/3$ and $2\eta_0/3$, it will be convenient to start with $z_0 = \eta_0/3$.
\end{rk}

%%%%%%%%%%%%%%%%%%%%%%%%%%%%%%%%%%%%%%%%%%%%%%%%%%%%%%%%%%%%%%%%%%%%%%%%%%%%%%%%%%%%%%%%%%%%%%%%%%%%%%%%%
%%%%%%%%%%%%%%%%%%%%%%%%%%%%%%%%%%%%%%%%%%%%%%%%%%%%%%%%%%%%%%%%%%%%%%%%%%%%%%%%%%%%%%%%%%%%%%%%%%%%%%%%%
\subsection{Study of the sequence $(\zeta_n)_{n \geq 0}$}\label{ss:zeta} The aim of this subsection is to show that, almost surely,
\begin{equation}\label{eq:limsup-zeta}
  \lim_{\eta \dto 0} \limsup_{n \to +\infty} \frac{1}{\Delta_n} \sum_{k=0}^{n-1} \delta_{k+1}\ind{\zeta_k < \eta} = 0.
\end{equation}
By Corollary~\ref{cor:final-coupling}, this proves Proposition~\ref{prop:tightness}.

To proceed, in §~\ref{sss:SqTq} we shall set $Z_0=\eta_0/3$ and introduce intertwinned stopping times $0 = S_0 < T_0 < S_1 < T_1 < \cdots$ such that, on $[S_q,T_q)$, $Z_r$ goes from $\eta_0/3$ to $2\eta_0/3$, and on $[T_q,S_{q+1})$, it goes from $2\eta_0/3$ to $\eta_0/3$. As a consequence, on $[T_q,S_{q+1})$, $Z_r$ behaves as a drifted Brownian motion negatively reflected at $\eta_0$, while for step sizes $\Delta_n$ small enough, on $[S_q,T_q)$, $Z_r$ should be close to a drifted Brownian motion positively reflected at $0$, which we will denote by $Z^+_{q,r}$. The quantification of this assertion is an important technical point of our argument, it is stated in Proposition~\ref{prop:ZZpq}. Then, for large $n$ and $\eta < \eta_0/3$, the prelimit in the right-hand side of~\eqref{eq:limsup-zeta} should approximately coincide with the average time spent by $Z^+_{q,r}$ in $[0,\eta)$, on each interval $[S_q,T_q)$, divided by the average length of the interval $[T_q,S_{q+1})$. This statement is made precise in §~\ref{sss:framing}.

We refer to Figure~\ref{fig:SqTq} for an illustration of several quantities which are introduced in this subsection.

\begin{center}
  \begin{figure}%[ht]
    \includegraphics[width=\textwidth]{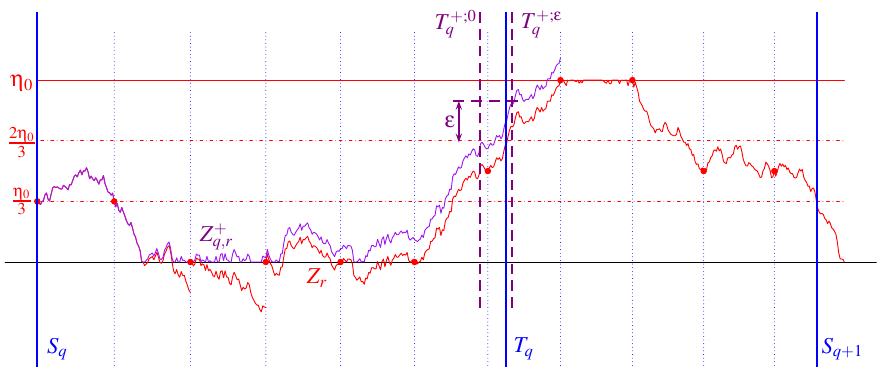}    
    \caption{The process $Z_r$ on $[S_q,S_{q+1})$. On the interval $[S_q,T_q)$, it is bounded from above by $Z^+_{q,r}$, which is reflected at $0$ and therefore does not depend on the grid $(\Delta_n)_{n \geq 0}$. The statement of Proposition~\ref{prop:ZZpq} is that the distance between $Z_r$ and $Z^+_{q,r}$ does not exceed $2\epsilon_q$. For $\epsilon>0$, the hitting times of the respective levels $2\eta_0/3$ and $2\eta_0/3+2\epsilon$ for $Z^+_{q,r}$ are denoted by $T^{+;0}_q$ and $T^{+;\epsilon}_q$.}
    \label{fig:SqTq}
  \end{figure}
\end{center}

%%%%%%%%%%%%%%%%%%%%%%%%%%%%%%%%%%%%%%%%%%%%%%%%%%%%%%%%%%%%%%%%%%%%%%%%%%%%%%%%%%%%%%%%%%%%%%%%%%%%%%%%%
\subsubsection{The sequences $(S_q)_{q \geq 0}$ and $(T_q)_{q \geq 0}$}\label{sss:SqTq} As is announced in Remark~\ref{rk:z0}, from now on, we set $z_0 := \eta_0/3$ in the definition of the process $(Z_r)_{r \geq 0}$ and of the sequence $(\zeta_n)_{n \geq 0}$, and we let $S_0 := 0$.

For any $q \geq 0$, assuming that the random variable $S_q \in [0,+\infty)$ has been defined and is such that $Z_{S_q} = \eta_0/3$, we set
\begin{equation*}
  T_q := \inf\left\{r \geq S_q: Z_r \geq \frac{2\eta_0}{3}\right\} \in (S_q, +\infty], 
\end{equation*}
and if $T_q < +\infty$,
\begin{equation*}
  S_{q+1} := \inf\left\{r \geq T_q: Z_r \leq \frac{\eta_0}{3}\right\} \in (T_q, +\infty].
\end{equation*}
Since $(Z_r)_{r \geq 0}$ is adapted and right-continuous, both $T_q$ and $S_{q+1}$ are stopping times for the filtration $(\mathcal{G}_r)_{r \geq 0}$. Besides, on each interval $[S_q,T_q)$, the negative reflection at $\eta_0$ does not act; while on each interval $[T_q,S_{q+1})$, there is no return to $0$ on the grid $(\Delta_n)_{n \geq 0}$.

\begin{rk}
  With the notation of Remark~\ref{rk:edsZ}, the process $(Z_r)_{r \geq 0}$ satifies
  \begin{align*}
    \dd Z_r &= -c_1 \dd r + \dd W_r + [Z_{r^-}]_-\dd \mathcal{N}^\Delta_r  &&\text{on $[S_q,T_q)$,}\\
    \dd Z_r &= -c_1 \dd r + \dd W_r - \dd L^{Z_\bullet,\eta_0}_r  &&\text{on $[T_q,S_{q+1})$.}
  \end{align*}
\end{rk}

As a consequence of this remark and the fact that, on the event $\{T_q < +\infty\}$, the Brownian motion $(W_r-W_{T_q})_{r \geq T_q}$ is independent from $\mathcal{G}_{T_q}$, we deduce that the variables $S_1-T_0, \ldots, S_{q+1}-T_q$ are independent copies of the hitting time of the level $\eta_0/3$ for a Brownian motion started at $2\eta_0/3$, negatively reflected at $\eta_0$, and with drift $-c_1$. By a symmetry argument, this random variable has the same law as the hitting time of the level $2\eta_0/3$ for a Brownian motion started at $\eta_0/3$, positively reflected at $0$, and with drift $c_1$. We denote by $\mathsf{T}^{c_1}_{\eta_0/3, 2\eta_0/3}$ such a random variable and refer to Subsection~\ref{ss:T-mbr} for details.

However, the same argument does not apply to the variables $T_0-S_0, \ldots, T_q-S_q$ because on each interval $[S_q,T_q)$, the process $Z_\bullet$ not only depends on the randomness induced by the Brownian motion $(W_r-W_{S_q})_{r \geq S_q}$, but also on all stopping times $\Delta_n$ in the interval, which need not be such that $\Delta_n - S_q$ is independent from $\mathcal{G}_{S_q}$. To remove this dependency, we denote by $(Z^+_{q,r})_{r \geq S_q}$ the Brownian motion started in $\eta_0/3$ at $r=S_q$, with drift $-c_1$, driven by the Brownian motion $(W_r-W_{S_q})_{r \geq S_q}$, and positively reflected at $0$.
In other words, we have
\begin{equation*}
  Z^+_{q,\bullet} = \boldsymbol{\Gamma}^{+,0}\beta_\bullet, \quad \beta_\bullet := \frac{\eta_0}{3} + \omega_\bullet - \omega_{S_q} \qquad \text{on $[S_q, +\infty)$},
\end{equation*}
with the notation of~\eqref{eq:positive-reflection-map}. This process no longer depends on the stopping times $\Delta_n$ larger than $S_q$, and therefore it is independent from $\mathcal{G}_{S_q}$. The next proposition, whose proof is detailed in Subsection~\ref{ss:pf-ZZpq}, shows that statistics of $Z_\bullet$ on $[S_q,T_q)$ are well approximated by statistics of $Z^+_{q,\bullet}$.

\begin{prop}[Estimate between $Z^+_{q,r}$ and $Z_r$]\label{prop:ZZpq}
  For any $q \geq 0$, let
  \begin{equation*}
    n_q := \min\{n \geq 0: \Delta_n \geq S_q\} \leq m_q := \min\{n \geq 0: \Delta_n \geq T_q\},
  \end{equation*}
  and with the notation of Lemma~\ref{lem:iota}, let 
  \begin{equation*}
  	\epsilon_q := \sup_{n_q \leq n \leq m_q} \iota_n.
  \end{equation*}
  For any $r \in [S_q,T_q)$, we have 
  \begin{equation*}
    0 \leq Z^+_{q,r} - Z_r \leq 2 \epsilon_q.
  \end{equation*}
\end{prop}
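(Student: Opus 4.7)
The plan is to compare $Z_r$ and $Z^+_{q,r}$ on $[S_q,T_q)$ by deriving explicit Skorokhod-type formulas driven by the common path $\beta_r := \eta_0/3 + \omega_r - \omega_{S_q}$. First, by definition of the positive reflection map $\boldsymbol{\Gamma}^{+,0}$, one has, for every $r \geq S_q$,
\begin{equation*}
  Z^+_{q,r} = \beta_r + L^+_r, \qquad L^+_r := \max\Bigl(0,\ \sup_{S_q \leq u \leq r}(-\beta_u)\Bigr),
\end{equation*}
with $L^+_r$ non-decreasing in $r$. A short induction on the grid intervals then yields the analogous representation for the jump process $Z$ on $[S_q,T_q)$, namely
\begin{equation*}
  Z_r = \beta_r + J_r, \qquad J_r := \max\Bigl(0,\ \max_{n :\, S_q < \Delta_n \leq r}(-\beta_{\Delta_n})\Bigr),
\end{equation*}
with the convention that the inner maximum is $0$ when $(S_q,r]$ contains no grid point. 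The induction step rests on the identity $[x]_+ = x + [x]_-$ applied at each jump $Z_{\Delta_{n+1}} = [Z_{\Delta_{n+1}^-}]_+$, combined with the fact that the upper reflection at $\eta_0$ never acts on $[S_q,T_q)$, by construction of the stopping times $S_q,T_q$.

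With these formulas, $D_r := Z^+_{q,r} - Z_r = L^+_r - J_r$ reduces the problem to comparing a continuous running supremum of $-\beta$ over $[S_q,r]$ with its discrete analogue over grid points in $(S_q,r]$. The lower bound $D_r \geq 0$ is then immediate, since $\{\Delta_n : S_q < \Delta_n \leq r\} \subseteq [S_q,r]$. For the upper bound, I would pick $u^* \in [S_q,r]$ achieving the supremum defining $L^+_r$. In the generic case $u^* \in [\Delta_k,\Delta_{k+1})$ with $n_q \leq k \leq m_q-1$, the grid point $\Delta_k \in (S_q,r]$ lies at distance at most one grid step, so the oscillation bound $|\beta_{u^*} - \beta_{\Delta_k}| \leq \iota_{k+1} \leq \epsilon_q$ gives $J_r \geq -\beta_{\Delta_k} \geq L^+_r - \epsilon_q$, whence $D_r \leq \epsilon_q$.

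The main obstacle lies in the boundary regime where no grid point has yet occurred in $(S_q,r]$, that is $r < \Delta_{n_q}$ (in which case $J_r = 0$ by convention), or more generally when $u^* \in [S_q,\Delta_{n_q})$ and one must reach forward to $\Delta_{n_q}$ rather than backward. There I would use that $\beta_{S_q} = \eta_0/3 > 0$ together with the oscillation bound $|\beta_u - \beta_{S_q}| \leq \iota_{n_q} \leq \epsilon_q$ valid on $[S_q,\Delta_{n_q}]$, which forces $L^+_r \leq [\epsilon_q - \eta_0/3]_+ \leq \epsilon_q$ so that $D_r \leq \epsilon_q$ still holds in this regime. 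Combining the two boundary contributions, near $S_q$ on the one hand and near $r$ on the other, produces the factor $2\epsilon_q$ in the stated estimate.
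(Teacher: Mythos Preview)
Your approach is correct and in fact slightly sharper than the paper's. Both arguments rest on the same Skorokhod representation: your identity $Z_r=\beta_r+J_r$ on $[S_q,T_q)$ is exactly what the paper establishes in \cref{lem:baromega} at the grid times and then trivially propagates between them. The paper then bounds $Z^+_{q,\Delta_n}-Z_{\Delta_n}\le\epsilon_q$ at grid points and, separately, picks up an additional $\epsilon_q$ from the off-grid increment of the reflected path, hence the factor $2$. By contrast, your direct comparison of $L^+_r$ with $J_r$ avoids this splitting: in each of your two cases (the maximizer $u^*$ lying in some $[\Delta_k,\Delta_{k+1})$ with $k\ge n_q$, or $u^*\in[S_q,\Delta_{n_q})$) you obtain $D_r\le\epsilon_q$, and these cases are \emph{alternatives}, not additive contributions. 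So your own argument actually proves the stronger bound $0\le Z^+_{q,r}-Z_r\le\epsilon_q$.

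The one weak point is your closing sentence: there is no ``near $r$'' boundary effect. If $u^*\in[\Delta_k,\Delta_{k+1})$ with $\Delta_k\le u^*\le r$, the comparison grid point $\Delta_k$ already lies in $(S_q,r]$ (or equals $S_q$ only when $q=0$, $k=0$, in which case $\beta_{\Delta_0}=\eta_0/3>0$ and the bound is trivial), so a single $\iota_{k+1}\le\epsilon_q$ suffices. Your attempt to account for the stated factor $2$ by invoking two boundaries is therefore misplaced; the $2$ in the paper's statement simply reflects its two-step decomposition and is not needed in your argument.
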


We gather the results from this paragraph in the following final statement.

\begin{cor}[Estimates on $(S_q)_{q \geq 0}$ and $(T_q)_{q \geq 0}$]\label{cor:SqTq}
  \begin{enumerate}[label=(\roman*),ref=\roman*]
    \item\label{it:cor-SqTq:1} Almost surely, the stopping times $S_q$ and $T_q$ are finite and they grow to $+\infty$ when $q \to +\infty$.
    \item\label{it:cor-SqTq:2} The random variables $(S_{q+1}-T_q)_{q \geq 0}$ are independent copies of {$\mathsf{T}^{c_1}_{ \eta_0/3,2\eta_0/3}$}.
    \item\label{it:cor-SqTq:3} The random variable $T_q-S_q$ satisfies the estimate
    \begin{equation*}
      T^{+;0}_q-S_q \leq T_q-S_q \leq T^{+;\epsilon_q}_q-S_q,
    \end{equation*}
    where $\epsilon_q$ is defined in Proposition~\ref{prop:ZZpq} and for any $\epsilon \geq 0$,
    \begin{equation*}
      T^{+;\epsilon}_q := \inf\left\{r \geq S_q: Z^+_{q,r} \geq \frac{2\eta_0}{3}+2\epsilon\right\} \in (S_q, +\infty)
    \end{equation*}
    is such that $T^{+;\epsilon}_q-S_q$ is independent from $\mathcal{G}_{S_q}$ and has the law of $\mathsf{T}^{-c_1}_{\eta_0/3, 2\eta_0/3+2\epsilon}$.
    \item\label{it:cor-SqTq:4} Almost surely, $\lim_{q \to +\infty} \epsilon_q = 0$.
  \end{enumerate}
\end{cor}
\begin{proof}
  Let $q \geq 0$ be such that $S_q < +\infty$ and $Z_{S_q} = \eta_0/3$. As a consequence of Lemma~\ref{lem:iota}, $\epsilon_q < +\infty$. Therefore Corollary~\ref{cor:unboundZ} shows that $T^{+;\epsilon_q}_q < +\infty$. It now follows from Proposition~\ref{prop:ZZpq} that
  \begin{equation*}
    T^{+;0}_q \leq T_q \leq T^{+;\epsilon_q}_q,
  \end{equation*}
  which implies in particular that $T_q < +\infty$. On the other hand, $S_{q+1}-T_q$ has the law of $\mathsf{T}^{c_1}_{\eta_0/3, 2\eta_0/3}$ (using a symmetry argument) and therefore it is finite, almost surely. We deduce that $S_{q+1} < +\infty$ and $Z_{S_{q+1}}=\eta_0/3$, which allows to show by induction that $S_q$ and $T_q$ are finite for all $q \geq 0$.
  
  In addition, for any $q \geq 0$, $S_{q+1}-T_q$ is independent from $\mathcal{G}_{T_q}$, and thus from $\mathcal{G}_{S_q}$, while it is $\mathcal{G}_{S_{q+1}}$-measurable. Thus, the variables $(S_{q+1}-T_q)_{q \geq 0}$ are independent copies of $\mathsf{T}^{c_1}_{\eta_0/3, 2\eta_0/3}$. As a consequence, the sequences $(S_q)_{q \geq 0}$ and $(T_q)_{q \geq 0}$ do not accumulate and therefore they grow to $+\infty$ when $q \to +\infty$. This completes the proof of the points~\eqref{it:cor-SqTq:1},~\eqref{it:cor-SqTq:2} and~\eqref{it:cor-SqTq:3}. We last deduce from~\eqref{it:cor-SqTq:1} that the indices $n_q$ and $m_q$ defined in Proposition~\ref{prop:ZZpq} grow to infinity when $q \to +\infty$, and therefore by Lemma~\ref{lem:iota} we have
  \begin{equation*}
    \lim_{q \to +\infty} \epsilon_q \leq \lim_{n \to +\infty} \sup_{n' \geq n} \iota_{n'} = 0, \qquad \text{almost surely,}
  \end{equation*}
  which yields the point~\eqref{it:cor-SqTq:4}.
\end{proof}

\begin{rk}\label{rk:Tp0q}
  It follows from Corollary~\ref{cor:SqTq}~\eqref{it:cor-SqTq:3} that for any $q \geq 0$, $\mathcal{G}_{T^{+;0}_q} \subset \mathcal{G}_{T_q} \subset \mathcal{G}_{S_{q+1}}$, therefore the random variables $(T^{+;0}_q-S_q)_{q \geq 0}$ are independent copies of $\mathsf{T}^{-c_1}_{\eta_0/3, 2\eta_0/3}$. However, as soon as $\epsilon>0$, since $T^{+;\epsilon}_q$ may \emph{a priori} be larger than $S_{q+1}$, the variables $(T^{+;\epsilon}_q-S_q)_{q \geq 0}$ are not necessarily independent.
\end{rk}

%%%%%%%%%%%%%%%%%%%%%%%%%%%%%%%%%%%%%%%%%%%%%%%%%%%%%%%%%%%%%%%%%%%%%%%%%%%%%%%%%%%%%%%%%%%%%%%%%%%%%%%%%
\subsubsection{Framing~\eqref{eq:limsup-zeta}}\label{sss:framing} For any $q \geq 0$, we set
\begin{equation*}
  \bar{R}_q := \sum_{k=0}^{+\infty} \ind{S_q \leq \Delta_k < S_{q+1}} \delta_{k+1}, \qquad R_q(\eta) := \sum_{k=0}^{+\infty} \ind{S_q \leq \Delta_k < S_{q+1}} \delta_{k+1} \ind{\zeta_k < \eta}.
\end{equation*}
Since the sequences $(S_q)_{q \geq 0}$ and $(T_q)_{q \geq 0}$ are almost surely well-defined and do not accumulate, these quantities are almost surely well-defined and finite; besides, for any $n \geq 0$, denoting by $Q_n \geq 0$ the unique index such that $S_{Q_n} \leq \Delta_n < S_{Q_{n+1}}$, 
we obtain the two-sided inequalities
\begin{equation}\label{eq:two-sided}
  \sum_{q=0}^{Q_n-1} \bar{R}_q \leq \Delta_n \leq \sum_{q=0}^{Q_n} \bar{R}_q, \qquad \sum_{q=0}^{Q_n-1} R_q(\eta) \leq \sum_{k=0}^{n-1} \delta_{k+1}\ind{\zeta_k < \eta} \leq \sum_{q=0}^{Q_n} R_q(\eta).
\end{equation}

Corollary~\ref{cor:SqTq} allows us to prove the following estimates, whose proof is detailed in Subsection~\ref{ss:pf-compR-LLN}.

\begin{lem}[Comparison for $\bar{R}_q$ and $R_q(\eta)$]\label{lem:compR}
  Let us define
  \begin{equation*}
    \bar{R}'_q := S_{q+1}-S_q, \qquad R'_q(\eta) := \int_{r=S_q}^{T_q} \ind{Z^+_{q,r} < \eta}\dd r, \qquad R''_q(\eta,\epsilon) := R'_q(\eta+\epsilon)-R'_q(\eta-\epsilon),
  \end{equation*}
  where $0 < \epsilon < \eta$. We have
  \begin{equation*}
    \lim_{q \to +\infty} \left|\bar{R}_q - \bar{R}'_q\right| = 0, \qquad \text{almost surely,}
  \end{equation*}
  and, for any $\eta \in (0, \eta_0/3]$, for any $0 < \epsilon < \eta$,
  \begin{equation*}
    \limsup_{q \to +\infty} \left(\left|R_q(\eta) - R'_q(\eta)\right|- R''_q(\eta,\epsilon)\right) \leq 0, \qquad \text{almost surely.}
  \end{equation*}
\end{lem}

To state our last intermediary result, we denote by $\mathsf{Z}^{+,0;c}_{z_0,\bullet}$ the Brownian motion started at $z_0 \geq 0$, positively reflected at $0$, with drift $c$, so that
\begin{equation*}
  \mathsf{T}^c_{z_0,z_1} = \inf\{r \geq 0: \mathsf{Z}^{+,0;c}_{z_0,r} = z_1\}, \qquad z_1 \geq z_0,
\end{equation*}
and introduce the notation
\begin{equation*}
  \bar{\mathcal{R}}^c_{z_0,z_1} = \Exp\left[\mathsf{T}^c_{z_0,z_1}\right], \qquad \mathcal{R}^c_{z_0,z_1}(\eta) = \Exp\left[\int_{r=0}^{\mathsf{T}^c_{z_0,z_1}}\ind{\mathsf{Z}^{+,0;c}_{z_0,r}<\eta}\dd r\right].
\end{equation*}

\begin{lem}[Laws of Large Numbers]\label{lem:LLN}
  With the notation introduced above, 
  \begin{equation*}
    \lim_{Q \to +\infty} \frac{1}{Q}\sum_{q=0}^{Q-1} \bar{R}'_q = \bar{\mathcal{R}}^{c_1}_{\eta_0/3, 2\eta_0/3} + \bar{\mathcal{R}}^{-c_1}_{\eta_0/3, 2\eta_0/3}, \qquad \text{almost surely,}
  \end{equation*}
  and, for any $\eta>0$,
  \begin{equation*}
    \lim_{Q \to +\infty} \frac{1}{Q}\sum_{q=0}^{Q-1} R'_q(\eta) = \mathcal{R}^{-c_1}_{\eta_0/3, 2\eta_0/3}(\eta), \qquad \text{almost surely.}
  \end{equation*}
\end{lem}
The proof of Lemma~\ref{lem:LLN} is detailed in Subsection~\ref{ss:pf-compR-LLN}.

We are now ready to complete the proof of the estimate~\eqref{eq:limsup-zeta}. By the Ces\`aro Lemma, we deduce from Lemmas~\ref{lem:compR} and~\ref{lem:LLN} that 
\begin{equation}\label{eq:finpflimsupzeta:1}
  \lim_{Q \to +\infty} \frac{1}{Q}\sum_{q=0}^{Q-1} \bar{R}_q = \lim_{Q \to +\infty} \frac{1}{Q}\sum_{q=0}^{Q-1} \bar{R}'_q = \bar{\mathcal{R}}^{c_1}_{\eta_0/3, 2\eta_0/3} + \bar{\mathcal{R}}^{-c_1}_{\eta_0/3, 2\eta_0/3}, \qquad \text{almost surely.}
\end{equation}
Similarly, we get, for any $0 < \epsilon < \eta \leq \eta_0/3$,
\begin{equation*}
  \limsup_{Q \to +\infty} \left|\frac{1}{Q}\sum_{q=0}^{Q-1} R_q(\eta) - \mathcal{R}^{-c_1}_{\eta_0/3, 2\eta_0/3}(\eta)\right| \leq \limsup_{Q \to +\infty}\frac{1}{Q}\sum_{q=0}^{Q-1} R''_q(\eta,\epsilon), \qquad \text{almost surely.}
\end{equation*}
On the other hand, using Lemma~\ref{lem:LLN} again yields
\begin{equation*}
  \lim_{Q \to +\infty} \frac{1}{Q}\sum_{q=0}^{Q-1}R''_q(\eta,\epsilon) = \mathcal{R}^{-c_1}_{\eta_0/3, 2\eta_0/3}(\eta+\epsilon)-\mathcal{R}^{-c_1}_{\eta_0/3, 2\eta_0/3}(\eta-\epsilon), \qquad \text{almost surely,}
\end{equation*}
and by Corollary~\ref{cor:estimT}, the right-hand vanishes when $\epsilon \dto 0$. Therefore we finally get
\begin{equation}\label{eq:finpflimsupzeta:2}
  \lim_{Q \to +\infty} \frac{1}{Q}\sum_{q=0}^{Q-1} R_q(\eta) = \mathcal{R}^{-c_1}_{\eta_0/3, 2\eta_0/3}(\eta), \qquad \text{almost surely.}
\end{equation}

By~\eqref{eq:two-sided} and since $Q_n \to +\infty$ with $n$, we conclude from~\eqref{eq:finpflimsupzeta:1} and~\eqref{eq:finpflimsupzeta:2} that for $\eta \in (0,\eta_0/3]$,
\begin{equation*}
  \lim_{n \to +\infty} \frac{1}{\Delta_n} \sum_{k=0}^{n-1} \delta_{k+1}\ind{\zeta_k < \eta} = \frac{\mathcal{R}^{-c_1}_{\eta_0/3, 2\eta_0/3}(\eta)}{\bar{\mathcal{R}}^{c_1}_{\eta_0/3, 2\eta_0/3} + \bar{\mathcal{R}}^{-c_1}_{\eta_0/3, 2\eta_0/3}}, \qquad \text{almost surely,}
\end{equation*}
which by Corollary~\ref{cor:estimT} yields~\eqref{eq:limsup-zeta} and completes the proof of Proposition~\ref{prop:tightness}.

%%%%%%%%%%%%%%%%%%%%%%%%%%%%%%%%%%%%%%%%%%%%%%%%%%%%%%%%%%%%%%%%%%%%%%%%%%%%%%%%%%%%%%%%%%%%%%%%%%%%%%%%%
%%%%%%%%%%%%%%%%%%%%%%%%%%%%%%%%%%%%%%%%%%%%%%%%%%%%%%%%%%%%%%%%%%%%%%%%%%%%%%%%%%%%%%%%%%%%%%%%%%%%%%%%%
%%%%%%%%%%%%%%%%%%%%%%%%%%%%%%%%%%%%%%%%%%%%%%%%%%%%%%%%%%%%%%%%%%%%%%%%%%%%%%%%%%%%%%%%%%%%%%%%%%%%%%%%%
\section{Proof of Theorems~\ref{theo:main-mun} and~\ref{theo:main}}\label{sec:prooftheomain}
Theorem~\ref{theo:main-mun} being a particular case of~Theorem~\ref{theo:main}, we only prove~Theorem~\ref{theo:main}. To proceed, we introduce the discrete and continuous renewal times
\begin{equation*}
  \mathfrak{n}_0 := 0, \qquad \mathfrak{n}_j := \min\{n > \mathfrak{n}_{j-1}: \theta_n = 1\}, \qquad \tauel_j := \Gamma_{\mathfrak{n}_j},
\end{equation*}
with $\theta_n$ defined in~\eqref{eq:theta}. It follows from Lemma~\ref{lem:controlmomentstempsarret} and the strong Markov property that the set $\{n \geq 1: \theta_n=1\}$ is almost surely unbounded, so that $\mathfrak{n}_j$ and $\tauel_j$ are well defined for any $j \geq 0$, and under Assumption~\ref{it:steps-1}, $\tauel_j \to +\infty$ when $j \to +\infty$.

We then introduce the empirical measure of the Euler scheme at renewal times
\begin{equation*}
  \forall \ell \geq 1, \qquad \vta_\ell := \frac{1}{\ell} \sum_{j=1}^\ell \delta_{\bar{X}_{\tauel_j}},
\end{equation*}
and first show that the almost sure tightness of $(\pn_n)_{n \geq 1}$ implies that of $(\vta_\ell)_{\ell \geq 1}$.

\begin{lem}[Tightness of $(\vta_\ell)_{\ell \geq 1}$]\label{lem:tightness-vartheta}
  Under the assumptions of Corollary~\ref{cor:tightness-pn}, the sequence $(\vta_\ell)_{\ell \geq 1}$ is almost surely tight.
\end{lem}
\begin{proof}
  Since ${\bf p}_n$ denotes the redistribution measure, it follows from the strong Markov property that for any bounded and measurable function $f:D \to \ER$ and $j \geq 1$,
  \begin{equation*}
    \Exp\left[f(\bar{X}_{\tauel_j})-\pn_{\mathfrak{n}_j} (f) | \mathcal{F}_{\tauel_{j}^{-}}\right] = 0.
  \end{equation*}
  Therefore, the sequence $(f(\bar{X}_{\tauel_j})-\pn_{\mathfrak{n}_j} (f))_{j \geq 1}$ is a martingale difference sequence for the filtration $(\mathcal{F}_{\tauel_{j+1}^{-}})_{j \geq 1}$. Since it is bounded, the strong Law of Large Numbers for martingale difference sequences yields
  \begin{equation*}
    \lim_{\ell \to +\infty} \frac{1}{\ell}\sum_{j=1}^\ell \left(f(\bar{X}_{\tauel_j})-\pn_{\mathfrak{n}_j} (f)\right) = \lim_{\ell \to +\infty} \left(\vta_\ell(f) - \frac{1}{\ell}\sum_{j=1}^\ell\pn_{\mathfrak{n}_j} (f)\right) = 0, \qquad \text{almost surely.}
  \end{equation*}
  The almost sure tightness of $(\vta_\ell)_{\ell \geq 1}$ then easily follows from Corollary~\ref{cor:tightness-pn}.
\end{proof}

The sequel of the argument uses the same strategy as in~\cite[Sections~4.2--4.4]{BenChaVil22} and relies on the introduction of two operators $A$ and $\Pi$. For any measurable and bounded function $f: D \to \R$, we let $Af$ be defined by
\begin{equation*}
  \forall x \in D, \qquad Af(x) = \Exp_x\left[\int_0^{\tau_D} f(Y_t)\dd t\right].
\end{equation*}
The operator $A$ is regularizing: by~\cite{BenChaVil22}, under Assumptions~\ref{cond:D} and~\ref{cond:coeffs}, for any measurable and bounded function $f : D \to \R$, the function $Af$ is bounded and Lipschitz continuous on $D$. Besides, for any $\mu \in \mathcal{M}_1(D)$, we have $\mu A \iind{D}>0$, which allows us to define 
\begin{equation*}
  \Pi_\mu = \frac{\mu A}{\mu A \iind{D}} \in \mathcal{M}_1(D).
\end{equation*}

The remainder of this section is organized as follows. As a preliminary, we prove the convergence to $A$ of its natural discretization in Subsection~\ref{ss:estim-tau}. Then, the proof of Theorem~\ref{theo:main} works in two main steps. First, we prove in Subsection~\ref{ss:mu-vartheta} that if $\vta_\ell$ converges to some measure $\nu$, then $\mubar_n$ converges to $\Pi_\nu$. Second, we use the notion of \emph{asymptotic pseudo-trajectory} to show in Subsection~\ref{ss:cv-vartheta} that $\vta_\ell$ converges to $\mu^\star$. Using the remark that $\Pi_{\mu^\star}=\mu^\star$, we complete the proof of Theorem~\ref{theo:main} in Subsection~\ref{ss:pf-main}.

%%%%%%%%%%%%%%%%%%%%%%%%%%%%%%%%%%%%%%%%%%%%%%%%%%%%%%%%%%%%%%%%%%%%%%%%%%%%%%%%%%%%%%%%%%%%%%%%%%%%%%%%%
%%%%%%%%%%%%%%%%%%%%%%%%%%%%%%%%%%%%%%%%%%%%%%%%%%%%%%%%%%%%%%%%%%%%%%%%%%%%%%%%%%%%%%%%%%%%%%%%%%%%%%%%%
\subsection{Discretization of the operator $A$}\label{ss:estim-tau}

Let Assumptions~\ref{cond:D} and~\ref{cond:coeffs} be in force. Moreover, let $\bfeta = (\eta_n)_{n \geq 1}$ be a sequence of positive time steps, denote
\begin{equation*}
  |\bfeta| := \sup_{n \geq 1} \eta_n, \qquad \dten_n := \sum_{k=0}^{n-1}\eta_{k+1},
\end{equation*}
{and assume that $\lim_{n \to +\infty} \dten_n = +\infty$.} {We denote by $(\bar{Y}^\bfeta_t)_{t \geq 0}$ the continuous-time Euler scheme associated with the SDE~\eqref{eq:SDE}, with the same initial initial condition $\bar{Y}_0^\bfeta = Y_0 \in D$ and driven by the same Brownian motion $(B_t)_{t \geq 0}$. We next set
\begin{equation}\label{eq:taubareta}
  \bar{\tau}^\bfeta_D := \inf\{\dten_n : \bar{Y}^\bfeta_{\dten_n} \not\in D\} = \inf\{t \geq 0: \bar{Y}^\bfeta_{\un{t}} \not\in D\},
\end{equation}
with $\un{t} := \dten_n$ for any $t \in [\dten_n, \dten_{n+1})$.} This allows us to define, for any measurable and bounded function $f : D \to \R$,
\begin{equation}\label{def:Aeta}
  \forall x \in D, \qquad \bar{A}^\bfeta f(x) := \Exp_x\left[\int_0^{\bar{\tau}^\bfeta_D} f(\bar{Y}^\bfeta_{\un{t}})\dd t\right].
\end{equation}
It follows from Lemma~\ref{lem:controlmomentstempsarret} that the function $\bar{A}^\bfeta f$ is bounded on $D$. The main result of this subsection is the following statement.

\begin{prop}[Convergence of $\bar{A}^\bfeta f$]\label{prop:weakA}
  Assume~\ref{cond:D} and~\ref{cond:coeffs}. Let $\bfeta$ be a step sequence {such that
  \begin{equation*}
    \lim_{n \to +\infty} \dten_n = +\infty \qquad \text{and} \qquad \sup_{n \geq 1} \frac{\eta_{n+1}}{\eta_n} < +\infty.
  \end{equation*}}
  Then, for any measurable and bounded function $f : D \to \R$,
  \begin{equation*}
    \lim_{|\bfeta| \to 0} \sup_{x \in D}|\bar{A}^\bfeta f(x) - Af(x)| = 0.
  \end{equation*}
\end{prop}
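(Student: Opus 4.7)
The plan is to first use exponential tail estimates on survival times to reduce to a bounded-time integral, then handle continuous $f$ by combining strong convergence of the Euler scheme with control of exit times, and finally extend to bounded measurable $f$ via a regularization argument based on Gaussian-type density bounds on the scheme. Under \ref{cond:D}--\ref{cond:coeffs}, positivity of the principal Dirichlet eigenvalue yields $\sup_{x\in D}\Pr_x(\tau_D>t)\leq C\ee^{-\lambda t}$, and an analogous bound for $\bar\tau^\bfeta_D$ uniform in small $|\bfeta|$ should follow from~\cref{lem:controlmomentstempsarret} (ellipticity survives discretization, so the probability of exiting $D$ over any fixed time window is bounded away from $0$ uniformly in $x\in D$ and $\bfeta$ with $|\bfeta|$ small). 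Writing $Af(x)=\int_0^\infty \Exp_x[f(Y_t)\ind{\tau_D>t}]\dd t$ and similarly for $\bar{A}^\bfeta f$, the tail beyond $T$ then contributes at most $O(\|f\|_\infty\ee^{-\lambda T})$, uniformly in $x$ and $\bfeta$.

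For continuous $f$ on $[0,T]$, bound the integrand in modulus by
\begin{equation*}
\Exp_x\bigl[|f(\bar{Y}^\bfeta_{\un t})-f(Y_t)|\ind{\tau_D\wedge\bar\tau^\bfeta_D>t}\bigr]+\|f\|_\infty\Pr_x\bigl(\tau_D\wedge\bar\tau^\bfeta_D\leq t<\tau_D\vee\bar\tau^\bfeta_D\bigr).
\end{equation*}
The first term tends to $0$ uniformly in $x\in D$ via uniform continuity of $f$ on $\bar D$ and strong convergence of the Euler scheme on compact time intervals. The second term requires showing $\bar\tau^\bfeta_D\to\tau_D$ in probability uniformly in $x$, which uses the $\mathcal{C}^2$ regularity of $\partial D$ together with nondegeneracy of the normal component of the diffusion near $\partial D$. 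Integration over $[0,T]$ and dominated convergence then give $\sup_{x\in D}|\bar{A}^\bfeta f(x)-Af(x)|\to 0$.

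For bounded measurable $f$, approximate by a continuous $f_\varepsilon$ with $\|f_\varepsilon\|_\infty\leq\|f\|_\infty$ and $\int_D|f-f_\varepsilon|(y)\dd y\leq\varepsilon$. The preceding step handles $f_\varepsilon$; it remains to control the residue $f-f_\varepsilon$ on both sides. Split the $t$-integral at a small $t_0>0$: on $[0,t_0]$ bound trivially by $2t_0\|f\|_\infty$, and on $[t_0,T]$ use Aronson's Gaussian upper bound for the density of $Y_t$ together with~\cref{lem:gauss-density} for the density $p_t^\bfeta(x,\cdot)$ of $\bar{Y}^\bfeta_{\un t}$ to write
\begin{equation*}
\Exp_x\bigl[|f-f_\varepsilon|(\bar{Y}^\bfeta_{\un t})\ind{\bar\tau^\bfeta_D>t}\bigr]\leq C_{t_0,T}\int_D|f-f_\varepsilon|(y)\dd y\leq C_{t_0,T}\,\varepsilon,
\end{equation*}
uniformly in $x\in D$ and $\bfeta$ sufficiently small. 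Sending in order $|\bfeta|\to 0$, then $\varepsilon\to 0$, then $t_0\to 0$ and $T\to\infty$ concludes.

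The main obstacle is the uniform Gaussian-type density bound for the Euler scheme with non-constant steps used in the last step: this is precisely the role of~\cref{lem:gauss-density}, and it is here that Assumption~\ref{it:steps-3} becomes indispensable (the ratio condition keeps successive step sizes comparable enough for heat-kernel-like estimates to hold). A secondary difficulty is the uniform-in-$x$ convergence of exit times $\bar\tau^\bfeta_D\to\tau_D$, which relies crucially on the $\mathcal{C}^2$ smoothness of $\partial D$ and is what permits the argument for continuous $f$ to be uniform on $D$.
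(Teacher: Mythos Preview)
Your proposal is correct and follows essentially the same strategy as the paper: exponential tail truncation via \cref{lem:controlmomentstempsarret}, a short-time trivial bound, the Gaussian density estimate from \cref{lem:gauss-density} for the regularization of the measurable case, and a separate treatment of the regular approximant using strong convergence of the scheme together with control of $|\tau_D-\bar\tau^\bfeta_D|$. The only cosmetic difference is that the paper packages the Lipschitz case into a standalone quantitative lemma (\cref{lem:weakerror}) and uses almost-everywhere Lipschitz approximants rather than $L^1$-close continuous ones, but the substance is the same.
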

The proof of Proposition~\ref{prop:weakA} relies on various discretization estimates, which are gathered in Section~\ref{app:discr} and may be of independent interest. 

\begin{proof}[Proof of Proposition~\ref{prop:weakA}]
  Let $f : D \to \R$ be a measurable and bounded function, which we extend by $0$ on $\R^d \setminus D$. It is well-known that there exists a sequence $(f_p)_{p \geq 1}$ of Lipschitz continuous functions $f_p : \R^d \to \R$ which are such that $\|f_p\|_\infty \leq \|f\|_\infty$ and $f_p(x) \to f(x)$ when $p \to +\infty$, $\dd x$-almost everywhere. Writing
  \begin{equation*}
    |\bar{A}^\bfeta f(x) - Af(x)| \leq |\bar{A}^\bfeta (f-f_p)(x)| + |\bar{A}^\bfeta f_p(x) - Af_p(x)| + |A(f-f_p)(x)| 
  \end{equation*}
  and applying Lemma~\ref{lem:weakerror} to $f_p$, we observe that to prove the proposition it suffices to show that
  \begin{equation}\label{eq:weakA-1}
    \lim_{p \to +\infty} \left(\sup_{x \in D} |A(f-f_p)(x)| + \limsup_{|\bfeta| \to 0} \sup_{x \in D} |\bar{A}^\bfeta (f-f_p)(x)|\right) = 0.
  \end{equation}
  
  We first address the term $|\bar{A}^\bfeta (f-f_p)(x)|$. We fix $0 < t_1 < t_2 < +\infty$, and write
  \begin{align*}
    \left|\bar{A}^\bfeta (f-f_p)(x)\right| &= \left|\int_0^{+\infty} \Exp_x\left[(f-f_p)(\bar{Y}^\bfeta_{\un{t}})\ind{t < \bar{\tau}^\bfeta_D}\right] \dd t\right|\\
    &\leq 2 \|f\|_\infty t_1 + \int_{t_1}^{t_2} \Exp_x\left[|f-f_p|(\bar{Y}^\bfeta_{\un{t}})\ind{\bar{Y}^\bfeta_{\un{t}} \in D}\right] \dd t + 2 \|f\|_\infty \int_{t_2}^{+\infty} \Pr_x\left(t < \bar{\tau}^\bfeta_D\right)\dd t.
  \end{align*}
  For $\delta > 0$, one may fix $t_1$ small enough for the inequality $2 \|f\|_\infty t_1 \leq \delta/3$ to hold, and by Lemma~\ref{lem:controlmomentstempsarret}, one may fix $t_2$ large enough for the inequality 
  \begin{equation*}
    \limsup_{|\bfeta| \to 0} \sup_{x \in D} 2 \|f\|_\infty \int_{t_2}^{+\infty} \Pr_x\left(t < \bar{\tau}^\bfeta_D\right)\dd t \leq \frac{\delta}{3}
  \end{equation*}
  to hold. Last, with the notation of Lemma~\ref{lem:gauss-density}, as soon as $\eta_1 \leq t_1$ we have
  \begin{align*}
    \int_{t_1}^{t_2} \Exp_x\left[|f-f_p|(\bar{Y}^\bfeta_{\un{t}})\ind{\bar{Y}^\bfeta_{\un{t}} \in D}\right] \dd t \leq \int_{y \in D} |f(y)-f_p(y)| \int_{t_1}^{t_2} \bar{p}^\bfeta_{\un{t}}(x,y) \dd t \dd y.
  \end{align*}
  Now, by the Gaussian upper bound from Lemma~\ref{lem:gauss-density}, it follows that some positive  $C_{t_1}$ and $\bar{\eta}$ exist such that  if $|\bfeta|\le \bar{\eta}$, then
  \begin{align*}
    \int_{t_1}^{t_2} \Exp_x\left[|f-f_p|(\bar{Y}^\bfeta_{\un{t}})\ind{\bar{Y}^\bfeta_{\un{t}} \in D}\right] \dd t \leq C_{t_1} \int_{y \in D} |f(y)-f_p(y)| \dd y.
  \end{align*}
  By the Dominated Convergence Theorem, $\int_{y \in D} |f(y)-f_p(y)| \dd y\xrightarrow{p\rightarrow+\infty}0$ so that,
  \begin{equation*}
  \lim_{p \to +\infty}\limsup_{|\bfeta| \to 0} \sup_{x \in D} \int_{t_1}^{t_2} \Exp_x\left[|f-f_p|(\bar{Y}^\bfeta_{\un{t}})\ind{\bar{Y}^\bfeta_{\un{t}} \in D}\right] \dd t =0.
  \end{equation*}
 From what precedes, we get
  \begin{equation*}
    \lim_{p \to +\infty} \limsup_{|\bfeta| \to 0} \sup_{x \in D} \left|\bar{A}^\bfeta (f-f_p)(x)\right| = 0.
  \end{equation*}
  The term $|A(f-f_p)(x)|$ follows from the same arguments. This gives~\eqref{eq:weakA-1} and thus completes the proof.
\end{proof}

\begin{rk}[On the role of Assumption~\ref{it:steps-3}]\label{rk:H3c}
  Assumption~\ref{it:steps-3} is only used in the proof of~\eqref{eq:gauss-pHn}, which in turn is only used for the regularization argument in the proof of~Proposition~\ref{prop:weakA}, under a rather weak form since we actually only require that
  \begin{equation}\label{eq:bsup-barp}
    \forall 0 < t_1 < t_2 < +\infty, \qquad \limsup_{|\bfeta| \to 0} \sup_{x,y \in D}\sup_{t \in [t_1,t_2]} \bar{p}^\bfeta_{\un{t}}(x,y) < +\infty.
  \end{equation}
  Therefore, the following two remarks are in order.
  \begin{enumerate}
    \item If one is able to show that~\eqref{eq:bsup-barp} holds true without using Assumption~\ref{it:steps-3}, then Proposition~\ref{prop:weakA} and all subsequent results of the article remain in force. For instance, if $\sigma$ is constant, this property is true without \ref{it:steps-3} (with the help of Girsanov arguments). Similarly, Malliavin calculus may also lead to~\eqref{eq:bsup-barp} when the coefficients are smooth enough.
    \item In any case, Lemma~\ref{lem:weakerror} shows that the uniform convergence of $\bar{A}^\bfeta f$ to $Af$ holds true for any continuous and bounded function $f : D \to \R$, independently from the bound~\eqref{eq:bsup-barp}. Therefore, without Assumption~\ref{it:steps-3}, Proposition~\ref{prop:weakA} remains true for continuous and bounded functions, and so do our main results Theorems~\ref{theo:main-mun} and~\ref{theo:main}.
  \end{enumerate}
\end{rk}
%%%%%%%%%%%%%%%%%%%%%%%%%%%%%%%%%%%%%%%%%%%%%%%%%%%%%%%%%%%%%%%%%%%%%%%%%%%%%%%%%%%%%%%%%%%%%%%%%%%%%%%%%
%%%%%%%%%%%%%%%%%%%%%%%%%%%%%%%%%%%%%%%%%%%%%%%%%%%%%%%%%%%%%%%%%%%%%%%%%%%%%%%%%%%%%%%%%%%%%%%%%%%%%%%%%
\subsection{Comparison between $(\mubar_n)_{n \geq 1}$ and $(\vta_\ell)_{\ell \geq 1}$}\label{ss:mu-vartheta}

For any $\ell \geq 1$ and $f : D \to \R$ measurable and bounded, let us define
\begin{equation*}
  \Delta N_\ell(f) = \sum_{k=\mathfrak{n}_{\ell-1}}^{\mathfrak{n}_\ell-1} \gamma_{k+1} f(\bar{X}_{\Gamma_k})=\int_{s=\mathfrak{n}_{\ell-1}}^{\mathfrak{n}_{\ell}} f(\bar{X}_{\un{s}})\dd s.
\end{equation*}
Note that for all $s\ge \mathfrak{n}_{\ell-1}$, $\bar{X}_{\un{s}}=\bar{X}_{\un{s}_{\bfeta}}^{\bfeta}$ with $\bfeta=\boldsymbol{\gamma}^{(\ell-1)}$ where 
 $\boldsymbol{\gamma}^{(\ell)} = (\gamma^{(\ell)}_k)_{k \geq 1}$ is defined by 
\begin{equation}\label{eq:defgammaparl}
  \forall k \geq 1, \qquad \gamma^{(\ell)}_k := \gamma_{ \mathfrak{n}_\ell+ k}.
\end{equation}
 Thus, by the strong Markov property, we deduce that, for any measurable and bounded $f: D \to \R$,
\begin{align}
  \forall \ell \geq 1, \qquad \Exp[\Delta N_\ell(f)|\mathcal{F}_{\tauel_{\ell-1}}] &= \bar{A}^{\boldsymbol{\gamma}^{(\ell-1)}}f(\bar{X}_{\tauel_{\ell-1}}),\label{eq:ECtauel}\\
  \forall \ell \geq 2, \qquad \Exp[\Delta N_\ell(f)|\mathcal{F}_{\tauel_{\ell-1}^-}] &= \pn_{\mathfrak{n}_{\ell-1}}\bar{A}^{\boldsymbol{\gamma}^{(\ell-1)}}f,\label{eq:ECtauel-}
\end{align}
where $\bar{A}^{\bfeta}$ is defined by \eqref{def:Aeta} and where, in the second line, we used that $\pn_{\mathfrak{n}_{\ell-1}}$ is  the redistribution measure (related to the  $(\ell-1)$-th jump).
\begin{lem}[Comparison between $(\mubar_n)_{n \geq 1}$ and $(\vta_\ell)_{\ell \geq 1}$]\label{lem:compardiscconti}
  Let the assumptions of Theorem~\ref{theo:main} hold.
  \begin{enumerate}[label=(\roman*),ref=\roman*]
    \item\label{it:compardiscconti-1} For any bounded and measurable function $f : D \to \R$,
    \begin{equation*}
      \lim_{\ell \to +\infty} \frac{1}{\ell}\sum_{j=1}^{\ell} \left(\bar{A}^{\boldsymbol{\gamma}^{(j)}}f-Af\right)(\bar{X}_{\tauel_j}) = 0, \qquad \text{almost surely.}
    \end{equation*}
    \item\label{it:compardiscconti-2} For any bounded and measurable function $f : D \to \R$,
    \begin{equation*}
      \lim_{\ell \to +\infty} \frac{1}{\ell}\sum_{j=1}^{\ell}\Big(\Delta N_{j+1}(f) - \vta_\ell Af\Big) = 0, \qquad \text{almost surely.}
    \end{equation*}
    \item\label{it:compardiscconti-3} Almost surely,
    \begin{equation*}
      \liminf_{\ell\to+\infty}\vta_\ell A \iind{D}>0, \quad \liminf_{\ell\to+\infty}\frac{\tauel_{\ell}}{\ell}>0, \quad \liminf_{\ell\to+\infty}\frac{1}{\ell}\sum_{j=1}^\ell \ES[\Delta \tauel_{j+1}|{\cal F}_{\tauel_j}]>0,
    \end{equation*}
    where $\Delta \tauel_{j+1}=\tauel_{j+1}-\tauel_j=\Delta N_{j+1} (\iind{D})$.
    \item\label{it:compardiscconti-4} If $(\vta_\ell)_{\ell\ge1}$ is almost surely convergent to a probability $\nu\in{\cal M}_1(D)$ for the weak topology, then we have
    \begin{equation*}
      \lim_{\ell \to +\infty} \frac{\tauel_\ell}{\ell} = \nu A \iind{D}, \qquad \text{almost surely;}
    \end{equation*}
    and, for any bounded and measurable function $f : D \to \R$,
    \begin{equation*}
      \lim_{n \to +\infty} \mubar_n(f) = \Pi_\nu(f), \qquad \text{almost surely.}
    \end{equation*}
  \end{enumerate}
\end{lem}
\begin{proof} 
  Since $\mathfrak{n}_j \to +\infty$ when $j \to +\infty$, Assumption~\ref{it:steps-1} ensures that $|\boldsymbol{\gamma}^{(j)}| \to 0$. Therefore, by Proposition~\ref{prop:weakA} and the Ces\`aro Lemma, for any bounded and measurable function $f : D \to \R$ we have
  \begin{equation*}
    \left|\frac{1}{\ell}\sum_{j=1}^{\ell} \left(\bar{A}^{\boldsymbol{\gamma}^{(j)}}f-Af\right)(\bar{X}_{\tauel_j})\right| \leq \frac{1}{\ell}\sum_{j=1}^{\ell} \sup_{x \in D} |\bar{A}^{\boldsymbol{\gamma}^{(j)}}f(x)-Af(x)| \xrightarrow{\ell\to+\infty} 0,
  \end{equation*}
  which proves~\eqref{it:compardiscconti-1}. Now, since the paths of $\bar{X}_t$ are continuous on each interval $[\tauel_\ell, \tauel_{\ell+1})$, one can check that for any $\ell \ge 1$, $\tauel_\ell$ is ${\cal F}_{\tauel_\ell^-}$-measurable. It follows that for any measurable and bounded $f : D \to \R$, the sequence $(\Delta N_\ell(f))_{\ell\ge1}$ is $({\cal F}_{\tauel_{\ell}^{-}})_{\ell \geq 1}$-adapted and by~\eqref{eq:ECtauel-}, 
$$\ES[\Delta N_{\ell+1}(f)|{\cal F}_{\tauel_{\ell}^{-}}]=\pn_{\mathfrak{n}_\ell} (\bar{A}^{\boldsymbol{\gamma}^{(\ell)}} f).$$
But for any $\ell\ge 1$, we also have
$$\ES[\bar{A}^{\boldsymbol{\gamma}^{(\ell)}} f(\bar{X}_{\tauel_{\ell}})|{\cal F}_{\tauel_{\ell}^{-}}]=\pn_{\mathfrak{n}_\ell} (\bar{A}^{\boldsymbol{\gamma}^{(\ell)}} f),$$
so that the sequence $(\Delta N_{\ell+1}(f)-\bar{A}^{\boldsymbol{\gamma}^{(\ell)}} f(\bar{X}_{\tauel_{\ell}}))_{\ell \geq 1}$ is a martingale difference sequence for the filtration $({\cal F}_{\tauel_{\ell}^{-}})_{\ell \geq 1}$. Moreover, for any $\ell \geq 1$,
\begin{align*}
  \Exp\left[\left(\Delta N_{\ell+1}(f)-\bar{A}^{\boldsymbol{\gamma}^{(\ell)}} f(\bar{X}_{\tauel_{\ell}})\right)^2 | {\cal F}_{\tauel_{\ell}^{-}}\right] & \leq 2 \Exp\left[\left(\Delta N_{\ell+1}(f)\right)^2+\left(\bar{A}^{\boldsymbol{\gamma}^{(\ell)}} f(\bar{X}_{\tauel_{\ell}})\right)^2 | {\cal F}_{\tauel_{\ell}^{-}}\right]\\
  & \leq 2 \|f\|_\infty^2 \Exp\left[\left(\tauel_{\ell+1}-\tauel_\ell\right)^2+\left(\Exp_{\bar{X}_{\tauel_\ell}}[\bar{\tau}^\bfeta_D]_{|\bfeta=\boldsymbol{\gamma}^{(\ell)}}\right)^2 | {\cal F}_{\tauel_{\ell}^{-}}\right]\\
  & \leq 4 \|f\|_\infty^2 \sup_{x \in D, |\bfeta| \leq |\boldsymbol{\gamma}^{(\ell)}|} \Exp_x\left[(\bar{\tau}^\bfeta_D)^2\right].
\end{align*}
Since $|\boldsymbol{\gamma}^{(\ell)}|$ is bounded by $|\boldsymbol{\gamma}|$ uniformly in $\ell$, we deduce from Lemma~\ref{lem:controlmomentstempsarret} that 
\begin{equation*}
  \sup_{\ell \geq 1} \Exp\left[\left(\Delta N_{\ell+1}(f)-\bar{A}^{\boldsymbol{\gamma}^{(\ell)}} f(\bar{X}_{\tauel_{\ell}})\right)^2\right] < +\infty,
\end{equation*}
which by the strong Law of Large Numbers for martingale difference sequences leads to
\begin{equation*}
  \lim_{\ell \to +\infty} \frac{1}{\ell}\sum_{j=1}^\ell \left(\Delta N_{j+1}(f)-\bar{A}^{\boldsymbol{\gamma}^{(j)}} f(\bar{X}_{\tauel_j})\right) = 0, \qquad \text{almost surely.}
\end{equation*}
Combining this statement with~\eqref{it:compardiscconti-1} proves~\eqref{it:compardiscconti-2}.

By Lemma~\ref{lem:tightness-vartheta}, almost surely, there exists a compact set $K \subset D$ for which $\vta_\ell(K) \geq 1/2$. Hence, since $A\iind{D}$ is continuous and positive on $D$, for any $\ell \geq 1$,
\begin{equation*}
  \vta_\ell A \iind{D} \geq \frac{1}{2}\inf_{x \in K} A \iind{D}(x) > 0, 
\end{equation*}
which proves that almost surely, $\liminf_{\ell\to+\infty}\vta_\ell A \iind{D}>0$. By~\eqref{it:compardiscconti-2}, and since
\begin{equation*}
  \frac{1}{\ell}\sum_{j=1}^\ell \Delta N_{j+1}(\iind{D}) = \frac{\tauel_{\ell+1}-\tauel_1}{\ell},
\end{equation*}
we then deduce that almost surely, $\liminf_{\ell\to+\infty}\tauel_\ell/\ell>0$. Last, by~\eqref{eq:ECtauel},
\begin{equation*}
  \ES[\Delta \tauel_{j+1}|{\cal F}_{\tauel_j}] = \bar{A}^{\boldsymbol{\gamma}^{(j)}}\iind{D}(\bar{X}_{\tauel_j}),
\end{equation*}
so that by~\eqref{it:compardiscconti-1}, 
\begin{equation*}
  \lim_{\ell \to +\infty} \frac{1}{\ell}\sum_{j=1}^\ell (\ES[\Delta \tauel_{j+1}|{\cal F}_{\tauel_j}] - A \iind{D}(\bar{X}_{\tauel_j}))=0.
\end{equation*}
Since, on the other hand,
\begin{equation*}
  \liminf_{\ell \to +\infty} \frac{1}{\ell} \sum_{j=1}^\ell A \iind{D}(\bar{X}_{\tauel_j}) = \liminf_{\ell \to +\infty} \vta_\ell A \iind{D} > 0,
\end{equation*}
this completes the proof of~\eqref{it:compardiscconti-3}.

We finally assume that almost surely, $\vta_\ell$ converges weakly to some probability measure $\nu$ on $D$. Then, for any measurable and bounded function $f : D \to \R$, $Af$ is continuous and bounded and therefore $\vta_\ell Af$ converges to $\nu Af$, almost surely. By~\eqref{it:compardiscconti-2} we next deduce that
\begin{equation*}
  \lim_{\ell \to +\infty} \frac{1}{\ell}\sum_{j=1}^\ell \Delta N_j(f) = \nu A f, \qquad \text{almost surely.}
\end{equation*}
Applying this result to $f=\iind{D}$, we first get that $\tauel_\ell/\ell \to \nu A \iind{D}$, almost surely. Then, for any measurable and bounded function $f : D \to \R$, we deduce that
\begin{equation*}
  \lim_{\ell \to +\infty} \mubar_{\mathfrak{n}_\ell}(f) = \lim_{\ell \to +\infty} \frac{\ell}{\tauel_\ell} \frac{1}{\ell}\sum_{j=1}^\ell \Delta N_j(f) = \frac{\nu A f}{\nu A \iind{D}} = \Pi_\nu f, \qquad \text{almost surely.}
\end{equation*}
To complete the proof, we show that this convergence holds along the whole sequence $(\mubar_n)_{n \geq 1}$. To this aim, we write, for any $n \in \{\mathfrak{n}_\ell, \ldots, \mathfrak{n}_{\ell+1}-1\}$,
\begin{equation*}
  |\mubar_n(f) - \mubar_{\mathfrak{n}_\ell}(f)| \leq 2\|f\|_\infty \frac{\Gamma_n - \Gamma_{\mathfrak{n}_\ell}}{\Gamma_n} \leq 2\|f\|_\infty \frac{\tauel_{\ell+1}-\tauel_\ell}{\tauel_\ell},
\end{equation*}
and since $\tauel_\ell/\ell$ converges almost surely to the positive number $\nu A \iind{D}$ then the right-hand side vanishes. This shows that $\mubar_n$ converges to $\Pi_\nu f$ and completes the proof of~\eqref{it:compardiscconti-4}.
\end{proof}

%%%%%%%%%%%%%%%%%%%%%%%%%%%%%%%%%%%%%%%%%%%%%%%%%%%%%%%%%%%%%%%%%%%%%%%%%%%%%%%%%%%%%%%%%%%%%%%%%%%%%%%%%
%%%%%%%%%%%%%%%%%%%%%%%%%%%%%%%%%%%%%%%%%%%%%%%%%%%%%%%%%%%%%%%%%%%%%%%%%%%%%%%%%%%%%%%%%%%%%%%%%%%%%%%%%
\subsection{Convergence of $(\vta_\ell)_{\ell \geq 1}$}\label{ss:cv-vartheta} 

The purpose of this subsection is to establish the following statement.

\begin{prop}[Convergence of $(\vta_\ell)_{\ell \geq 1}$]\label{prop:cv-vartheta}
  Under the assumptions of Theorem~\ref{theo:main}, $(\vta_\ell)_{\ell \geq 1}$ converges almost surely to $\mu^\star$, weakly in $\mathcal{M}_1(D)$.
\end{prop}

The proof of Proposition~\ref{prop:cv-vartheta} is based on the idea that, by construction,
\begin{align}
  \vta_{\ell+1}&=\vta_\ell\left(1-\frac{1}{\ell+1}\right)+\frac{1}{\ell+1} \delta_{\bar{X}_{\tauel_{\ell+1}}}\nonumber\\
  &=\vta_\ell+\frac{1}{\ell+1} \left( \Pi_{\vta_\ell}-\vta_\ell\right)+ \frac{1}{\ell+1}\left( \delta_{\bar{X}_{\tauel_{\ell+1}}}-\Pi_{\vta_\ell}\right)\nonumber\\
  &=\vta_\ell+{h}_{\ell} F(\vta_\ell)+{h}_{\ell} \varepsilon_{\ell+1}\label{eq:recursivvta}
\end{align}
where
\begin{align}\label{def:F}
  h_\ell=\frac{1}{(\ell+1) (\vta_{\ell} A  \iind{D})},\quad F(\mu)=\mu A-(\mu A  \iind{D}) \mu,\quad \varepsilon_{\ell+1}= (\vta_{\ell} A  \iind{D})\delta_{\bar{X}_{\tauel_{\ell+1}}}-\vta_\ell A.
\end{align}
Written in this way, $(\vta_\ell)_{\ell\ge1}$ can be viewed as a discretization of the ordinary differential equation 
\begin{equation}\label{eq:ODE}
  \dot{\varphi}_t = F(\varphi_t),
\end{equation}
with a sequence of random steps $(h_\ell)_{\ell\ge1}$ and a perturbation sequence denoted by $(\varepsilon_\ell)_{\ell\ge 2}$. It is clear that $F(\mu^\star)=0$, and to prove that $\vta_\ell$ converges to $\mu^\star$, we shall show that $(\vta_\ell)_{\ell\ge1}$ is an \emph{asymptotic pseudo-trajectory} for~\eqref{eq:ODE} on the one hand, and that $\mu^\star$ is the unique, global attractor of this ODE on the other hand. To proceed, we first show that the noise component in~\eqref{eq:recursivvta} vanishes asymptotically. Before stating it, we first remark that, by~\eqref{def:F} and Lemma~\ref{lem:controlmomentstempsarret},
  \begin{equation}\label{eq:assumption3i}
    \liminf_{\ell\to+\infty} (\ell+1) h_\ell \ge \frac{1}{\sup_{x\in D}\ES_x[\tau_D]}>0, \qquad \text{almost surely,}
  \end{equation}
  and by Lemma~\ref{lem:compardiscconti}~\eqref{it:compardiscconti-3},
  \begin{equation}\label{eq:assumption3ii}
    \limsup_{\ell\to+\infty} (\ell+1) h_\ell \le \frac{1}{\liminf_{\ell\to+\infty} \vta_\ell A\iind{D}} < +\infty, \qquad \text{almost surely.}
  \end{equation}
For  $t>0$, we also set 
  \begin{equation}\label{def:Ndelt}
    N(t)=\inf\left\{N \ge 1, \sum_{j=1}^{N+1}h_j>t\right\}.
  \end{equation}
  Note that~\eqref{eq:assumption3i} and~\eqref{eq:assumption3ii} guarantee that $N(t)$ is almost surely finite, and that $N(t) \to +\infty$ when $t \to +\infty$.
  
  The main auxiliary result toward the proof of Proposition~\ref{prop:cv-vartheta} is the following statement.

\begin{lem}[Control of the noise component]\label{lem:control-error}
  Let the assumptions of Theorem~\ref{theo:main} hold. For any bounded and Lipschitz continuous function $f : D \to \R$, we have for any $s>0$,
  \begin{equation*}
    \lim_{t\to+\infty}\sum_{j=N(t)+1}^{N(t+s)} {h}_j \varepsilon_{j+1}(f)=0, \qquad \text{almost surely.}
  \end{equation*}
\end{lem}
\begin{proof}
  Let $f: D \to \R$ be bounded and Lipschitz continuous. The sequence
  \begin{equation*}
    \left(\sum_{j=1}^\ell \left({h}_j \varepsilon_{j+1}(f)-\ES\left[{h}_j \varepsilon_{j+1}(f)|{{\cal F}}_{\tauel_{j+1}^{-}}\right]\right)\right)_{\ell\ge1}
  \end{equation*}
  is a $({\cal F}_{\tauel_{\ell+1}})_{\ell \geq 1}$-martingale, whose infinite bracket is bounded by
  \begin{equation*}
    2\|f\|_\infty^2\sum_{j=1}^{+\infty} \frac{1}{(j+1)^2} <+\infty.
  \end{equation*}
  This implies that this martingale converges almost surely. Hence, since $N(t)\rightarrow+\infty$ as $t\rightarrow+\infty$, we are reduced to proving that 
  \begin{equation}\label{eq:criterionpseudo}
    \lim_{t\to+\infty}\sum_{j=N(t)+1}^{N(t+s)}  \ES\left[{h}_j \varepsilon_{j+1}(f)|{{\cal F}}_{\tauel_{j+1}^{-}}\right]=0, \qquad \text{almost surely.}
  \end{equation}
  One remarks that 
  \begin{equation*}
    \mathbb{E}[\varepsilon_{j+1}(f)|{{\cal F}}_{\tauel_{j+1}^{-}}]= \vta_j A \iind{D} \pn_{\mathfrak{n}_{j+1}}(f)- \vta_j A f = \vta_j A \iind{D} \left(\pn_{\mathfrak{n}_{j+1}}- \Pi_{\vta_j} \right)(f).
  \end{equation*}
  Using Assumption~\ref{cond:pn}, the definition of $N(t)$ and Lemma~\ref{lem:controlmomentstempsarret}, we get
  \begin{align*}
    &\sum_{j=N(t)+1}^{N(t+s)} h_j \vta_j A \iind{D} \left|(\pn_{\mathfrak{n}_{j+1}} - \mubar_{\mathfrak{n}_{j+1}}) (f)\right|\\
    &\le  \left(\frac{1}{N(t+s)}+s \sup_{x\in D}\ES_x[\tau_D]\right) \sup_{n\ge \mathfrak{n}_{N(t)+2}} \left|({\pn}_n-\mubar_n)(f)\right|\xrightarrow{t\to+\infty}0.
  \end{align*}
  In view of the previous convergence and of~\eqref{eq:criterionpseudo}, we are thus reduced to proving that
  \begin{equation}\label{eq:criterionpseudo2}
    \lim_{t\to+\infty} \left|\sum_{j=N(t)+1}^{N(t+s)} \frac{1}{j+1}\left(\mubar_{\mathfrak{n}_{j+1}} - \Pi_{\vta_j}\right)(f)\right|=0, \qquad \text{almost surely.}
  \end{equation}
  
  We remark that $\mubar_{\mathfrak{n}_{j+1}}(f)$ decomposes as follows:
  \begin{align*}
    \mubar_{\mathfrak{n}_{j+1}}(f)&= \left(\frac{1}{\tauel_{j+1}}-\frac{1}{\sum_{k=1}^j   \ES[\Delta \tauel_{k+1}|{\cal F}_{\tauel_k}]}\right)\tauel_{j+1} \mubar_{\mathfrak{n}_{j+1}}(f)\\
    &\quad +\frac{\tauel_j  \mubar_{\mathfrak{n}_j}(f)}{ \sum_{k=1}^j   \ES[\Delta \tauel_{k+1}|{\cal F}_{\tauel_k}]} +\frac{\Delta N_{j+1}(f)}{ \sum_{k=1}^j   \ES[\Delta \tauel_{k+1}|{\cal F}_{\tauel_k}]},
  \end{align*}
  where we recall from Subsection~\ref{ss:mu-vartheta} that $\Delta N_{j+1}(f) = \sum_{k=\mathfrak{n}_j}^{\mathfrak{n}_{j+1}-1}\gamma_{k+1}f(\bar{X}_{\Gamma_k})$. Hence, 
  \begin{equation*}
    \frac{1}{j+1}\left(\mubar_{\mathfrak{n}_{j+1}} - \Pi_{\vta_j}\right)(f) = \Delta R_{j,1}+\Delta R_{j,2}+\Delta R_{j,3}+\Delta R_{j,4},
  \end{equation*}
  where,
  \begin{align*}
    \Delta R_{j,1}&=\frac{1}{j+1}\left(\frac{1}{\tauel_{j+1}}-\frac{1}{\sum_{k=1}^j  \ES[\Delta \tauel_{k+1}|{\cal F}_{\tauel_k}]}\right)\tauel_{j+1} \mubar_{\mathfrak{n}_{j+1}}(f),\\
    \Delta R_{j,2}&=\frac{\tauel_{j}  \mubar_{\mathfrak{n}_j}(f)-\sum_{k=0}^{j-1} \bar{A}^{\boldsymbol{\gamma}^{(k)}} f(\bar{X}_{\tauel_k})}{(j+1) \sum_{k=1}^j   \ES[\Delta \tauel_{k+1}|{\cal F}_{\tauel_k}]},\\
    \Delta R_{j,3}&=\frac{1}{j+1}\left(\frac{\sum_{k=0}^{j-1} \bar{A}^{\boldsymbol{\gamma}^{(k)}} f(\bar{X}_{\tauel_k})}{ \sum_{k=1}^j   \ES[\Delta \tauel_{k+1}|{\cal F}_{\tauel_k}]}- \frac{\vta_j A f}{\vta_{j} A \iind{D}}\right),\\
    \Delta R_{j,4}&=\frac{\Delta N_{j+1}(f)}{ (j+1)\sum_{k=1}^j   \ES[\Delta \tauel_{k+1}|{\cal F}_{\tauel_k}]},
  \end{align*}
  and we recall that for $\ell \geq 1$, the sequence of time steps $\boldsymbol{\gamma}^{(\ell)}$ is defined in~\eqref{eq:defgammaparl}. In view of~\eqref{eq:criterionpseudo2}, it remains to prove that for $i=1,2,3,4$,
  \begin{equation}\label{eq:criterionpseudo2:1}
    \lim_{\ell\to+\infty} \left(\sum_{j=N(t)+1}^{N(t+s)} \Delta R_{j,i}\right)=0, \qquad \text{almost surely.}
  \end{equation}
  
  We first address the cases $i=1,2,4$. We introduce, for any $N\in\mathbb{N}$ and $c>0$, 
  \begin{equation*}
    \sigma_{N,c}:=\inf\left\{\ell \ge N, \frac{1}{\ell+1}\sum_{k=1}^{\ell+1} \ES[\Delta \tauel_{k+1}|{\cal F}_{\tauel_k}]\le c\right\},
  \end{equation*}
  with the convention that $\sigma_{N,c} = +\infty$  if  the set above is empty. We shall prove that, for $i=1,2,4$, for any $N \in \N$ and $c > 0$,
  \begin{equation*}
    \ES\left[ \sum_{j=N}^{\sigma_{N,c}} \left|\Delta R_{j,i}\right|\right] < +\infty,
  \end{equation*}
  which shows that $\sum_{j=N}^{\sigma_{N,c}} |\Delta R_{j,i}|$ is almost surely convergent. Since, by~Lemma~\ref{lem:compardiscconti}~\eqref{it:compardiscconti-3}, almost surely there exist $N$ and $c$ for which $\sigma_{N,c} = +\infty$, we deduce that $\sum_{j\ge1} |\Delta R_{j,i}|$ is almost surely convergent and in turn that \eqref{eq:criterionpseudo2:1} holds true. For $i=1$, we use the definition of $\sigma_{N,c}$ to write, for any $N$ and $c$,
  \begin{align*}
    \ES\left[ \sum_{j=N}^{\sigma_{N,c}} \left|\Delta R_{j,1}\right|\right]& \leq \ES\left[\sum_{j=N}^{\sigma_{N,c}}\frac{\|f\|_\infty}{j+1}\left| \frac{\tauel_{j+1}-\sum_{k=1}^j   \ES[\Delta \tauel_{k+1}|{\cal F}_{\tauel_k}]}{\sum_{k=1}^j   \ES[\Delta \tauel_{k+1}|{\cal F}_{\tauel_k}]}\right|\right]\\
    &\le \sum_{j=N}^{+\infty} \frac{\|f\|_\infty}{cj(j+1)}\ES\left[\left(\tauel_{j+1}-\sum_{k=1}^j   \ES[\Delta \tauel_{k+1}|{\cal F}_{\tauel_k}]\right)^2\right]^{\frac{1}{2}}\\
    &\le \sum_{j=N}^{+\infty} \frac{\|f\|_\infty}{cj(j+1)} \left(\sum_{k=0}^j \ES[(\Delta \tauel_{k+1})^2]\right)^{\frac{1}{2}}\\
    &\le \sum_{j=N}^{+\infty} \frac{\|f\|_\infty}{cj(j+1)}\sqrt{j+1}\sup_{x\in D,|\bfeta| \le |\boldsymbol{\gamma}|}\ES_x[(\bar{\tau}^\bfeta_{D})^2]^{\frac{1}{2}}<+\infty,
  \end{align*}
  by~Lemma~\ref{lem:controlmomentstempsarret}. For $i=2$, we remark that since, by~\eqref{eq:ECtauel}, $\Exp[\Delta N_{k+1}(f)|\mathcal{F}_{\tauel_k}] = \bar{A}^{\boldsymbol{\gamma}^{(k)}}f(\bar{X}_{\tauel_k})$ for $k \geq 0$, we have
  \begin{align*}
    \Exp\left[\left|\tauel_j \mubar_{\mathfrak{n}_j}(f) - \sum_{k=0}^{j-1} \bar{A}^{\boldsymbol{\gamma}^{(k)}}f(\bar{X}_{\tauel_k})\right|^2\right] &= \Exp\left[\left|\sum_{k=0}^{j-1} \left(\Delta N_{k+1}(f) -  \bar{A}^{\boldsymbol{\gamma}^{(k)}}f(\bar{X}_{\tauel_k})\right)\right|^2\right]\\
    &= \sum_{k=0}^{j-1}\Exp\left[\left(\Delta N_{k+1}(f) -  \bar{A}^{\boldsymbol{\gamma}^{(k)}}f(\bar{X}_{\tauel_k})\right)^2\right]\\
    &\leq \sum_{k=0}^{j-1}\Exp\left[\left(\Delta N_{k+1}(f)\right)^2\right]\\
    &\leq \|f\|_\infty^2 \sum_{k=0}^{j-1}\Exp\left[\left(\Delta \tauel_{k+1}\right)^2\right]\\
    &\leq j \|f\|_\infty^2 \sup_{x\in D,|\bfeta| \le |\boldsymbol{\gamma}|}\ES_x[(\bar{\tau}^\bfeta_{D})^2].
  \end{align*}
  Thanks to~Lemma~\ref{lem:controlmomentstempsarret} again, it follows that for any $N$ and $c$,
  \begin{equation*}
    \Exp\left[\sum_{j=N}^{\sigma_{N,c}} \left|\Delta R_{j,2}\right|\right] \leq \sum_{j=N}^{+\infty} \frac{\sqrt{j}}{cj(j+1)}\|f\|_\infty \sup_{x\in D,|\bfeta| \le |\boldsymbol{\gamma}|}\ES_x[(\bar{\tau}^\bfeta_{D})^2]^{\frac{1}{2}} < +\infty.
  \end{equation*}
  Last, for $i=4$, with similar arguments we have
  \begin{align*}
    \Exp\left[\sum_{j=N}^{\sigma_{N,c}} \left|\Delta R_{j,4}\right|\right] &\leq \sum_{j=N}^{+\infty} \frac{1}{cj(j+1)} \Exp\left[|\Delta N_{j+1}(f)|\right] \leq \sum_{j=N}^{+\infty} \frac{1}{cj(j+1)} \|f\|_\infty \sup_{x\in D,|\bfeta| \le |\boldsymbol{\gamma}|}\ES_x[\bar{\tau}^\bfeta_{D}] < +\infty.
  \end{align*}

  We now address the case $i=3$. We first write
  \begin{equation*}
    \Delta R_{j,3} = \frac{1}{j+1}\left(\mathrm{I}_j + \mathrm{II}_j + \mathrm{III}_j\right),
  \end{equation*}
  with
  \begin{equation*}
    \mathrm{I}_j = \frac{\sum_{k=0}^{j-1} (\bar{A}^{\boldsymbol{\gamma}^{(k)}}f-Af)(\bar{X}_{\tauel_k})}{\sum_{k=1}^j \Exp[\Delta \tauel_{k+1} | \mathcal{F}_{\tauel_k}]}, \qquad \mathrm{II}_j = \frac{Af(\bar{X}_{\tauel_j}) - Af(\bar{X}_0)}{\sum_{k=1}^j \Exp[\Delta \tauel_{k+1} | \mathcal{F}_{\tauel_k}]},    
  \end{equation*}
  and
  \begin{equation*}
    \mathrm{III}_j = \frac{\vta_j A f}{\vta_j A \iind{D}} \frac{\sum_{k=1}^j (A \iind{D}(\bar{X}_{\tauel_k}) - \Exp[\Delta \tauel_{k+1} | \mathcal{F}_{\tauel_k}])}{\sum_{k=1}^j \Exp[\Delta \tauel_{k+1} | \mathcal{F}_{\tauel_k}]} = \Pi_{\vta_\ell}f \frac{\sum_{k=1}^j (A \iind{D} - \bar{A}^{\boldsymbol{\gamma}^{(k)}} \iind{D})(\bar{X}_{\tauel_k})}{\sum_{k=1}^j \Exp[\Delta \tauel_{k+1} | \mathcal{F}_{\tauel_k}]}.
  \end{equation*}
  Using~Lemma~\ref{lem:compardiscconti}~\eqref{it:compardiscconti-3} and~Proposition~\ref{prop:weakA}, we get
  \begin{equation*}
    \lim_{j \to +\infty} |\mathrm{I}_j| + |\mathrm{II}_j| + |\mathrm{III}_j| = 0, \qquad \text{almost surely.}
  \end{equation*}
  Therefore, almost surely, for any $\varepsilon > 0$, there exists $\ell_0(\epsilon) \geq 1$ such that $|\Delta R_{j,3}| \leq \varepsilon/(j+1)$ for any $j \geq \ell_0(\varepsilon)$. As a consequence, if $\ell \geq \ell_0(\epsilon)$ then
  \begin{equation*}
    \sum_{j=N(t)+1}^{N(t+s)} \left|\Delta R_{j,3}\right| \leq \varepsilon \sum_{j=N(t)+1}^{N(t+s)} \frac{1}{j+1} = \varepsilon \sum_{j=N(t)+1}^{N(t+s)} h_j \vta_j A \iind{D} \leq \varepsilon \left(1+{s} \sup_{x \in D} \Exp_x[\tau_D]\right),
  \end{equation*}
  which by~Lemma~\ref{lem:controlmomentstempsarret} shows that \eqref{eq:criterionpseudo2:1} holds true for $i=3$ and completes the proof.
\end{proof}

The completion of the proof of Proposition~\ref{prop:cv-vartheta} now follows from rather standard arguments, which are detailed in Section~\ref{proof:propcvvartheta}.

%%%%%%%%%%%%%%%%%%%%%%%%%%%%%%%%%%%%%%%%%%%%%%%%%%%%%%%%%%%%%%%%%%%%%%%%%%%%%%%%%%%%%%%%%%%%%%%%%%%%%%%%%
%%%%%%%%%%%%%%%%%%%%%%%%%%%%%%%%%%%%%%%%%%%%%%%%%%%%%%%%%%%%%%%%%%%%%%%%%%%%%%%%%%%%%%%%%%%%%%%%%%%%%%%%%
\subsection{Completion of the proof of Theorem~\ref{theo:main}}\label{ss:pf-main} We now conclude the proof by summarizing the arguments.

By~Proposition~\ref{prop:cv-vartheta}, $\vta_\ell$ converges weakly to $\mu^\star$, almost surely. Therefore, Lemma~\ref{lem:compardiscconti}~\eqref{it:compardiscconti-4} yields that for any measurable and bounded function $f: D \to \R$, $\mu_n(f) \to \Pi_{\mu^\star}(f)$, and it finally immediately follows from the definition of QSDs that $\Pi_{\mu^\star}(f)=\mu^\star(f)$. This completes the proof of the first statement in Theorem~\ref{theo:main}.

Now, to prove the almost sure convergence to $\lambda^\star$
 of ${\Gamma_n}^{-1}\sum_{k=1}^n \theta_k$,  we again use the  combination of Proposition~\ref{prop:cv-vartheta} and Lemma~\ref{lem:compardiscconti}~\eqref{it:compardiscconti-4} to obtain that almost surely,
 $$ \frac{\dten_{\mathfrak{n}_\ell}}{\ell}\xrightarrow{\ell \rightarrow+\infty} \mu^\star A \iind{D}=\ES_{\mu^{\star}}[\tau_D]=\frac{1}{\lambda^\star},$$
 since $\tau_D$ has exponential distribution with parameter $\lambda^\star$ when starting from $\mu^\star$ (see \emph{e.g.} \cite[Theorem 2.2]{CMM13}). To deduce the result, it then suffices to remark that for any $n\in \llbracket \mathfrak{n}_\ell , \mathfrak{n}_{\ell+1}-1\rrbracket$,
 $$ \frac{\ell}{\dten_{\mathfrak{n}_\ell+1}}\le \frac{1}{\Gamma_n}\sum_{k=1}^n \theta_k\le \frac{\ell}{\dten_{\mathfrak{n}_\ell}}$$
and that $\dten_{\mathfrak{n}_\ell}\rightarrow+\infty$ as $\ell \to +\infty$.

%%%%%%%%%%%%%%%%%%%%%%%%%%%%%%%%%%%%%%%%%%%%%%%%%%%%%%%%%%%%%%%%%%%%%%%%%%%%%%%%%%%%%%%%%%%%%%%%%%%%%%%%%
%%%%%%%%%%%%%%%%%%%%%%%%%%%%%%%%%%%%%%%%%%%%%%%%%%%%%%%%%%%%%%%%%%%%%%%%%%%%%%%%%%%%%%%%%%%%%%%%%%%%%%%%%
%%%%%%%%%%%%%%%%%%%%%%%%%%%%%%%%%%%%%%%%%%%%%%%%%%%%%%%%%%%%%%%%%%%%%%%%%%%%%%%%%%%%%%%%%%%%%%%%%%%%%%%%%
\section{Proof of Theorems~\ref{theo2} and~\ref{theo:weakerrorf}}\label{sec:converg-distrib}
In this section, we focus on the proofs of Theorem~\ref{theo2}, that is to say of the convergence in distribution of $(\bar{X}_{\Gamma_n})_{n\ge0}$ towards $\mu^\star$, and Theorem~\ref{theo:weakerrorf} which establishes the weak convergence toward $\mu$-return processes, over finite time horizons, of Euler schemes with redistribution. The main tool for both results is Proposition~\ref{propdistrib} which is a slight generalization of Theorem~\ref{theo:weakerrorf}. It is stated and proved in~Subsection~\ref{ss:weakerror}. The proof of Theorem~\ref{theo2} is then completed in Subsection~\ref{sec:prooftheo2}.

%%%%%%%%%%%%%%%%%%%%%%%%%%%%%%%%%%%%%%%%%%%%%%%%%%%%%%%%%%%%%%%%%%%%%%%%%%%%%%%%%%%%%%%%%%%%%%%%%%%%%%%%%
%%%%%%%%%%%%%%%%%%%%%%%%%%%%%%%%%%%%%%%%%%%%%%%%%%%%%%%%%%%%%%%%%%%%%%%%%%%%%%%%%%%%%%%%%%%%%%%%%%%%%%%%%
\subsection{Weak convergence of Euler schemes with redistribution}\label{ss:weakerror}

We first gather a preliminary estimate related to Euler schemes with redistribution as introduced in~Definition~\ref{defi:eulerdistrib}. For such a scheme $(\bar{X}^{\bfeta,\bfnu}_t)_{t \geq 0}$, and with the notation of~Definition~\ref{defi:eulerdistrib}, we denote by $(\bar\tauel_k)_{k\ge1}$ the associated jump sequence, that is to say the set of times $\dten_n$ for which $U^{\bfeta,\bfnu}_n = 1$. We also introduce the notation
\begin{equation}\label{eq:defNt}
  N(t) = \inf\left\{N \geq 1: \sum_{j=1}^{N+1} \eta_j > t\right\}, 
\end{equation}
so that $n \leq N(t)$ if and only if $\dten_n \leq t$. We last recall the definition~\eqref{def:wass} of the Wasserstein distance $\mathcal{W}_1$.

\begin{lem}[Estimate on Euler schemes with redistribution]\label{lem:cvdistrib} 
  Assume \ref{cond:D} and \ref{cond:coeffs}. For any $T>0$, $\varepsilon > 0$ and $\mu \in \mathcal{M}_1(D)$, there exist $\rho > 0$, $\delta > 0$ and $\eta_0 > 0$ such that, for any Euler scheme with redistribution $(\bar{X}^{\bfeta,\bfnu}_t)_{t \geq 0}$, if
  \begin{equation}\label{eq:cond-cvdistrib}
    |\bfeta| \leq \eta_0, \qquad \mathcal{W}_1\left(\mathcal{L}(\bar{X}^{\bfeta,\bfnu}_0), \mu\right) \leq \rho, \qquad {\Pr\left(\sup_{n \geq 1} \mathcal{W}_1(\nu_n,\mu) \leq \rho\right) = 1},
  \end{equation}
  then 
  \begin{equation*}
    \sup_{t\in[0,T]}\PE\left(\exists k\ge 1, \bar\tauel_k\in[t-\delta, t+\delta]\right) \le \varepsilon.
  \end{equation*}
\end{lem}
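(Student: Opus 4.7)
The strategy is to bound, uniformly in $t \in [0,T]$, the expected number of jumps $\ES\bigl[\#\{k \geq 1 : \bar{\tauel}_k \in [t-\delta, t+\delta]\}\bigr]$, which dominates the probability in the statement by Markov's inequality. I will achieve this via a renewal-type integral inequality solved by Gronwall's lemma, the key input being a uniform bound on the Lebesgue density of the first jump time $\bar\tauel_1$ of the scheme.

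\textbf{Step~1 (reduction to starting points away from the boundary).} Since $\mu \in \mathcal{M}_1(D)$, for any $\varepsilon_0 > 0$ one can choose $\kappa > 0$ with $\mu(\{\psi_D < \kappa\}) \leq \varepsilon_0$. Testing the $\mathcal{W}_1$-closeness assumption against the Lipschitz cutoff $f_\kappa(x) := (1 - 2\psi_D(x)/\kappa)_+$ yields $\nu(\{\psi_D < \kappa/2\}) \leq 2\varepsilon_0 + 2\rho/\kappa$ for every $\nu$ with $\mathcal{W}_1(\nu, \mu) \leq \rho$. Hence, up to an arbitrarily small error, both $\mathcal{L}(\bar{X}^{\bfeta, \bfnu}_0)$ and (almost surely) each $\nu_n$ put their mass on the compact set $K := \{x \in D : \psi_D(x) \geq \kappa/2\}$ strictly contained in $D$.

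\textbf{Step~2 (time-regularity of the exit distribution).} For the continuous diffusion starting from $x \in K$, parabolic regularity applied to the survival function $u(s, x) := \Pr_x(\tau_D > s)$ shows that $s \mapsto u(s, x)$ is smooth on $[0, T]$ with $|\partial_s u(s, x)| \leq C_K$ uniformly on $[0, T] \times K$ (the compactness of $K$ and its positive distance to $\partial D$ are essential here). Combining this with the weak-error statement of \cref{lem:weakerror} and the Gaussian density bounds of \cref{lem:gauss-density}, this Lipschitz-in-time regularity transfers to the Euler scheme: for $|\bfeta| \leq \eta_0$ with $\eta_0$ small enough, $\PE_x\bigl(\bar{\tau}^{\bfeta}_D \in [a, b]\bigr) \leq C_K (b - a) + \varepsilon_2$ uniformly in $x \in K$ and $[a, b] \subset [0, T]$, with $\varepsilon_2$ tunable.

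\textbf{Step~3 (renewal inequality and conclusion).} Let $\phi(t)$ denote the supremum of $\PE(\exists k \geq 1, \bar{\tauel}_k \in [t-\delta, t+\delta])$ over all Euler schemes with redistribution satisfying \eqref{eq:cond-cvdistrib} for the same parameters $(\rho, \eta_0)$. Conditioning on $\bar\tauel_1$ and using (via \cref{rk:informationavantzero} applied to the post-$\bar\tauel_1$ process) that the shifted scheme still satisfies \eqref{eq:cond-cvdistrib}, one derives
\begin{equation*}
  \phi(t) \leq \PE\bigl(\bar\tauel_1 \in [t-\delta, t+\delta]\bigr) + \int_0^{t-\delta} \phi(t-s)\, F_1(\dd s),
\end{equation*}
where $F_1$ is the law of $\bar\tauel_1$. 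Combining Steps~1--2 bounds the first summand by $C_K \delta + \varepsilon_1$ and the density of $F_1$ (on contributions coming from $K$) by $C_K$, so the integral is at most $C_K \int_0^t \phi(u)\, \dd u + \varepsilon_1$. Gronwall's lemma yields $\sup_{t \in [0,T]} \phi(t) \leq (C_K \delta + 2\varepsilon_1) \exp(C_K T)$, which is $\leq \varepsilon$ for the appropriate choice of $\kappa, \rho, \eta_0, \delta$. The main obstacle lies in Step~2: transferring the time-Lipschitz regularity of the continuous exit distribution to the discrete scheme uniformly in the starting point and the step sequence requires a quantitative weak-error analysis at the level of the exit event, since $\bar\tau^\bfeta_D$ is not a smooth path functional and one must tie together the heat-kernel bounds of \cref{lem:gauss-density} with the escape event $\{\bar{Y}^\bfeta_{\un{t}} \not\in D\}$.
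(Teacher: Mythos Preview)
Your overall renewal strategy is sound and matches the paper's spirit, but Step~2 has a genuine gap. You invoke \cref{lem:weakerror} and \cref{lem:gauss-density} to transfer the time-Lipschitz regularity of $s \mapsto \Pr_x(\tau_D > s)$ to the discrete exit time $\bar\tau^\bfeta_D$, but neither tool addresses the exit event: \cref{lem:weakerror} controls $\bar A^\bfeta f - Af$ for Lipschitz test functions $f$ on $D$, not distributions of exit times, and \cref{lem:gauss-density} bounds the density of $\bar Y^\bfeta_{\dten_n}$ on $\R^d$, with no link to the event $\{\bar Y^\bfeta_{\un t} \notin D\}$. You yourself flag this as the ``main obstacle,'' and indeed closing it with these lemmas would require substantial additional work (essentially a weak error estimate on the killed semigroup, which the paper does not develop).

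The paper bypasses this entirely. Instead of a Lipschitz/density bound on the law of $\bar\tau^\bfeta_D$, it uses the \emph{strong} error estimate $\sup_{x \in D} \ES_x[|\tau_D - \bar\tau^\bfeta_D|] \to 0$ from \cref{lem:strong-error-tau} together with the small-interval bound $\sup_{x \in K, s \leq T} \Pr_x(\tau_D \in [s-2\delta, s+2\delta]) \to 0$ from \cref{lem:cvdistrib:1}, yielding directly
\[
  \Phi_\rho(s,\delta,\bfeta) := \sup_{\alpha : \mathcal W_1(\alpha,\mu) \leq \rho} \Pr_\alpha(\bar\tau^\bfeta_D \in [s-\delta,s+\delta]) \leq \varepsilon
\]
for suitable $\rho,\delta,\eta_0$. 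This is qualitatively weaker than your Lipschitz bound---smallness on intervals of a fixed length $2\delta$, no density---but it suffices: instead of Gronwall, the paper writes $\sum_{k \geq 1} \Pr(\bar\tauel_k \in [t-\delta,t+\delta]) \leq \varepsilon \sum_{k \geq 1} \Pr(\bar\tauel_{k-1} \leq T+1)$ and then bounds the latter sum by a constant $C(T)$ \emph{independent of $\varepsilon$} via a direct counting argument (applying $\Phi_\rho(0,\delta',\bfeta) \leq 1/2$ to each inter-jump time shows that the number of jumps before $T+1$ has binomial/geometric-type tails). This ``small probability per jump $\times$ bounded expected number of jumps'' decomposition replaces your Gronwall loop and removes any need for density control.
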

\begin{proof} 
  Let us first fix $\mu \in \mathcal{M}_1(D)$ and $\varepsilon > 0$. As a preliminary step, we claim that, defining the compact set $K_r=\{x\in D, d(x,\partial D)\ge r\}$, one can choose $r > 0$ and $\rho > 0$ such that for any $\alpha \in \mathcal{M}_1(D)$, if $\mathcal{W}_1(\alpha,\mu) \leq \rho$ then $\alpha(K_r^c) \leq \varepsilon/3$. The proof of this claim is elementary and we omit it. From now on we set $K=K_r$.
  
  We now fix $T>0$. Let $(\bar{X}^{\bfeta,\bfnu}_t)_{t \geq 0}$ be an Euler scheme with redistribution which satisfies the second and third conditions of~\eqref{eq:cond-cvdistrib} with $\rho$ fixed in the preliminary step. We first write, for any $\delta > 0$ and $t \in [0,T]$, 
  \begin{equation}\label{eq:cvdistrib:0}
    \PE\left(\exists k\ge 1, \bar\tauel_k\in[t-\delta, t+\delta]\right) \leq \sum_{k \geq 1} \PE\left(\bar\tauel_k\in[t-\delta, t+\delta]\right).
  \end{equation}
  For any $k \geq 1$, it follows from~Definition~\ref{defi:eulerdistrib} that the random variable $\bar\tauel_{k-1}$ is $\mathcal{G}_{(\bar\tauel_{k-1})^-}$-measurable, and conditionally on $\mathcal{G}_{(\bar\tauel_{k-1})^-}$, $\bar\tauel_k-\bar\tauel_{k-1}$ has the law of the exit time from $D$ for an Euler scheme (without redistribution) with initial distribution $\nu_{N(\bar\tauel_{k-1})}$ and step sequence $\bfeta^{(\bar\tauel_{k-1})} = (\eta_{N(\bar\tauel_{k-1})+n})_{n \geq 1}$, where $N(\cdot)$ is defined in~\eqref{eq:defNt}. Note that for $k=1$ we take the convention that $\mathcal{G}_{0^-}=\{\emptyset,\Omega\}$, $\bar\tauel_0=0$ and $\nu_0 = \mathcal{L}(\bar{X}^{\bfeta,\bfnu}_0)$. By the second and third conditions in~\eqref{eq:cond-cvdistrib}, we therefore deduce that, taking the conditional expectation with respect to $\mathcal{G}_{\dten_k^-}$,
  \begin{equation}\label{eq:cvdistrib:1}
    \PE\left(\bar\tauel_k\in[t-\delta, t+\delta]\right) \leq \ES\left[\Phi_\rho(t-\bar\tauel_{k-1},\delta,\bfeta^{(\bar\tauel_{k-1})})\ind{\bar\tauel_{k-1}<t+\delta}\right],
  \end{equation}
  where, for $s\ge-\delta$,
  \begin{equation}\label{eq:strategyphi}
    \Phi_\rho(s, \delta, \bfeta)=\sup_{\alpha, {\cal W}_1(\alpha,\mu)\le \rho}\PE_{\alpha}(\bar{\tau}^\bfeta_{D}\in[s-\delta,s+\delta]),
  \end{equation}
{with the notation of~\eqref{eq:taubareta} for $\bar{\tau}^\bfeta_{D}$}. For the compact subset $K$ fixed in the preliminary step,
  \begin{equation*}
    \Phi_\rho(s, \delta, \bfeta)\le \sup_{x\in K}\Pr_x(|\tau_D-\bar{\tau}^\bfeta_D|\ge\delta)+\sup_{x\in K} \PE_x(\tau_D\in[s-2\delta,s+2\delta])+\frac{\varepsilon}{3}.
  \end{equation*}
  By~Lemma~\ref{lem:cvdistrib:1}, we may fix $\delta \in [0,1)$ such that
  \begin{equation*}
    \sup_{x\in K} \PE_x(\tau_D\in[s-2\delta,s+2\delta]) \leq \frac{\varepsilon}{3},
  \end{equation*}
  uniformly in $s \leq T$. By~Lemma~\ref{lem:strong-error-tau} and the Markov inequality, for this choice of $\delta$ one may fix $\eta_0 >0$ such that if $|\bfeta| \leq \eta_0$ then 
  \begin{equation*}
    \sup_{x\in K}\Pr_x(|\tau_D-\bar{\tau}^\bfeta_D|\ge\delta) \leq \frac{\varepsilon}{3}.
  \end{equation*}
  As an intermediate conclusion, we have constructed $\delta$, $\rho$ and $\eta_0$, which depend on $T$, $\mu$ and $\varepsilon$, such that if $|\bfeta| \leq \eta_0$ then
  \begin{equation}\label{eq:boundphistra}
    \forall s \in [0,T], \qquad \Phi_\rho(s, \delta, \bfeta) \leq \varepsilon.
  \end{equation}
  As a consequence, by~\eqref{eq:cvdistrib:1}, as soon as the scheme $(\bar{X}^{\bfeta,\bfnu}_t)_{t \geq 0}$ satisfies~\eqref{eq:cond-cvdistrib}, it holds
  \begin{equation}\label{eq:cvdistrib:2}
    \forall t \in [0,T], \qquad \PE(\bar{\tauel}_k\in[t-\delta, t+\delta])\le \varepsilon\PE(\bar\tauel_{k-1}<t+\delta) \le \varepsilon\PE(\bar\tauel_{k-1}<T+1).
  \end{equation}
   
  Now let $\delta'>0$ and $\xi_{k}=\iind{\Delta \bar\tauel_k\ge {\delta'}}$, where $\Delta \bar\tauel_k = \bar\tauel_k-\bar\tauel_{k-1}$ for $k \geq 1$. For every $k\ge 1$, we have
  \begin{equation*}
    \PE(\bar\tauel_{k}<T)\le \sum_{(i_1,\ldots,i_k)\in\{0,1\}^k, i_1+\ldots+i_k\le \lfloor \frac{T}{{\delta'}}\rfloor }\PE\left(\bigcap_{\ell=1}^k \left\{\xi_{\ell}=i_\ell\right\}\right).
  \end{equation*}
  But for any $\ell\ge 1$,
  \begin{equation*}
    \PE(\xi_{\ell}=0|\mathcal{G}_{\bar\tauel_{\ell-1}})\le \tilde{\Phi}(\nu_{\bar\tauel_{\ell-1}}, {\delta'},\bfeta^{(\bar\tauel_{\ell-1})})\le \sup_{\alpha, {\cal W}_1(\alpha,\mu)\le \rho}\tilde{\Phi}(\alpha, {\delta'},\bfeta^{(\bar\tauel_{\ell-1})}),
  \end{equation*}
  with $\tilde{\Phi}(\alpha, {\delta'},{\bfeta})=\PE_{\alpha}(\bar{\tau}^\bfeta_D \le {\delta'})$. We remark that, if we set $\delta=\delta'$ in $\Phi_\rho$ defined in \eqref{eq:strategyphi}, we have
  \begin{equation*}
    \sup_{\alpha, {\cal W}_1(\alpha,\mu)\le \rho}\tilde{\Phi}(\alpha, {\delta'},{\bfeta})\le \Phi_\rho(0,\delta',\bfeta).
  \end{equation*}
  As a consequence, applying~\eqref{eq:boundphistra} with $\varepsilon = 1/2$, we get that there exist $\delta'$, $\rho'$ and $\eta_0'$ such that 
  \begin{equation*}
    \sup_{(\alpha,\bfeta),{\cal W}_1(\alpha,\mu)\le \rho, |\bfeta|\le\eta_0} \tilde{\Phi}(\alpha, {\delta'},{\bfeta})\le \frac{1}{2}.
  \end{equation*}
  Up to replacing $\rho$ and $\eta_0$ by $\rho \wedge \rho'$ and $\eta_0 \wedge \eta_0'$, we deduce that if $(\bar{X}^{\bfeta,\bfnu}_t)_{t \geq 0}$ satisfies~\eqref{eq:cond-cvdistrib}, then
  \begin{equation*}
    \PE(\xi_{\ell}=i_\ell|\mathcal{G}_{\bar\tauel_{\ell-1}})\le \begin{cases} 
      1 &\textnormal{if $i_\ell=1$,}\\
      \frac{1}{2}&\textnormal{if $i_\ell=0$.}
    \end{cases}
  \end{equation*}
  As a consequence,
  \begin{align*}
    \sum_{k\ge1}\PE(\bar\tauel_{k-1}<T)&\le 1+ \sum_{k\ge1}\sum_{\ell=0}^{\lfloor \frac{T}{{\delta'}}\rfloor} \binom{k}{\ell} \frac{1}{2^{k-\ell}} \le 1+ \sum_{k\ge1} {2^{-k}} k^{\lfloor \frac{T}{{\delta'}}\rfloor}  \sum_{\ell=0}^{\lfloor \frac{T}{{\delta'}}\rfloor}\frac{2^\ell}{\ell !}\le 1+ \ee^2\sum_{k\ge1} {2^{-k}} k^{\lfloor \frac{T}{{\delta'}}\rfloor},
  \end{align*}
  and the right-hand side is bounded by some constant $C$ which only depends on $T$ and $\delta'$, but not on $\varepsilon$. Combining the latter estimate with~\eqref{eq:cvdistrib:0} and~\eqref{eq:cvdistrib:2}, we deduce that
  \begin{equation*}
    \sup_{t \in [0,T]} \PE\left(\exists k\ge 1, \bar\tauel_k\in[t-\delta, t+\delta]\right) \leq C \varepsilon,
  \end{equation*}
  which up to renormalizing $\varepsilon$ concludes the proof.
\end{proof}

We are now able to state the main result of this section. We recall that, for any $\mu \in \mathcal{M}_1(D)$, we denote by $(P_t^\mu)_{t\ge0}$ the semigroup of the $\mu$-return process $(X^\mu_t)_{t \geq 0}$ in $D$ defined in~Subsection~\ref{ss:cvdistrib}. If $X^\mu_0 \sim \mu_0$, then a basic property of this process, which is crucial in the proof of~Proposition~\ref{propdistrib} below, is its \emph{renewal} structure encoded in the identity
\begin{equation}\label{eq:renewal}
  \forall t \geq 0, \qquad \mu_0 P^\mu_t f = \Exp_{\mu_0}[f(X^\mu_t)] = \Exp_{\mu_0}\left[f(Y_t)\ind{t < \tau_D}\right] + \Exp_{\mu_0}\left[(\mu P_{t-\tau_D}^\mu f) \ind{t \geq \tau_D}\right],
\end{equation}
with $(Y_t)_{t \geq 0}$ the solution to~\eqref{eq:SDE} with initial distribution $\mu_0$ and $\tau_D$ the associated exit time from $D$.

The Euler scheme with redistribution $(\bar{X}^{\bfeta,\bfnu}_t)_{t \geq 0}$ enjoys a similar property, but its formulation is slightly more involved. First, we set $\nu_0 = \mathcal{L}(\bar{X}^{\bfeta,\bfnu}_0)$ and introduce the notation
\begin{equation}\label{eq:sg-redistrib}
  \forall t \geq 0, \qquad \Exp[f(\bar{X}^{\bfeta,\bfnu}_t)] = \nu_0 \bar{P}^{\bfeta,\bfnu}_t f,
\end{equation}
for the time-marginal measure of this scheme. The key point is now that, with the notation introduced before the statement of~Lemma~\ref{lem:cvdistrib} and defining 
\begin{equation*}
  \mathcal{G}'_t := \mathcal{G}_{\bar{\tauel}_1+t}, \quad \bfnu' = (\nu_{N(\bar{\tauel}_1)+n})_{n \geq 1}, \quad \bfeta' = (\eta_{N(\bar{\tauel}_1)+n})_{n \geq 1}, \quad \bar{X}_t'^{\bfeta',\bfnu'} := \bar{X}^{\bfeta,\bfnu}_{\bar{\tauel}_1+t},
\end{equation*}
we have that $\Pr$-almost surely, the triple $((\mathcal{G}'_t)_{t \geq 0},\bfnu', (\bar{X}_t'^{\bfeta',\bfnu'})_{t \geq 0})$ is an Euler scheme with redistribution, with step sequence $\bfeta'$ and initial distribution $\nu'_0 := \nu_{N(\bar{\tauel}_1)}$, defined on the space $(\Omega, \mathcal{F})$ equipped with the (random) probability measure $\Pr'(\cdot) = \Pr(\cdot|\mathcal{G}_{(\bar{\tauel}_1)^-})$. The associated time-marginal measure
\begin{equation*}
  \nu'_0 \bar{P}_t'^{\bfeta',\bfnu'} f := \Exp'\left[f\left(\bar{X}_t'^{\bfeta',\bfnu'}\right)\right] = \Exp\left[f\left(\bar{X}_{\bar{\tauel}_1+t}^{\bfeta,\bfnu}\right)|\mathcal{G}_{(\bar{\tauel}_1)^-}\right]
\end{equation*}
is then a $\mathcal{G}_{(\bar{\tauel}_1)^-}$-measurable random variable, and the discretized version of the renewal identity~\eqref{eq:renewal} writes
\begin{equation}\label{eq:renewal-discr}
  \forall t \geq 0, \qquad \nu_0 \bar{P}_t^{\bfeta,\bfnu} f = \Exp[f(\bar{X}^{\bfeta,\bfnu}_t)] = \Exp\left[f(\bar{Y}^\bfeta_t)\ind{t < \bar{\tau}^\bfeta_D}\right] + \Exp\left[(\nu'_0 \bar{P}_{t-\bar{\tauel}_1}'^{\bfeta',\bfnu'}f) \ind{t \geq \bar{\tauel}_1}\right],
\end{equation}
with $(\bar{Y}^\bfeta_t)_{t \geq 0}$ the Euler scheme associated with~\eqref{eq:SDE}, with step sequence $\bfeta$ and initial distribution $\nu_0$, and $\bar{\tau}^\bfeta_D$ {is defined by~\eqref{eq:taubareta}}.

\begin{prop}[Wasserstein error estimate for Euler schemes with redistribution]\label{propdistrib} 
  Let $\mu\in{\cal M}_1(D)$, $T>0$, $\varepsilon > 0$ and a compact subset $K \subset D$. There exist $\rho>0$, $\eta_0>0$ and $C \geq 0$, such that, for any Euler scheme with redistribution $(\bar{X}^{\bfeta,\bfnu}_t)_{t \geq 0}$ which satisfies
  \begin{equation}\label{eq:conditionnunnuu}
    |\bfeta| \leq \eta_0, \qquad \Pr\left(\sup_{n\ge1} {\cal W}_1({\nu}_n, \mu)> \rho\right)\le {\varepsilon},
  \end{equation}
  and for any $\mu_0 \in \mathcal{M}_1(D)$, we have
  \begin{equation*}
    \sup_{t \in [0,T]} \mathcal{W}_1\left(\nu_0 \bar{P}^{\bfeta,\bfnu}_t, \mu_0 P^\mu_t\right) \leq C_M \left(\varepsilon + \mu_0(K^c)\right) + C \mathcal{W}_1(\mu_0,\nu_0),
  \end{equation*}
  where $C_M$ is a constant which only depends on the set $D$.
\end{prop}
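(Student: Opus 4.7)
The plan is to exploit the two renewal identities \eqref{eq:renewal} and \eqref{eq:renewal-discr} to set up a self-referential inequality on
\[
\Psi(t) := \sup \mathcal{W}_1\bigl(\nu_0 \bar{P}^{\bfeta,\bfnu}_t,\, \mu_0 P^\mu_t\bigr),
\]
where the supremum runs over admissible schemes satisfying \eqref{eq:conditionnunnuu} and over the initial pairs $(\mu_0,\nu_0)$, and then to close this inequality on the finite horizon $[0,T]$ by iterating across successive renewal times. Writing $C_M := \mathrm{diam}(D)$, which is an a priori upper bound on the Wasserstein distance between any two probabilities on $D$, the terms $C_M \varepsilon$ and $C_M \mu_0(K^c)$ in the conclusion should naturally arise as the trivial cost paid on the events where either the redistribution strays from $\mu$, or the dephasing between $\tau_D$ and $\bar{\tauel}_1$ cannot be neglected, or the initial mass sits outside the compact set on which uniform-in-starting-point estimates are available.

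First I would couple $\mu_0$ with $\nu_0$ through an optimal $\mathcal{W}_1$-coupling and drive the diffusion $(Y_t)_{t \geq 0}$ from \eqref{eq:SDE} and its Euler scheme $(\bar Y^\bfeta_t)_{t \geq 0}$ by the same Brownian motion. On the event $\{t < \tau_D \wedge \bar{\tauel}_1\}$ the Wasserstein cost is bounded by the $L^1$ strong Euler error, which under \ref{it:coeffs-2} is of order $C\,\mathcal{W}_1(\mu_0,\nu_0) + C\sqrt{|\bfeta|}$. On the dephasing events $\{\tau_D \leq t < \bar{\tauel}_1\} \cup \{\bar{\tauel}_1 \leq t < \tau_D\}$ I would pay the trivial bound $C_M$ times the probability of the event: for initial conditions in a compact $K \subset D$, the combination of Lemma~\ref{lem:strong-error-tau} and the anti-concentration estimate of Lemma~\ref{lem:cvdistrib} bounds this probability by $\varepsilon$ uniformly in $t \in [0,T]$ once $\eta_0$ and $\rho$ are small enough, while the initial mass outside $K$ is simply absorbed into the $C_M\mu_0(K^c)$ term.

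On the complementary event $\{\tau_D \vee \bar{\tauel}_1 \leq t\}$, the renewal identities produce the inner quantities $\mu P^\mu_{t-\tau_D}$ and $\nu'_0 \bar{P}'^{\bfeta',\bfnu'}_{t-\bar{\tauel}_1}$, and the triangle inequality yields
\[
\mathcal{W}_1\bigl(\mu P^\mu_{t-\tau_D},\, \nu'_0 \bar{P}'^{\bfeta',\bfnu'}_{t-\bar{\tauel}_1}\bigr) \leq \mathcal{W}_1\bigl(\mu P^\mu_{t-\tau_D},\, \mu P^\mu_{t-\bar{\tauel}_1}\bigr) + \Psi(t-\bar{\tauel}_1) + C\rho,
\]
where the last term controls $\mathcal{W}_1(\nu'_0, \mu) \leq \rho$ from \eqref{eq:conditionnunnuu}, the middle term encapsulates the recursion, using that the time-shifted process $(\bar X^{\bfeta,\bfnu}_{\bar{\tauel}_1+s})_{s \geq 0}$ is again an Euler scheme with redistribution in the sense of \cref{defi:eulerdistrib}, and the first term is handled by the uniform-in-time regularity of $P^\mu$ on the bounded set $D$ combined with the smallness of $|\tau_D - \bar{\tauel}_1|$ already used above. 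Combining the pre- and post-jump analyses yields an inequality of the form
\[
\Psi(t) \leq A + \Exp\!\bigl[\Psi(t - \bar{\tauel}_1)\ind{\bar{\tauel}_1 \leq t}\bigr], \qquad t \in [0,T],
\]
with $A := C\,\mathcal{W}_1(\mu_0,\nu_0) + C_M(\varepsilon + \mu_0(K^c))$. Iterating across successive jumps and invoking the moment bound on the number $N_T$ of jumps in $[0,T]$, already established in the proof of Lemma~\ref{lem:cvdistrib} via the geometric estimate on $\Pr(\bar{\tauel}_{k-1} < T)$, closes the argument.

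The principal obstacle is the dephasing between the exact and discretized first-jump times: these cannot be made to coincide by any coupling, which forces the recursive comparison to involve the two semigroups at distinct time parameters $t - \tau_D$ and $t - \bar{\tauel}_1$. This is precisely what demands both the anti-concentration estimate of Lemma~\ref{lem:cvdistrib} and the compact restriction $K \subset D$ on the initial mass, and what ultimately produces the $C_M(\varepsilon + \mu_0(K^c))$ term in the final bound.
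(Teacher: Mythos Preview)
Your architecture matches the paper's: decompose via the renewal identities \eqref{eq:renewal}--\eqref{eq:renewal-discr}, control the pre-jump and dephasing pieces with \cref{lem:strong-error-tau}, \cref{lem:wass-estim} and \cref{lem:cvdistrib}, set up a self-referential inequality on the post-jump part, and close it by iteration over the jump times. Two points, however, are genuinely incomplete.

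\textbf{The quantity on which you recurse cannot be $\Psi$ as defined.} You take the supremum over \emph{all} initial pairs $(\mu_0,\nu_0)$; but then $\Psi(t)\le C_M$ is trivially saturated (take $\mu_0$ concentrated near $\partial D$), and the inequality $\Psi(t)\le A+\Exp[\Psi(t-\bar\tauel_1)\ind{\bar\tauel_1\le t}]$ with $A$ depending on $(\mu_0,\nu_0)$ is incoherent. The point is that after one jump the $\mu$-return process restarts from $\mu$, so the renewal inequality \emph{closes only when $\mu_0=\mu$}. The paper therefore defines $F_{\rho,\eta_0}(t)=\sup|\mu P^\mu_t f-\nu_0\bar P^{\bfeta,\bfnu}_t f|$ with the supremum over schemes whose \emph{initial law is $\rho$-close to $\mu$}, obtains the renewal inequality for general $(\mu_0,\nu_0)$ with the integrand $F_{\rho,\eta_0}(t-s)$ against the \emph{fixed} measure $\Pr_{\mu}(\tau_D\in\dd s)$, then specializes to $\mu_0=\mu$ to get a closed inequality in $F$, solves it via convolution powers and $\Pr_\mu(\tau_D\le T)<1$, and finally plugs the resulting bound on $F$ back into the general inequality. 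Your iteration against the random $\bar\tauel_1$ could also be made to work, but only after you separate the first step (general $\mu_0$) from the recursion (which must live over the restricted class $\mu_0\approx\mu$).

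\textbf{The recursion requires the almost-sure condition, not \eqref{eq:conditionnunnuu}.} For the shifted scheme $(\bar X'^{\bfeta',\bfnu'}_s)_{s\ge0}$ to fall back into the admissible class you must have $\mathcal W_1(\nu'_0,\mu)\le\rho$ and $\sup_n\mathcal W_1(\nu'_n,\mu)\le\rho$ \emph{almost surely}, which is exactly the stronger condition \eqref{eq:cond-cvdistrib} of \cref{lem:cvdistrib}, not the probabilistic \eqref{eq:conditionnunnuu}. The paper first proves the estimate under this stronger hypothesis and then, in a final step, relaxes to \eqref{eq:conditionnunnuu} by replacing each $\nu_n$ with $\tilde\nu_n:=\nu_n\ind{\mathcal W_1(\nu_n,\mu)\le\rho}+\mu\ind{\mathcal W_1(\nu_n,\mu)>\rho}$ and coupling the two schemes; the discrepancy is bounded by $2M\varepsilon$. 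You do not address this, and without it the recursion does not close.

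A smaller remark: you place the time-shift discrepancy on the continuous side and invoke ``uniform-in-time regularity of $P^\mu$''. Because $X^\mu$ jumps, this regularity is itself an anti-concentration statement on renewal times and is not free. The paper instead places the shift on the discrete side, bounding $|\nu'_0\bar P'^{\bfeta',\bfnu'}_{r}f-\nu'_0\bar P'^{\bfeta',\bfnu'}_{r+u}f|$ by splitting on the event $\Omega_{\delta,r}=\{\exists k:\bar\tauel'_k\in[r-\delta,r+\delta]\}$, which is controlled directly by \cref{lem:cvdistrib}.
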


Theorem~\ref{theo:weakerrorf} is a direct consequence of~Proposition~\ref{propdistrib}.

\begin{rk}\label{rk:weakerrorrf} Proposition~\ref{propdistrib} is slightly more uniform than Theorem~\ref{theo:weakerrorf} since it applies uniformly to any Euler scheme with redistribution satisfying the condition \eqref{eq:conditionnunnuu}. Note that taking $\nu_n = \mu$ and $\bar{X}^{\bfeta,\bfnu}_0=X_0$ yields the natural Euler discretization of the $\mu$-return process (denote it by $\bar{X}^{\bfeta,\mu}_t$), for which our result yields the weak consistency estimate $$\sup_{t \in [0,T]} \mathcal{W}_1(\mathcal{L}(\bar{X}^{\bfeta,\mu}_t),\mathcal{L}(X^\mu_t)) \xrightarrow{|\bfeta| \to 0}0.$$
\end{rk}

\begin{rk} It seems that the methods used in this proof may be of interest for the $\mu$-return process itself.  For instance, the reader may check that a very simple adaptation of our renewal argument leads to the  following local stability estimate: for any compact subset $K$ of $D$,
$$\sup_{t\in[0,T]} \sup_{x,y\in K,|x-y|\le \delta}{\cal W}_1(\delta_x P_t^\mu,\delta_y {P}^{\mu}_t)\xrightarrow{\delta\rightarrow0}0.$$
\end{rk}

\begin{proof}[Proof of Proposition~\ref{propdistrib}] 
  Let us fix $\mu \in \mathcal{M}_1(D)$ and a function $f : D \to \R$ such that 
  \begin{equation}\label{eq:defM}
    \|f\|_\infty \leq M := \sup\{|x-y|, x,y \in D\}, \qquad [f]_1=1.
  \end{equation}
  The combination of~\eqref{eq:renewal} with~\eqref{eq:renewal-discr} yields, for any $\mu_0 \in \mathcal{M}_1(D)$ and for any Euler scheme with redistribution $(\bar{X}^{\bfeta,\bfnu}_t)_{t \geq 0}$,
  \begin{equation*}
    \forall t \geq 0, \qquad \left|\mu_0 P_t^\mu f - \nu_0 \bar{P}^{\bfeta,\bfnu}_t f\right| \leq \mathrm{I} + \mathrm{II},
  \end{equation*}
  with
  \begin{align*}
    \mathrm{I} &:= \left|\Exp_{\mu_0}\left[f(Y_t)\ind{t < \tau_D}\right] - \Exp_{\nu_0}\left[f(\bar{Y}^\bfeta_t)\ind{t < \bar{\tau}^\bfeta_D}\right]\right|,\\
    \mathrm{II} &:= \left|\Exp_{\mu_0}\left[(\mu P_{t-\tau_D}^\mu f) \ind{t \geq \tau_D}\right] - \Exp\left[(\nu'_0 \bar{P}_{t-\bar{\tauel}_1}'^{\bfeta',\bfnu'}f) \ind{t \geq \bar{\tauel}_1}\right]\right|.
  \end{align*}
  The proof is organized in 4 steps. In Step~1, we control the term $\mathrm{I}$ while in Step~2, we show that under the condition
  \begin{equation}\label{eq:cond-cvdistrib:2}
    |\bfeta| \leq \eta_0, \qquad \Pr\left(\sup_{n \geq 1} \mathcal{W}_1(\nu_n,\mu) \leq \rho\right) = 1,
  \end{equation}
  which is stronger than~\eqref{eq:conditionnunnuu}, the term $\mathrm{II}$ can be controlled so that $|\mu_0 P_t^\mu f - \nu_0 \bar{P}^{\bfeta,\bfnu}_t f|$ satisfies a renewal inequation. This inequation is solved in Step~3, and we detail how to relax the condition~\eqref{eq:cond-cvdistrib:2} to~\eqref{eq:conditionnunnuu} in Step~4.
  
  \medskip  
  \emph{Step~1: control of $\mathrm{I}$.} Assume that $(Y_t)_{t \geq 0}$ and $(\bar{Y}^\bfeta_t)_{t \geq 0}$ are defined on the same probability space, driven by the same Brownian motion $(B_t)_{t \geq 0}$, and with initial condition $(Y_0,\bar{Y}^\bfeta_0)$ distributed according to an optimal coupling $\Pi^*$ of $(\mu_0,\nu_0)$ (see Subsection~\ref{ss:wass} for the existence of $\Pi^*$). Then, for any $t \geq 0$,
  \begin{equation*}
    \mathrm{I} \leq \left|\Exp_{\Pi^*}\left[\left(f(Y_t)-f(\bar{Y}^\bfeta_t)\right)\ind{t < \tau_D, t < \bar{\tau}^\bfeta_D}\right]\right| + M \left(\Pr_{\Pi^*}\left(\tau_D \leq t \leq \bar{\tau}^\bfeta_D\right) + \Pr_{\Pi^*}\left(\bar{\tau}^\bfeta_D \leq t \leq \tau_D\right)\right).
  \end{equation*}
  By~Lemma~\ref{lem:strong-pages} and~Lemma~\ref{lem:strong-coupling}, and since $[f]_1 = 1$, for any $T>0$ there exists $C_T \geq 0$ such that, for any $t \in [0,T]$,
  \begin{align*}
    \left|\Exp_{\Pi^*}\left[\left(f(Y_t)-f(\bar{Y}^\bfeta_t)\right)\ind{t < \tau_D, t < \bar{\tau}^\bfeta_D}\right]\right| &\le \Exp_{\Pi^*}\left[\left|Y_t-\bar{Y}^\bfeta_t\right|\right]\\
    &\leq \int_{D \times D} \Exp\left[\left|Y^x_t-Y^y_t\right|\right]\Pi^*(\dd x,\dd y) + \sup_{y \in D} \Exp_y\left[\left|Y_t-\bar{Y}^\bfeta_t\right|\right]\\
    &\leq C_T\left(\mathcal{W}_1(\mu_0,\nu_0) + \sqrt{|\bfeta|}\right).
  \end{align*}
  On the other hand, by~Lemma~\ref{lem:wass-estim}~\eqref{it:wass-estim:2}, for any $T>0$, $\varepsilon > 0$ and compact subset $K \subset D$, there exist $C \geq 0$ and $\eta_0>0$ such that if $|\bfeta| \leq \eta_0$ then for any $t \in [0,T]$,
  \begin{equation*}
    \Pr_{\Pi^*}\left(\tau_D \leq t \leq \bar{\tau}^\bfeta_D\right) + \Pr_{\Pi^*}\left(\bar{\tau}^\bfeta_D \leq t \leq \tau_D\right) \leq \varepsilon+C {\cal W}_1(\mu_0,\nu_0)+\mu_0(K^c).
  \end{equation*}
  As a conclusion, for any $T>0$, $\varepsilon > 0$ and compact subset $K \subset D$, there exist $C^{(1)}_M \geq 0$, $C^{(1)} \geq 0$ and $\eta^{(1)}_0 > 0$ such that if $|\bfeta| \leq \eta^{(1)}_0$ then for any $t \in [0,T]$,
  \begin{equation*}
    \mathrm{I} \leq C^{(1)}_M\left(\varepsilon + \mu_0(K^c)\right) + C^{(1)} \mathcal{W}_1(\mu_0,\nu_0).
  \end{equation*}
  Notice that the constant $C^{(1)}_M$ only depends on $M$ and not on $T, \varepsilon, K$.
  
  \medskip
  \emph{Step~2: control of $\mathrm{II}$.} Taking again an optimal coupling $\Pi^*$ of $(\mu_0,\nu_0)$, we now get, for any $t \geq 0$ and  $\delta > 0$,
  \begin{align*}
    \mathrm{II} &\leq \left|\Exp_{\Pi^*}\left[\left(\mu P_{t-\tau_D}^\mu f - \nu'_0 \bar{P}_{t-\bar{\tauel}_1}'^{\bfeta',\bfnu'}f\right)\ind{\tau_D \leq t, |\tau-\bar{\tauel}_1| \leq \delta}\right]\right|\\
    &\quad + M\left(\Pr_{\Pi^*}\left(\tau_D \leq t \leq \bar{\tau}^\bfeta_D\right) + \Pr_{\Pi^*}\left(\bar{\tau}^\bfeta_D \leq t \leq \tau_D\right) + 2 \Pr_{\Pi^*}(|\tau_D-\bar{\tau}^\bfeta_D| \geq \delta)\right),
  \end{align*}
  and the first term satisfies the estimate
  \begin{equation}\label{eq:pfpropdistrib:21}
    \begin{aligned}
      &\left|\Exp_{\Pi^*}\left[\left(\mu P_{t-\tau_D}^\mu f - \nu'_0 \bar{P}_{t-\bar{\tauel}_1}'^{\bfeta',\bfnu'}f\right)\ind{\tau_D \leq t, |\tau_D-\bar{\tauel}_1| \leq \delta}\right]\right|\\
      &\leq \Exp_{\Pi^*}\left[\left|\mu P_{t-\tau_D}^\mu f - \nu'_0 \bar{P}_{t-\tau_D}'^{\bfeta',\bfnu'}f\right|\ind{\tau_D \leq t - \delta}\right]\\
      & + \Exp_{\Pi^*}\left[\sup_{u \in [-\delta,\delta]} \left|\nu'_0 \bar{P}_{t-\tau_D}'^{\bfeta',\bfnu'}f-\nu'_0 \bar{P}_{t-\tau_D+u}'^{\bfeta',\bfnu'}f\right|\ind{\tau_D \leq t - \delta}\right] + 2M\Pr_{\mu_0}(\tau_D \in [t-\delta,t]).
    \end{aligned}
  \end{equation}
  
  We first address the right-hand side of~\eqref{eq:pfpropdistrib:21}. For any $\rho>0$ and $\eta_0>0$, denote by $\mathcal{C}(\rho,\eta_0)$ the set of Euler schemes with redistribution $(\bar{X}^{\bfeta,\bfnu}_t)_{t \geq 0}$ which satisfy the condition~\eqref{eq:cond-cvdistrib}. For any $t > 0$, introduce
  \begin{equation*}
    F_{\rho,\eta_0}(t) = \sup_{\mathcal{C}(\rho,\eta_0)} \left|\mu P_t^\mu f - \nu_0 \bar{P}^{\bfeta,\bfnu}_t f\right|.
  \end{equation*}
  We now remark that if $(\bar{X}^{\bfeta,\bfnu}_t)_{t \geq 0}$ satisfies the condition~\eqref{eq:cond-cvdistrib:2}, then $\Pr$-almost surely, the scheme $(\bar{X}_t'^{\bfeta',\bfnu'})_{t \geq 0}$ belongs to $\mathcal{C}(\rho,\eta_0)$, which yields the crucial estimate
  \begin{equation*}
    \left|\mu P_{t-\tau_D}^\mu f - \nu'_0 \bar{P}_{t-\tau_D}'^{\bfeta',\bfnu'}f\right| \leq F_{\rho,\eta_0}(t-\tau_D),
  \end{equation*}
  and therefore
  \begin{align*}
    \Exp_{\Pi^*}\left[\left|\mu P_{t-\tau_D}^\mu f - \nu'_0 \bar{P}_{t-\tau_D}'^{\bfeta',\bfnu'}f\right|\ind{\tau_D \leq t - \delta}\right] &\leq \int_0^{t-\delta} F_{\rho,\eta_0}(t-s) \Pr_{\mu_0}(\tau_D \in \dd s)\\ 
    &\leq \int_0^t F_{\rho,\eta_0}(t-s) \Pr_{\mu_0}(\tau_D \in \dd s).
  \end{align*}
  
  Let us now focus on the second term in the right-hand side of~\eqref{eq:pfpropdistrib:21}. We fix $T>0$ and for any $r \in [\delta,T]$, we set
  \begin{equation*}
    \Omega_{\delta,r} = \{\exists k \geq 1: \bar{\tauel}'_k \in [r-\delta,r+\delta]\},
  \end{equation*}
  where $(\bar{\tauel}'_k)_{k \geq 1}$ is the jump sequence associated with the scheme $(\bar{X}_t'^{\bfeta',\bfnu'})_{t \geq 0}$. We next write, for any $u \in [-\delta,\delta]$,
  \begin{equation*}
    \left|\nu'_0 \bar{P}_r'^{\bfeta',\bfnu'}f-\nu'_0 \bar{P}_{r+u}'^{\bfeta',\bfnu'}f\right| \leq 2 \|f\|_\infty \Pr'\left(\Omega_{\delta,r}\right) + [f]_1 \Exp'\left[|\bar{X}_r'^{\bfeta',\bfnu'}-\bar{X}_{r+u}'^{\bfeta',\bfnu'}|\iind{\Omega_{\delta,r}^c}\right].
  \end{equation*}
  On the one hand, on the event $\Omega_{\delta,r}^c$, $(\bar{X}_t'^{\bfeta',\bfnu'})_{t \in [r-\delta,r+\delta]}$ coincides with an Euler scheme without redistribution, with a step sequence $\bfeta''$ which also satisfies $|\bfeta''| \leq \eta_0$. Therefore, by standard arguments, there exists $C \geq 0$ which only depends on $b$ and $\sigma$ such that 
  \begin{equation*}
    \sup_{r \in [\delta,T], u \in [-\delta,\delta]} \Exp'\left[|\bar{X}_r'^{\bfeta',\bfnu'}-\bar{X}_{r+u}'^{\bfeta',\bfnu'}|\iind{\Omega_{\delta,r}^c}\right] \leq C \sqrt{\delta}.
  \end{equation*}
  Let us fix $\varepsilon > 0$ and set $\delta_\varepsilon$ such that $C \sqrt{\delta_\varepsilon} \leq \varepsilon$. On the other hand, by~Lemma~\ref{lem:cvdistrib}, there exist $\rho>0$, $\eta_0>0$ and $\delta>0$ such that if $(\bar{X}_t'^{\bfeta',\bfnu'})_{t \geq 0} \in \mathcal{C}(\rho,\eta_0)$, then
  \begin{equation*}
    \sup_{r \in [\delta,T]} \Pr'\left(\Omega_{\delta,r}\right) \leq \varepsilon.
  \end{equation*}
  Therefore, up to replacing $\delta$ with $\delta \wedge \delta_\varepsilon$, we get, for any $t \in [0,T]$,
  \begin{equation*}
    \Exp_{\Pi^*}\left[\sup_{u \in [-\delta,\delta]} \left|\nu'_0 \bar{P}_{t-\tau_D}'^{\bfeta',\bfnu'}f-\nu'_0 \bar{P}_{t-\tau_D+u}'^{\bfeta',\bfnu'}f\right|\ind{\tau_D \leq t - \delta}\right] \leq (2M+1)\varepsilon.
  \end{equation*}
 For any compact subset $K \subset D$, the third term in the right-hand side of~\eqref{eq:pfpropdistrib:21} is easily controlled by
  \begin{equation*}
    2M\left(\sup_{x\in K} \Pr_x(\tau_D \in[t-\delta,t])+\mu_0(K^c)\right),
  \end{equation*}
  so that, by~Lemma~\ref{lem:cvdistrib:1}, up to decreasing $\delta$, we get that for any $t \in [0,T]$,
  \begin{equation*}
    2M\Pr_{\mu_0}(\tau_D \in [t-\delta,t]) \leq 2M\left(\varepsilon+\mu_0(K^c)\right).
  \end{equation*}
  
  Finally, by~Lemma~\ref{lem:wass-estim}, there exist $C \geq 0$ and $\eta_0 > 0$ such that if $|\bfeta| \leq \eta_0$ then for any $t \in [0,T]$,
  \begin{equation*}
    \Pr_{\Pi^*}\left(\tau_D \leq t \leq \bar{\tau}^\bfeta_D\right) + \Pr_{\Pi^*}\left(\bar{\tau}^\bfeta_D \leq t \leq \tau_D\right) + 2 \Pr_{\Pi^*}(|\tau_D-\bar{\tau}^\bfeta_D| \geq \delta) \leq \varepsilon+C {\cal W}_1(\mu_0,\nu_0)+\mu_0(K^c).
  \end{equation*}
  
  Therefore, up to decreasing $\eta_0$, for any $T > 0$, $\varepsilon > 0$, and compact subset $K \subset D$, we have constructed $\rho^{(2)}>0$, $\eta^{(2)}_0>0$ and $C^{(2)}_M \geq 0$, $C^{(2)} \geq 0$ such that, for any $(\bar{X}^{\bfeta,\bfnu}_t)_{t \geq 0}$ which satisfies~\eqref{eq:cond-cvdistrib:2}, for any $t \in [0,T]$,
  \begin{equation*}
    \mathrm{II} \leq \int_0^t F_{\rho,\eta_0}(t-s)\Pr_{\mu_0}(\tau_D \in \dd s) + C^{(2)}_M(\varepsilon + \mu_0(K^c)) + C^{(2)}\mathcal{W}_1(\mu_0,\nu_0).
  \end{equation*}
  As in Step~1, the constant $C^{(2)}_M$ only depends on $M$ and not on $T, \varepsilon, K$.

  \medskip
  \emph{Step~3: renewal argument.} Let us fix $T>0$, $\varepsilon > 0$ and $K$ a compact subset of $D$. Combining the results of Steps~1 and~2, there exist $\rho>0$, $\eta_0>0$ and $C_M \geq 0$, $C \geq 0$, such that $C_M$ only depends on $M$, and for any $(\bar{X}^{\bfeta,\bfnu}_t)_{t \geq 0}$ which satisfies~\eqref{eq:cond-cvdistrib:2}, for any $\mu_0 \in \mathcal{M}_1(D)$, for any $t \in [0,T]$,
  \begin{equation}\label{eq:renewal-ineq}
    \left|\mu_0 P_t^\mu f - \nu_0 \bar{P}^{\bfeta,\bfnu}_t f\right| \leq \int_0^t F_{\rho,\eta_0}(t-s)\Pr_{\mu_0}(\tau_D \in \dd s) + C_M\left(\varepsilon + \mu_0(K^c)\right) + C\mathcal{W}_1(\mu_0,\nu_0).
  \end{equation}
  Applying this inequality with $\mu_0=\mu$ and taking the supremum of the left-hand side over $\mathcal{C}(\rho,\eta_0)$, which is included in the set of schemes satisfying~\eqref{eq:cond-cvdistrib:2}, we deduce that
  \begin{equation*}
    \forall t \in [0,T], \qquad F_{\rho,\eta_0}(t) \leq \int_0^t F_{\rho,\eta_0}(t-s)\Pr_{\mu_0}(\tau_D \in \dd s) + C_M\left(\varepsilon + \mu(K^c)\right) + C\rho.
  \end{equation*}
  Let us now take $K$ such that $\mu(K^c) \leq \varepsilon$, and then decrease $\rho$ so that $C\rho \leq C_M\varepsilon$. Up to replacing $C_M$ with $3C_M$, we get
  \begin{equation*}
    \forall t \in [0,T], \qquad F_{\rho,\eta_0}(t) \leq \int_0^t F_{\rho,\eta_0}(t-s)\Pr_{\mu_0}(\tau_D \in \dd s) + C_M\varepsilon.
  \end{equation*}
  Iterating this inequality and using the fact that $F_{\rho,\eta_0}$ is nonnegative, we get for every $N\in\mathbb{N}^*$, for every $t\in[0,T]$,
  \begin{equation*}
    F_{\rho,\eta_0}(t)\le \int_0^t F_{\rho,\eta_0}(t-s) (\PE_\tau^\mu)^{\ast N}(\dd s)+ C_M \varepsilon\sum_{k=0}^{N-1} \PE_\mu(\tau_D\le t)^k,
  \end{equation*}
  where $(\PE_\tau^\mu)^{\ast k}=\PE_\tau^\mu\ast\ldots \ast\PE_\tau^\mu$ with $\ast$ denoting the convolution and $\PE_\tau^\mu$ the probability distribution of $\tau_D$ under $\PE_\mu$. For every $0\le t\le T$, $\PE_\mu(\tau_D\le t)\le\PE_\mu(\tau_D\le T)<1$. Then, since $F_{\rho,\eta_0}$ is bounded by $2 M$, it follows that for every $t\in[0,T]$,
  \begin{equation*}
    F_{\rho,\eta_0}(t)\le 2M\limsup_{N\to+\infty}\PE_\mu(\tau_D\le T)^N+ \frac{C_M \varepsilon}{\PE_\mu(\tau_D> T)}\le \frac{ C_M \varepsilon}{\PE_\mu(\tau_D> T)}.
  \end{equation*}
  Plugging this estimate in the right-hand side of~\eqref{eq:renewal-ineq}, and up to increasing the value of $C_M$, we conclude that for any $(\bar{X}^{\bfeta,\bfnu}_t)_{t \geq 0}$ which satisfies~\eqref{eq:cond-cvdistrib:2}, for any $\mu_0 \in \mathcal{M}_1(D)$,
  \begin{equation*}
    \sup_{t \in [0,T]} \left|\mu_0 P_t^\mu f - \nu_0 \bar{P}^{\bfeta,\bfnu}_t f\right| \leq C_M \left(\varepsilon + \mu_0(K^c)\right) + C \mathcal{W}_1(\mu_0,\nu_0).
  \end{equation*}

  \medskip
  \emph{Step~4: conclusion of the proof.} Since the result of Step~3 is valid for any function $f$ satisfying~\eqref{eq:defM}, by the Kantorovitch--Rubinstein duality for the Wasserstein distance~\cite[Lemma~11.8.3]{dudley89}, we easily deduce the estimate
  \begin{equation*}
    \sup_{t \in [0,T]} \mathcal{W}_1\left(\mu_0 P_t^\mu, \nu_0 \bar{P}^{\bfeta,\bfnu}_t\right) \leq C_M \left(\varepsilon + \mu_0(K^c)\right) + C \mathcal{W}_1(\mu_0,\nu_0).
  \end{equation*}
  Let us now assume that the scheme $(\bar{X}^{\bfeta,\bfnu}_t)_{t \geq 0}$ does not satisfies~\eqref{eq:cond-cvdistrib:2}, but merely~\eqref{eq:conditionnunnuu}. On the same probability space, endowed with the same filtration $(\mathcal{G}_t)_{t \geq 0}$, define the sequence $\tilde{\bfnu} = (\tilde{\nu}_n)_{n \geq 1}$ by
  \begin{equation*}
    \tilde{\nu}_n = \begin{cases}
      \nu_n & \text{if $\mathcal{W}_1(\nu_n,\mu) \leq \rho$,}\\
      \mu & \text{otherwise,}
    \end{cases}
  \end{equation*}
  and then construct the scheme $(\bar{X}^{\bfeta,\tilde{\bfnu}}_t)_{t \geq 0}$ with the same initial condition as $(\bar{X}^{\bfeta,\bfnu}_t)_{t \geq 0}$, the same Brownian motion $(B_t)_{t \geq 0}$, and, as long as $\tilde{\nu}_n=\nu_n$, the same restarting points $U^{\bfeta,\bfnu}_n$. Then on the one hand, it is clear that $(\bar{X}^{\bfeta,\tilde{\bfnu}}_t)_{t \geq 0}$ satisfies~\eqref{eq:cond-cvdistrib:2}, and therefore by the argument above,
  \begin{equation*}
    \sup_{t \in [0,T]} \mathcal{W}_1\left(\mu_0 P_t^\mu, \nu_0 \bar{P}^{\bfeta,\tilde{\bfnu}}_t\right) \leq C_M \left(\varepsilon + \mu_0(K^c)\right) + C \mathcal{W}_1(\mu_0,\nu_0).
  \end{equation*}
  On the other hand, for any function $f : D \to \R$ satisfying~\eqref{eq:defM}, 
  \begin{equation*}
    \sup_{t\in[0,T]} \left| \ES[f(\bar{X}^{\bfeta,\bfnu}_t)]-\ES[f(\bar{X}^{\bfeta,\tilde{\bfnu}}_t)\right|\le
2\|f\|_\infty\PE\left(\sup_{n\ge1} {\cal W}_1({\nu}_n, \mu)> \rho\right)\le 2 M \varepsilon.
  \end{equation*}
  Therefore, by the Kantorovitch--Rubinstein duality again and the triangle inequality for the Wasserstein distance,
  \begin{equation*}
    \sup_{t \in [0,T]} \mathcal{W}_1\left(\mu_0 P_t^\mu, \nu_0 \bar{P}^{\bfeta,\bfnu}_t\right) \leq (C_M+2M) \left(\varepsilon + \mu_0(K^c)\right) + C \mathcal{W}_1(\mu_0,\nu_0),
  \end{equation*}
  and the proof is completed at the price of replacing $C_M$ by $C_M+2M$.
\end{proof}

%%%%%%%%%%%%%%%%%%%%%%%%%%%%%%%%%%%%%%%%%%%%%%%%%%%%%%%%%%%%%%%%%%%%%%%%%%%%%%%%%%%%%%%%%%%%%%%%%%%%%%%%%
%%%%%%%%%%%%%%%%%%%%%%%%%%%%%%%%%%%%%%%%%%%%%%%%%%%%%%%%%%%%%%%%%%%%%%%%%%%%%%%%%%%%%%%%%%%%%%%%%%%%%%%%%
\subsection{Proof of Theorem~\ref{theo2}}\label{sec:prooftheo2} 

The starting point of the proof of~Theorem~\ref{theo2} is the transcription of the almost sure convergence of $\pn_n$ toward $\mu^\star$ given by~Theorem~\ref{theo:main} in Wasserstein distance.

\begin{lem}[Convergence of $\pn_n$ to $\mu^\star$ in Wasserstein distance]\label{lem:cv-wass-pn}
  Under the assumptions of~Theorem~\ref{theo:main},
  \begin{equation*}
    \mathcal{W}_1(\pn_n,\mu^\star) \xrightarrow{n\rightarrow+\infty}0, \qquad \text{almost surely.}
  \end{equation*}
\end{lem}
\begin{proof}
By Corollary~\ref{cor:pnconverges}, $(\pn_n)_{n\ge1}$ converges almost surely to $\mu^\star$ for the weak topology on ${\cal M}_1(D)$. But, by Remark~\ref{rk:topoPolish} and \cite[Corollary 6.13]{villani_book}, one knows that the Wasserstein distance metrizes the weak convergence on ${\cal M}_1(D)$. The result follows.
\end{proof}

We will also use the following property of the $\mu^\star$-return process. The argument is certainly standard, for the sake of completeness we give a sketch of its proof under our assumptions.
\begin{lem}[Uniform ergodicity of the $\mu^\star$-return process]\label{lem:ergo-return}
  Under Assumptions~\ref{cond:D} and~\ref{cond:coeffs}, for any continuous and bounded function $f : D \to \R$,
  \begin{equation*}
    \lim_{t \to +\infty} \sup_{\alpha \in \mathcal{M}_1(D)} |\alpha P^{\mu^\star}_t f - \mu^\star(f)| = 0.
  \end{equation*}
\end{lem}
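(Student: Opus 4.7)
The plan is to exploit the renewal structure of the $\mu^\star$-return process, combined with the fact that $\mu^\star$ is stationary for $(P^{\mu^\star}_t)_{t\ge0}$, which reduces the question to a uniform tail estimate for $\tau_D$.

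First, I will apply the renewal identity~\eqref{eq:renewal} with $\mu=\mu^\star$ and initial distribution $\alpha$, giving
\[
  \alpha P^{\mu^\star}_t f = \Exp_\alpha\left[f(Y_t)\ind{t<\tau_D}\right] + \Exp_\alpha\left[(\mu^\star P^{\mu^\star}_{t-\tau_D} f)\ind{t\ge\tau_D}\right].
\]
Since $\mu^\star$ is the QSD, the characterization of QSDs recalled in \S\ref{ss:cvdistrib} (after Ferrari--Kesten--Martinez--Picco) states that $\mu^\star$ is invariant for the $\mu^\star$-return process: $\mu^\star P^{\mu^\star}_s = \mu^\star$ for every $s\ge 0$. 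Plugging this in and using $\mu^\star(f)=\mu^\star(f)\Pr_\alpha(t<\tau_D)+\mu^\star(f)\Pr_\alpha(t\ge\tau_D)$, I obtain
\[
  \alpha P^{\mu^\star}_t f - \mu^\star(f) = \Exp_\alpha\left[f(Y_t)\ind{t<\tau_D}\right] - \mu^\star(f)\Pr_\alpha(t<\tau_D),
\]
and therefore the crude bound
\[
  \sup_{\alpha\in\mathcal{M}_1(D)}|\alpha P^{\mu^\star}_t f - \mu^\star(f)| \leq 2\|f\|_\infty \sup_{x\in D}\Pr_x(\tau_D>t).
\]

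Second, I need the uniform decay $\sup_{x\in D}\Pr_x(\tau_D>t)\to 0$ as $t\to+\infty$. This is a direct consequence of the spectral structure already exploited in the proof of~\cref{prop:ODE}: as noted there, the function $\eta$ is the \emph{uniform} limit on $D$ of the family $x\mapsto \ee^{\lambda^\star t}\Pr_x(\tau_D>t)$, and $\eta$ is bounded. Hence there exists $C\ge 0$ such that
\[
  \sup_{x\in D}\Pr_x(\tau_D>t) \leq C\,\ee^{-\lambda^\star t},
\]
which vanishes as $t\to+\infty$ and concludes the proof.

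There is no real obstacle: the renewal identity annihilates the dependence on the initial distribution as soon as the process has performed its first jump, and after that time the process is already at stationarity thanks to the defining property of the QSD. The only quantitative input is the (uniform) exponential decay of $\Pr_x(\tau_D>t)$, which is exactly the content of the material invoked in the proof of the second statement of \cref{prop:ODE} and requires no new argument.
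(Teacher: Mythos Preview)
Your proof is correct and follows the same overall scheme as the paper: apply the renewal identity, use the stationarity of $\mu^\star$ for the $\mu^\star$-return process to kill the second term, and be left with an expression controlled on $\{t<\tau_D\}$. The difference lies only in the last step. The paper rewrites the remaining term as $\big(\Exp_\alpha[f(Y_t)\mid t<\tau_D]-\mu^\star(f)\big)\Pr_\alpha(t<\tau_D)$ and invokes the uniform Yaglom limit \cite[Eq.~(3.3)]{BenChaVil22} to send the first factor to zero. You instead bound the whole expression by $2\|f\|_\infty\sup_{x\in D}\Pr_x(\tau_D>t)$ and use the uniform exponential tail decay. Your route is more elementary in that it avoids the Yaglom limit altogether; note also that the uniform tail bound you need is already stated directly as \cref{lem:controlmomentstempsarret}, so you could cite that instead of extracting it from the proof of \cref{prop:ODE}. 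One small point: the paper does not explicitly state that $\eta$ is bounded, but this follows immediately from the uniform convergence of $\ee^{\lambda^\star t}\Pr_x(\tau_D>t)$ on $D$, so your claim is justified.
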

\begin{proof}
  From the renewal identity~\eqref{eq:renewal} we get, for any $t \geq 0$ and $\alpha \in \mathcal{M}_1(D)$,
  \begin{equation*}
    \alpha P^{\mu^\star}_t f = \Exp_\alpha\left[f(Y_t)|t<\tau_D\right]\Pr_\alpha(t<\tau_D) + \int_0^t (\mu^\star P^{\mu^\star}_{t-s} f)\Pr_\alpha(\tau_D \in \dd s).
  \end{equation*}
  Applying this identity with $\alpha=\mu^\star$ and using the basic properties of $\mu^\star$ yields, for any $t \geq 0$,
  \begin{equation*}
    \mu^\star P^{\mu^\star}_t f = \mu^\star(f)\ee^{-\lambda^\star t} + \int_0^t (\mu^\star P^{\mu^\star}_{t-s}f) \lambda^\star \ee^{-\lambda^\star s}\dd s,
  \end{equation*}
  from which one easily deduces the standard statement that $\mu^\star P^{\mu^\star}_t f = \mu^\star(f)$. As a consequence
  \begin{equation*}
    \alpha P^{\mu^\star}_t f = \Exp_\alpha\left[f(Y_t)|t<\tau_D\right]\Pr_\alpha(t<\tau_D) + \mu^\star(f)\Pr_\alpha(\tau_D \leq t)
  \end{equation*}
  and the result follows from the fact that, by~\cite[Eq.~(3.3)]{BenChaVil22}, 
  \begin{equation*}
    \lim_{t \to +\infty} \sup_{\alpha \in \mathcal{M}_1(D)} \left|\Exp_\alpha\left[f(Y_t)|t<\tau_D\right] - \mu^\star(f)\right| = 0.\qedhere
  \end{equation*}
\end{proof}

For any $n \geq 0$, we now denote $\alpha_n = \mathcal{L}(\bar{X}_{\Gamma_n})$ and prove the tightness of the sequence $(\alpha_n)_{n \geq 0}$.

\begin{prop}[Tightness]\label{prop:theo2-tight}
  Under the assumptions of~Theorem~\ref{theo2}, the sequence $(\alpha_n)_{n \geq 0}$ is tight.
\end{prop}
\begin{proof}
  We first introduce some notation. For $n$ large enough, by Assumption~\ref{it:steps-1} we may find $\varphi(n) < n$ such that $\Gamma_n - 2 \leq \Gamma_{\varphi(n)} \leq \Gamma_n - 1$. For such $n$, we set $\tauel^n_0 = \Gamma_{\varphi(n)}$ and $\{\tauel^n_k, k \geq 1\} = \{\Gamma_{n'}, n' > \varphi(n), \theta_{n'}=1\}$, similarly to the introduction of Section~\ref{sec:prooftheomain}. With these notations, we have
  \begin{equation*}
    \PE(\bar{X}_{\Gamma_n}\in A)=\sum_{k\ge0} \PE(\bar{X}_{\Gamma_n}\in A, \tauel_k^n\le \Gamma_n<\tauel_{k+1}^n).
  \end{equation*}
  In order to take advantage of the almost sure convergence of $\pn_n$ to $\mu^\star$, for $\rho>0$ we introduce the events
  \begin{equation*}
    \Upsilon^{n,\rho}:=\left\{\sup_{\varphi(n) \le \ell}{\cal W}_1(\pn_{\ell},\mu^\star)\le \rho\right\}, \qquad \Upsilon^{n,k,\rho}:=\left\{ \sup_{\varphi(n)\le \ell \le N(\bar\tauel_k^n) }{\cal W}_1(\pn_{\ell},\mu^\star)\le \rho\right\},
  \end{equation*}
  where, as in~Subsection~\ref{ss:weakerror}, we have defined $N(t) = \inf\left\{N \geq 1: \Gamma_N > t\right\}$, so that $n \leq N(t)$ if and only if $\Gamma_n \leq t$. By construction, $\Upsilon^{n,\rho}\subset \Upsilon^{n,k,\rho}$, so that
  \begin{equation}\label{eq:arg1tensionnun}
    \PE(\bar{X}_{\Gamma_n}\in A)\le \PE((\Upsilon^{n,\rho})^c)+\sum_{k\ge0} \PE(\bar{X}_{\Gamma_n}\in A, \tauel_k^n\le \Gamma_n<\tauel_{k+1}^n, \Upsilon^{n,k,\rho}).
  \end{equation}
  
  By~Lemma~\ref{lem:cv-wass-pn}, $\PE((\Upsilon^{n,\rho})^c)\to 0$ as $n\to+\infty$. We thus focus on the second term. For $k=0$, we deduce conditioning on $\mathcal{F}_{\Gamma_{\varphi(n)}}$ that
  \begin{equation*}
    \PE(\bar{X}_{\Gamma_n}\in A, \tauel_0^n\le \Gamma_n<\tauel_1^n, \Upsilon^{n,0,\rho})\le\ES[\Psi_A(\bar{X}_{\Gamma_{\varphi(n)}},\Gamma_n-\Gamma_{\varphi(n)},\boldsymbol{\gamma}^n)]\le \sup_{x\in D,t\in[1,2]} \Psi_A(x,t,\boldsymbol{\gamma}^n)\label{eq:argumentihopethelast}
  \end{equation*}
  where we have set $\boldsymbol{\gamma}^n = (\gamma_{\varphi(n)+k})_{k \geq 1}$ and defined $\Psi_A(x,t,\bfeta)=\PE_x(\bar{Y}^\bfeta_t\in A,\bar{\tau}_{D}^\bfeta>t)$. Now, let $k\ge1$. Conditioning with respect to ${\cal F}_{\tauel_k^n}$, we get
  \begin{equation*}
    \PE(\bar{X}_{\Gamma_n}\in A, \tauel_k^n\le \Gamma_n<\tauel_{k+1}^n, \Upsilon^{n,k,\rho}) = \ES\left[\Psi_A\left(\bar{X}_{\tauel_k^n},\Gamma_n-\tauel_k^n,\boldsymbol{\gamma}^{N(\tauel_k^n)}\right) \ind{\tauel_k^n \le \Gamma_n,\Upsilon^{n,k,\rho}}\right].
  \end{equation*}
  But, since the event $\{\tauel_k^n \le \Gamma_n,\Upsilon^{n,k,\rho}\}$ is in ${\cal F}_{(\tauel_k^n)^-}$, we can still condition with respect to ${\cal F}_{(\tauel_k^n)^-}$ in order to obtain
  \begin{equation*}
    \PE(\bar{X}_{\Gamma_n}\in A, \tauel_k^n\le \Gamma_n<\tauel_{k+1}^n, \Upsilon^{n,k,\rho}) = \ES\left[\int_D \Psi_A\left(x,\Gamma_n-\tauel_k^n,\boldsymbol{\gamma}^{N(\tauel_k^n)}\right)\pn_{N(\tauel_k^n)}(\dd x) \ind{\tauel_k^n \le \Gamma_n,\Upsilon^{n,k,\rho}}\right].
  \end{equation*}
  Then, for any subset $B$ of $D$, we have:
  \begin{equation*}
    \int_D \Psi_A\left(x,\Gamma_n-\tauel_k^n,\boldsymbol{\gamma}^{N(\tauel_k^n)}\right)\pn_{N(\tauel_k^n)}(\dd x) \leq \pn_{N(\tauel_k^n)}(B^c) + \sup_{x\in B,t \le 2,|\bfeta|\le |\boldsymbol{\gamma}^{N(\tauel_k^n)}|} \Psi_A(x,t,\bfeta).
  \end{equation*}
  Therefore,
  \begin{equation*}
    \iind{\Upsilon^{n,k,\rho}} \int_D \Psi_A\left(x,\Gamma_n-\tauel_k^n,\boldsymbol{\gamma}^{N(\tauel_k^n)}\right)\pn_{N(\tauel_k^n)}(\dd x) \leq \tilde{\Psi}_{A,B}(|\boldsymbol{\gamma}^{N(\tauel_k^n)}|, \rho),
  \end{equation*}
  where
  \begin{equation*}
    \tilde{\Psi}_{A,B}(\eta,\rho) := \sup_{\mu, {\cal W}_1(\mu,\mu^\star)\le \rho}\mu(B^c)+\sup_{x\in B,t\le 2,|\bfeta|\le \eta}  \Psi_A(x,t,\bfeta).
  \end{equation*}
  Hence,
  \begin{equation*}
    \PE(\bar{X}_{\Gamma_n}\in A, \tauel_k^n\le \Gamma_n<\tauel_{k+1}^n, \Upsilon^{n,k,\rho}) \leq \tilde{\Psi}_{A,B}(|\boldsymbol{\gamma}^{N(\tauel_k^n)}|, \rho) \PE(\tauel_k^n \le \Gamma_n,\Upsilon^{n,k,\rho}).
  \end{equation*}
  We deduce from~\eqref{eq:arg1tensionnun}, what precedes and the fact that $|\boldsymbol{\gamma}^n| \to 0$ that for any positive $\rho$, for any subsets $A$ and $B$ of $D$,
  \begin{equation}\label{eq:refdsks}
    \begin{aligned}
      \limsup_{n\to+\infty} \PE(\bar{X}_{\Gamma_n} \in A) &\leq \limsup_{\eta \to 0} \sup_{x\in D,t\in[1,2],|\bfeta| \leq \eta} \Psi_A(x,t,\bfeta)\\
      &\quad + \limsup_{\eta \to 0}\tilde{\Psi}_{A,B}(\eta, \rho) \limsup_{n \to +\infty} \sum_{k \geq 1} \PE(\tauel_k^n \le \Gamma_n,\Upsilon^{n,k,\rho}).
    \end{aligned}
  \end{equation}
  
  Let $\varepsilon>0$. As in the proof of~Lemma~\ref{lem:cvdistrib}, set $K_r=\{x\in D, d(x,\partial D)\ge r\}$ and choose $r_1>0$, $\rho_\varepsilon>0$ such that $\sup_{\mu, {\cal W}_1(\mu,\mu^\star)\le \rho_\varepsilon}\mu((K_{r_1})^c)\le \varepsilon$. We thus fix $B=K_{r_1}$. Now, we set $A=K_{r_2}^c$ for some $r_2 \leq r_1/2$ to be fixed below. On the one hand, since $d(B,A)=r_1-r_2 \geq r_1/2$, we have for all $x\in B$, 
  \begin{equation*}
    \PE_x(\bar{Y}_t^\bfeta\in A)\le \PE_x(|\bar{Y}_t^\bfeta-x|\ge r_1-r_2)\le\frac{1}{r_1-r_2}\ES_x\left[|\bar{Y}_t^\bfeta-x|\right]\le \frac{2C \sqrt{t}}{r_1},
  \end{equation*}
  for some constant $C$ which only depends on $b$ and $\sigma$. Hence, we can find some $t_0 \in (0,2]$, which depends on $\varepsilon$ and $r_1$ but not on $r_2$, such that, for any $\eta$,
  \begin{equation*}
    \sup_{x\in B,t\le t_0,|\bfeta|\le \eta} \Psi_A(x,t,\bfeta)\le \varepsilon.
  \end{equation*}
  Now, for $t\ge t_0$, for any sequence $\bfeta$,
  \begin{align*}
    \PE_x(\bar{Y}_{t}^\bfeta\in A)\le \PE_x({Y}_{t}\in K_{2 r_2}^c)+\PE_x(|\bar{Y}_{t}^\bfeta-{Y}_{t}|\ge r_2).
  \end{align*}
  By~Lemma~\ref{lem:gauss-density},
  \begin{equation*}
    \sup_{t\in[ t_0,2], (x,y)\in D\times D} p_{t}(x,y)\le C<+\infty,
  \end{equation*}
  where $C$ depends on $t_0$. Thus, by~Lemma~\ref{lem:strong-pages},
  \begin{align*}
    \PE_x(\bar{Y}_{t}^\bfeta\in A)\le C \lambda_d(K_{2 r_2}^c)+\frac{1}{r_2}\ES_x \left[|\bar{Y}_{t}^\bfeta-{Y}_{t}|\right]\le C \left(\lambda_d(K_{2 r_2}^c)+\frac{\sqrt{|\bfeta|}}{r_2}\right).
  \end{align*}
  Since $\lambda_d(K_{2 r_2}^c)\xrightarrow{r_2\to0}0$, we may now fix $r_2$ small enough such that
  \begin{equation*}
    \limsup_{\eta \to 0} \sup_{x\in B,t \in [t_0,2], |\bfeta|\le \eta} \Psi_A(x,t,\bfeta) \leq \varepsilon.
  \end{equation*}
  Up to renormalizing $\varepsilon$, it follows that
  \begin{equation}\label{eq:refdsks:1}
    \limsup_{\eta \to 0} \sup_{x\in D,t\in[1,2],|\bfeta| \leq \eta} \Psi_A(x,t,\bfeta) \leq \varepsilon, \qquad \limsup_{\eta \to 0}\tilde{\Psi}_{A,B}(\eta, \rho) \leq \varepsilon.
  \end{equation}
  It remains now to control the series which appears in \eqref{eq:refdsks}. Noting that $\Gamma_n-\Gamma_{\varphi(n)}\le 2$, the same arguments as in the last part of the proof of~Lemma~\ref{lem:cvdistrib} lead to the following statement: there exist $n_0\ge1$ and a positive constant $C$ such that for every $n\ge n_0$,
  \begin{equation*}
    \sum_{k\ge1} \PE( \tauel_k^n \le \Gamma_n,\Upsilon^{n,k,\rho})\le C<+\infty.
  \end{equation*}
  Combining the latter estimate with~\eqref{eq:refdsks} and~\eqref{eq:refdsks:1}, and recalling that $A$ is the complement of the compact subset $K_{r_2}$, we complete the proof.
\end{proof}

We may finally complete the proof of~Theorem~\ref{theo2}.

\begin{proof}[Proof of~Theorem~\ref{theo2}]
  Since, by~Proposition~\ref{prop:theo2-tight}, the sequence $(\alpha_n)_{n \geq 0}$ is tight, by Assumption~\ref{it:steps-1}, Lemma~\ref{lem:cv-wass-pn} and Remark~\ref{rk:informationavantzero}, one may apply~Theorem~\ref{theo:weakerrorf} to the scheme $(\bar{X}_{\Gamma_n + t})_{t \geq 0}$ with $\mu_0^n=\nu_0^n=\alpha_n$ and $\mu=\mu^\star$ to get, for any $T>0$,
  \begin{equation}\label{eq:pf-theo2:ident}
    \lim_{n \to +\infty} \sup_{t \in [0,T]} \mathcal{W}_1\left(\alpha_n P^{\mu^\star}_t, \mathcal{L}(\bar{X}_{\Gamma_n+t})\right) = 0.
  \end{equation}
  Let $f : D \to \R$ be Lipschitz continuous and let $\varepsilon > 0$. By~Lemma~\ref{lem:ergo-return}, there exists $T>0$ such that
  \begin{equation*}
    \sup_{t \in [T/2,T]} \sup_{n \geq 1} |\alpha_n P^{\mu^\star}_t f - \mu^\star(f)| \leq \varepsilon.
  \end{equation*}
  Now, by Assumption~\ref{it:steps-1}, for $n$ large enough there exists $\varphi(n) < n$ such that $\Gamma_n \in [\Gamma_{\varphi(n)}+T/2,\Gamma_{\varphi(n)}+T]$, and obviously $\varphi(n) \to +\infty$. Thus we have
  \begin{align*}
    \left|\Exp\left[f(\bar{X}_{\Gamma_n})\right] - \mu^\star(f)\right| &\leq \sup_{t \in [T/2,T]} \left|\Exp\left[f(\bar{X}_{\Gamma_{\varphi(n)}+t})\right] - \mu^\star(f)\right|\\
    &\leq \sup_{t \in [T/2,T]} \left|\Exp\left[f(\bar{X}_{\Gamma_{\varphi(n)}+t})\right] - \alpha_{\varphi(n)} P^{\mu^\star}_t f\right| + \sup_{t \in [T/2,T]} \left|\alpha_{\varphi(n)} P^{\mu^\star}_t f - \mu^\star(f)\right|.
  \end{align*}
  By~\eqref{eq:pf-theo2:ident} applied along the subsequence $\varphi(n)$, and using the Lipschitz continuity of $f$, the first term in the right-hand side vanishes when $n \to +\infty$, while the second term is bounded by $\varepsilon$ uniformly in $n$. Therefore $\alpha_n(f) \to \mu^\star(f)$ and the proof is completed.
\end{proof}
{\begin{rk} In the above proof, Assumption~\ref{it:steps-3} is only used in the assumptions of Lemma~\ref{lem:cv-wass-pn}. But, by Remark~\ref{withoutH3c}, one easily checks that  Lemma~\ref{lem:cv-wass-pn} holds without Assumption~\ref{it:steps-3}. This means that Theorem~\ref{theo2} holds true without Assumption~\ref{it:steps-3}.
\end{rk}}

%%%%%%%%%%%%%%%%%%%%%%%%%%%%%%%%%%%%%%%%%%%%%%%%%%%%%%%%%%%%%%%%%%%%%%%%%%%%%%%%%%%%%%%%%%%%%%%%%%%%%%%%%
%%%%%%%%%%%%%%%%%%%%%%%%%%%%%%%%%%%%%%%%%%%%%%%%%%%%%%%%%%%%%%%%%%%%%%%%%%%%%%%%%%%%%%%%%%%%%%%%%%%%%%%%%
%%%%%%%%%%%%%%%%%%%%%%%%%%%%%%%%%%%%%%%%%%%%%%%%%%%%%%%%%%%%%%%%%%%%%%%%%%%%%%%%%%%%%%%%%%%%%%%%%%%%%%%%%
%\newpage
\begin{center}
	\vskip 1cm
	{\Large Supplementary material}
	\vskip 1cm
\end{center}

This Supplementary Material document contains proofs of results stated in the main part of the article, which are either technical developments or close to standard arguments.

%%%%%%%%%%%%%%%%%%%%%%%%%%%%%%%%%%%%%%%%%%%%%%%%%%%%%%%%%%%%%%%%%%%%%%%%%%%%%%%%%%%%%%%%%%%%%%%%%%%%%%%%%
%%%%%%%%%%%%%%%%%%%%%%%%%%%%%%%%%%%%%%%%%%%%%%%%%%%%%%%%%%%%%%%%%%%%%%%%%%%%%%%%%%%%%%%%%%%%%%%%%%%%%%%%%
%%%%%%%%%%%%%%%%%%%%%%%%%%%%%%%%%%%%%%%%%%%%%%%%%%%%%%%%%%%%%%%%%%%%%%%%%%%%%%%%%%%%%%%%%%%%%%%%%%%%%%%%%
\section{Useful facts on the one-dimensional reflected Brownian motion with drift}\label{app:RBM}

In this section, we collect various definitions and estimates related with one-dimensional reflected Brownian motions with drift. These results are used in the proofs of Section~\ref{sec:tightness} of the article.

%%%%%%%%%%%%%%%%%%%%%%%%%%%%%%%%%%%%%%%%%%%%%%%%%%%%%%%%%%%%%%%%%%%%%%%%%%%%%%%%%%%%%%%%%%%%%%%%%%%%%%%%%
%%%%%%%%%%%%%%%%%%%%%%%%%%%%%%%%%%%%%%%%%%%%%%%%%%%%%%%%%%%%%%%%%%%%%%%%%%%%%%%%%%%%%%%%%%%%%%%%%%%%%%%%%
\subsection{Positive and negative reflection maps} 

For some interval $[r_0,r_1)$ with $-\infty < r_0 < r_1 \leq +\infty$, we denote by $D([r_0,r_1),A)$ the set of cadlag paths indexed by $[r_0,r_1)$ with values in $A$. Given $z \in \R$, we define the \emph{positive reflection map at $z$} 
\begin{equation*}
  \boldsymbol{\Gamma}^{+,z} : \left\{\begin{array}{ccc}
    D([r_0,r_1),\R) & \to & D([r_0,r_1),[z,+\infty))\\
    \beta_\bullet & \mapsto & Z^+_\bullet
  \end{array}\right.
\end{equation*}
by
\begin{equation*}
  \forall r \in [r_0, r_1), \qquad Z^+_r = \beta_r + \max_{r_0 \leq u \leq r} [\beta_u-z]_-,
\end{equation*}
and the \emph{negative reflection map at $z$} 
\begin{equation*}
  \boldsymbol{\Gamma}^{-,z} : \left\{\begin{array}{ccc}
    D([r_0,r_1),\R) & \to & D([r_0,r_1),(-\infty,z])\\
    \beta_\bullet & \mapsto & Z^-_\bullet
  \end{array}\right.
\end{equation*}
by
\begin{equation*}
  \forall r \in [r_0, r_1), \qquad Z^-_r = \beta_r - \max_{r_0 \leq u \leq r} [\beta_u-z]_+.
\end{equation*}

\begin{prop}[Properties of the reflection map]\label{prop:refmap}
  With the notation introduced above, we have the following statements.
  \begin{enumerate}[label=(\roman*),ref=\roman*]
    \item\label{it:prop-refmap:accr} For any $r_0 \leq r' \leq r < r_1$, we have $Z^-_r-Z^-_{r'} \leq \beta_r - \beta_{r'} \leq Z^+_r-Z^+_{r'}$.
    \item\label{it:prop-refmap:flow} For any $r_0 \leq r' < r_1$, we have $Z^\pm_\bullet = \boldsymbol{\Gamma}^{\pm,z}(Z^\pm_{r'} + \beta_\bullet - \beta_{r'})$ on $[r',r_1)$.
  \end{enumerate}
\end{prop}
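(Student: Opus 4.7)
\textbf{Plan of proof for \cref{prop:refmap}.}

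For part \eqref{it:prop-refmap:accr}, the plan is to write both inequalities directly from the definitions. Setting $M^+_r := \max_{r_0 \leq u \leq r}[\beta_u - z]_-$, we have $Z^+_r - Z^+_{r'} = (\beta_r - \beta_{r'}) + (M^+_r - M^+_{r'})$, and since the set of points over which the maximum is taken grows with $r$, the bracketed difference is nonnegative. The analogous identity for $Z^-$, using $M^-_r := \max_{r_0 \leq u \leq r}[\beta_u - z]_+$, yields $Z^-_r - Z^-_{r'} = (\beta_r - \beta_{r'}) - (M^-_r - M^-_{r'}) \leq \beta_r - \beta_{r'}$. This part is essentially a one-line verification.

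For part \eqref{it:prop-refmap:flow}, I will treat the positive case; the negative case is symmetric (or follows by replacing $\beta_\bullet$ with $-\beta_\bullet$ and $z$ with $-z$). Fix $r' \in [r_0, r_1)$ and set $\tilde\beta_r := Z^+_{r'} + \beta_r - \beta_{r'}$ for $r \in [r', r_1)$. Let $\tilde Z^+_r := \boldsymbol{\Gamma}^{+,z}(\tilde\beta_\bullet)_r$. The goal is to show $\tilde Z^+_r = Z^+_r$ for all $r \in [r', r_1)$. Writing $M^+_{r',r} := \max_{r' \leq u \leq r}[\beta_u - z]_-$, so that $M^+_r = \max(M^+_{r'}, M^+_{r',r})$, the key algebraic step is the identity
\begin{equation*}
  [M^+_{r'} + \beta_u - z]_- = \max\bigl(0,\, [\beta_u - z]_- - M^+_{r'}\bigr),
\end{equation*}
valid because $M^+_{r'} \geq 0$. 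Taking the maximum over $u \in [r',r]$, this yields $\max_{r' \leq u \leq r}[\tilde\beta_u - z]_- = \max(0, M^+_{r',r} - M^+_{r'})$, and then a direct computation gives $\tilde Z^+_r - Z^+_r = M^+_{r'} + \max(0, M^+_{r',r} - M^+_{r'}) - \max(M^+_{r'}, M^+_{r',r}) = 0$ in both cases $M^+_{r',r} \leq M^+_{r'}$ and $M^+_{r',r} > M^+_{r'}$.

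No significant obstacle is expected: both parts reduce to elementary manipulations of the Skorokhod formula. The only point requiring a bit of care is the truncation identity above, which encapsulates the fact that the reflection accumulated before $r'$ is absorbed in the initial value $Z^+_{r'}$ of the restarted path.
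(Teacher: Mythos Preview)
Your proposal is correct and follows essentially the same approach as the paper. For part~\eqref{it:prop-refmap:accr} the paper simply says ``immediate,'' which is exactly your running-maximum monotonicity observation; for part~\eqref{it:prop-refmap:flow} the paper also reduces (for $\boldsymbol{\Gamma}^{+,0}$) to the identity $\max_{r_0 \leq u \leq r}[\beta_u]_- = M^+_{r'} + \max_{r' \leq v \leq r}[\beta_v + M^+_{r'}]_-$ and verifies it by the same two-case split on whether the overall maximum is attained on $[r_0,r']$ or $[r',r]$, so your truncation identity $[c+a]_- = \max(0,[a]_- - c)$ is just a slightly more explicit packaging of the same algebra.
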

\begin{proof}
  The first point is immediate. We show the second point for $\boldsymbol{\Gamma}^{+,0}$, the argument carries over to general reflection maps without any difficulty. For any $r_0 \leq r' \leq r < r_1$, the claimed identity rewrites
  \begin{equation*}
    Z^+_r = Z^+_{r'} + \beta_r - \beta_{r'} + \max_{r' \leq v \leq r} [Z^+_{r'} + \beta_v - \beta_{r'}]_-,
  \end{equation*}
  and it is easily seen to hold if and only if
  \begin{equation*}
    \max_{r_0 \leq u \leq r} [\beta_u]_- = \max_{r_0 \leq u \leq r'} [\beta_u]_- + \max_{r' \leq v \leq r} \left[\beta_v + \max_{r_0 \leq u \leq r'} [\beta_u]_-\right]_-.
  \end{equation*}
  To prove the latter identity we distinguish between two cases, namely whether the maximum of $[\beta_u]_-$ is reached on $[r_0,r']$ or on $[r',r]$. In the first case, we have
  \begin{equation*}
    \max_{r_0 \leq u \leq r} [\beta_u]_- = \max_{r_0 \leq u \leq r'} [\beta_u]_-,
  \end{equation*}
  while $\beta_v + \max_{r_0 \leq u \leq r'} [\beta_u]_- \geq 0$ for any $v \in [r',r]$ and thus
  \begin{equation*}
    \max_{r' \leq v \leq r} \left[\beta_v + \max_{r_0 \leq u \leq r'} [\beta_u]_-\right]_- = 0,
  \end{equation*}
  which proves the identity. In the second case, we have
  \begin{equation*}
    \max_{r' \leq v \leq r} \left[\beta_v + \max_{r_0 \leq u \leq r'} [\beta_u]_-\right]_- = \max_{r' \leq v \leq r} -\left(\beta_v + \max_{r_0 \leq u \leq r'} [\beta_u]_-\right) = \max_{r' \leq v \leq r} [\beta_v]_- - \max_{r_0 \leq u \leq r'} [\beta_u]_-,
  \end{equation*}
  and therefore
  \begin{equation*}
    \max_{r_0 \leq u \leq r} [\beta_u]_- = \max_{r' \leq v \leq r} [\beta_v]_- = \max_{r_0 \leq u \leq r'} [\beta_u]_- + \max_{r' \leq v \leq r} \left[\beta_v + \max_{r_0 \leq u \leq r'} [\beta_u]_-\right]_-,
  \end{equation*}
  which completes the proof.
\end{proof}

%%%%%%%%%%%%%%%%%%%%%%%%%%%%%%%%%%%%%%%%%%%%%%%%%%%%%%%%%%%%%%%%%%%%%%%%%%%%%%%%%%%%%%%%%%%%%%%%%%%%%%%%%
\subsection{Hitting times of reflected Brownian motions}\label{ss:T-mbr}

In this section, we let $(\mathcal{G}_r)_{r \geq 0}$ be a filtration and $(\mathsf{W}_r)_{r \geq 0}$ be a one-dimensional $(\mathcal{G}_r)_{r \geq 0}$-Brownian motion.

For $c \in \R$ and $z_0 \geq z$, we denote by $(\mathsf{Z}^{+,z;c}_{z_0,r})_{r \geq 0}$ the Brownian motion with constant drift $c$, positively reflected at the level $z$, and started from $z_0$ at $r=0$. In explicit terms,
\begin{equation*}
  \mathsf{Z}^{+,z;c}_{z_0, \bullet} = \boldsymbol{\Gamma}^{+,z} \upbeta^c_{z_0, \bullet}, \qquad \upbeta^c_{z_0,r} := z_0 + cr + \mathsf{W}_r.
\end{equation*}
Similarly, for $c \in \R$ and $z_0 \leq z$, we define $(\mathsf{Z}^{-,z;c}_{z_0,r})_{r \geq 0}$ by
\begin{equation*}
  \mathsf{Z}^{-,z;c}_{z_0, \bullet} = \boldsymbol{\Gamma}^{-,z} \upbeta^c_{z_0, \bullet}.
\end{equation*}

\begin{prop}[Strong Markov property]\label{prop:strongMarkovZ}
  For $c \in \R$ and $z_0 \geq z$, the process $(\mathsf{Z}^{+,z;c}_{z_0,r})_{r \geq 0}$ is adapted to $(\mathcal{G}_r)_{r \geq 0}$ and has the strong Markov property, in the sense that if $\varrho$ is a stopping time, then conditionally on the event $\{\varrho < +\infty, \mathsf{Z}^{+,z;c}_{z_0,\varrho} = z'\}$, the process $(\mathsf{Z}^{+,z;c}_{z_0,r+\varrho})_{r \geq 0}$ has the same law as $(\mathsf{Z}^{+,z;c}_{z',r})_{r \geq 0}$. Similar statements hold for the negatively reflected process $(\mathsf{Z}^{-,z;c}_{z_0,r})_{r \geq 0}$.
\end{prop}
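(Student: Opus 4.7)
The adaptedness is essentially immediate: the value $\mathsf{Z}^{+,z;c}_{z_0,r}$ is a deterministic functional of $(\mathsf{W}_u)_{0 \leq u \leq r}$ (through the formula $\upbeta^c_{z_0,r} + \max_{0 \leq u \leq r}[\upbeta^c_{z_0,u}-z]_-$), which by assumption is $\mathcal{G}_r$-measurable. So the plan concentrates on the strong Markov property.

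The key structural ingredient is the flow property of the reflection map established in \cref{prop:refmap}~\eqref{it:prop-refmap:flow}. Applied to the path $\upbeta^c_{z_0,\bullet}$ at the (deterministic) time $r'$, it yields
\begin{equation*}
  \mathsf{Z}^{+,z;c}_{z_0,r'+\bullet} = \boldsymbol{\Gamma}^{+,z}\bigl(\mathsf{Z}^{+,z;c}_{z_0,r'} + \upbeta^c_{z_0,r'+\bullet} - \upbeta^c_{z_0,r'}\bigr) = \boldsymbol{\Gamma}^{+,z}\bigl(\mathsf{Z}^{+,z;c}_{z_0,r'} + c\,\bullet + \mathsf{W}_{r'+\bullet} - \mathsf{W}_{r'}\bigr).
\end{equation*}
The first step is to extend this identity from deterministic $r'$ to an arbitrary stopping time $\varrho$. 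This is a routine approximation by the standard dyadic scheme: for each $n \geq 1$, set $\varrho_n := 2^{-n}\lceil 2^n \varrho\rceil$ (with the usual convention on $\{\varrho = +\infty\}$); then $\varrho_n$ takes countably many deterministic values, and on $\{\varrho_n = k 2^{-n}\}$ the identity above holds pointwise. Passing to the limit $n \to +\infty$ and using the right-continuity of the paths of $\mathsf{Z}^{+,z;c}_{z_0,\bullet}$, $\upbeta^c_{z_0,\bullet}$, and of the reflection map $\boldsymbol{\Gamma}^{+,z}$ (for the topology of uniform convergence on compact intervals), we obtain
\begin{equation*}
  \mathsf{Z}^{+,z;c}_{z_0,\varrho+\bullet} = \boldsymbol{\Gamma}^{+,z}\bigl(\mathsf{Z}^{+,z;c}_{z_0,\varrho} + c\,\bullet + \widetilde{\mathsf{W}}_{\bullet}\bigr) \qquad \text{on $\{\varrho < +\infty\}$,} \qquad \widetilde{\mathsf{W}}_r := \mathsf{W}_{\varrho+r} - \mathsf{W}_\varrho.
\end{equation*}

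The second step is to invoke the strong Markov property of the driving Brownian motion $(\mathsf{W}_r)_{r\ge 0}$ with respect to the filtration $(\mathcal{G}_r)_{r \geq 0}$: conditionally on $\{\varrho < +\infty\}$, the process $(\widetilde{\mathsf{W}}_r)_{r \geq 0}$ is a standard Brownian motion, independent of $\mathcal{G}_\varrho$. Since $\mathsf{Z}^{+,z;c}_{z_0,\varrho}$ is $\mathcal{G}_\varrho$-measurable, conditioning further on $\{\mathsf{Z}^{+,z;c}_{z_0,\varrho} = z'\}$ only restricts this $\mathcal{G}_\varrho$-variable without affecting the law of $\widetilde{\mathsf{W}}$. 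We therefore obtain
\begin{equation*}
  \bigl(\mathsf{Z}^{+,z;c}_{z_0,\varrho+r}\bigr)_{r \geq 0} \mid \{\varrho < +\infty,\ \mathsf{Z}^{+,z;c}_{z_0,\varrho}=z'\} \;\overset{(d)}{=}\; \boldsymbol{\Gamma}^{+,z}\bigl(z' + c\,\bullet + \widetilde{\mathsf{W}}_{\bullet}\bigr) = \bigl(\mathsf{Z}^{+,z;c}_{z',r}\bigr)_{r \geq 0},
\end{equation*}
which is the desired identity. The argument for $(\mathsf{Z}^{-,z;c}_{z_0,r})_{r \geq 0}$ is strictly identical after replacing $\boldsymbol{\Gamma}^{+,z}$ by $\boldsymbol{\Gamma}^{-,z}$.

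The only mildly delicate point is the stopping-time extension of the flow identity. It is not really an obstacle, only a matter of checking the continuity of the reflection map (which is standard and follows at once from the explicit running-maximum formula) and invoking right-continuity of paths; but it is the step where one must be careful, since the argument requires that all manipulations remain measurable on the event $\{\varrho < +\infty\}$ and that $\mathcal{G}_\varrho$ be defined in the usual optional sense so that the strong Markov property of $(\mathsf{W}_r)_{r \geq 0}$ applies.
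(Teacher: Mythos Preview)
Your proposal is correct and follows exactly the approach sketched in the paper, which simply says that the result follows from \cref{prop:refmap}~\eqref{it:prop-refmap:flow} combined with the strong Markov property of $(\mathsf{W}_r)_{r\ge 0}$. You have merely spelled out the routine details (the dyadic approximation to pass from deterministic times to stopping times, and the continuity of the reflection map) that the paper leaves implicit.
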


Proposition~\ref{prop:strongMarkovZ} directly follows from Proposition~\ref{prop:refmap}~\eqref{it:prop-refmap:flow} combined with the strong Markov property for the Brownian motion $(\mathsf{W}_r)_{r \geq 0}$.

\medskip
The following invariances in law are immediate:
\begin{itemize}
  \item translation: for any $u \in \R$, $\mathsf{Z}^{\pm,z+u;c}_{z_0+u, \bullet}$ has the same law as $\mathsf{Z}^{\pm,z;c}_{z_0, \bullet}$;
  \item symmetry: $\mathsf{Z}^{+,z;c}_{z_0, \bullet}$ has the same law as $\mathsf{Z}^{-,-z;-c}_{-z_0, \bullet}$.
\end{itemize}

These invariance properties allow to reduce the study of hitting times for reflected drifted Brownian motions to the random variable
\begin{equation*}
  \mathsf{T}^c_{z_0, z_1} := \inf\{r \geq 0: \mathsf{Z}^{+,0;c}_{z_0, r} = z_1\}, \qquad z_1 \geq z_0 \geq 0,
\end{equation*}
defined from the drifted Brownian motion with positive reflection at $0$.

\begin{prop}[Moments of $\mathsf{T}^c_{z_0, z_1}$]\label{prop:estimT}
  Let $0 \leq z_0 \leq z_1$. For any $c \in \R$, for any $k \geq 1$,
  \begin{equation*}
    \Exp\left[\left(\mathsf{T}^c_{z_0, z_1}\right)^k\right] < +\infty.
  \end{equation*}
\end{prop}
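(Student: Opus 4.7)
The plan is to show that $\mathsf{T}^c_{z_0,z_1}$ has exponential tails uniformly in the starting point $z_0 \in [0,z_1]$, from which all polynomial moments follow immediately.

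The first step is to establish a uniform lower bound on the probability of hitting $z_1$ in bounded time. Since positive reflection only pushes the process upward, one has the pathwise bound
\begin{equation*}
  \mathsf{Z}^{+,0;c}_{z_0,r} \geq \upbeta^c_{z_0,r} = z_0 + cr + \mathsf{W}_r,
\end{equation*}
so that
\begin{equation*}
  \Pr\left(\mathsf{T}^c_{z_0,z_1} \leq T\right) \geq \Pr\left(\max_{0 \leq r \leq T}\upbeta^c_{z_0,r} \geq z_1\right) \geq \Pr\left(\max_{0 \leq r \leq T}(cr + \mathsf{W}_r) \geq z_1\right) =: p,
\end{equation*}
where the second inequality uses $z_0 \geq 0$. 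Fixing any $T > 0$, the quantity $p = p(c,T,z_1)$ is strictly positive and, crucially, independent of $z_0 \in [0,z_1]$.

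The second step is a standard geometric-tail argument based on the strong Markov property given by \cref{prop:strongMarkovZ}. On the event $\{\mathsf{T}^c_{z_0,z_1} > kT\}$, one has $\mathsf{Z}^{+,0;c}_{z_0,kT} \in [0,z_1)$, and applying the strong Markov property at the deterministic time $kT$ together with the uniform lower bound from the first step yields
\begin{equation*}
  \Pr\left(\mathsf{T}^c_{z_0,z_1} > (k+1)T \,\big|\, \mathcal{G}_{kT}\right)\ind{\mathsf{T}^c_{z_0,z_1} > kT} \leq (1-p)\ind{\mathsf{T}^c_{z_0,z_1} > kT}.
\end{equation*}
Taking expectations and iterating gives $\Pr(\mathsf{T}^c_{z_0,z_1} > nT) \leq (1-p)^n$ for every $n \geq 1$, and then for any $k \geq 1$,
\begin{equation*}
  \Exp\left[\left(\mathsf{T}^c_{z_0,z_1}\right)^k\right] \leq T^k \sum_{n=0}^{+\infty}(n+1)^k (1-p)^n < +\infty.
\end{equation*}

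The only conceptual subtlety is that one might initially worry about the case $c < 0$, where the unreflected drifted Brownian motion has positive probability of never reaching $z_1$. However, the comparison above only uses that with positive probability the path of $(\mathsf{W}_r)_{r \geq 0}$ has a sufficiently large excursion on $[0,T]$, which holds independently of the sign of $c$; the reflection at $0$ then prevents the process from ``escaping'' downward. Thus no separate argument is needed for the two signs of the drift, and the proof is essentially a one-line comparison followed by a geometric-tail iteration.
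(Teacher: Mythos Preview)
Your proof is correct and follows essentially the same approach as the paper: use the pathwise inequality $\mathsf{Z}^{+,0;c}_{z_0,\bullet} \geq \upbeta^c_{z_0,\bullet}$ (which the paper phrases via \cref{prop:refmap}~\eqref{it:prop-refmap:accr}) to get a uniform-in-$z_0$ lower bound on $\Pr(\mathsf{T}^c_{z_0,z_1}\leq T)$, then iterate with the Markov property to obtain geometric tails. The paper simply takes $T=1$ and omits the explicit iteration, but the argument is the same.
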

\begin{proof}  From the definition of $(\mathsf{Z}^{+,0;c}_{z_0, r})_{r \geq 0}$ and Proposition~\ref{prop:refmap}~\eqref{it:prop-refmap:accr}, it is easy to show that 
  \begin{equation*}
    \inf_{z_0 \in [0,z_1]} \Pr\left(\mathsf{T}^c_{z_0, z_1} \leq 1\right) \geq \inf_{z_0 \in [0,z_1]} \Pr\left(\sup_{r \in [0,1]} \upbeta^c_{z_0,r} \geq z_1\right) > 0.
  \end{equation*}
  We therefore deduce from the Markov property that $\mathsf{T}^c_{z_0, z_1}$ has geometric tails and thus finite moments of all orders.  
\end{proof}

Proposition~\ref{prop:estimT} allows us to introduce the notation
\begin{equation*}
  \bar{\mathcal{R}}^c_{z_0, z_1} := \Exp\left[\mathsf{T}^c_{z_0, z_1}\right],
\end{equation*}
and for $\eta \in (0, z_0)$, 
\begin{equation*}
  \mathcal{R}^c_{z_0, z_1}(\eta) := \Exp\left[\int_{r=0}^{\mathsf{T}^c_{z_0, z_1}} \ind{\mathsf{Z}^{+,0;c}_{z_0, r} < \eta}\dd r\right].
\end{equation*}

\begin{cor}[Continuity of $\mathcal{R}^c_{z_0, z_1}$]\label{cor:estimT}
  The function $\mathcal{R}^c_{z_0, z_1}$ is continuous on $(0,z_0)$ and satisfies
  \begin{equation*}
    \lim_{\eta \dto 0} \mathcal{R}^c_{z_0, z_1}(\eta) = 0.
  \end{equation*}
\end{cor}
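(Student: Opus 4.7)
The plan is to apply dominated convergence with the integrable majorant $\mathsf{T}^c_{z_0,z_1}$ supplied by \cref{prop:estimT}. The key deterministic fact to extract is that, for any fixed level $\eta \in [0, z_0)$,
\[
\int_0^{\mathsf{T}^c_{z_0,z_1}} \ind{\mathsf{Z}^{+,0;c}_{z_0,r} = \eta}\,\dd r = 0, \qquad \text{almost surely.}
\]
For $\eta \in (0, z_0)$, I would obtain this from the occupation time formula for the continuous semimartingale $\mathsf{Z}^{+,0;c}_{z_0,\bullet}$, whose quadratic variation is $r$: the occupation measure on $[0,\mathsf{T}^c_{z_0,z_1}]$ is absolutely continuous with respect to Lebesgue measure on $\R$, hence charges no singleton. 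For $\eta = 0$, this is the classical fact that a reflected Brownian motion with drift spends zero Lebesgue time at its reflecting barrier (the local time at $0$ is continuous, the zero set has empty interior and Lebesgue measure zero). Both statements can be read off \cref{prop:refmap} and standard semimartingale theory.

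Granted this, continuity of $\mathcal{R}^c_{z_0,z_1}$ at any $\eta_0 \in (0, z_0)$ follows by taking a sequence $\eta_n \to \eta_0$: the integrands $\ind{\mathsf{Z}^{+,0;c}_{z_0,r} < \eta_n}$ converge pointwise in $r$ to $\ind{\mathsf{Z}^{+,0;c}_{z_0,r} < \eta_0}$ at every $r$ such that $\mathsf{Z}^{+,0;c}_{z_0,r} \neq \eta_0$, hence almost everywhere on $[0,\mathsf{T}^c_{z_0,z_1}]$ by the key fact above. Since $\int_0^{\mathsf{T}^c_{z_0,z_1}} \ind{\mathsf{Z}^{+,0;c}_{z_0,r} < \eta_n}\,\dd r \leq \mathsf{T}^c_{z_0,z_1} \in L^1$ by \cref{prop:estimT}, dominated convergence yields $\mathcal{R}^c_{z_0,z_1}(\eta_n) \to \mathcal{R}^c_{z_0,z_1}(\eta_0)$.

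For the limit as $\eta \dto 0$, I observe that the family $\ind{\mathsf{Z}^{+,0;c}_{z_0,r} < \eta}$ decreases monotonically with $\eta$ and converges pointwise to $\ind{\mathsf{Z}^{+,0;c}_{z_0,r} = 0}$ (using that $\mathsf{Z}^{+,0;c}_{z_0,r} \geq 0$). Integrating against $\dd r$ on $[0,\mathsf{T}^c_{z_0,z_1}]$ and invoking the key fact at $\eta = 0$, the pointwise limit has zero integral almost surely, so monotone convergence gives $\int_0^{\mathsf{T}^c_{z_0,z_1}} \ind{\mathsf{Z}^{+,0;c}_{z_0,r} < \eta}\,\dd r \dto 0$ almost surely. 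Dominated convergence with the same integrable bound then yields $\mathcal{R}^c_{z_0,z_1}(\eta) \to 0$.

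The only technical point requiring care is the vanishing of the occupation time at the reflecting barrier $\eta = 0$; this is the one place where the reflection (as opposed to the underlying drifted Brownian structure) genuinely intervenes. It can either be cited as a standard property of one-dimensional reflected Brownian motion, or proved directly from the representation $\mathsf{Z}^{+,0;c}_{z_0,r} = \upbeta^c_{z_0,r} + \max_{0 \leq u \leq r}[\upbeta^c_{z_0,u}]_-$ by noting that the maximum of a continuous process increases on a set of Lebesgue measure zero.
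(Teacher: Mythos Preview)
Your argument is correct, but it follows a different route from the paper's. The paper applies Fubini to write
\[
  \mathcal{R}^c_{z_0,z_1}(\eta) = \int_0^{+\infty} \Pr\bigl(\mathsf{T}^c_{z_0,z_1} > r,\ \mathsf{Z}^{+,0;c}_{z_0,r} < \eta\bigr)\,\dd r,
\]
and then uses dominated convergence in~$r$ with majorant $\Pr(\mathsf{T}^c_{z_0,z_1} > r)$; the key ingredient is that, for each fixed~$r$, the marginal $\mathsf{Z}^{+,0;c}_{z_0,r}$ has a density (obtained from the identity in law with $|z_0+\mathsf{W}_r|$ when $c=0$, then Girsanov for general~$c$), so that $\Pr(\mathsf{Z}^{+,0;c}_{z_0,r}=\eta)=0$. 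You instead work pathwise: for each fixed~$\eta$, the occupation time formula for the continuous semimartingale $\mathsf{Z}^{+,0;c}_{z_0,\bullet}$ (whose bracket is~$r$) gives $\int_0^{\mathsf{T}^c_{z_0,z_1}}\ind{\mathsf{Z}^{+,0;c}_{z_0,r}=\eta}\,\dd r=0$ a.s., and you then apply dominated convergence in~$\omega$ with majorant $\mathsf{T}^c_{z_0,z_1}\in L^1$. The two arguments are dual (time-slice density versus pathwise occupation), and both are short. Your approach has the minor advantage of not invoking Girsanov or the explicit law of the reflected process; note also that your separate treatment of $\eta=0$ is not really needed, since the occupation time formula already covers that level.
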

\begin{proof}
  We first rewrite, for any $\eta \in (0,z_0)$,
  \begin{equation*}
    \mathcal{R}^c_{z_0, z_1}(\eta) = \int_{r=0}^{+\infty} \Pr\left(\mathsf{T}^c_{z_0, z_1} > r, \mathsf{Z}^{+,0;c}_{z_0, r} < \eta\right)\dd r,
  \end{equation*}
  so that by Proposition~\ref{prop:estimT} and the dominated convergence theorem, to prove the corollary it suffices to show that for any $r \geq 0$, the nonnegative random variable $\mathsf{Z}^{+,0;c}_{z_0, r}$ has a density with respect to the Lebesgue measure.   { For $c=0$, this follows for instance from the fact that the process $(\mathsf{Z}^{+,0;0}_{z_0, r})_{r \geq 0}$ has the same law as $(|z_0 + \mathsf{W}_r|)_{r \geq 0}$~\cite[Corollary~2.2, p.~240]{RevYor99}.}
  By the Girsanov theorem, the absolute continuity of $\mathsf{Z}^{+,0;c}_{z_0, r}$ is then preserved for any value of $c \in \R$.
\end{proof}

\begin{rk}
  The density of $\mathsf{Z}^{+,0;c}_{z_0, r}$ is explicitly computed in~\cite[Section~4.2]{Lin05}.
\end{rk}

Finally, since $\mathsf{T}^c_{z_0, z_1}$ is almost surely finite for arbitrarily large $z_1$, the invariance properties combined with Proposition~\ref{prop:estimT} also entail the following immediate corollary. 

\begin{cor}[Unboundedness of trajectories]\label{cor:unboundZ}
  For any $c \in \R$ and $z \leq z_0$,
  \begin{equation*}
    \limsup_{r \to +\infty} \mathsf{Z}^{+,z;c}_{z_0, r} = +\infty, \qquad \text{almost surely;}
  \end{equation*}
  and similarly, for $z \geq z_0$,
  \begin{equation*}
    \liminf_{r \to +\infty} \mathsf{Z}^{-,z;c}_{z_0, r} = -\infty, \qquad \text{almost surely.}
  \end{equation*}
\end{cor}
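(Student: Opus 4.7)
The strategy is to reduce to the hitting time estimate in \cref{prop:estimT} via the invariance properties listed just before its statement, and then iterate using the strong Markov property from \cref{prop:strongMarkovZ}.

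First, I would treat the positive reflection case. By translation invariance, the process $(\mathsf{Z}^{+,z;c}_{z_0, r})_{r \geq 0}$ has the same law as $(\mathsf{Z}^{+,0;c}_{z_0-z, r})_{r \geq 0}$, so it is enough to show that
\begin{equation*}
  \limsup_{r \to +\infty} \mathsf{Z}^{+,0;c}_{z_0', r} = +\infty, \qquad \text{almost surely,}
\end{equation*}
for any $z_0' \geq 0$. Pick an integer $K \geq z_0'$. By \cref{prop:estimT}, the hitting time $\mathsf{T}^c_{z_0', K}$ is almost surely finite. Denote it by $\varrho_K$. By \cref{prop:strongMarkovZ}, conditionally on $\mathcal{G}_{\varrho_K}$, the process $(\mathsf{Z}^{+,0;c}_{z_0', r+\varrho_K})_{r \geq 0}$ has the same law as $(\mathsf{Z}^{+,0;c}_{K, r})_{r \geq 0}$. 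Applying \cref{prop:estimT} again to the hitting time of the level $K+1$, we obtain a finite stopping time $\varrho_{K+1} > \varrho_K$ at which $\mathsf{Z}^{+,0;c}_{z_0', \varrho_{K+1}} = K+1$. Iterating, we build an almost surely increasing sequence $(\varrho_k)_{k \geq K}$ of finite stopping times with $\mathsf{Z}^{+,0;c}_{z_0', \varrho_k} = k$, which gives the claimed $\limsup$.

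The negative reflection statement then follows immediately from the symmetry invariance $\mathsf{Z}^{-,z;c}_{z_0, \bullet} \overset{(d)}{=} -\mathsf{Z}^{+,-z;-c}_{-z_0, \bullet}$: if $z \geq z_0$ then $-z_0 \geq -z$, so the previous case applies to $(\mathsf{Z}^{+,-z;-c}_{-z_0, r})_{r \geq 0}$, and therefore $\liminf_{r \to +\infty} \mathsf{Z}^{-,z;c}_{z_0, r} = -\infty$ almost surely.

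There is no genuine obstacle here: the result is essentially a packaging of \cref{prop:estimT} and \cref{prop:strongMarkovZ}, and the only mild point of care is that the iteration uses the strong Markov property (rather than the simple Markov property), which is what justifies restarting at successive hitting times of growing integer levels.
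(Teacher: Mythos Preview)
Your proposal is correct and follows the same route as the paper, which simply notes (in one sentence) that \cref{prop:estimT} makes $\mathsf{T}^c_{z_0,z_1}$ almost surely finite for arbitrarily large $z_1$, and then invokes the translation and symmetry invariances. Your iteration via the strong Markov property is a valid way to spell this out, though it is not strictly needed: since \cref{prop:estimT} applies directly to $\mathsf{T}^c_{z_0',k}$ for every $k \geq z_0'$, the single process $(\mathsf{Z}^{+,0;c}_{z_0',r})_{r \geq 0}$ already reaches each level $k$ in finite time without restarting.
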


\begin{rk}
  The proofs of Proposition~\ref{prop:estimT} and Corollaries~\ref{cor:estimT} and~\ref{cor:unboundZ} could also follow from the remark that, by the Tanaka formula, the process $(\mathsf{Z}^{+,0;c}_{z_0, r})_{r \geq 0}$ has the same law as the process $(|\mathsf{X}_r|)_{r \geq 0}$, where $(\mathsf{X}_r)_{r \geq 0}$ is the solution to the stochastic differential equation
  \begin{equation*}
    \dd \mathsf{X}_r = c \sgn(\mathsf{X}_r)\dd r + \dd \mathsf{W}_r, \qquad \mathsf{X}_0 = z_0.
  \end{equation*}
  This allows to use general results on the integrability of exit times from bounded sets, the existence of a density, and the ergodic behavior of one-dimensional diffusions with bounded measurable drift and additive noise.
\end{rk}

%%%%%%%%%%%%%%%%%%%%%%%%%%%%%%%%%%%%%%%%%%%%%%%%%%%%%%%%%%%%%%%%%%%%%%%%%%%%%%%%%%%%%%%%%%%%%%%%%%%%%%%%%
%%%%%%%%%%%%%%%%%%%%%%%%%%%%%%%%%%%%%%%%%%%%%%%%%%%%%%%%%%%%%%%%%%%%%%%%%%%%%%%%%%%%%%%%%%%%%%%%%%%%%%%%%
%%%%%%%%%%%%%%%%%%%%%%%%%%%%%%%%%%%%%%%%%%%%%%%%%%%%%%%%%%%%%%%%%%%%%%%%%%%%%%%%%%%%%%%%%%%%%%%%%%%%%%%%%
\section{Postponed proofs from Subsection~\ref{ss:zeta}}\label{app:pfs-tension}

This section contains the proofs of Proposition~\ref{prop:ZZpq}, Lemma~\ref{lem:compR} and Lemma~\ref{lem:LLN}.

%%%%%%%%%%%%%%%%%%%%%%%%%%%%%%%%%%%%%%%%%%%%%%%%%%%%%%%%%%%%%%%%%%%%%%%%%%%%%%%%%%%%%%%%%%%%%%%%%%%%%%%%%
%%%%%%%%%%%%%%%%%%%%%%%%%%%%%%%%%%%%%%%%%%%%%%%%%%%%%%%%%%%%%%%%%%%%%%%%%%%%%%%%%%%%%%%%%%%%%%%%%%%%%%%%%
\subsection{Proof of Proposition~\ref{prop:ZZpq}}\label{ss:pf-ZZpq}

As a preliminary for the proof of Proposition~\ref{prop:ZZpq}, we clarify the relation between $(Z_r)_{r \in [S_q,T_q)}$ and $(\beta_r)_{r \in [S_q,T_q)}$ in Lemma~\ref{lem:baromega} below. In this statement, we define $(\bar{\beta}_r)_{r \in [S_q,T_q)}$ by
\begin{equation*}
  \bar{\beta}_r = \begin{cases}
    \beta_{S_q} = \frac{\eta_0}{3} & \text{on $[S_q, \Delta_{n_q} \wedge T_q)$,}\\
    \beta_{\Delta_n} = \frac{\eta_0}{3} + \omega_{\Delta_n} - \omega_{S_q} & \text{on $[\Delta_n, \Delta_{n+1}\wedge T_q)$, for any $n \in \{n_q, \ldots, m_q-1\}$.}
  \end{cases}
\end{equation*}

\begin{lem}[$Z_{\Delta_n}$ as a reflected scheme]\label{lem:baromega}
  For any $n \in \{n_q, \ldots, m_q-1\}$,
  \begin{equation*}
    Z_{\Delta_n} = \left(\boldsymbol{\Gamma}^{+,0}\bar{\beta}_\bullet\right)_{\Delta_n}.
  \end{equation*}
\end{lem}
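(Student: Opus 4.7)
The plan is a straightforward induction on $n$ over $\{n_q,\ldots,m_q-1\}$, based on the observation that both sides of the identity satisfy the same one-step recursion
\begin{equation*}
  A_{n+1} = \bigl[A_n + (\omega_{\Delta_{n+1}} - \omega_{\Delta_n})\bigr]_+
\end{equation*}
along the grid, and agree at $n = n_q$. The key input is that on $[S_q,T_q)$ the negative reflection of $Z_\bullet$ at $\eta_0$ is inactive, so the only mechanism preventing $Z_\bullet$ from going below $0$ is the jump rule $Z_{\Delta_{n+1}} = [Z_{\Delta_{n+1}^-}]_+$ of Remark~\ref{rk:edsZ}.

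For the left-hand side, Remark~\ref{rk:edsZ} gives $Z_{\Delta_{n+1}^-} = Z_{\Delta_n} + (\omega_{\Delta_{n+1}} - \omega_{\Delta_n})$ on each $(\Delta_n,\Delta_{n+1})$ with $n_q\le n\le m_q-2$, which combined with the jump rule yields the recursion. For the right-hand side, $\bar{\beta}_\bullet$ is piecewise constant on $[S_q,T_q)$ with $\bar{\beta}_{\Delta_n} = \beta_{\Delta_n}$ and increments $\bar{\beta}_{\Delta_{n+1}} - \bar{\beta}_{\Delta_n} = \omega_{\Delta_{n+1}} - \omega_{\Delta_n}$; a short elementary computation with the explicit Skorokhod formula $(\boldsymbol{\Gamma}^{+,0}x)_r = x_r + \sup_{S_q\le s\le r}[-x_s]_+$ (Appendix~\ref{app:RBM}) shows that, applied to such a piecewise-constant input, the reflected process on the grid obeys the same recursion.

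For the base case $n = n_q$, no $\Delta_n$ lies in $[S_q,\Delta_{n_q})$ by definition of $n_q$, so on that interval $Z_\bullet$ is just a drifted Brownian motion started at $Z_{S_q} = \eta_0/3$ while $\bar{\beta}_\bullet\equiv\eta_0/3>0$, and a direct computation gives $Z_{\Delta_{n_q}} = [\eta_0/3+\omega_{\Delta_{n_q}}-\omega_{S_q}]_+ = (\boldsymbol{\Gamma}^{+,0}\bar{\beta}_\bullet)_{\Delta_{n_q}}$ (with the degenerate case $\Delta_{n_q}=S_q$ trivially yielding $\eta_0/3$ on both sides). The induction then concludes. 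I do not anticipate any serious obstacle; the only point requiring a little care is the verification of the Skorokhod recursion for the cadlag piecewise-constant input $\bar{\beta}_\bullet$, but this is a routine exercise with the supremum formula.
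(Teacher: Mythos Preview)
Your proposal is correct and follows essentially the same approach as the paper: an induction on $n$ showing that both $Z_{\Delta_n}$ and $(\boldsymbol{\Gamma}^{+,0}\bar{\beta}_\bullet)_{\Delta_n}$ satisfy the recursion $A_{n+1}=[A_n+\omega_{\Delta_{n+1}}-\omega_{\Delta_n}]_+$ and agree at $n=n_q$. The only cosmetic difference is that the paper derives the recursion for the right-hand side via the flow property \cref{prop:refmap}~\eqref{it:prop-refmap:flow} rather than directly from the supremum formula, but this is the same computation.
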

\begin{proof}
  We first note that if $\Delta_{n_q} \geq T_q$, that is to say $n_q=m_q$, then the statement is empty. We therefore assume that $\Delta_{n_q} < T_q$ and first remark that, by construction of the process $(\bar{\beta}_r)_{r \in [S_q,T_q)}$ and since $\bar{\beta}_{S_q} = \eta_0/3>0$, for any $n \in \{n_q, \ldots, m_q-1\}$, we have the identity
  \begin{equation}\label{eq:pf:baromega}
    \left(\boldsymbol{\Gamma}^{+,0}\bar{\beta}_\bullet\right)_{\Delta_n} = \beta_{\Delta_n} + \max_{n_q \leq n' \leq n} [\beta_{\Delta_{n'}}]_-.
  \end{equation}
  In particular, for $n=n_q$, we deduce that
  \begin{equation*}
    \left(\boldsymbol{\Gamma}^{+,0}\bar{\beta}_\bullet\right)_{\Delta_{n_q}} = \beta_{\Delta_{n_q}} + [\beta_{\Delta_{n_q}}]_- = [\beta_{\Delta_{n_q}}]_+,
  \end{equation*}
  which clearly coincides with $Z_{\Delta_{n_q}}$.
  
  Let us now assume that $n \in \{n_q, \ldots, m_q-1\}$ is such that $Z_{\Delta_n} = \left(\boldsymbol{\Gamma}^{+,0}\bar{\beta}_\bullet\right)_{\Delta_n}$. Then, by Proposition~\ref{prop:refmap}~\eqref{it:prop-refmap:flow}, we have
  \begin{equation*}
    \left(\boldsymbol{\Gamma}^{+,0}\bar{\beta}_\bullet\right)_{\Delta_{n+1}} = \left(\boldsymbol{\Gamma}^{+,0}\left(Z_{\Delta_n} + \bar{\beta}_\bullet - \bar{\beta}_{\Delta_n}\right)\right)_{\Delta_{n+1}}.
  \end{equation*}
  But it follows from the definition of $\bar{\beta}_\bullet$, and the fact that $Z_{\Delta_n} \geq 0$, that
  \begin{equation*}
    \max_{\Delta_n \leq v \leq \Delta_{n+1}} \left[Z_{\Delta_n} + \bar{\beta}_v - \bar{\beta}_{\Delta_n}\right]_- = \left[Z_{\Delta_n} + \beta_{\Delta_{n+1}} - \beta_{\Delta_n}\right]_-,
  \end{equation*}
  so that
  \begin{equation*}
    \left(\boldsymbol{\Gamma}^{+,0}\bar{\beta}_\bullet\right)_{\Delta_{n+1}} = \left[Z_{\Delta_n} + \beta_{\Delta_{n+1}} - \beta_{\Delta_n}\right]_+ = \left[Z_{\Delta_n} + \omega_{\Delta_{n+1}} - \omega_{\Delta_n}\right]_+.
  \end{equation*}
  Since $\Delta_{n+1} < T_q$, $Z_\bullet$ does not reflect at $\eta_0$ on $[\Delta_n, \Delta_{n+1}]$ and therefore the right-hand side above coincides with $Z_{\Delta_{n+1}}$. The end of the proof follows by induction.
\end{proof}

We are now ready to complete the proof of Proposition~\ref{prop:ZZpq}.

\begin{proof}[Proof of~Proposition~\ref{prop:ZZpq}]
  We deduce from Lemma~\ref{lem:baromega} and~\eqref{eq:pf:baromega} that, for any $n \in \{n_q, \ldots, m_q-1\}$,
  \begin{equation*}
    Z^+_{q,\Delta_n} - Z_{\Delta_n} = \left(\boldsymbol{\Gamma}^{+,0}\beta_\bullet\right)_{\Delta_n} - \left(\boldsymbol{\Gamma}^{+,0}\bar{\beta}_\bullet\right)_{\Delta_n} = \max_{S_q \leq u \leq \Delta_n} [\beta_u]_- - \max_{n_q \leq n' \leq n} [\beta_{\Delta_{n'}}]_-,
  \end{equation*}
  from which it immediately follows that
  \begin{equation}\label{eq:pfZZpq:1}
    0 \leq Z^+_{q,\Delta_n} - Z_{\Delta_n} \leq \sup_{S_q \leq u \leq \Delta_n} \left|\beta_u - \bar{\beta}_u\right| \leq \max_{n_q \leq n' \leq n} \iota_{n'} \leq \epsilon_q.
  \end{equation}
  
  To complete the proof of Proposition~\ref{prop:ZZpq}, it remains to estimate $Z^+_{q,r}-Z_r$ for values of $r$ outside the grid $(\Delta_n)_{n_q \leq n \leq m_q-1}$. To proceed, we note that for $n \in \{n_q, \ldots, m_q-1\}$, Proposition~\ref{prop:refmap}~\eqref{it:prop-refmap:flow} yields
  \begin{equation*}
    Z^+_{q,\bullet} = \boldsymbol{\Gamma}^{+,0}\left(Z^+_{q,\Delta_n} + \beta_\bullet - \beta_{\Delta_n}\right), \qquad \text{on $[\Delta_n, \Delta_{n+1}\wedge T_q)$,}
  \end{equation*}
  while
  \begin{equation*}
    Z_\bullet = Z_{\Delta_n} + \beta_\bullet - \beta_{\Delta_n}, \qquad \text{on $[\Delta_n, \Delta_{n+1}\wedge T_q)$.}
  \end{equation*}
  Therefore, for any $r \in [\Delta_n, \Delta_{n+1}\wedge T_q)$,
  \begin{equation}\label{eq:pfZZpq:2}
    Z^+_{q,r} - Z_r = Z^+_{q,\Delta_n} - Z_{\Delta_n} + \max_{\Delta_n \leq u \leq r} \left[Z^+_{q,\Delta_n} + \beta_u - \beta_{\Delta_n}\right]_-,
  \end{equation}
  which already shows that $Z^+_{q,r} - Z_r \geq 0$ thanks to the lower bound in~\eqref{eq:pfZZpq:1}. Besides, letting 
  \begin{equation*}
    \rho_n := \inf\{u \geq \Delta_n: Z^+_{q,\Delta_n} + \beta_u - \beta_{\Delta_n} \leq 0\},
  \end{equation*}
  we get
  \begin{equation*}
    \max_{\Delta_n \leq u \leq r} \left[Z^+_{q,\Delta_n} + \beta_u - \beta_{\Delta_n}\right]_- = \begin{cases}
      0 & \text{if $\rho_n \geq r$,}\\
      \max_{\Delta_n \leq u \leq r} [\beta_u-\beta_{\rho_n}]_+ & \text{otherwise,}
    \end{cases}
  \end{equation*}
  which shows that
  \begin{equation}\label{eq:pfZZpq:3}
    \max_{\Delta_n \leq u \leq r} \left[Z^+_{q,\Delta_n} + \beta_u - \beta_{\Delta_n}\right]_- \leq \sup_{u,v \in [\Delta_n, \Delta_{n+1}]} |\beta_u-\beta_v| = \iota_{n+1} \leq \epsilon_q.
  \end{equation}
  Combining~\eqref{eq:pfZZpq:2} with the upper bound of~\eqref{eq:pfZZpq:1}, we deduce that $Z^+_{q,r} - Z_r \leq 2 \epsilon_q$. The same argument applies on the interval $[S_q, \Delta_{n_q}\wedge T_q)$, and this completes the proof.
\end{proof}

%%%%%%%%%%%%%%%%%%%%%%%%%%%%%%%%%%%%%%%%%%%%%%%%%%%%%%%%%%%%%%%%%%%%%%%%%%%%%%%%%%%%%%%%%%%%%%%%%%%%%%%%%
%%%%%%%%%%%%%%%%%%%%%%%%%%%%%%%%%%%%%%%%%%%%%%%%%%%%%%%%%%%%%%%%%%%%%%%%%%%%%%%%%%%%%%%%%%%%%%%%%%%%%%%%%
\subsection{Proofs of Lemma~\ref{lem:compR} and Lemma~\ref{lem:LLN}}\label{ss:pf-compR-LLN}

\begin{proof}[Proof of Lemma~\ref{lem:compR}]
  For any $q \geq 0$, we have
  \begin{equation*}
    \left|\bar{R}_q - \bar{R}'_q\right| = \left|\sum_{n=n_q}^{n_{q+1}-1} \delta_{n+1} - (S_{q+1}-S_q)\right| \leq \left(\Delta_{n_q}-S_q\right) + \left(\Delta_{n_{q+1}}-S_{q+1}\right) \leq \delta_{n_q} + \delta_{n_{q+1}}.
  \end{equation*}
  By Corollary~\ref{cor:SqTq}~\eqref{it:cor-SqTq:1}, $n_q \to +\infty$ when $q \to +\infty$ and therefore~\eqref{eq:gamma-delta} ensures that the right-hand side above goes to $0$ when $q \to +\infty$.
  
  On the other hand, let $\eta \in (0,\eta_0/3]$. For any $q \geq 0$, we have
  \begin{align*}
    R_q(\eta) - R'_q(\eta) &= \sum_{n=n_q}^{n_{q+1}-1} \delta_{n+1} \ind{\zeta_n < \eta} - \int_{r=S_q}^{T_q} \ind{Z^+_{q,r}<\eta}\dd r\\
    &= \sum_{n=n_q}^{m_q-1} \delta_{n+1} \ind{\zeta_n < \eta} - \int_{r=S_q}^{T_q} \ind{Z^+_{q,r}<\eta}\dd r\\
    &= \sum_{n=n_q}^{m_q-1} \int_{r=\Delta_n}^{\Delta_{n+1} \wedge T_q} \left(\ind{Z_{\Delta_n} < \eta} - \ind{Z^+_{q,r} < \eta}\right) \dd r\\
    &\qquad - \int_{r=S_q}^{\Delta_{n_q}} \ind{Z^+_{q,r} < \eta} \dd r + \int_{r=T_q}^{\Delta_{m_q}} \ind{Z_{\Delta_{m_q-1}} < \eta} \dd r,
  \end{align*}
  where we recall that $m_q$ is defined in Proposition~\ref{prop:ZZpq} and we have used the fact that, since $\eta \leq \eta_0/3$, then $\zeta_n = Z_{\Delta_n} \geq \eta$ for any $n \in \{m_q, \ldots, n_{q+1}-1\}$. By the same arguments as in the beginning of the proof,
  \begin{equation*}
    \left|- \int_{r=S_q}^{\Delta_{n_q}} \ind{Z^+_{q,r} < \eta} \dd r + \int_{r=T_q}^{\Delta_{m_q}} \ind{Z_{\Delta_{m_q-1}} < \eta} \dd r\right| \leq \delta_{n_q} + \delta_{m_q},
  \end{equation*}
  which vanishes when $q \to +\infty$. 
  
  To complete the proof we first establish that, for any $q \geq 0$,
  \begin{equation*}
    \forall n \in \{n_q, \ldots, m_q-1\}, \quad \forall r \in [\Delta_n, \Delta_{n+1} \wedge T_q), \qquad |Z^+_{q,r} - Z_{\Delta_n}| \leq 3 \epsilon_q.
  \end{equation*}
  Indeed, we have
  \begin{equation*}
    |Z^+_{q,r} - Z_{\Delta_n}| \leq |Z^+_{q,r} - Z^+_{q,\Delta_n}| + |Z^+_{q,\Delta_n} - Z_{\Delta_n}|,
  \end{equation*}
  and by~\eqref{eq:pfZZpq:1}, the second term is bounded by $\epsilon_q$, while by Proposition~\ref{prop:refmap}~\eqref{it:prop-refmap:flow} and the definition of $Z^+_{q,\bullet}$,
  \begin{align*}
    Z^+_{q,r} - Z^+_{q,\Delta_n} = \beta_r - \beta_{\Delta_n} + \max_{\Delta_n \leq u \leq r} \left[Z^+_{q,\Delta_n} + \beta_u - \beta_{\Delta_n}\right]_-,
  \end{align*}
  and by~\eqref{eq:pfZZpq:3}, the modulus of the right-hand side is bounded by $2\epsilon_q$. We deduce that for $\Delta_n$ and $r$ as above, if $\ind{Z_{\Delta_n} < \eta} \not= \ind{Z^+_{q,r} < \eta}$ then necessarily $\eta-3\epsilon_q \leq Z^+_{q,r} < \eta+3\epsilon_q$. Therefore, using Corollary~\ref{cor:SqTq}~\eqref{it:cor-SqTq:4}, we deduce that for any $\epsilon \in (0,\eta)$, for $q$ large enough we have $3\epsilon_q \leq \epsilon$ and thus
  \begin{equation*}
    \left|\sum_{n=n_q}^{m_q-1} \int_{r=\Delta_n}^{\Delta_{n+1} \wedge T_q} \left(\ind{Z_{\Delta_n} < \eta} - \ind{Z^+_{q,r} < \eta}\right) \dd r\right| \leq \int_{r=S_q}^{T_q} \ind{\eta-\epsilon \leq Z^+_{q,r} < \eta+\epsilon}\dd r = R''_q(\eta,\epsilon),
  \end{equation*}
  which completes the proof.
\end{proof}

\begin{proof}[Proof of Lemma~\ref{lem:LLN}]
  We start by writing
  \begin{equation*}
    \frac{1}{Q}\sum_{q=0}^{Q-1} \bar{R}'_q = \frac{1}{Q}\sum_{q=0}^{Q-1} (S_{q+1}-T_q) + \frac{1}{Q}\sum_{q=0}^{Q-1} (T_q-S_q)
  \end{equation*}
  and first observe that by Corollary~\ref{cor:SqTq}~\eqref{it:cor-SqTq:2}, the strong Law of Large Numbers and Proposition~\ref{prop:estimT},
  \begin{equation*}
    \lim_{Q \to +\infty} \frac{1}{Q}\sum_{q=0}^{Q-1} (S_{q+1}-T_q) = \bar{\mathcal{R}}^{c_1}_{\eta_0/3, 2\eta_0/3}, \qquad \text{almost surely.}
  \end{equation*}
  We now address simultaneously $T_q-S_q$ and $R'_q(\eta)$ by showing that for any measurable function $0 \leq g \leq 1$,
  \begin{equation*}
    \lim_{Q \to +\infty} \frac{1}{Q}\sum_{q=0}^{Q-1} \int_{r=S_q}^{T_q} g(Z^+_{q,r}) \dd r = \Exp\left[\int_{r=0}^{\mathsf{T}^{-c_1}_{\eta_0/3, 2\eta_0/3}} g(\mathsf{Z}^{+,0;-c_1}_{\eta_0/3, r}) \dd r\right], \qquad \text{almost surely.}
  \end{equation*}
  The difficulty here is that in the prelimit, the variable $T_q$ depends on $(Z_r)_{r \in [S_q,T_q)}$, and thus on the randomness induced by the stopping times $(\Delta_n)_{n_q \leq n \leq m_q-1}$, therefore the summands need not be independent.
  
  By Corollary~\ref{cor:SqTq}~\eqref{it:cor-SqTq:3}, we have, for any $q \geq 0$,
  \begin{equation*}
    \left|\int_{r=S_q}^{T_q} g(Z^+_{q,r}) \dd r-\int_{r=S_q}^{T^{+;0}_q} g(Z^+_{q,r}) \dd r\right| \leq \int_{r=T^{+;0}_q}^{T^{+;\epsilon_q}_q} g(Z^+_{q,r}) \dd r.
  \end{equation*}
  We deduce from the argument in Remark~\ref{rk:Tp0q} that the second term in the left-hand side forms a collection of independent copies of the random variable $\int_{r=0}^{\mathsf{T}^{-c_1}_{\eta_0/3, 2\eta_0/3}} g(\mathsf{Z}^{+,0;-c_1}_{\eta_0/3, r}) \dd r$, and therefore by the strong Law of Large Numbers and Proposition~\ref{prop:estimT} we have
  \begin{equation*}
    \lim_{Q \to +\infty} \frac{1}{Q}\sum_{q=0}^{Q-1}\int_{r=S_q}^{T^{+;0}_q} g(Z^+_{q,r}) \dd r = \Exp\left[\int_{r=0}^{\mathsf{T}^{-c_1}_{\eta_0/3, 2\eta_0/3}} g(\mathsf{Z}^{+,0;-c_1}_{\eta_0/3, r}) \dd r\right], \qquad \text{almost surely.}
  \end{equation*}
  To complete the proof, it therefore suffices to show that
  \begin{equation}\label{eq:pf-LLN:1}
    \lim_{Q \to +\infty} \frac{1}{Q}\sum_{q=0}^{Q-1} \int_{r=T^{+;0}_q}^{T^{+;\epsilon_q}_q} g(Z^+_{q,r}) \dd r = 0.
  \end{equation}
  To proceed, we fix $\epsilon>0$ and notice that by Corollary~\ref{cor:SqTq}~\eqref{it:cor-SqTq:4}, we have
  \begin{equation*}
    \limsup_{Q \to +\infty} \frac{1}{Q}\sum_{q=0}^{Q-1} \int_{r=T^{+;0}_q}^{T^{+;\epsilon_q}_q} g(Z^+_{q,r}) \dd r \leq \limsup_{Q \to +\infty} \frac{1}{Q}\sum_{q=0}^{Q-1} \int_{r=T^{+;0}_q}^{T^{+;\epsilon}_q} g(Z^+_{q,r}) \dd r, \qquad \text{almost surely.}
  \end{equation*}
  For any $q \geq 0$, we now define
  \begin{equation*}
    \Delta M_q := \int_{r=T^{+;0}_q}^{T^{+;\epsilon}_q} g(Z^+_{q,r}) \dd r - \Exp\left[\int_{r=T^{+;0}_q}^{T^{+;\epsilon}_q} g(Z^+_{q,r}) \dd r\bigg| \mathcal{G}_{T^{+;0}_q}\right].
  \end{equation*} 
  Since $T^{+;0}_q \leq T_q \leq S_{q+1} \leq T^{+;0}_{q+1}$, the family $(\mathcal{G}_{T^{+;0}_q})_{q \geq 0}$ is a filtration, with respect to which $(\Delta M_q)_{q \geq 0}$ is a martingale difference sequence. Besides, by the strong Markov property for the reflected Brownian motion $(Z^+_{q,r})_{r \geq S_q}$ (see Proposition~\ref{prop:strongMarkovZ}), for any $q \geq 0$ we have
  \begin{equation*}
    \Exp\left[\int_{r=T^{+;0}_q}^{T^{+;\epsilon}_q} g(Z^+_{q,r}) \dd r\bigg| \mathcal{G}_{T^{+;0}_q}\right] = \Exp\left[\int_{r=0}^{\mathsf{T}^{-c_1}_{2\eta_0/3, 2\eta_0/3+2\epsilon}} g(\mathsf{Z}^{+,0;-c_1}_{2\eta_0/3, r}) \dd r\right] \leq \bar{\mathcal{R}}^{-c_1}_{2\eta_0/3, 2\eta_0/3+2\epsilon},
  \end{equation*}
  and
  \begin{align*}
    \Exp\left[(\Delta M_q)^2\right] &= \Exp\left[\Var\left(\int_{r=T^{+;0}_q}^{T^{+;\epsilon}_q} g(Z^+_{q,r}) \dd r\bigg| \mathcal{G}_{T^{+;0}_q}\right)\right]\\
    &\leq \Exp\left[\Exp\left[\left(\int_{r=T^{+;0}_q}^{T^{+;\epsilon}_q} g(Z^+_{q,r}) \dd r\right)^2\bigg| \mathcal{G}_{T^{+;0}_q}\right]\right] \leq \Exp\left[\left(\mathsf{T}^{-c_1}_{2\eta_0/3, 2\eta_0/3+2\epsilon}\right)^2\right].
  \end{align*}
  Since, by Proposition~\ref{prop:estimT}, the right-hand side is finite and does not depend on $q$, we deduce from the strong Law of Large Numbers for martingale difference sequences that
  \begin{equation*}
    \lim_{Q \to +\infty} \frac{1}{Q}\sum_{q=0}^{Q-1} \Delta M_q = 0, \qquad \text{almost surely.}
  \end{equation*}
  Therefore, we have
  \begin{equation*}
    \limsup_{Q \to +\infty} \frac{1}{Q}\sum_{q=0}^{Q-1} \int_{r=T^{+;0}_q}^{T^{+;\epsilon}_q} g(Z^+_{q,r}) \dd r \leq \bar{\mathcal{R}}^{-c_1}_{2\eta_0/3, 2\eta_0/3+2\epsilon}, \qquad \text{almost surely.}
  \end{equation*}
  By Corollary~\ref{cor:estimT}, the right-hand side vanishes with $\epsilon$, which shows~\eqref{eq:pf-LLN:1} and completes the proof. 
\end{proof}

%%%%%%%%%%%%%%%%%%%%%%%%%%%%%%%%%%%%%%%%%%%%%%%%%%%%%%%%%%%%%%%%%%%%%%%%%%%%%%%%%%%%%%%%%%%%%%%%%%%%%%%%%
%%%%%%%%%%%%%%%%%%%%%%%%%%%%%%%%%%%%%%%%%%%%%%%%%%%%%%%%%%%%%%%%%%%%%%%%%%%%%%%%%%%%%%%%%%%%%%%%%%%%%%%%%
%%%%%%%%%%%%%%%%%%%%%%%%%%%%%%%%%%%%%%%%%%%%%%%%%%%%%%%%%%%%%%%%%%%%%%%%%%%%%%%%%%%%%%%%%%%%%%%%%%%%%%%%%
\section{Discretization estimates}\label{app:discr}

In this section, we gather various estimates regarding the Euler scheme associated with~\eqref{eq:SDE} and the (discrete) time at which it exits the set $D$. These estimates are employed in Sections~\ref{sec:prooftheomain} and~\ref{sec:converg-distrib}.

Throughout the section, we let Assumptions~\ref{cond:D} and~\ref{cond:coeffs} be in force, and let $\bfeta = (\eta_n)_{n \geq 1}$ be a sequence of positive time steps. We define and assume
\begin{equation*}
  |\bfeta| := \sup_{n \geq 1} \eta_n, \qquad \dten_n := \sum_{k=0}^{n-1}\eta_{k+1}, \qquad \lim_{n \to +\infty} \dten_n = +\infty.
\end{equation*}

As in Sections~\ref{sec:prooftheomain} and~\ref{sec:converg-distrib}, we denote by $(\bar{Y}^\bfeta_t)_{t \geq 0}$ the associated continuous-time Euler scheme for the SDE~\eqref{eq:SDE}, with the same initial initial condition $\bar{Y}_0^\bfeta = Y_0 \in D$ and driven by the same Brownian motion $(B_t)_{t \geq 0}$. We also recall that we denote
\begin{equation*}
  \bar{\tau}^\bfeta_D := \inf\{\dten_n : \bar{Y}^\bfeta_{\dten_n} \not\in D\} = \inf\{t \geq 0: \bar{Y}^\bfeta_{\un{t}} \not\in D\},
\end{equation*}
with $\un{t} := \dten_n$ for any $t \in [\dten_n, \dten_{n+1})$.

\begin{rk}
  Since we do not require $\eta_n$ to tend to $0$, all results in this section remain valid for Euler schemes with constant step sizes.
\end{rk}

%%%%%%%%%%%%%%%%%%%%%%%%%%%%%%%%%%%%%%%%%%%%%%%%%%%%%%%%%%%%%%%%%%%%%%%%%%%%%%%%%%%%%%%%%%%%%%%%%%%%%%%%%
%%%%%%%%%%%%%%%%%%%%%%%%%%%%%%%%%%%%%%%%%%%%%%%%%%%%%%%%%%%%%%%%%%%%%%%%%%%%%%%%%%%%%%%%%%%%%%%%%%%%%%%%%
\subsection{Standard estimates}

In this subsection we gather two estimates on $(Y_t)_{t \geq 0}$ and $(\bar{Y}^\bfeta_t)_{t \geq 0}$ which are standard when the step sequence $\bfeta$ is constant. The first one is a strong error estimate. In the constant step size case, its proof can be found in~\cite[Theorem 7.2]{pages_book}, the adaptation to the present setting is straightforward.

\begin{lem}[Strong error estimate]\label{lem:strong-pages}
  For any $T>0$, there exists $C_T \geq 0$ such that
  \begin{equation*}
    \sup_{y \in D} \Exp_y\left[|Y_t-\bar{Y}^\bfeta_t|\right] \leq C_T \sqrt{|\bfeta|},
  \end{equation*}
  where the notation $\Exp_y[\cdot]$ means that $Y_0=\bar{Y}^\bfeta_0=y$.
\end{lem}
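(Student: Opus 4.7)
The plan is to carry out the classical $L^2$ strong error analysis for the Euler scheme and then descend to $L^1$ by Jensen. The fact that the step sequence $\bfeta$ is non-constant causes no real difficulty because Assumption~\ref{cond:coeffs} provides global bounds and Lipschitz constants for $b$ and $\sigma$; all constants below depend only on $T$, $\|b\|_\infty$, $\|\sigma\|_\infty$ and the Lipschitz constants, hence are uniform in $y \in D$.

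First I would write, for $t \in [0,T]$,
\begin{equation*}
  Y_t - \bar{Y}^\bfeta_t = \int_0^t \bigl(b(Y_s) - b(\bar{Y}^\bfeta_{\un{s}})\bigr)\dd s + \int_0^t \bigl(\sigma(Y_s) - \sigma(\bar{Y}^\bfeta_{\un{s}})\bigr)\dd B_s,
\end{equation*}
apply Cauchy--Schwarz to the drift term and the It\^o isometry to the diffusion term, and then invoke the Lipschitz continuity of $b,\sigma$ from Assumption~\ref{it:coeffs-2}. This yields, for a constant $C_T$ depending only on $T$ and the Lipschitz constants,
\begin{equation*}
  \Exp_y\bigl[|Y_t - \bar{Y}^\bfeta_t|^2\bigr] \leq C_T \int_0^t \Exp_y\bigl[|Y_s - \bar{Y}^\bfeta_{\un{s}}|^2\bigr]\dd s.
\end{equation*}

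Next, split $Y_s - \bar{Y}^\bfeta_{\un{s}} = (Y_s - \bar{Y}^\bfeta_s) + (\bar{Y}^\bfeta_s - \bar{Y}^\bfeta_{\un{s}})$. For the second piece, from the definition of the scheme,
\begin{equation*}
  \bar{Y}^\bfeta_s - \bar{Y}^\bfeta_{\un{s}} = b(\bar{Y}^\bfeta_{\un{s}})(s-\un{s}) + \sigma(\bar{Y}^\bfeta_{\un{s}})(B_s-B_{\un{s}}),
\end{equation*}
so using the boundedness of $b$ and $\sigma$ from Assumption~\ref{it:coeffs-1} gives $\Exp[|\bar{Y}^\bfeta_s - \bar{Y}^\bfeta_{\un{s}}|^2] \leq C(s-\un{s})^2 + C(s-\un{s}) \leq C' |\bfeta|$ uniformly in $s \in [0,T]$ and in the starting point $y$. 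Combining with the previous display,
\begin{equation*}
  \Exp_y\bigl[|Y_t - \bar{Y}^\bfeta_t|^2\bigr] \leq C'_T |\bfeta| + C'_T \int_0^t \Exp_y\bigl[|Y_s - \bar{Y}^\bfeta_s|^2\bigr]\dd s.
\end{equation*}
Gronwall's lemma then yields $\sup_{t \in [0,T]}\sup_{y \in D}\Exp_y[|Y_t-\bar{Y}^\bfeta_t|^2] \leq C''_T |\bfeta|$, and Jensen's inequality concludes with $\sqrt{C''_T}$.

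There is no real obstacle: the only point of care is that the piecewise-constant nature of the scheme forces one to introduce the grid oscillation $\bar{Y}^\bfeta_s - \bar{Y}^\bfeta_{\un{s}}$, which is precisely where the boundedness of $b,\sigma$ (rather than a pointwise Lipschitz-in-$x$ bound) is used in order to obtain a constant independent of $n$ and of $y \in D$. The resulting estimate is uniform over $y \in D$ because all bounds are global.
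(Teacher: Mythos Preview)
Your argument is correct and is precisely the standard $L^2$--Gronwall route that the paper has in mind: the paper does not actually spell out a proof but simply refers to \cite[Theorem~7.2]{pages_book} for the constant-step case and notes that ``the adaptation to the present setting is straightforward''. Your write-up \emph{is} that straightforward adaptation, and the use of the global boundedness of $b,\sigma$ to control the grid oscillation $\bar{Y}^\bfeta_s-\bar{Y}^\bfeta_{\un{s}}$ uniformly in $y\in D$ is exactly the right observation.
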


The second estimate is a Gaussian bound on the transition densities of $(Y_t)_{t \geq 0}$ and $(\bar{Y}^\bfeta_t)_{t \geq 0}$.

\begin{lem}[Gaussian upper bound on the transition densities]\label{lem:gauss-density}  For any $x \in D$ and $t>0$, the random variable $Y_t$ admits a density $p_t(x,y)$ on $\R^d$ under $\Pr_x$. Besides, for any $T>0$, there exist $C>0$ such that for any $t \in (0,T]$, $x \in D$, $y \in \R^d$,
  \begin{equation*}
    p_t(x,y) \leq \frac{C}{t^{d/2}}\exp\left(-\frac{|x-y|^2}{2Ct}\right).
  \end{equation*}
  
  Likewise, for any $x \in D$ and $n \geq 1$, the random variable $\bar{Y}^\bfeta_{\dten_n}$ admits a density $\bar{p}^\bfeta_{\dten_n}(x,y)$ on $\R^d$ under $\Pr_x$. Besides, for any $T>0$, $\bar{\eta}>0$ and $\kappa>0$, there exists $C>0$ such that if
  \begin{equation}\label{eq:cond-gauss}
    |\bfeta| \leq \bar{\eta}, \qquad \sup_{n \geq 1} \frac{\eta_{n+1}}{\eta_n}  \leq \kappa,
  \end{equation}
  then for any $n \geq 1$ for which $\dten_n \leq T$, for any $x \in D$, $y \in \R^d$,
  \begin{equation}\label{eq:gauss-pHn}
    \bar{p}^\bfeta_{\dten_n}(x,y) \leq \frac{C}{\dten_n^{d/2}}\exp\left(-\frac{|x-y|^2}{2C\dten_n}\right).
  \end{equation}
\end{lem}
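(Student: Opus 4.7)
The first claim on $p_t(x,y)$ is a classical Aronson-type bound. Under Assumption~\ref{cond:coeffs}, $\sigma\sigma^\top$ is bounded, Lipschitz and uniformly elliptic while $b$ is only bounded measurable; the existence of the density together with the stated Gaussian upper bound then follows, for instance, from Aronson's fundamental bounds for the drift-free case combined with a Girsanov transform, whose stochastic exponential has $L^p$ moments bounded uniformly in $t\in(0,T]$ by boundedness of $b$.

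For the second claim, I would adapt the parametrix approach of Lemaire and Menozzi~\cite{LemMen10} to the nonhomogeneous step-size setting. The key observation is that, conditionally on $\bar{Y}^\bfeta_{\dten_k}=x$, the one-step increment $\bar{Y}^\bfeta_{\dten_{k+1}}-x$ is Gaussian with mean $\eta_{k+1}b(x)$ and covariance $\eta_{k+1}\sigma\sigma^\top(x)$, so that $\bar{p}^\bfeta_{\dten_n}(x,y)$ is obtained as an $n$-fold convolution of explicit Gaussian one-step kernels $q^\bfeta_{k+1}(\cdot,\cdot)$. For each fixed terminal point $y$, I would introduce a \emph{frozen} Gaussian reference $\tilde{p}^\bfeta_{\dten_n}(x,y)$ obtained by evaluating the coefficients at $y$; direct calculation shows it satisfies the target bound~\eqref{eq:gauss-pHn} thanks to the boundedness of $b$ and the uniform ellipticity of $\sigma\sigma^\top(y)$. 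The parametrix identity then expresses the difference $\bar{p}^\bfeta_{\dten_n}-\tilde{p}^\bfeta_{\dten_n}$ as a discrete Duhamel-type series whose terms are space-time convolutions of $\tilde{p}^\bfeta$ against a perturbation kernel $H^\bfeta$; the Lipschitz regularity of $b$ and $\sigma$ (Assumption~\ref{it:coeffs-2}) together with ellipticity yields a pointwise control of the form $|H^\bfeta_k(x,z)|\leq C\,\eta_k^{-1+\alpha}\,\tilde{p}^\bfeta_{\eta_k}(x,z)$ for some $\alpha\in(0,1/2)$ and a constant $C$ depending only on the coefficients.

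The main technical obstacle is showing that this series converges and remains of the order of $\tilde{p}^\bfeta_{\dten_n}(x,y)$, uniformly over step sequences satisfying~\eqref{eq:cond-gauss}. This reduces to a Chapman--Kolmogorov-type estimate of the form
\begin{equation*}
  \int_{z\in\R^d}\tilde{p}^\bfeta_{\dten_n-\dten_k}(z,y)\,\tilde{p}^\bfeta_{\dten_k}(x,z)\,\dd z \;\leq\; C\,\tilde{p}^\bfeta_{\dten_n}(x,y),
\end{equation*}
i.e. one has to glue two frozen Gaussians indexed by different freezing points and different subintervals into a single Gaussian of variance proportional to $\dten_n$. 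This is precisely where the bounded-ratio condition $\sup_n \eta_{n+1}/\eta_n\leq\kappa$ in~\eqref{eq:cond-gauss} is needed: it prevents the variance scales at consecutive steps from degenerating, so that the aspect ratios of the two Gaussians being composed stay uniformly controlled; without it the gluing, and hence the whole parametrix iteration, would fail. Combining the Chapman--Kolmogorov estimate with the algebraic singularity $\eta_k^{-1+\alpha}$ of $H^\bfeta_k$, a standard Beta-function summation argument yields the convergence of the series and the Gaussian bound~\eqref{eq:gauss-pHn}, uniformly over $\dten_n\in(0,T]$ and over step sequences satisfying~\eqref{eq:cond-gauss}.
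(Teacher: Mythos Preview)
Your proposal is correct and follows the same route as the paper: Aronson's estimates for $p_t$, and a line-by-line adaptation of the Lemaire--Menozzi parametrix argument for $\bar{p}^\bfeta_{\dten_n}$. One small correction on where~\eqref{eq:cond-gauss} enters: the Chapman--Kolmogorov convolution of two Gaussian-type bounds goes through without any step-ratio hypothesis; the paper instead pinpoints the role of $\kappa$ in bounding the quantities $\eta_{k+1}/(\dten_{n'}-\dten_{k+1})$ that appear in the kernel estimates of~\cite[Section~4.3.1]{LemMen10} and are trivially $\leq 1$ in the constant-step case.
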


On the one hand, the existence and upper bound for the density of $Y_t$ is a classical result which follows from Aronson's estimates~\cite{Aro67}. On the other hand, the existence of the density $\bar{p}^\bfeta_{\dten_n}(x,y)$ for $n \geq 1$ is obvious from the construction of the Euler scheme $(\bar{Y}^\bfeta_t)_{t \geq 0}$. The Gaussian upper bound~\eqref{eq:gauss-pHn} is proved by Lemaire and Menozzi in~\cite{LemMen10}, for a constant step size $\eta$. Their argument can be adapted line-to-line to the nonconstant step size case, at the price of bounding by $\kappa$ quantities of the form $\eta_{k+1}/(\dten_{n'}-\dten_{k+1})$, for $0 \leq k < n'$, which appear in the estimates of~\cite[Section~4.3.1]{LemMen10}. In the constant step size case, such quantities are obviously bounded by $1$.

%%%%%%%%%%%%%%%%%%%%%%%%%%%%%%%%%%%%%%%%%%%%%%%%%%%%%%%%%%%%%%%%%%%%%%%%%%%%%%%%%%%%%%%%%%%%%%%%%%%%%%%%%
%%%%%%%%%%%%%%%%%%%%%%%%%%%%%%%%%%%%%%%%%%%%%%%%%%%%%%%%%%%%%%%%%%%%%%%%%%%%%%%%%%%%%%%%%%%%%%%%%%%%%%%%%
\subsection{Estimates on exit times}

In this subsection, we gather various useful estimates on the exit times $\tau_D$ and $\bar{\tau}^\bfeta_D$.

\begin{lem}[Tails of hitting times]\label{lem:controlmomentstempsarret}
  For any $\bar{\eta}>0$, there exist positive $\rho$ and $\beta$ such that if $|\bfeta| \leq \bar{\eta}$, then for every $x\in D$ and $t\ge0$,
  \begin{equation*}
    \mathbb{P}_x(\bar{{\tau}}^\bfeta_{D}>t) + \mathbb{P}_x({\tau}_{D}>t) \le \rho \ee^{-\beta t}.
  \end{equation*}
  As a consequence, for any $p>0$,
  \begin{equation*}
    \sup_{x\in D, |\bfeta| \le \bar{\eta}} \ES_x\left[(\bar{\tau}^\bfeta_D)^p\right] + \sup_{x\in D} \ES_x\left[\tau_D^p\right]  <+\infty.
  \end{equation*}
\end{lem}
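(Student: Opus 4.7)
Both exponential tails follow from a two-step argument. The first step establishes a uniform one-step bound of the form
\begin{equation*}
  \sup_{x \in D}\Pr_x(\tau_D > T_0) \leq \delta, \qquad \sup_{x \in D,\ |\bfeta|\leq \bar\eta} \Pr_x(\bar{\tau}^\bfeta_D > T_0) \leq \delta,
\end{equation*}
for some $T_0>0$ and $\delta \in (0,1)$. The second iterates this via the strong Markov property at grid times $\dten_{m_k}$ chosen so that $\dten_{m_k} - \dten_{m_{k-1}} \geq T_0$: since the shifted step sequence still has supremum bounded by $\bar\eta$, the one-step bound applies at each iteration to yield $\Pr_x(\bar{\tau}^\bfeta_D > \dten_{m_k}) \leq \delta^k$, and $\dten_{m_k}\leq k(T_0 + \bar\eta)$ converts this into the exponential tail $\rho\ee^{-\beta t}$. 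The moment bound then follows from $\mathbb{E}[T^p]=p\int_0^{+\infty} t^{p-1}\Pr(T>t)\dd t$.

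For the one-step bound on $\tau_D$, which is classical for uniformly elliptic diffusions in bounded domains, I would appeal to Girsanov: by Assumption~\ref{it:coeffs-1}, $\sigma^{-1}b$ is bounded, so the change of measure with density $\exp\bigl(-\int_0^{T_0}\sigma^{-1}b\cdot \dd B_s-\tfrac{1}{2}\int_0^{T_0}|\sigma^{-1}b|^2\dd s\bigr)$ converts $Y$ into a driftless martingale under a new probability $\mathbb{Q}$, and the Radon-Nikodym derivative $\dd\mathbb{Q}/\dd\Pr$ has $L^2$-norm under $\Pr$ controlled by $\|b\|_\infty$, $c_0$ and $T_0$ only. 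Under $\mathbb{Q}$, Dambis-Dubins-Schwarz identifies any scalar projection $v\cdot Y_t$ with a time-changed Brownian motion of clock $\geq c_0 t$, which, for $T_0$ large enough relative to $\mathrm{diam}(D)$, escapes the interval $[v\cdot x - \mathrm{diam}(D), v\cdot x + \mathrm{diam}(D)]$ in time $T_0$ with probability bounded below by some $p_0>0$; the Cauchy-Schwarz comparison $\Pr(A)\geq \mathbb{Q}(A)^2/\mathbb{E}_\Pr[(\dd\mathbb{Q}/\dd\Pr)^2]$ then returns a positive lower bound on $\Pr_x(\tau_D\leq T_0)$, uniform in $x\in D$.

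The same strategy handles $\bar{\tau}^\bfeta_D$: the Girsanov transform associated with the piecewise-constant drift $t\mapsto b(\bar{Y}^\bfeta_{\un{t}})$ turns $\bar{Y}^\bfeta$ into a driftless martingale under $\mathbb{Q}$, whose Radon-Nikodym density has uniform $L^2$-norm depending only on $\|b\|_\infty$, $c_0$ and $T_0$ (not on $\bfeta$). Under $\mathbb{Q}$, I would focus on the grid index $n_\star = n_\star(\bfeta)$ with $\dten_{n_\star}\in(T_0/2, T_0]$, well-defined as soon as $T_0\geq 2\bar\eta$. The vector $\bar{Y}^\bfeta_{\dten_{n_\star}} - x$ is then a $\mathbb{Q}$-martingale with second moment $\geq c_0 d\,\dten_{n_\star}$ and, by Burkholder-Davis-Gundy together with boundedness of $\sigma$, fourth moment $\leq C\,\dten_{n_\star}^2$, so Paley-Zygmund yields $\mathbb{Q}\bigl(|\bar{Y}^\bfeta_{\dten_{n_\star}} - x|\geq \sqrt{c_0 d T_0/4}\bigr)\geq p_0'>0$ independently of $x$ and $\bfeta$. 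Taking $T_0$ large enough for this threshold to exceed $\mathrm{diam}(D)$ forces $\bar{Y}^\bfeta_{\dten_{n_\star}}\notin D$, and Cauchy-Schwarz transports the bound back to $\Pr$. The main technical subtlety is precisely this uniformity in $\bfeta$, which must be obtained without appealing to~\cref{lem:gauss-density} (whose proof rests on~\ref{it:steps-3}); the Girsanov plus Paley-Zygmund route deliberately uses only the ellipticity constant, the sup-norms of $b$ and $\sigma$, and $\mathrm{diam}(D)$, and never any structural condition on $\bfeta$ beyond $|\bfeta|\leq\bar\eta$.
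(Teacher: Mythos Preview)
Your proof is correct, and the high-level architecture (a uniform one-step escape estimate followed by Markov iteration at grid times spaced by $T_0$) coincides with the paper's. The difference lies in how the one-step bound is obtained.

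The paper avoids Girsanov entirely: instead of removing the drift, it absorbs it into the target level by writing, for $T\ge 2\bar\eta$,
\[
  \Pr_x(\bar\tau^\bfeta_D>T)\le 1-\Pr_x\Big(\sup_{0\le t\le T}\Big|\int_0^{\un t}\sigma(\bar Y^\bfeta_{\un s})\,\dd B_s\Big|>2R+T\|b\|_\infty\Big),
\]
and then applies Dambis--Dubins--Schwarz to the first row $\int_0^t\sigma_{1,\cdot}(\bar Y^\bfeta_{\un s})\,\dd B_s$, whose clock lies in $[ct,c^{-1}t]$ by ellipticity. The existence of a grid point in $[T/2,T]$ reduces matters to $\Pr\big(\inf_{cT/2\le r\le c^{-1}T}|W_r|>M\big)>0$, which is a statement about a single standard Brownian motion and is manifestly independent of $x$ and $\bfeta$. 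This is slightly more elementary than your route: no change of measure, no moment comparison, just one time change and one fixed $T$.

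Your approach trades this for Girsanov plus Paley--Zygmund at a single grid time $\dten_{n_\star}$. The benefit is conceptual transparency: uniformity in $\bfeta$ is visible directly from the second- and fourth-moment bounds on $\bar Y^\bfeta_{\dten_{n_\star}}-x$ under $\mathbb Q$, which depend only on $c_0$, $\|\sigma\|_\infty$ and $T_0$. The cost is that you must also control $\mathbb E_\Pr[(\dd\mathbb Q/\dd\Pr)^2]$ uniformly in $\bfeta$, which you correctly note follows from the boundedness of $\sigma^{-1}b$ on all of $\R^d$ (not just $D$) --- a point worth making explicit since the Euler scheme can leave $D$ before $T_0$. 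Both arguments are valid and neither invokes any structural condition on $\bfeta$ beyond $|\bfeta|\le\bar\eta$, as required.
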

\begin{proof} 
  We only detail the proof of the tail estimate for $\bar{{\tau}}^\bfeta_{D}$, the arguments are similar but simpler for $\tau_D$. By Assumption~\ref{cond:D}, $D$ is relatively compact, therefore ${\bar D}\subset \bar{B}(0,R)$ where $R$ denotes a positive number. Then, for any $x\in D$ and $T>0$,
  \begin{align*}
    \mathbb{P}_x(\bar{\tau}^\bfeta_{D}>T)&\le \mathbb{P}_x\left(\sup_{0\le t\le T} |\bar{Y}_{\un{t}}^{\bfeta}|\le R\right)\\
    &\le \mathbb{P}_x\left(\sup_{0\le t\le T} \left|\int_0^{\un{t}} \sigma(\bar{Y}^\bfeta_{\un{s}}) \dd B_s\right|\le R+|x|+T \|b\|_\infty\right)\\
    &\le 1- \mathbb{P}_x\left(\sup_{0\le t\le T} \left|\int_0^{\un{t}} \sigma(\bar{Y}^\bfeta_{\un{s}}) \dd B_s\right|>M\right),
  \end{align*}
  where $M= 2R+T \|b\|_\infty$. For any $\bar{\eta}>0$, one may now choose $T$ large enough to ensure that if $|\bfeta| \leq \bar{\eta}$, then the set $\{\dten_n, n \geq 0\} \cap [T/2,T]$ is nonempty. In this case,
  \begin{equation*}
    \sup_{0\le t\le T} \left|\int_0^{\un{t}} \sigma(\bar{Y}^\bfeta_{\un{s}}) \dd B_s\right| \geq \inf_{T/2\le t\le T} \left|\int_0^t \sigma(\bar{Y}^\bfeta_{\un{s}}) \dd B_s\right|,
  \end{equation*}
  so that
  \begin{equation*}
    \mathbb{P}_x(\bar{\tau}^\bfeta_{D}>T) \leq 1-\mathbb{P}_x\left(\inf_{T/2\le t\le T} \left|\int_0^t \sigma(\bar{Y}^\bfeta_{\un{s}}) \dd B_s\right|>M\right).
  \end{equation*}
  
  By Assumption~\ref{it:coeffs-1}, there exists $c \in (0,1]$ such that $c |y|^2 \leq \langle \sigma\sigma^\top(x) y, y\rangle \leq c^{-1} |y|^2 $ for any $x,y \in\ER^d$, so that with $y=(1,0\ldots,0)$, this implies that $c \leq |\sigma_{1,\cdot}(x)|^2 \leq c^{-1}$. Thus, using the Dambis--Dubins--Schwarz Theorem~\cite[Theorem~1.6, p.~181]{RevYor99} and denoting by $(W_r)_{r \geq 0}$ a standard one-dimensional Brownian motion, we have
\begin{align*}
  \mathbb{P}_x\left(\inf_{T/2\le t\le T} \left|\int_0^t \sigma(\bar{Y}^\bfeta_{\un{s}}) \dd B_s\right|>M\right) &\ge \mathbb{P}_x\left(\inf_{T/2\le t\le T}\left|\int_0^t \sigma_{1,\cdot}(\bar{Y}^\bfeta_{\un{s}}) \dd B_s\right|>M\right)\\
  &\ge \mathbb{P}_x\left(\inf_{T/2\le t\le T} |W_{\int_0^{t} |\sigma_{1,\cdot}(\bar{Y}^\bfeta_{\un{s}})|^2 \dd s}|>M\right)\\
  &\ge \mathbb{P}\left(\inf_{c T/2\le r\le c^{-1} T} |W_r|>M\right)=:\rho_T>0.
\end{align*}
As a consequence, $\sup_{x \in D}  \mathbb{P}_x(\bar{\tau}^\bfeta_{D}>T)\le 1-\rho_T$ and a standard Markovian induction yields: 
\begin{equation*}
  \forall k\ge0, \qquad \sup_{x\in D} \mathbb{P}_x(\bar{\tau}^\bfeta_{D}>kT) \le (1-\rho_T)^k,
\end{equation*}
which in turn easily implies that a positive $\rho$ exists such that
\begin{equation*}
  \forall t \geq 0, \qquad \sup_{x\in D} \mathbb{P}_x(\bar{\tau}^\bfeta_{D}>t) \le \rho \exp(-\beta t), \quad \textnormal{with $\beta=-\frac{\log(1-\rho_T)}{T}$}.\qedhere
\end{equation*}
\end{proof}

\begin{lem}[Estimates on $\tau_D$]\label{lem:cvdistrib:1}
  For any $T>0$, for any compact subset $K$ of $D$,
  \begin{equation*}
    \sup_{x\in K,t\in[0,T]}\PE_x(\tau_{D}\in[t-\delta,t+\delta])\xrightarrow{\delta\to0}0.
  \end{equation*}
  
  Moreover, for any $\delta>0$,
  \begin{equation*}
    \sup_{x,y\in D,|x-y|\le \rho}\PE(|\tau_{D}^x-\tau_{D}^y|\ge\delta)\xrightarrow{\rho\to0}0,
  \end{equation*}
  where $\tau^x_D$ and $\tau^y_D$ respectively refer to the exit times from $D$ for the strong solutions to~\eqref{eq:SDE} with initial conditions $x$ and $y$, and driven by the same Brownian motion $(B_t)_{t \geq 0}$.
\end{lem}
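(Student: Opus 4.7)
The plan is to treat the two statements separately, each reducing to standard PDE regularity for the generator $L$ combined with elementary probabilistic arguments.

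For the first statement, I would study the function $u(x,t) := \Pr_x(\tau_D > t)$, which solves the backward Kolmogorov equation $\partial_t u = Lu$ on $D \times (0,+\infty)$ with Dirichlet data $u_{|\partial D} = 0$ and initial data $u(\cdot, 0) = 1$ on $D$. Classical parabolic regularity for the uniformly elliptic operator $L$ on the $\mathcal{C}^2$ domain $D$ ensures that $u$ is continuous (even smooth) on $D \times (0, +\infty)$, so $\tau_D$ has no atom in $(0, +\infty)$. Continuity up to $t = 0$ on a compact $K \subset D$ follows from Doob's inequality applied to the bounded-coefficient semimartingale $Y - Y_0$: with $r := d(K, \partial D) > 0$,
\[
  1 - u(x,t) = \Pr_x(\tau_D \leq t) \leq \Pr_x\Big(\sup_{s \leq t} |Y_s - x| \geq r\Big) \leq \frac{C t}{r^2},
\]
uniformly in $x \in K$. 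Hence $u$ is continuous on the compact set $K \times [0, T]$, thus uniformly continuous, and the identity $\Pr_x(\tau_D \in [t-\delta, t+\delta]) = u(x, [t-\delta]_+) - u(x, t+\delta)$ yields the conclusion.

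For the second statement, I would split according to whether $x$ and $y$ are close to $\partial D$ or not. The function $w(x) := \Exp_x[\tau_D]$ solves $Lw = -1$ on $D$ with $w_{|\partial D} = 0$, and by classical elliptic regularity $w \in \mathcal{C}(\bar D)$, so $w(x) \to 0$ as $d(x, \partial D) \dto 0$. Consequently, given $\varepsilon > 0$, one fixes $r > 0$ small enough so that $\sup_{d(x,\partial D) < 2r} \Pr_x(\tau_D \geq \delta) \leq w(x)/\delta < \varepsilon/2$. Whenever $|x-y| \leq r/2$ and at least one of $x, y$ lies within $r/2$ of $\partial D$, both are within distance $r$ of the boundary, and the union bound $\Pr(|\tau^x_D - \tau^y_D| \geq \delta) \leq \Pr(\tau^x_D \geq \delta) + \Pr(\tau^y_D \geq \delta) < \varepsilon$ closes this case. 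The complementary case $x, y \in K_{r/2} := \{z \in D: d(z,\partial D) \geq r/2\}$ is handled by coupling $Y^x$ and $Y^y$ via the same Brownian motion: standard Gronwall--Burkholder estimates give $\Exp[\sup_{t \leq T'} |Y^x_t - Y^y_t|^2] \leq C_{T'} |x-y|^2$ for any horizon $T'$. Introducing the stopping times $\varrho^{x,\eta}_- := \inf\{t : d(Y^x_t, \partial D) \leq \eta\}$ and $\varrho^{x,\eta}_+ := \inf\{t : d(Y^x_t, \bar D) \geq \eta\}$, on the event $A_\eta := \{\sup_{t \leq T'} |Y^x_t - Y^y_t| \leq \eta\} \cap \{\varrho^{x,\eta}_+ \leq T'\}$ the triangle inequality forces both $\tau^x_D$ and $\tau^y_D$ to lie in $[\varrho^{x,\eta}_-, \varrho^{x,\eta}_+]$, so $|\tau^x_D - \tau^y_D| \leq \varrho^{x,\eta}_+ - \varrho^{x,\eta}_-$ there.

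The main obstacle is the uniform control of the layer-crossing time $\varrho^{x,\eta}_+ - \varrho^{x,\eta}_-$. I would resolve this by leveraging the signed distance function $\tilde\psi_D$ of \cref{lem:tildepsi}: by It\^o's formula, $(\tilde\psi_D(Y^x_t))_t$ is a one-dimensional semimartingale with bounded drift and diffusion coefficient bounded below by a positive constant (using the eikonal identity $|\nabla\tilde\psi_D| = 1$ near $\partial D$ and the uniform ellipticity of $\sigma\sigma^\top$), so its exit time from $(-\eta, \eta)$ has expectation of order $\eta^2$ uniformly in the starting position. Markov's inequality then gives $\Pr(\varrho^{x,\eta}_+ - \varrho^{x,\eta}_- \geq \delta) = O(\eta^2/\delta)$ uniformly in $x \in K_{r/2}$. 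Choosing first $T'$ large enough (using the exponential tail bound from \cref{lem:controlmomentstempsarret} to make $\Pr(\varrho^{x,\eta}_+ > T')$ small uniformly in $x$), then $\eta$ small, and finally $\rho = |x-y|$ small relative to $\eta$ (so that $\Pr(A_\eta^c)$ is small), concludes the proof.
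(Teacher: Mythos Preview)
Your proof of the first statement is correct and takes a different route from the paper: you use continuity of the survival function $u(x,t)=\Pr_x(\tau_D>t)$ on $K\times[0,T]$ via parabolic regularity plus a Doob-type bound near $t=0$, whereas the paper combines pathwise H\"older regularity of $Y$ with the Gaussian density bound of \cref{lem:gauss-density} to reduce to $\Pr_x(d(Y_t,\partial D)\le M\delta^{1/4})$. Your argument is arguably more direct; the parenthetical ``even smooth'' slightly overclaims under merely Lipschitz coefficients, but continuity is all you use.

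For the second statement there is a genuine gap in the layer-crossing step. The quantity $\varrho^{x,\eta}_+-\varrho^{x,\eta}_-$ is \emph{not} the exit time of $\tilde\psi_D(Y^x_\cdot)$ from $(-\eta,\eta)$: at time $\varrho^{x,\eta}_-$ the process $\tilde\psi_D(Y^x)$ sits at level $\eta$, on the boundary of that interval, and nothing prevents it from escaping upward and roaming the interior of $D$ (where $\tilde\psi_D>\eta$) for a long time before eventually hitting $-\eta$. So $\varrho^{x,\eta}_+-\varrho^{x,\eta}_-$ is the hitting time of level $-\eta$ from level $\eta$, not an exit time from $(-\eta,\eta)$, and the $O(\eta^2)$ bound fails. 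Already for $D=(-1,1)$ and $Y$ a standard Brownian motion one computes $\Exp_x[\varrho^{x,\eta}_+-\varrho^{x,\eta}_-]=4\eta$, and in general no purely local argument near $\partial D$ controls the excursions back into $D$.

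Your scheme can be repaired: splitting at $\tau_D^x$ yields $\Exp_x[\varrho^{x,\eta}_+-\varrho^{x,\eta}_-]\le \sup_{\psi_D(z)=\eta}w(z)+\sup_{z\in\partial D}\Exp_z[\tau_{D_{+\eta}}]$ with $D_{+\eta}$ the $\eta$-enlargement of $D$; the first term vanishes by the continuity of $w$ on $\bar D$ you already invoked, and the second by Dini's theorem on the compact $\partial D$. The paper bypasses expectations of crossing times altogether: it applies the strong Markov property at $\tau_D^x$ and shows, via a one-dimensional comparison with drifted Brownian motion and Blumenthal's $0$--$1$ law, that from any $z\in\partial D$ the process $\tilde\psi_D(Y^z)$ reaches level $-\eta$ within the \emph{fixed} time $\delta$ with probability tending to $1$ as $\eta\to 0$, uniformly in $z$.
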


The proof of the second statement relies on the following classical result, which is also used in the proof of~Proposition~\ref{propdistrib}.

\begin{lem}[Strong coupling estimate]\label{lem:strong-coupling}
  For any $T>0$, there exists $C_T>0$ such that
  \begin{equation*}
    \forall x,y \in D, \qquad \Exp\left[\sup_{t \in [0,T]} |Y^x_t-Y^y_t|\right] \leq C_T|x-y|,
  \end{equation*}
  where $(Y^x_t)_{t \geq 0}$ and $(Y^y_t)_{t \geq 0}$ refer to the strong solutions to~\eqref{eq:SDE} with initial conditions $x$ and $y$, and driven by the same Brownian motion $(B_t)_{t \geq 0}$. 
\end{lem}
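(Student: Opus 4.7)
The statement is the classical strong stability estimate for SDEs with Lipschitz coefficients, and the plan is to derive it by the standard Doob--Gronwall route, squaring the difference $Z_t := Y^x_t-Y^y_t$ to handle the stochastic integral cleanly and concluding for the first moment via Cauchy--Schwarz.

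First, by strong existence and uniqueness under Assumption~\ref{cond:coeffs}, the processes $(Y^x_t)_{t \geq 0}$ and $(Y^y_t)_{t \geq 0}$ driven by the same Brownian motion $(B_t)_{t \geq 0}$ satisfy
\begin{equation*}
  Z_t = (x-y) + \int_0^t \bigl(b(Y^x_s) - b(Y^y_s)\bigr) \dd s + \int_0^t \bigl(\sigma(Y^x_s) - \sigma(Y^y_s)\bigr)\dd B_s.
\end{equation*}
Taking the supremum over $s \in [0,t]$, squaring, and using $(a+b+c)^2 \leq 3(a^2+b^2+c^2)$ together with the Cauchy--Schwarz inequality for the drift term and the Burkholder--Davis--Gundy inequality for the martingale part, one obtains a constant $C_{\mathrm{BDG}}$ (independent of $x,y$) such that
\begin{equation*}
  \Exp\left[\sup_{s \in [0,t]} |Z_s|^2\right] \leq 3|x-y|^2 + 3T [b]_1^2 \int_0^t \Exp[|Z_s|^2]\dd s + 3 C_{\mathrm{BDG}} [\sigma]_1^2 \int_0^t \Exp[|Z_s|^2]\dd s,
\end{equation*}
where $[b]_1$ and $[\sigma]_1$ are the Lipschitz constants provided by Assumption~\ref{it:coeffs-2}.

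Next, I would apply Gronwall's lemma to the function $t \mapsto \Exp[\sup_{s \in [0,t]} |Z_s|^2]$, which yields a constant $\tilde{C}_T \geq 0$ (depending on $T$, $[b]_1$, $[\sigma]_1$) such that
\begin{equation*}
  \Exp\left[\sup_{s \in [0,T]} |Z_s|^2\right] \leq \tilde{C}_T |x-y|^2.
\end{equation*}
Finally, taking the square root and using the Cauchy--Schwarz inequality $\Exp[U] \leq \Exp[U^2]^{1/2}$ on $U = \sup_{s \in [0,T]} |Z_s|$ gives the desired bound with $C_T := \sqrt{\tilde{C}_T}$.

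Since $b$ and $\sigma$ are globally Lipschitz on $\R^d$ by Assumption~\ref{it:coeffs-2}, there is no obstacle in invoking Gronwall's lemma, and the bound is truly uniform in $x,y \in D$; boundedness of $D$ plays no role beyond keeping the initial points in a compact set. The only minor subtlety worth checking is that $\Exp[\sup_{s \leq t} |Z_s|^2]$ is finite for each $t$ before applying Gronwall, which follows from the standard a priori $L^2$-bound on solutions of SDEs with linearly-growing (here bounded, by Assumption~\ref{it:coeffs-1}) coefficients, so that the Gronwall step is rigorous.
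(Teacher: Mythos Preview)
Your proof is correct and follows the standard Doob--BDG--Gronwall route. The paper itself does not give a proof of this lemma, calling it a ``classical result'', so your argument is exactly the kind of justification that is implicitly being invoked.
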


\begin{proof}[Proof of~Lemma~\ref{lem:cvdistrib:1}]
Let $T > 0$ and $\delta\le 1$. Let $K$ be a compact subset of $D$ and $x \in K$. For a positive $M$, we have, for any $t \in [0,T]$,
\begin{align*}
& \PE_x(\tau_D\in[t-\delta,t+\delta])\\
&\le \PE_x\left(\tau_D\in[t-\delta,t+\delta], \sup_{s,v\le T+1}\frac{|Y_v-Y_s|}{|v-s|^{1/4}}\le M\right) +\PE_x\left( \sup_{s,v\le T+1}\frac{|Y_v-Y_s|}{|v-s|^{1/4}}> M\right).
\end{align*}
Since $b$ and $\sigma$ are bounded, it is classical background that
$$\sup_{x\in K} \PE_x\left(\sup_{s,v\le T+1}\frac{|Y_v-Y_s|}{|v-s|^{1/4}}> M\right)\xrightarrow{M\to+\infty}0.$$
We fix $M$ large enough in such a way that this term is small enough and now consider the first term. On the event
$\{\sup_{s,v\le T+1}|Y_v-Y_s|/|v-s|^{1/4}\le M\}$, we have $\Pr_x$-almost surely, for any $x\in K$,
$$\tau_D \ge t_0:=(M^{-1}d(K,\partial D))^{4}\wedge (T+1)\quad \textnormal{and} 
\quad d(Y_t,\partial D)\ind{\tau_D\le T+1}\le |Y_t-Y_{\tau_D}|\le M |t-\tau_D|^{1/4}.$$
Hence, if $\delta< t_0/2$, we have
$$\forall t\in[0,{t_0}/{2}),\quad \forall x\in K, \qquad \PE_x\left(\tau_D\leq t+\delta, \sup_{s,v\le T+1}\frac{|Y_v-Y_s|}{|v-s|^{1/4}}\le M\right)=0.$$
Thus,
\begin{align*}
\sup_{t\in[0,T]}\PE_x\left(\tau_D\in[t-\delta,t+\delta], \sup_{s,v\le T+1}\frac{|Y_v-Y_s|}{|v-s|^{1/4}}\le M\right)\le 
\sup_{t\in[t_0/2,T]}\PE_x\left(d(Y_t,\partial D)\le M\delta^{1/4}\right). 
\end{align*}
But, by Lemma~\ref{lem:gauss-density}, there exist $C>0$ such that for any $t \in (0,T]$, $x \in D$, the density $p_t(x,\cdot)$ of $Y_t$ under $\Pr_x$ satisfies, for any $y \in \R^d$,
\begin{equation*}
  p_t(x,y) \leq \frac{C}{t^{d/2}}\exp\left(-\frac{|x-y|^2}{2Ct}\right).
\end{equation*}
Hence
$$\sup_{x\in K} \sup_{t\in[t_0/2,T]}\PE_x(d(Y_t,\partial D)\le M\delta^{1/4})\le \frac{C}{(t_0/2)^{d/2}} \lambda_d\left(\{y\in D, d(y,\partial D)\le M\delta^{1/4}\}\right) \xrightarrow{\delta\to0}0,$$
where $\lambda_d$ denotes the Lebesgue measure on $\R^d$.

For the second statement, let us set, with the notations of Lemma~\ref{lem:tildepsi}, $\tau^x_{r}=\inf\{t\ge0, \widetilde{\psi}_D(Y^x_t)\le -\eta\}$ with $\eta\in [0,\eta_0)$. Remark that $\tau^x_0=\tau^x_D$ and that $\tau^x_\eta>\tau^x_D$ if $\eta>0$. Define $\tau^y_\eta$ similarly. For any $T>0$, $\delta > 0$ and $\eta \in (0,\eta_0)$, we have
\begin{align*}
\PE(\tau_D^y-\tau_D^x\ge \delta,\tau_D^x\le T)&\le \PE(\tau_\eta^x-\tau_D^x\ge\delta)+ \PE(\tau_\eta^x-\tau_D^x < \delta, \tau_D^x\le T, Y^y_{\tau_\eta^x}\in D)\\
&\le \sup_{z\in\partial D}\PE\left(\inf_{t\le \delta} \widetilde{\psi}_D(Y_t^z)> - \eta\right)+\PE\left(\sup_{t\in[0,T+\delta]} |Y_{t}^y-Y_{t}^x|>\eta\right),
\end{align*}
where in the second line, we used the strong Markov property for the first term and the fact that $d(Y_{\tau_\eta^x}^x,D)\ge \eta$ for the second one. 

Let $z \in \partial D$. Since $\widetilde{\psi}_D(Y_0^z)=0$, we deduce from the same arguments as in Subsection~\ref{ss:coupling} that there exists a time-change $\tau^z(r)$, a process $(\tilde{K}^z_t)_{t \geq 0}$ and a Brownian motion $(W^z_r)_{r \geq 0}$ such that
\begin{equation*}
  \forall r \leq \varrho^z, \qquad \tilde{\psi}_D(Y^z_{\tau(r)}) = \int_{s=0}^{\tau^z(r)} \tilde{K}^z_s\dd s + W^z_r,
\end{equation*}
with $\varrho^z := \inf\{r \geq 0: |\tilde{\psi}_D(Y^z_{\tau(r)})| = \eta_0\}$. Furthermore, there exist $c_0, c_1 > 0$ which do not depend on $z$ and such that
\begin{equation*}
  c_0 \leq (\tau^z)'(r) \leq \frac{1}{c_0}, \qquad \left|\int_{s=\tau^z(r')}^{\tau^z(r)} \tilde{K}^z_s\dd s\right| \leq c_1(r-r').
\end{equation*}
As a consequence, for any $z \in \partial D$ we get 
\begin{equation*}
  \forall r \leq \varrho^z, \qquad \tilde{\psi}_D(Y^z_{\tau(r)}) \leq c_1 r + W^z_r,
\end{equation*}
which yields
\begin{align*}
  \Pr\left(\inf_{t \leq \delta} \tilde{\psi}_D(Y^z_t) > -\eta\right) &\leq \Pr\left(\inf_{t \leq \delta \wedge \tau^z(\varrho^z)} \tilde{\psi}_D(Y^z_t) > -\eta\right)\\
  &= \Pr\left(\inf_{r \leq (\tau^z)^{-1}(\delta) \wedge \varrho^z} \tilde{\psi}_D(Y^z_{\tau(r)}) > -\eta\right)\\
  &\leq \Pr\left(\inf_{r \leq (\tau^z)^{-1}(\delta) \wedge \varrho^z} c_1 r + W^z_r > -\eta\right).
\end{align*}
We now deduce from the fact that 
\begin{equation*}
  \forall r \leq \varrho^z, \qquad |\tilde{\psi}_D(Y^z_{\tau(r)})| \leq c_1 r + |W^z_r|,
\end{equation*}
that $\varrho^z \geq \bar{\varrho}^z := \inf\{r \geq 0: c_1 r + |W^z_r| = \eta_0\}$. Hence,
\begin{align*}
  \Pr\left(\inf_{r \leq (\tau^z)^{-1}(\delta) \wedge \varrho^z} c_1 r + W^z_r > -\eta\right) &\leq \Pr\left(\inf_{r \leq (\tau^z)^{-1}(\delta) \wedge \bar{\varrho}^z} c_1 r + W^z_r > -\eta\right)\\
  &\leq \Pr\left(\inf_{r \leq c_0 \delta \wedge \bar{\varrho}^z} c_1 r + W^z_r > -\eta\right).
\end{align*}
The random variable $\inf_{r \leq c_0 \delta \wedge \bar{\varrho}^z} c_1 r + W^z_r$ now only depends on the trajectory of the Brownian motion $(W^z_r)_{r \geq 0}$, and therefore its law does not depend on $z$, so that in the end
\begin{equation*}
  \sup_{z \in \partial D} \Pr\left(\inf_{t \leq \delta} \tilde{\psi}_D(Y^z_t) > -\eta\right) \leq \Pr\left(\inf_{r \leq c_0 \delta \wedge \bar{\varrho}} c_1 r + W_r > -\eta\right)
\end{equation*}
with obvious notation for $(W_r, \bar{\varrho})$. By the $0-1$ law, $\inf_{r \leq c_0 \delta \wedge \bar{\varrho}} c_1 r + W_r < 0$, almost surely, and therefore the right-hand side above vanishes when $\eta \to 0$.

Finally, by~Lemma~\ref{lem:strong-coupling} and the Markov inequality, for any $\eta>0$,
\begin{align*}
  \sup_{x,y\in D,|x-y|\le \rho} \PE\left(\sup_{t\in[0,T+\delta]} |Y_{t}^y-Y_{t}^x|>\eta\right) \xrightarrow{\rho\rightarrow0}0.
\end{align*}
From what precedes, we deduce that for any positive $T$, 
$$\sup_{x,y\in D,|x-y|\le \rho}\PE(\tau_{D}^y-\tau_{D}^x\ge\delta,\tau^x\le T)\xrightarrow{\rho\to0}0.$$ To conclude, it is now enough to use that, by Lemma~\ref{lem:controlmomentstempsarret}, $\sup_{x\in D} \PE(\tau_D^x>T)\rightarrow 0$ as $T\rightarrow+\infty$ and a symmetry argument to obtain the same property for $\tau_{D}^x-\tau_{D}^y$. The second statement follows.
\end{proof}

\begin{lem}[Strong discretization error on $\bar{\tau}_D$]\label{lem:strong-error-tau}
  For every $T>0$, there exist some positive $c_{T}$ and $\varepsilon$ such that if $|\bfeta| \le \varepsilon$, then 
  \begin{equation*}
    \sup_{x \in D} \ES_x[|\bar{\tau}_{D}^{\bfeta}-\tau_D|]\le c_{T}\sqrt{|\bfeta|}+\tilde{\rho} \ee^{-\tilde{\beta} T},
  \end{equation*}
  where $\tilde{\rho} $ and $\tilde{\beta}$ are positive numbers independent of $T$.
\end{lem}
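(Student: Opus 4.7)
The plan is to truncate at time $T$: using $|a-b| \le |a\wedge T - b\wedge T| + (a-T)_+ + (b-T)_+$,
\begin{equation*}
  \ES_x[|\bar{\tau}_D^\bfeta - \tau_D|] \le \ES_x[|\bar{\tau}_D^\bfeta \wedge T - \tau_D \wedge T|] + \ES_x[(\bar{\tau}_D^\bfeta - T)_+] + \ES_x[(\tau_D - T)_+].
\end{equation*}
The last two terms are handled by \cref{lem:controlmomentstempsarret}: integrating $\Pr_x(\tau_D > t) + \Pr_x(\bar{\tau}_D^\bfeta > t) \le 2\rho e^{-\beta t}$ from $T$ to $\infty$ yields a combined contribution of at most $(2\rho/\beta) e^{-\beta T}$, delivering the $\tilde\rho e^{-\tilde\beta T}$ term with constants independent of $T$.

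For the truncated part, I would couple $Y$ and $\bar Y^\bfeta$ through the common driving Brownian motion and first upgrade \cref{lem:strong-pages} to the pathwise $L^2$ bound $\ES_x[\sup_{s \le T} |Y_s - \bar Y^\bfeta_s|^2] \le C_T |\bfeta|$, obtained by the standard Gronwall argument combined with BDG. Together with a uniform modulus-of-continuity estimate for $\bar Y^\bfeta$ between consecutive grid points, this gives, for any $\eta > 0$, a good event $A_\eta$ with $\Pr_x(A_\eta^c) \le C_T |\bfeta|/\eta^2$ on which $\sup_{s \le T} |Y_s - \bar Y^\bfeta_s| \le \eta$ and $\bar Y^\bfeta$ varies by at most $\eta$ on each grid interval. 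On $A_\eta$, the $1$-Lipschitz signed distance $\psi_D$ (which coincides with $\tilde\psi_D$ near $\partial D$ and satisfies the eikonal equation~\eqref{eq:eikonal} by \cref{lem:tildepsi}) yields two one-sided geometric comparisons: if $Y$ has reached the sublevel $\{\psi_D \le -2\eta\}$ at some time $\tau_D^{(-2\eta)}$, then $\bar Y^\bfeta_{\dten_n} \notin D$ at the first grid point $\dten_n \ge \tau_D^{(-2\eta)}$, so $\bar\tau_D^\bfeta \le \tau_D^{(-2\eta)} + |\bfeta|$; conversely, at the grid exit time $\bar\tau_D^\bfeta$, the point $Y_{\bar\tau_D^\bfeta}$ lies within $\eta$ of $\partial D$, and by uniform ellipticity its expected residual exit time from $D$ is $O(\eta)$, via a reflected-drifted-Brownian-motion comparison analogous to the one developed in \cref{ss:coupling}.

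The main obstacle will be to control $\ES_x[\tau_D^{(-2\eta)} - \tau_D]$ uniformly in the entry point $Y_{\tau_D} \in \partial D$; this requires a one-dimensional reflected-process bound in the spirit of \cref{sec:tightness}, quantifying the nontangential penetration into $D^c$ forced by the uniform ellipticity in~\ref{it:coeffs-1}, and should yield an $O(\eta)$ estimate. Gathering the contributions---namely $T \cdot \Pr_x(A_\eta^c) \lesssim T |\bfeta|/\eta^2$ on the bad event and the $O(\eta)$ one-sided bounds on the good event---and optimizing in $\eta$, with sharper BDG tail estimates at higher moments for $\sup_{s \le T}|Y_s - \bar Y^\bfeta_s|$ to avoid the loss incurred by Markov's inequality at order $2$, should deliver the announced rate $c_T \sqrt{|\bfeta|}$.
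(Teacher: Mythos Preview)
Your truncation-plus-tail decomposition matches the paper's (the paper uses Cauchy--Schwarz on the indicator $\{\bar\tau_D^\bfeta \vee \tau_D > T\}$ rather than your additive splitting $(a-T)_+ + (b-T)_+$, but the outcome is identical). The real divergence is in how the truncated term $\ES_x[|\bar\tau_D^\bfeta \wedge T - \tau_D \wedge T|]$ is handled: the paper does not argue from scratch but simply invokes \cite[Theorem~3.11]{bouchard-geiss-gobet}, after checking that their hypotheses (Lipschitz uniformly elliptic coefficients, and a $C^2$ boundary-defining function, supplied here by $\tilde\psi_D$ from \cref{lem:tildepsi}) hold. This delivers $c_T\sqrt{|\bfeta|}$ in one line.

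Your route---coupling on a good event $A_\eta$, bounding one side by the overshoot time $\tau_D^{(-2\eta)}-\tau_D$ and the other by the residual exit time from an $\eta$-neighbourhood of $\partial D$, both of which are $O(\eta)$ in expectation by the Lipschitz continuity of $A\iind{D}$---is essentially a reconstruction of the cited result and is sound in outline. One caveat: the optimization you describe gives $C\eta + C_T|\bfeta|^{p/2}/\eta^p$ when using $p$-th moment Markov bounds on $A_\eta^c$, which yields $|\bfeta|^{p/(2(p+1))}$ after optimizing in $\eta$; this approaches but never reaches $\sqrt{|\bfeta|}$, and closing that last gap is precisely where the technical work of the cited reference lies. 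For the paper's downstream applications (only Markov-inequality bounds on $\Pr_x(|\tau_D-\bar\tau_D^\bfeta|\ge\delta)$ with $|\bfeta|\to 0$ are ever used), any $o(1)$ rate would in fact suffice, so your approach would be adequate there even without the sharp exponent.
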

\begin{proof}
  By Lemma~\ref{lem:controlmomentstempsarret} and the Cauchy--Schwarz inequality, one easily checks that there exist some positive $\tilde{\rho} $ and $\tilde{\beta}$ such that if $|\bfeta| \leq 1$, then for every $T > 0$,
  \begin{equation*}
    \ES_x[|\bar{\tau}_{D}^{\bfeta}-\tau_D|\ind{\bar{\tau}^\bfeta_D \vee \tau_D >T}] \leq \sqrt{2(\Exp_x[(\bar{\tau}^\bfeta_D)^2] + \Exp_x[\tau_D^2])}\sqrt{\Pr_x(\bar{\tau}^\bfeta_D>T)+\Pr_x(\tau_D>T)}\le \tilde{\rho} \ee^{-\tilde{\beta} T}.
  \end{equation*}
  On the other hand, 
  \begin{equation*}
    \ES_x[|\bar{\tau}_{D}^{\bfeta}-\tau_D|\ind{\bar{\tau}^\bfeta_D \vee \tau_D \leq T}]\le \ES_x[|\bar{\tau}_{D}^{\bfeta}\wedge T-\tau_D\wedge T|].
  \end{equation*}
  To control the above right-hand quantity, we apply \cite[Theorem~3.11]{bouchard-geiss-gobet}. With the numbering of this paper, one checks that the assumptions of this theorem are fulfilled in our setting: Assumption 3.5 holds true by our Assumptions~\ref{cond:D} and~\ref{cond:coeffs}, and Assumption 3.8 also holds with  $\delta=\widetilde{\psi}_D$ defined in Lemma~\ref{lem:tildepsi}. By this theorem, we get the existence of some positive $\varepsilon$, which can be taken smaller than $1$ without loss of generality, and  $c_{T}$ such that if $|\bfeta| \le \varepsilon$, then for any $x\in D$, 
  \begin{equation*}
    \ES_x[|\bar{\tau}_{D}^{\bfeta}\wedge T-\tau_D\wedge T|]\le c_{T}\sqrt{|\bfeta|}.\qedhere
  \end{equation*}
\end{proof}

%%%%%%%%%%%%%%%%%%%%%%%%%%%%%%%%%%%%%%%%%%%%%%%%%%%%%%%%%%%%%%%%%%%%%%%%%%%%%%%%%%%%%%%%%%%%%%%%%%%%%%%%%
%%%%%%%%%%%%%%%%%%%%%%%%%%%%%%%%%%%%%%%%%%%%%%%%%%%%%%%%%%%%%%%%%%%%%%%%%%%%%%%%%%%%%%%%%%%%%%%%%%%%%%%%%
\subsection{Discretization of the operator $A$}
 
In the next statement, the operators $A$ and $\bar{A}^\bfeta$ are defined in Section~\ref{sec:prooftheomain}.

\begin{lem}[Quantitative weak error on $Af$ for Lipschitz test functions]\label{lem:weakerror} 
  For any $T>0$, there exist some positive $c_{T}$ and $\varepsilon$ such that if $|\bfeta| \le \varepsilon$, then for any bounded and Lipschitz continuous function $f : \R^d \to \R$,
  \begin{equation*}
    \sup_{x \in D} \left|\bar{A}^\bfeta f(x) - Af(x)\right| \leq (\|f\|_\infty \vee [f]_1) \left(c_{T}\sqrt{|\bfeta|}+\tilde{\rho} \ee^{-\tilde{\beta} T}\right),
  \end{equation*}
  where $\tilde{\rho}$ and $\tilde{\beta}$ are again positive numbers independent of $T$. 
\end{lem}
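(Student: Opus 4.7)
The plan is to couple the diffusion $(Y_t)_{t \geq 0}$ and the Euler scheme $(\bar{Y}^\bfeta_t)_{t \geq 0}$ on the same Brownian motion, with the common initial condition $Y_0 = \bar{Y}^\bfeta_0 = x \in D$, and to write
\begin{equation*}
  \bar{A}^\bfeta f(x) - Af(x) = \Exp_x\left[\int_0^{+\infty} \left(f(\bar{Y}^\bfeta_{\un{t}})\ind{t < \bar{\tau}^\bfeta_D} - f(Y_t)\ind{t < \tau_D}\right)\dd t\right].
\end{equation*}
I would then truncate the integral at a horizon $T$. On the tail $[T,+\infty)$, the integrand is bounded in absolute value by $\|f\|_\infty \bigl(\ind{t<\tau_D} + \ind{t<\bar\tau_D^\bfeta}\bigr)$, whose expectation decays exponentially in $T$ thanks to \cref{lem:controlmomentstempsarret}, giving a contribution of order $\|f\|_\infty \tilde{\rho}\ee^{-\tilde{\beta}T}$ after adjustment of constants.

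On the interval $[0,T]$, I would split the integrand as
\begin{equation*}
  \bigl(f(\bar{Y}^\bfeta_{\un{t}}) - f(Y_t)\bigr)\ind{t < \tau_D \wedge \bar{\tau}^\bfeta_D} + f(\bar{Y}^\bfeta_{\un{t}})\ind{\tau_D \leq t < \bar{\tau}^\bfeta_D} - f(Y_t)\ind{\bar{\tau}^\bfeta_D \leq t < \tau_D}.
\end{equation*}
The first piece is controlled by the Lipschitz constant: $|f(\bar{Y}^\bfeta_{\un{t}})-f(Y_t)| \leq [f]_1(|\bar{Y}^\bfeta_t - Y_t| + |\bar{Y}^\bfeta_{\un{t}} - \bar{Y}^\bfeta_t|)$. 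The first summand has expectation $\leq C_T\sqrt{|\bfeta|}$ by \cref{lem:strong-pages}, while the second is bounded by a standard estimate on Euler increments (using boundedness of $b,\sigma$) by $C\sqrt{|\bfeta|}$. Integrating in $t \in [0,T]$ thus yields a contribution of order $[f]_1 T C'_T \sqrt{|\bfeta|}$. The remaining two pieces together contribute at most $\|f\|_\infty\,\Exp_x[|\tau_D - \bar{\tau}^\bfeta_D|]$ in absolute value, which is handled directly by \cref{lem:strong-error-tau}, producing a bound of the form $\|f\|_\infty(c_T\sqrt{|\bfeta|} + \tilde{\rho}\ee^{-\tilde{\beta}T})$.

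Summing the three contributions uniformly in $x \in D$ and absorbing $T$-dependent prefactors into $c_T$ yields the claimed inequality with $\|f\|_\infty \vee [f]_1$ as the overall scaling factor. The constants $\tilde{\rho}, \tilde{\beta}$ come from \cref{lem:controlmomentstempsarret,lem:strong-error-tau} and are independent of $T$, as required. There is no real obstacle: all ingredients—strong error on trajectories, strong error on the exit time, and exponential tail of exit times—are already in place, so the only care needed is to manage the mismatch between $\bar{Y}^\bfeta_{\un{t}}$ and $\bar{Y}^\bfeta_t$ inside the Lipschitz bound, which is a standard increment estimate.
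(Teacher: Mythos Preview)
Your proposal is correct and follows essentially the same route as the paper: split the difference into a Lipschitz trajectory error controlled by \cref{lem:strong-pages} and an exit-time mismatch controlled by $\|f\|_\infty\,\Exp_x[|\tau_D-\bar\tau_D^\bfeta|]$ via \cref{lem:strong-error-tau}, with the tail handled by the exponential bound of \cref{lem:controlmomentstempsarret}. The only cosmetic difference is that the paper integrates up to $\tau_D\vee\bar\tau_D^\bfeta$ and then splits on the event $\{\tau_D\vee\bar\tau_D^\bfeta\le T\}$, whereas you truncate the time integral at $T$ directly; both are equivalent.
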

\begin{proof} 
  Let $f : \R^d \to \R$ be bounded and Lipschitz continuous. For any $x \in D$,
  \begin{align*}
    \left|\bar{A}^\bfeta f(x) - Af(x)\right| &= \left|\Exp_x\left[\int_0^{\bar{\tau}^\bfeta_D \vee \tau_D} \left(f(\bar{Y}^\bfeta_{\un{t}})-f(Y_t)\right)\dd t - \int_{\bar{\tau}^\bfeta_D}^{\bar{\tau}^\bfeta_D \vee \tau_D}f(\bar{Y}^\bfeta_{\un{t}})\dd t - \int_{\tau_D}^{\bar{\tau}^\bfeta_D \vee \tau_D}f(Y_t)\dd t\right]\right|\\
   & \leq \Exp_x\left[\int_0^{\bar{\tau}^\bfeta_D \vee \tau_D} \left|f(\bar{Y}^\bfeta_{\un{t}})-f(Y_t)\right|\dd t\right] + \|f\|_\infty \Exp_x\left[|\bar{\tau}^\bfeta_D - \tau_D|\right].
  \end{align*}
  For any $T>0$, 
  \begin{equation*}
    \Exp_x\left[\int_0^{\bar{\tau}^\bfeta_D \vee \tau_D} \left|f(\bar{Y}^\bfeta_{\un{t}})-f(Y_t)\right|\dd t\ind{\bar{\tau}^\bfeta_D \vee \tau_D \leq T}\right] \leq T [f]_1 \sup_{t \in [0,T]} \Exp_x\left[|\bar{Y}^\bfeta_{\un{t}}-Y_t|\right],
  \end{equation*}
  while, with the same arguments as in the proof of Lemma~\ref{lem:strong-error-tau},
  \begin{align*}
    &\Exp_x\left[\int_0^{\bar{\tau}^\bfeta_D \vee \tau_D} \left|f(\bar{Y}^\bfeta_{\un{t}})-f(Y_t)\right|\dd t\ind{\bar{\tau}^\bfeta_D \vee \tau_D > T}\right]\\
    &\leq 2\|f\|_\infty \Exp_x\left[(\bar{\tau}^\bfeta_D \vee \tau_D) \ind{\bar{\tau}^\bfeta_D \vee \tau_D > T}\right]\\
    &\leq 2\|f\|_\infty \sqrt{\Exp_x\left[(\bar{\tau}^\bfeta_D)^2\right] + \Exp_x\left[\tau_D^2\right]}\sqrt{\Pr_x\left(\bar{\tau}^\bfeta_D > T\right) + \Pr_x\left(\tau_D > T\right)}.
  \end{align*}
  Therefore, the proof follows from the application of~Lemma~\ref{lem:strong-pages}.
\end{proof}

%%%%%%%%%%%%%%%%%%%%%%%%%%%%%%%%%%%%%%%%%%%%%%%%%%%%%%%%%%%%%%%%%%%%%%%%%%%%%%%%%%%%%%%%%%%%%%%%%%%%%%%%%
%%%%%%%%%%%%%%%%%%%%%%%%%%%%%%%%%%%%%%%%%%%%%%%%%%%%%%%%%%%%%%%%%%%%%%%%%%%%%%%%%%%%%%%%%%%%%%%%%%%%%%%%%
\subsection{Wasserstein estimates}\label{ss:wass}

In this subsection, we no longer assume that $Y_0 = \bar{Y}^\bfeta_0$, but rather that the pair $(Y_0, \bar{Y}^\bfeta_0)$ is random and distributed according to some \emph{optimal} coupling in the following sense. For any $\mu, \nu \in \mathcal{M}_1(D)$, we recall the definition~\eqref{def:wass} of the Wasserstein distance $\mathcal{W}_1(\mu,\nu)$ and the definition~\eqref{def:coupl-alpha-beta} of the set ${\cal C}(\mu,\nu)$ of couplings of $\mu$ and $\nu$. By Remark~\ref{rk:topoPolish} and \cite[Theorem 4.1]{villani_book}, the set ${\cal C}^*(\mu,\nu)$ of \emph{optimal} couplings $\Pi^*$, namely for which ${\cal W}_1(\mu,\nu) = \int |x-y|\Pi^*(\dd x,\dd y)$, is always nonempty.

\begin{lem}[Wasserstein estimates]\label{lem:wass-estim}
  \begin{enumerate}[label=(\roman*),ref=\roman*]
    \item\label{it:wass-estim:1} For any $\varepsilon>0$ and $\delta>0$, there exist $C>0$ and $\eta_0 > 0$ such that, for any $\mu, \nu \in \mathcal{M}_1(D)$, for any optimal coupling $\Pi^*$ in $\mathcal{C}^*(\mu,\nu)$, if $|\bfeta|\le \eta_0$ then
    \begin{equation*}
      \PE_{\Pi^*}(|\tau-\bar{\tau}|\ge \delta)\le \varepsilon + C{\cal W}_1(\mu,\nu),
    \end{equation*}
    where in the left-hand side, the notation $\Pr_{\Pi^*}$ means that the initial condition $(Y_0,\bar{Y}^\bfeta_0)$ is random and distributed according to $\Pi^*$.
    \item\label{it:wass-estim:2} For any $T>0$ and $\varepsilon>0$, for any compact subset $K$ of $D$, there exist $C>0$ and $\eta_0 > 0$ such that, for any $\mu, \nu \in \mathcal{M}_1(D)$, for any optimal coupling $\Pi^*$ in $\mathcal{C}^*(\mu,\nu)$, if $|\bfeta|\le \eta_0$ then
  \begin{equation*}
    \sup_{t\in[0,T]}\left(\PE_{\Pi^*}(\tau_{D}\le t\le \bar{\tau}_{D}^{\bfeta})+\PE_{\Pi^*}(\bar{\tau}_{D}^{\bfeta}\le t\le \tau_{D})\right)\le \varepsilon+C {\cal W}_1(\mu,\nu)+\mu(K^c).
  \end{equation*}
  \end{enumerate}
\end{lem}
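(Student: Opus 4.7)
The plan is to prove (i) via a triangle decomposition involving an auxiliary diffusion, and then deduce (ii) from (i) together with a near-boundary control provided by \cref{lem:cvdistrib:1}.

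For (i), I would introduce on the same probability space the diffusion $(\tilde{Y}_t)_{t \geq 0}$ solving~\eqref{eq:SDE} starting from $\bar{Y}^\bfeta_0$ and driven by the same Brownian motion $(B_t)_{t \geq 0}$ as $(Y_t)_{t \geq 0}$ and $(\bar{Y}^\bfeta_t)_{t \geq 0}$, and denote by $\tilde{\tau}_D$ its exit time from $D$. The triangle inequality then gives
\[
\PE_{\Pi^*}(|\tau_D-\bar{\tau}^\bfeta_D|\ge\delta) \le \PE_{\Pi^*}(|\tau_D-\tilde{\tau}_D|\ge\delta/2)+\PE_{\Pi^*}(|\tilde{\tau}_D-\bar{\tau}^\bfeta_D|\ge\delta/2).
\]
Since $\tilde{Y}$ and $\bar{Y}^\bfeta$ share the same initial condition and the same driving Brownian motion, \cref{lem:strong-error-tau} together with the Markov inequality will make the second term $\le \varepsilon/3$ once $T$ is chosen large and $|\bfeta|$ small, uniformly in $\bar{Y}^\bfeta_0\in D$. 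For the first term, conditioning on $(Y_0,\bar{Y}^\bfeta_0)\sim\Pi^*$ and using that for an optimal coupling $\PE_{\Pi^*}(|Y_0-\bar{Y}^\bfeta_0|>\rho)\le\mathcal{W}_1(\mu,\nu)/\rho$ by Markov, together with the second statement of \cref{lem:cvdistrib:1}, I would obtain
\[
\PE_{\Pi^*}(|\tau_D-\tilde{\tau}_D|\ge\delta/2) \le \frac{\mathcal{W}_1(\mu,\nu)}{\rho}+\sup_{x,y\in D,\,|x-y|\le\rho}\PE(|\tau_D^x-\tau_D^y|\ge\delta/2),
\]
and then fix $\rho$ small enough so that the supremum is $\le\varepsilon/3$. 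This yields (i) with $C=2/\rho$ after renaming $\varepsilon$.

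For (ii), I would use that for any $\delta>0$,
\[
\{\tau_D\le t\le\bar{\tau}^\bfeta_D\}\cup\{\bar{\tau}^\bfeta_D\le t\le\tau_D\}\subset\{|\tau_D-\bar{\tau}^\bfeta_D|\ge\delta\}\cup\{\tau_D\in[t-\delta,t+\delta]\}.
\]
Conditioning on $Y_0$, the first marginal of $\Pi^*$ being $\mu$,
\[
\sup_{t\in[0,T]}\PE_{\Pi^*}(\tau_D\in[t-\delta,t+\delta])\le\mu(K^c)+\sup_{x\in K,\,t\in[0,T]}\PE_x(\tau_D\in[t-\delta,t+\delta]),
\]
and by the first statement of \cref{lem:cvdistrib:1} I would pick $\delta$ small enough to make the last supremum $\le\varepsilon/2$. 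The remaining probability $\PE_{\Pi^*}(|\tau_D-\bar{\tau}^\bfeta_D|\ge\delta)$ is bounded by the already-proved (i) applied with this value of $\delta$, which produces the constants $C$ and $\eta_0$ in the statement.

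The main obstacle lies in the first term in the decomposition of (i): quantifying the continuity of the map $x\mapsto\tau_D^x$ uniformly over $x,y\in D$ at distance $\le\rho$ is precisely the content of the second statement of \cref{lem:cvdistrib:1}, which is itself nontrivial because $\tau_D$ is not a continuous functional of the Brownian path in general and the proof requires a delicate analysis near $\partial D$ via the signed-distance function $\tilde{\psi}_D$. It is precisely through the Markov inequality applied to $|Y_0-\bar{Y}^\bfeta_0|$ that $\mathcal{W}_1(\mu,\nu)$ enters the bound, with the constant $C$ determined by the threshold $\rho$ needed to make the uniform modulus of continuity small.
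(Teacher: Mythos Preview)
Your proof is correct and follows essentially the same approach as the paper: in (i) you use the same triangle decomposition $|\tau_D-\bar\tau^\bfeta_D|\le|\tau_D-\tilde\tau_D|+|\tilde\tau_D-\bar\tau^\bfeta_D|$ combined with the Markov split on $|Y_0-\bar Y^\bfeta_0|$ and \cref{lem:cvdistrib:1}, \cref{lem:strong-error-tau} (the paper merely performs the distance split before the triangle inequality rather than after, and writes $\tau^y$ instead of introducing $\tilde Y$); part (ii) is identical. The constant should be $C=1/\rho$ rather than $2/\rho$, but this is immaterial.
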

\begin{proof}
  We shall write $\tau$ instead of $\tau_{D}$ and $\bar{\tau}$ instead of $\bar{\tau}_{D}^{\bfeta}$ when no confusion holds, and $\tau^x$, $\bar{\tau}^y$ when we want to emphasize the fact that $Y_0=x$ and $\bar{Y}^\bfeta_0=y$. We first prove~\eqref{it:wass-estim:1} and thus fix $\varepsilon > 0$ and $\delta > 0$. For any $\rho > 0$, if $\Pi^* \in \mathcal{C}^*(\mu,\nu)$, then
\begin{align*}
\PE_{\Pi^*}(|\tau-\bar{\tau}|\ge \delta)&\leq \int_{x,y \in D} \ind{|x-y| \leq \rho} \PE(|\tau^x-\bar{\tau}^y|\ge \delta)\Pi^*(\dd x, \dd y) + \int_{x,y \in D} \ind{|x-y| > \rho} \Pi^*(\dd x, \dd y)\\
&\le\sup_{(x,y)\in D^2, |x-y|\le \rho} \PE(|\tau^x-\bar{\tau}^y|\ge \delta)+ \frac{1}{\rho} {\cal W}_1(\mu,\nu)\\
&\le \sup_{(x,y)\in D^2, |x-y|\le \rho} \PE\left(|\tau^x-{\tau}^y|\ge \frac{\delta}{2}\right)+ \sup_{y\in D} \PE\left(|\tau^y-\bar{\tau}^y|\ge \frac{\delta}{2}\right)+\frac{1}{\rho} {\cal W}_1(\mu,\nu),
\end{align*}
where in the second and third lines, we respectively used the Markov and the triangle inequalities. By~Lemma~\ref{lem:cvdistrib:1}, we can fix $\rho$ small enough in such a way that the first term of the right-hand side is lower than $\varepsilon/2$. On the other hand, by Lemma~\ref{lem:strong-error-tau} (applied with $T$ large enough) and the Markov inequality, there exists $\eta_0>0$ such that for all step sequence $\bfeta$ with $|\bfeta|\le\eta_0$,
\begin{equation*}
  \sup_{y\in D} \PE\left(|\tau^y-\bar{\tau}^y|\ge \frac{\delta}{2}\right)\le \frac{\varepsilon}{2}.
\end{equation*}
This completes the proof of~\eqref{it:wass-estim:1} with $C=1/\rho$.

To prove~\eqref{it:wass-estim:2}, let us now fix $T>0$. One easily checks that for any $\mu, \nu \in \mathcal{M}_1(D)$ and $\Pi^* \in \mathcal{C}^*(\mu,\nu)$, for any $\delta>0$ and $t \in [0,T]$,
\begin{align*}
\PE_{\Pi^*}({\tau}\le t \le \bar{\tau})+\PE_{\Pi^*}(\bar{\tau}\le t\le {\tau})\le \PE_{\Pi^*}(|\tau-\bar{\tau}|\ge \delta)+ \PE_{\mu}(\tau\in[t-\delta,t+\delta]).
\end{align*}
For any compact $K \subset D$,
$$\sup_{t \in [0,T]} \PE_{\mu}(\tau\in[t-\delta,t+\delta])\le\sup_{t\in[0,T], x\in K}\PE_x(\tau\in [t-\delta,t+\delta])+\mu(K^c),$$
and for any $\varepsilon>0$, by~Lemma~\ref{lem:cvdistrib:1}, one can fix $\delta_{\varepsilon}$ small enough in such a way that 
\begin{equation*}
\sup_{t\in[0,T]}\PE_{\mu}(\tau\in[t-\delta_\varepsilon,t+\delta_{\varepsilon}])\le\varepsilon+\mu(K^c).
\end{equation*}
Applying~\eqref{it:wass-estim:1} with $\delta = \delta_\varepsilon$, we conclude that for $C$ and $\eta_0$ given by~\eqref{it:wass-estim:1}, if $|\bfeta| \leq \eta_0$ then
\begin{equation*}
  \sup_{t\in[0,T]}\left(\PE_{\Pi^*}(\tau_{D}\le t\le \bar{\tau}_{D}^{\bfeta})+\PE_{\Pi^*}(\bar{\tau}_{D}^{\bfeta}\le t\le \tau_{D})\right)\le 2\varepsilon+C{\cal W}_1(\mu,\nu)+\mu(K^c),
\end{equation*}
which completes the proof of~\eqref{it:wass-estim:2} at the price of replacing $\varepsilon$ by $\varepsilon/2$.
\end{proof}

%%%%%%%%%%%%%%%%%%%%%%%%%%%%%%%%%%%%%%%%%%%%%%%%%%%%%%%%%%%%%%%%%%%%%%%%%%%%%%%%%%%%%%%%%%%%%%%%%%%%%%%%%%%%
%%%%%%%%%%%%%%%%%%%%%%%%%%%%%%%%%%%%%%%%%%%%%%%%%%%%%%%%%%%%%%%%%%%%%%%%%%%%%%%%%%%%%%%%%%%%%%%%%%%%%%%%%%%%
%%%%%%%%%%%%%%%%%%%%%%%%%%%%%%%%%%%%%%%%%%%%%%%%%%%%%%%%%%%%%%%%%%%%%%%%%%%%%%%%%%%%%%%%%%%%%%%%%%%%%%%%%%%%
\section{Proof of Proposition~\ref{prop:cv-vartheta}}\label{proof:propcvvartheta}

This section contains the end of the proof of Proposition~\ref{prop:cv-vartheta}, once the technical result of Lemma~\ref{lem:control-error} has been established. It is included here for the sake of self-containedness, but essentially follows from standard arguments in stochastic approximation, where the specific time-discretized setting of the article does not play a major role.

We first gather useful properties on the ODE~\eqref{eq:ODE}.

\begin{prop}[Solutions to~\eqref{eq:ODE}]\label{prop:ODE}
  Assume~\ref{cond:D} and~\ref{cond:coeffs}. 
  \begin{enumerate}[label=(\roman*),ref=\roman*]
    \item For each probability $\nu$ on $D$, there is a unique $(\varphi^\nu_t)_{t \geq 0}$ in ${\cal C}(\ER_+,{\cal M}_1(D))$ such that for any bounded continuous function $f:D\to\ER$,
    \begin{equation*}
      \forall t \geq 0, \qquad \varphi^\nu_t(f)=\nu(f)+\int_0^t F(\varphi^\nu_s)(f) \dd s,
    \end{equation*}
    where $F$ is defined by~\eqref{def:F}. 
        \item For any compact subset ${\cal K}$ of ${\cal M}_1(D)$,
    \begin{equation*}
      \lim_{t \to +\infty} \sup_{\nu \in {\cal K}} \|\varphi_t^\nu-\mu^\star\|_\mathrm{TV} = 0.
    \end{equation*}
  \end{enumerate}
\end{prop}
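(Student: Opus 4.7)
The plan is to reduce the nonlinear equation to a linear one by exploiting the homogeneity of $F$. By \cref{lem:controlmomentstempsarret}, $A$ extends to a bounded positive linear operator on the space of finite signed measures on $D$ endowed with the total variation norm, since $\|Af\|_\infty \leq \|f\|_\infty \sup_{x\in D}\Exp_x[\tau_D] < +\infty$. For $\nu \in {\cal M}_1(D)$, let $\psi^\nu_t := \nu\ee^{tA}$ denote the unique solution of the linear equation $\dot{\psi}_t = \psi_t A$ with $\psi_0 = \nu$; the Neumann series $\psi^\nu_t = \sum_{k \geq 0} (t^k/k!)\,\nu A^k$ shows that $\psi^\nu_t$ remains a non-negative measure with $\psi^\nu_t(D) \geq \nu(D) = 1$ for every $t \geq 0$. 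Setting $\varphi^\nu_t := \psi^\nu_t / \psi^\nu_t(D)$, the identities $\dot{\psi}^\nu_t = \psi^\nu_t A$ and $\dot{\psi}^\nu_t(D) = \psi^\nu_t(D)\,\varphi^\nu_t(A\iind{D})$ yield after differentiation that $\varphi^\nu_t(f) = \nu(f) + \int_0^t F(\varphi^\nu_s)(f)\,\dd s$ for any bounded measurable $f$. Conversely, if $(\varphi_t)_{t\geq 0} \in \mathcal{C}(\R_+,{\cal M}_1(D))$ is any other solution, then $\tilde{\psi}_t := \exp(\int_0^t \varphi_s(A\iind{D})\,\dd s)\,\varphi_t$ solves the linear equation $\dot{\tilde\psi}_t = \tilde\psi_t A$ with $\tilde\psi_0 = \nu$, and uniqueness of this linear Cauchy problem in $(\mathcal{M}_\mathrm{sig}(D),\|\cdot\|_\mathrm{TV})$ forces $\tilde\psi_t = \psi^\nu_t$, whence $\varphi_t = \varphi^\nu_t$.

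For the uniform convergence in part~(ii), the plan is to rely on the spectral analysis of the killed semigroup, which under Assumptions~\ref{cond:D} and~\ref{cond:coeffs} is classical (Krein--Rutman together with standard elliptic regularity for $\mathcal{C}^2$ domains) and is invoked in the same framework in~\cite{BenChaVil22}. The operator $A$ admits $1/\lambda^\star$ as a simple principal eigenvalue with a spectral gap, with associated left eigenmeasure $\mu^\star$ and right eigenfunction $h^\star$ which is continuous on $\bar{D}$, strictly positive on $D$, and vanishes on $\partial D$. Decomposing $\nu = \nu(h^\star)\mu^\star + \nu_\perp$ along the principal eigenspace and its spectral complement, one has
\begin{equation*}
  \ee^{-t/\lambda^\star}\psi^\nu_t = \nu(h^\star)\mu^\star + \ee^{-t/\lambda^\star}\nu_\perp\ee^{tA},
\end{equation*}
and the spectral gap yields $\|\ee^{-t/\lambda^\star}\nu_\perp\ee^{tA}\|_\mathrm{TV} \leq C\ee^{-\delta t}$ with $C,\delta>0$ depending only on $A$. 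Dividing by $\psi^\nu_t(D) = \ee^{t/\lambda^\star}(\nu(h^\star) + O(\ee^{-\delta t}))$ gives, as long as $\nu(h^\star) > 0$,
\begin{equation*}
  \|\varphi^\nu_t - \mu^\star\|_\mathrm{TV} \leq \frac{C'\,\ee^{-\delta t}}{\nu(h^\star)}.
\end{equation*}

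To conclude, it suffices to check that $\inf_{\nu \in {\cal K}} \nu(h^\star) > 0$ for any compact ${\cal K} \subset {\cal M}_1(D)$. By Prokhorov's theorem, ${\cal K}$ is tight in $D$, so there exists a compact $K \subset D$ with $\nu(K) \geq 1/2$ for every $\nu \in {\cal K}$; since $h^\star$ is continuous and strictly positive on $D$, $\inf_K h^\star > 0$, and therefore $\nu(h^\star) \geq \tfrac{1}{2}\inf_K h^\star$ uniformly over ${\cal K}$, which yields the stated uniform exponential convergence. The main technical obstacle is the existence of the spectral gap and the positivity of $h^\star$: these are not derived in this excerpt, but are by now standard for elliptic killed semigroups in $\mathcal{C}^2$ bounded domains and are already exploited in the same setting by~\cite{BenChaVil22}, so the plan is to invoke them as a black box rather than reprove them.
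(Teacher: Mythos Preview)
Your proof is correct and follows essentially the same route as the paper: both rest on the linearization $\psi_t = \nu\ee^{tA}$ for existence/uniqueness and on the spectral gap of the killed dynamics together with positivity and continuity of the principal eigenfunction $h^\star$ (called $\eta$ in the paper) to obtain the bound $\|\varphi^\nu_t-\mu^\star\|_\mathrm{TV}\le C\ee^{-\delta t}/\nu(h^\star)$, then use tightness of $\mathcal{K}$ to get $\inf_{\nu\in\mathcal{K}}\nu(h^\star)>0$. The only difference is presentational: the paper imports these facts directly from~\cite[Propositions~3.1 and~3.3]{BenChaVil22}, whereas you unpack the linearization and spectral decomposition explicitly before invoking the same black boxes.
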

\begin{proof}
  The first statement is a transcription of~\cite[Proposition~3.3]{BenChaVil22}, and combining the latter result with Proposition~3.1 in the same reference we get that there exist $C \geq 0$ and $\lambda > 0$ such that, for any $\nu \in \mathcal{M}_1(D)$,
  \begin{equation*}
    \forall t \geq 0, \qquad \|\varphi_t^\nu-\mu^\star\|_\mathrm{TV} \leq \frac{C}{\nu(\eta)} \ee^{-\lambda t},
  \end{equation*}
  where $\eta$ is a measurable and positive function on $D$ which is the right eigenvector associated with the principal Dirichlet eigenvalue $\lambda^\star>0$ on $D$ of the infinitesimal generator $L$ of~\eqref{eq:SDE}. Combining Eq.~(3.4) in~\cite{BenChaVil22} with the Priola--Wang gradient estimate used in the proof of~\cite[Proposition~3.2]{BenChaVil22}, $\eta$ is observed to be the uniform $t \to +\infty$ limit of the family of continuous functions $x \mapsto \ee^{\lambda^\star t}\Pr_x(\tau_D>t)$, and therefore it is continuous on $D$. Thus, for any compact set ${\cal K}$ of ${\cal M}_1(D)$, $\inf_{\nu \in{\cal K}} \nu(\eta)>0$, which completes the proof of the second statement.
\end{proof}

We may now complete the proof of Proposition~\ref{prop:cv-vartheta}, by using the estimate from Lemma~\ref{lem:control-error} to show that $(\vta_\ell)_{\ell \geq 1}$ is an asymptotic pseudo-trajectory for the ODE~\eqref{eq:ODE} and then deducing from Proposition~\ref{prop:ODE} that it converges almost surely to $\mu^\star$. Before proceeding, we clarify a few topological aspects which will be needed in the proof. Since $\bar{D}$ is compact, the set $\mathcal{C}_b(\bar{D})$ of continuous (and necessarily bounded) real-valued functions on $\bar{D}$, endowed with the sup-norm, admits a dense countable subset $(f_k)_{k \geq 1}$. For any $\nu^1, \nu^2 \in \mathcal{M}_1(D)$, we define
\begin{equation}\label{eq:distM1}
  d(\nu^1,\nu^2) = \sum_{k=1}^{+\infty} 2^{-k}\frac{|\nu^1(f_k)-\nu^2(f_k)|}{\|f_k\|_\infty}.
\end{equation}
Then $d$ defines a distance on $\mathcal{M}_1(D)$, and a sequence $(\nu_n)_{n \geq 1}$ of elements of $\mathcal{M}_1(D)$ converges weakly to $\nu \in \mathcal{M}_1(D)$ if and only if $d(\nu_n,\nu) \to 0$. This statement follows from the Portmanteau Theorem, which states that the weak convergence in $\mathcal{M}_1(D)$ is equivalent to the convergence against uniformly continuous functions on $D$; the fact that uniformly continuous functions on $D$ extend to uniformly continuous functions on $\bar{D}$; and the fact that any such function is uniformly approximated by elements of $(f_k)_{k \geq 1}$. Notice however that $d$ does not make $\mathcal{M}_1(D)$ complete.

\begin{proof}[Proof of Proposition~\ref{prop:cv-vartheta}]
  The proof is divided in three steps.

  \medskip\emph{Step~1. Definition and relative compactness of $(\tilde{\vta}^{(\ell,T)}_t)_{t \geq 0}$.}  We build a family of continuous-time processes {$(\tilde{\vta}^{(\ell,T)}_t)_{t \geq 0}$} as follows. We begin by considering a continuous-time process $(\tilde{\vta}_t)_{t \geq 0}$ with values in $\mathcal{M}_1(D)$ built by interpolation of the sequence $(\vta_\ell)_{\ell\ge1}$: we set $\dten_0 = 0$ and for any $\ell \geq 1$, $\dten_\ell = \sum_{j=1}^\ell h_j$. Then we set, for any $\ell \geq 1$,
  \begin{equation}\label{eq:hnpluss}
    \forall s \in [0,h_\ell], \qquad \tilde{\vta}_{\dten_{\ell-1}+s} = \vta_\ell + \frac{s}{h_\ell}(\vta_{\ell+1}-\vta_\ell),
  \end{equation}
  which defines $(\tilde{\vta}_t)_{t \geq 0} \in \mathcal{C}([0,+\infty),\mathcal{M}_1(D))$ such that $\tilde{\vta}_{\dten_{\ell-1}} = \vta_\ell$ for any $\ell \geq 1$. We first show that, almost surely, this function is uniformly continuous with respect to the distance $d$ defined in~\eqref{eq:distM1}. By~\eqref{eq:hnpluss} and the fact that, for any bounded and measurable function $f: D \to \R$,  
  \begin{equation}\label{eq:controlaccroissvta}
    |\vta_{\ell+1}(f)-\vta_\ell(f)|\le \frac{2\|f\|_\infty}{(\ell+1)},
  \end{equation}
  we have for every $s,t \in[\dten_{\ell-1},\dten_\ell]$,
  \begin{equation*}
    |\tilde{\vta}_t(f)-\tilde{\vta}_s(f)|\le \frac{2\|f\|_\infty}{(\ell+1)h_\ell}|t-s|,
  \end{equation*}
  so that by \eqref{eq:assumption3i}, for every  $s, t \geq 0$, almost surely, there exists a (random) positive constant $C$ such that,
  \begin{equation}\label{eq:vartheta-lip}
    |\tilde{\vta}_t(f)-\tilde{\vta}_s(f)|\le C \|f\|_\infty |t-s|.
  \end{equation}
  Then, for every  $s, t \geq 0$,  
  \begin{equation}\label{eq:vartheta-lip:2}
    d(\tilde{\vta}_t, \tilde{\vta}_s)\le 2C |t-s|,
  \end{equation}
  and the uniform continuity of  $(\tilde{\vta}_t)_{t\ge0}$ follows.
  
  From now on, we fix $T>0$ and for any $\ell \geq 1$, we define $(\tilde{\vta}^{(\ell,T)}_t)_{t \geq 0} \in \mathcal{C}([0,+\infty),\mathcal{M}_1(D))$ by
  \begin{equation}\label{eq:nutildelt}
    \forall t \geq 0, \qquad \tilde{\vta}^{(\ell,T)}_t = \tilde{\vta}_{ t+[\dten_{\ell-1}-T]_+}.
  \end{equation}
  The uniform continuity of $(\tilde{\vta}_t)_{t\ge0}$ implies that the sequence $\{(\tilde{\vta}^{(\ell,T)}_t)_{t \geq 0};\ell \geq 1\}$ is uniformly equicontinuous. On the other hand, for any $\ell \geq 1$, $\tilde{\vta}^{(\ell,T)}_0 = \tilde{\vta}_{[\dten_{\ell-1}-T]_+}$ is a convex combination of elements of the tight sequence $(\vta_\ell)_{\ell \geq 1}$ (by Lemma~\ref{lem:tightness-vartheta}). Therefore,  the sequence $(\tilde{\vta}^{(\ell,T)}_0)_{\ell \geq 1}$ is also tight and thus, by the Prohorov Theorem, it is relatively compact. We deduce from Ascoli's Theorem that the sequence $\{(\tilde{\vta}^{(\ell,T)}_t)_{t \geq 0}; \ell \geq 1\}$ is relatively compact in ${\cal C}([0,+\infty),{\cal M}_1(D))$ for the topology of uniform convergence on compact intervals.
  
  \medskip\emph{Step~2. Any limit of $(\tilde{\vta}^{(\ell,T)}_t)_{t \geq 0}$ solves the ODE~\eqref{eq:ODE}.} As in Step 1, we begin by considering 
  $(\tilde{\vta}_t)_{t \geq 0} $. For $u\ge0$, set  $\un{u}_h=\dten_{N(u)}$ and $\bar{u}_h=\dten_{N(u)+1}$, where $N(u)$ is defined in \eqref{def:Ndelt}.
   With the help of \eqref{eq:recursivvta} and \eqref{eq:hnpluss}, one can check that for any $0\le s\le t$, we have: if $s\ge \dten_{N(t)}$,
  $$\tilde{\vta}_t-\tilde{\vta}_s=(t-s) (F(\vta_{\un{t}_h})+\varepsilon_{N(t)+2})$$
  and if $s< \dten_{N(t)}$ (with the convention $\sum_{\emptyset}=0$),
  \begin{align*}\tilde{\vta}_t-\tilde{\vta}_s&=(\tilde{\vta}_t-\tilde{\vta}_{\un{t}_h})+ \sum_{k=N(s)+1}^{N(t)-1} (\tilde{\vta}_{\dten_{k+1}}-\tilde{\vta}_{\dten_k})+
 ( \tilde{\vta}_{\bar{s}_h}-\tilde{\vta}_s)\\
 & =(t-\un{t}_h) (F(\vta_{\un{t}_h})+\varepsilon_{N(t)+2})\\
 &+\sum_{k=N(s)+1}^{N(t)-1}h_{k+1}(F(\vta_{k+1})+\varepsilon_{k+2})+(\bar{s}_h-s) (F(\vta_{\un{s}_h})+\varepsilon_{N(s)+2}).  
  \end{align*}
In other words,
$$\tilde{\vta}_t-\tilde{\vta}_s=\int_s^{t} F(\tilde{\vta}_{\un{u}_h})\dd u+\int_s^{t} \varepsilon_{N(u)+2}\dd u.$$

We now fix a bounded and Lipschitz continuous function $f : D \to \R$ and prove that for any $s\ge0$,
  \begin{equation}\label{eq:cv-vartheta-4}
    \lim_{ t \to +\infty} \left(\tilde{\vta}_{t+s}(f) - \tilde{\vta}_t(f) - \int_t^{t+s} F(\tilde{\vta}_u)(f)\dd u\right) = 0, \qquad \text{almost surely.}
  \end{equation} 
 To this end, it suffices to show that for any $s \geq 0$,
  \begin{equation}\label{eq:cv-vartheta-2}
  \lim_{t\rightarrow+\infty}\int_t^{t+s} \varepsilon_{N(u)+2}(f)\dd u=0\qquad \text{almost surely},
  \end{equation}
  and that,
   \begin{equation}\label{eq:cv-vartheta-6}
\lim_{t\rightarrow+\infty}   \left|\int_{t}^{t+s} \left(F(\tilde{\vta}_{\un{u}_h})(f)-F(\tilde{\vta}_u)(f)\right)\dd u\right|=0\qquad \text{almost surely}.
  \end{equation}
  For \eqref{eq:cv-vartheta-2}, we remark that, using~\eqref{eq:assumption3ii}, 
  \begin{align*}
  \int_t^{t+s} \varepsilon_{N(u)+2}(f)\dd u &= (\bar{t}_h-t)\varepsilon_{N(t)+2}(f) + \sum_{k=N(t)+1}^{N(t+s)-1}h_{k+1}\varepsilon_{k+2}(f) + (t+s-\un{t+s}_h)\varepsilon_{N(t+s)+2}(f)\\
  &=\sum_{j=N(t)+1}^{N(t+s)}h_j\varepsilon_{j+1}(f)+O\left(\frac{1}{N(t)}\right),
  \end{align*}
  since $t \mapsto N(t)$ is nondecreasing and 
  $$\sup_{\ell\ge1}|\varepsilon_\ell(f)|\le \sup_{\nu\in{\cal M}_1(D)} \left[\nu A\iind{D} \|f\|_\infty+|\nu (A f)|\right]\le 2\sup_{x\in D}\ES_x[\tau_D]<+\infty.$$
We thus deduce \eqref{eq:cv-vartheta-2} from Lemma~\ref{lem:control-error} and the fact that $N(t)\xrightarrow{t\rightarrow+\infty}+\infty$ almost surely.

For \eqref{eq:cv-vartheta-6}, we first remark that since the functions $Af$ and $A\iind{D}$ are bounded and Lipschitz continuous on $D$, they extend to bounded and Lipschitz continuous functions on $\bar{D}$. This makes the mapping $\nu \mapsto F(\nu)(f)$ continuous for the weak topology on $\mathcal{M}_1(\bar{D})$. Since the latter topology is known to be metrized by the distance $d$ defined by~\eqref{eq:distM1} on the one hand, and to make the space $\mathcal{M}_1(\bar{D})$ compact on the other hand, we deduce that $\nu \mapsto F(\nu)(f)$ is uniformly continuous on the metric space $(\mathcal{M}_1(\bar{D}),d)$, and therefore also on the metric space $(\mathcal{M}_1(D),d)$. As a consequence, for any $\varepsilon > 0$, there exists $\delta > 0$ such that for any $\nu^1, \nu^2 \in \mathcal{M}_1(D)$ such that $d(\nu^1,\nu^2) \leq \delta$, we have $|F(\nu^1)(f) - F(\nu^2)(f)| \leq \varepsilon$. Now, by~\eqref{eq:assumption3ii}, there exists $\ell_0(\varepsilon) \geq 1$ such that for any $\ell \geq \ell_0(\varepsilon)$, $h_\ell \leq \delta/(2C)$, where $C$ is given by~\eqref{eq:vartheta-lip:2}. Following this latter estimate, we deduce that if $t$ is such that $N(t)+1 \geq \ell_0(\varepsilon)$, then for any $u \geq t$, 
  \begin{equation*}
    d\left(\tilde{\vta}_{\un{u}_h},\tilde{\vta}_u\right) \leq 2C |u-\un{u}_h| \leq 2C h_{N(u)+1} \leq \delta,
  \end{equation*}
  and therefore 
 \begin{equation*}
    \left|\int_t^{t+s} \left(F(\tilde{\vta}_{\un{u}_h})(f)-F(\tilde{\vta}_u)(f)\right)\dd u\right| \leq s \varepsilon.
  \end{equation*}
  From what precedes, we  deduce \eqref{eq:cv-vartheta-6} and \eqref{eq:cv-vartheta-4}.
  
  Let us now return to the study of $(\tilde{\vta}^{(\ell,T)}_t)_{t \geq 0}$. Due to the construction of $\tilde{\vta}^{(\ell,T)}$ as a $[\dten_{\ell-1}-T]_+$-shift of  $\tilde{\vta}$ (see \eqref{eq:nutildelt}), one easily deduces from \eqref{eq:cv-vartheta-4} that for any positive $T$ and $t$,
 \begin{equation}\label{eq:cv-vartheta-7}
    \lim_{\ell \to +\infty} \tilde{\vta}^{(\ell,T)}_t(f) - \tilde{\vta}^{(\ell,T)}_0(f) - \int_0^t F(\tilde{\vta}^{(\ell,T)}_s)(f)\dd s = 0, \qquad \text{almost surely.}
  \end{equation} 
  
  Step~1 now allows us to extract a subsequence of $\{(\vta^{(\ell,T)}_t)_{t \geq 0}; \ell \geq 1\}$, that we still index by $\ell$ for convenience, which converges almost surely to some limit $(\vta^{(\infty,T)}_t)_{t \geq 0}$ in $\mathcal{C}([0,+\infty),\mathcal{M}_1(D))$. Using the uniform continuity of $\nu \mapsto F(\nu)(f)$ again, we deduce from~\eqref{eq:cv-vartheta-7} that, for any $t>0$ and bounded and Lipschitz continuous function $f : D \to \R$,
  \begin{equation*}
    \tilde{\vta}^{(\infty,T)}_t(f) = \tilde{\vta}^{(\infty,T)}_0(f) + \int_0^t F(\tilde{\vta}^{(\infty,T)}_s)(f)\dd s, \qquad \text{almost surely.}
  \end{equation*} 
  Using a standard countability and continuity argument, it is easily checked that there is an almost sure event on which the identity above holds for any $t>0$ and bounded and Lipschitz continuous function $f : D \to \R$.  This identity then classically extends to any bounded and continuous function $f : D \to \R$ so that, almost surely, $(\tilde{\vta}^{(\infty,T)}_t)_{t \geq 0}$ solves the ODE~\eqref{eq:ODE} in the sense of Proposition~\ref{prop:ODE}, with an initial condition $\tilde{\vta}^{(\infty,T)}_0$. Using the notation of Proposition~\ref{prop:ODE}, we may thus write $\tilde{\vta}^{(\infty,T)}_t = \varphi_t^{\nu_T}$, with $\nu_T = \tilde{\vta}^{(\infty,T)}_0$.
  
  \medskip\emph{Step~3. Convergence to $\mu^\star$.} Using Lemma~\ref{lem:tightness-vartheta} and the Prohorov Theorem, take a subsequence $(\vta_{\ell_k})_{k \geq 1}$ converging to some $\vta_\infty$. Let $T>0$. On the one hand, up to further extraction, Step~2 shows that $\tilde{\vta}^{(\ell_k,T)}_T \to \varphi^{\nu_T}_T$ for some $\nu_T$ which is the limit of $\tilde{\vta}^{(\ell_k,T)}_0$ and therefore depends on $T$. Note that $\nu_T$ belongs to ${\cal K}:=\bar{(\tilde{\vta}_{t})_{t \geq 1}}$, which by Lemma~\ref{lem:tightness-vartheta} again is a compact subset of ${\cal M}_1(D)$ and does not depend on $T$. On the other hand, since $\tilde{\vta}^{(\ell_k,T)}_T=\vta_{\ell_k}$, it converges to $\vta_\infty$. As a consequence $\vta_\infty=\varphi^{\nu_T}_T$ and the result finally follows by letting $T\rightarrow+\infty$ and using Proposition~\ref{prop:ODE} which guarantees that  
  \begin{equation*}
    \|\varphi^{\nu_T}_T-\mu^\star\|_\mathrm{TV} \leq \sup_{\nu\in{\cal K}}\|\varphi^\nu_T-\mu^\star\|_\mathrm{TV}\xrightarrow{T\rightarrow+\infty}0.\qedhere
  \end{equation*}
\end{proof}


\begin{thebibliography}{10}

\bibitem{AFP}
D. Aldous, B. Flannery, and J.-L. Palacios.
\newblock Two applications of urn processes: the fringe analysis of search
  trees and the simulation of quasi-stationary distributions of markov chains.
\newblock {\em Probab. Eng. Inf. Sci.}, 2(3):293--307, 1988.

\bibitem{Aro67}
D.~G. Aronson.
\newblock Bounds for the fundamental solution of a parabolic equation.
\newblock {\em Bull. Amer. Math. Soc.}, 73:890--896, 1967.

\bibitem{Bar60}
M.~S. Bartlett.
\newblock {\em Stochastic population models in ecology and epidemiology.}
\newblock Methuen \& Co., Ltd., London; John Wiley \& Sons, Inc., New York,,
  1960.

\bibitem{BenCloPan18}
M.~Bena\"im, B.~Cloez, and F.~Panloup.
\newblock Stochastic approximation of quasi-stationary distributions on compact
  spaces and applications.
\newblock {\em Ann. Appl. Probab.}, 28(4):2370--2416, 2018.

\bibitem{BenChaVil22}
M. Bena\"im, N. Champagnat, and D. Villemonais.
\newblock Stochastic approximation of quasi-stationary distributions for
  diffusion processes in a bounded domain.
\newblock {\em Ann. Inst. Henri Poincar\'e{} Probab. Stat.}, 57(2):726--739,
  2021.

\bibitem{bouchard-geiss-gobet}
B. Bouchard, S. Geiss, and E. Gobet.
\newblock First time to exit of a continuous {I}t\^{o} process: general moment
  estimates and {$\rm L_1$}-convergence rate for discrete time approximations.
\newblock {\em Bernoulli}, 23(3):1631--1662, 2017.

\bibitem{BHM00}
K. Burdzy, R. Ho{\l}yst, and P. March.
\newblock A {F}leming-{V}iot particle representation of the {D}irichlet
  {L}aplacian.
\newblock {\em Comm. Math. Phys.}, 214(3):679--703, 2000.

\bibitem{ChaCouVil18}
N. Champagnat, K.-A. Coulibaly-Pasquier, and D. Villemonais.
\newblock Criteria for exponential convergence to quasi-stationary
  distributions and applications to multi-dimensional diffusions.
\newblock In {\em S\'{e}minaire de {P}robabilit\'{e}s {XLIX}}, volume 2215 of
  {\em Lecture Notes in Math.}, pages 165--182. Springer, Cham, 2018.

\bibitem{CMM13}
P. Collet, S. Mart{\'{\i}}nez, and J. San~Mart{\'{\i}}n.
\newblock {\em Quasi-stationary distributions}.
\newblock Probability and its Applications (New York). Springer, Heidelberg,
  2013.
\newblock Markov chains, diffusions and dynamical systems.

\bibitem{DarSen65}
J.~N. Darroch and E.~Seneta.
\newblock On quasi-stationary distributions in absorbing discrete-time finite
  {M}arkov chains.
\newblock {\em J. Appl. Probability}, 2:88--100, 1965.

\bibitem{DiGLelLePNec16}
G. Di~Ges{\`u}, T. Leli{\`e}vre, D. Le~Peutrec, and B. Nectoux.
\newblock Jump markov models and transition state theory: the quasi-stationary
  distribution approach.
\newblock {\em Faraday discussions}, 195:469--495, 2016.

\bibitem{dudley89}
R.~M. Dudley.
\newblock {\em Real analysis and probability}, volume~74 of {\em Cambridge
  Studies in Advanced Mathematics}.
\newblock Cambridge University Press, Cambridge, 2002.
\newblock Revised reprint of the 1989 original.

\bibitem{FerKesMarPic95}
P.~A. Ferrari, H.~Kesten, S.~Martinez, and P.~Picco.
\newblock Existence of quasi-stationary distributions. {A} renewal dynamical
  approach.
\newblock {\em Ann. Probab.}, 23(2):501--521, 1995.

\bibitem{Gilbarg-Trudinger}
D. Gilbarg and N.~S. Trudinger.
\newblock {\em Elliptic partial differential equations of second order}.
\newblock Classics in Mathematics. Springer-Verlag, Berlin, 2001.
\newblock Reprint of the 1998 edition.

\bibitem{GriKan04}
I.~Grigorescu and M.~Kang.
\newblock Hydrodynamic limit for a {F}leming-{V}iot type system.
\newblock {\em Stochastic Process. Appl.}, 110(1):111--143, 2004.

\bibitem{GroJon13}
P.~Groisman and M.~Jonckheere.
\newblock Simulation of quasi-stationary distributions on countable spaces.
\newblock {\em Markov Process. Related Fields}, 19(3):521--542, 2013.

\bibitem{LP1}
D.~Lamberton and G.~Pag{\`e}s.
\newblock Recursive computation of the invariant distribution of a diffusion.
\newblock {\em Bernoulli}, 8(3):367--405, 2002.

\bibitem{LelPilRey18}
T. Leli\`evre, L. Pillaud-Vivien, and J. Reygner.
\newblock Central limit theorem for stationary {F}leming-{V}iot particle
  systems in finite spaces.
\newblock {\em ALEA Lat. Am. J. Probab. Math. Stat.}, 15(2):1163--1182, 2018.

\bibitem{LemMen10}
V.~Lemaire and S.~Menozzi.
\newblock On some non asymptotic bounds for the {E}uler scheme.
\newblock {\em Electron. J. Probab.}, 15:no. 53, 1645--1681, 2010.

\bibitem{Lin05}
V.~Linetsky.
\newblock On the transition densities for reflected diffusions.
\newblock {\em Adv. Appl. Probab.}, 37(2):435--460, 2005.

\bibitem{MV12}
S. M{\'e}l{\'e}ard and D. Villemonais.
\newblock Quasi-stationary distributions and population processes.
\newblock {\em Probab. Surv.}, 9:340--410, 2012.

\bibitem{pages_book}
G. Pag\`es.
\newblock {\em Numerical probability}.
\newblock Universitext. Springer, Cham, 2018.
\newblock An introduction with applications to finance.

\bibitem{PolFeaJohRob20}
M. Pollock, P. Fearnhead, A.~M. Johansen, and G.~O. Roberts.
\newblock Quasi-stationary {M}onte {C}arlo and the {S}ca{LE} algorithm.
\newblock {\em J. R. Stat. Soc. Ser. B. Stat. Methodol.}, 82(5):1167--1221,
  2020.
\newblock With discussions followed by a rejoinder by the authors.

\bibitem{RevYor99}
D.~Revuz and M.~Yor.
\newblock {\em Continuous martingales and {B}rownian motion}, volume 293 of
  {\em Grundlehren der mathematischen Wissenschaften [Fundamental Principles of
  Mathematical Sciences]}.
\newblock Springer-Verlag, Berlin, third edition, 1999.

\bibitem{SchPar12}
R.~L. Schilling and L.~Partzsch.
\newblock {\em Brownian motion}.
\newblock De Gruyter, Berlin, 2012.
\newblock An introduction to stochastic processes, With a chapter on simulation
  by Bj\"{o}rn B\"{o}ttcher.

\bibitem{villani_book}
C. Villani.
\newblock {\em Optimal transport}, volume 338 of {\em Grundlehren der
  mathematischen Wissenschaften [Fundamental Principles of Mathematical
  Sciences]}.
\newblock Springer-Verlag, Berlin, 2009.
\newblock Old and new.

\bibitem{Vil14}
D.~Villemonais.
\newblock General approximation method for the distribution of {M}arkov
  processes conditioned not to be killed.
\newblock {\em ESAIM Probab. Stat.}, 18:441--467, 2014.

\bibitem{WRS20}
A.~Q. Wang, G.~O. Roberts, and D. Steinsaltz.
\newblock An approximation scheme for quasi-stationary distributions of killed
  diffusions.
\newblock {\em Stochastic Process. Appl.}, 130(5):3193--3219, 2020.

\end{thebibliography}
\end{document}